\newtheorem{mth}{Theorem}
\newtheorem{thm}{Theorem}[section]
\newtheorem{cor}[thm]{Corollary}
\newtheorem{lem}[thm]{Lemma}
\newtheorem{prop}[thm]{Proposition}
\theoremstyle{definition}
\theoremstyle{remark}
\newtheorem*{rem}{Remark}
\numberwithin{equation}{section}
\newcommand{\diag}{X}
\newcommand{\el}{\ell}
\newcommand{\barg}{\gamma}
\renewcommand{\d}{{\partial}}
\newcommand{\dbar}{{\bar{\partial}}}
\newcommand{\spann}{{\operatorname{span}}}
\newcommand{\1}{{\chi}}
\newcommand{\Prob}{{\mathbb P}}
\newcommand{\Expe}{{\mathbb E}}
\newcommand{\C}{{\mathbb C}}
\newcommand{\calZ}{\mathcal{Z}}
\newcommand{\tr}{\operatorname{tr}}
\newcommand{\cl}{\operatorname{cl}}
\newcommand{\Ordo}{O}
\newcommand{\R}{{\mathbb R}}
\newcommand{\Z}{{\mathbb Z}}
\newcommand{\const}{{\textrm{const.}}}
\newcommand{\ti}{\textit{t.i.\,}}
\renewcommand{\L}{{\mathbb L}}
\newcommand{\bfR}{{\mathbf R}}
\newcommand{\hfun}{H}
\newcommand{\nbh}{\Omega}
\newcommand{\bulk}{\operatorname{Int}}
\newcommand{\Pol}{\mathcal{W}}
\newcommand{\bfk}{\mathbf{k}}
\newcommand{\Lap}{\Delta}
\newcommand\coity{{C \sb 0 \sp \infty}}
\newcommand{\calH}{{\mathcal H}}
\newcommand{\calC}{{\mathcal C}}
\newcommand{\calB}{{\mathcal B}}
\newcommand{\calN}{{\mathcal N}}
\newcommand{\calF}{{\mathcal F}}
\newcommand{\calM}{{\mathcal M}}
\newcommand{\calP}{{\mathcal P}}
\newcommand{\calW}{{\mathcal W}}
\newcommand{\eqpot}{\check{Q}}
\newcommand{\Ai}{\operatorname{Ai}}
\newcommand{\fii}{{\varphi}}
\newcommand{\config}{\Theta}
\newcommand{\bfK}{{\mathbf{K}}}
\newcommand{\erker}{F}
\newcommand{\Ham}{H}
\newcommand{\drop}{S}
\newcommand{\vt}{\vartheta}
\newcommand{\re}{\operatorname{Re}}
\newcommand{\im}{\operatorname{Im}}
\newcommand{\erfc}{\operatorname{erfc}}
\newcommand{\dist}{\operatorname{dist}}
\newcommand{\supp}{\operatorname{supp}}
\newcommand{\eps}{\varepsilon}
\newcommand{\rest}{J}
\def\lpar{\left (}
\def\rpar{\right )}
\def\labs{\left |}
\def\rabs{\right |}
\def\babs#1{\labs {#1} \rabs}
\begin{document}
\title[Ward's equation]
{Rescaling Ward identities in the random normal matrix model}

\author{Yacin Ameur}
\author{Nam-Gyu Kang}
\author{Nikolai Makarov}
\address{Yacin Ameur\\ Department of Mathematics\\ Faculty of Science\\ Lund University\\ P.O. Box 118\\ 221 00 Lund\\ Sweden.}
\email{yacin.ameur@maths.lth.se}
\address{Nam-Gyu Kang\\ Department of Mathematical Sciences\\ Seoul National University\\ San56-1 Shinrim-dong Kwanak-gu Seoul 151-747\\ South Korea.}
\email{nkang@snu.ac.kr}
\address{Nikolai Makarov\\ Department of Mathematics\\
California Institute of Technology\\
Pasadena\\ CA 91125\\
USA.}
\email{makarov@caltech.edu}

\subjclass[2010]{Primary: 60B20. Secondary: 60G55; 81T40; 30C40; 30D15; 35R09}

\keywords{Random normal matrix;  Universality; Ward's equation.}

\thanks{Nam-Gyu Kang was supported by Samsung Science and Technology Foundation (SSTF-BA1401-01).}

\begin{abstract} We study existence and universality of scaling limits for the eigenvalues of a random normal
matrix, in particular at points on the boundary of the spectrum. Our
approach
uses Ward's equation -- an integro-differential identity satisfied by the rescaled one-point function.
\end{abstract}

\maketitle

In random normal matrix theory, one studies normal matrices $M$ (i.e. $MM^*=M^*M$)
of some large order $n$ picked randomly
with respect to a probability measure of the form
\begin{equation}\label{hm}d\mu_n(M)=\frac 1 {\calZ_n}\, e^{\,-n\tr Q(M)}\, dM.\end{equation}
Here $dM$ is the surface measure on normal $n\times n$ matrices
inherited from $4n^2$-dimensional Lebesgue measure via
the natural embedding into $\C^{\, n^2}$, $Q(\zeta)$ is a suitable real-valued function defined on $\C$ ("large'' as $\zeta\to\infty$), and $\calZ_n$ is a normalizing constant;  $\tr Q(M)=\sum_1^n Q(\zeta_j)$ is
the usual trace of the matrix $Q(M)$, where $\zeta_j$ denote the eigenvalues.

If $Q$ is just defined on $\R$ and $dM$ is surface measure on the Hermitian matrices (i.e. $M^*=M$), one obtains
random Hermitian matrices.
The study of such eigenvalue ensembles, e.g. using
the technique of Riemann-Hilbert problems, has been an active area of research.

The
eigenvalues $\{\zeta_j\}_1^n$ of a normal matrix $M$, picked randomly with respect to the measure \eqref{hm},
form a random point process in the complex plane $\C$. The same point processes  also appear as
a special case of one-component plasma (or "OCP'') ensembles, where $\{\zeta_j\}_1^n$ has the interpretation of a system of repelling point charges subjected to the external %magnetic
field $n Q$. Here the factor $n$ is needed to ensure that the system stays in a finite portion of the plane as $n\to\infty$.

In either interpretation, one defines the "energy'' of the system $\{\zeta_j\}_1^n$ by
\begin{equation}\label{hadd}\Ham_n\left(\zeta_1,\ldots,\zeta_n\right)=\sum_{j\ne k}\log\frac 1 {\babs{\,\zeta_j-\zeta_k\,}}+n\sum_{j=1}^nQ\left(\zeta_j\right).\end{equation}

The joint distribution of the particles/eigenvalues then follows the
 the \textit{Boltzmann-Gibbs law},
\begin{equation}\label{E1.1}d \Prob_n(\zeta)=\frac 1 {Z_n}\,e^{\,-\,\Ham_n(\zeta)}\,d V_n(\zeta),\quad
\zeta=(\zeta_1,\ldots,\zeta_n)\in\C^n,\end{equation}
where $dV_n$ is Lebesgue measure in $\C^n$ divided by $\pi^n$, and
$Z_n=\int e^{\,-\,\Ham_n}\,d V_n$.
We can henceforth treat the system $\{\zeta_j\}_1^n$ simply as a random sample from the distribution
\eqref{E1.1}, having the interpretations as random eigenvalues or as point charges. An important feature of this process is that it is \textit{determinantal}.

As $n\to\infty$, the system $\{\zeta_j\}_1^n$ tends to occupy a certain set $S$ called the droplet.
More precisely, the empirical distribution $\frac 1 n\sum_1^n\delta_{\zeta_j}$
converges in a suitable sense \cite{HM} to the equilibrium measure given by weighted potential theory.
In addition, the fluctuations of the system about the equilibrium converges to the Gaussian field on $S$
with free boundary conditions. See \cite{AHM3}.

 In this paper we will study microscopic properties of the system, close to a point $p$ in the droplet; in particular, a boundary point. The figure below shows a random sample
from the classical Ginibre ensemble, in which $Q=\babs{\,\zeta\,}^{\,2}$ and $S=\left\{\,\zeta\,;\,\babs{\,\zeta\,}\le 1\,\right\}$.
In this case, the process $\{\zeta_j\}_1^n$ can alternatively be interpreted as the eigenvalues of an
$n\times n$ matrix whose entries are independent complex, centered Gaussian random variables of variance $1/n$,
see \cite{G}.

We rescale about the point $p=1$ by letting $z_j=\sqrt{n}\,\left(\zeta_j-1\right)$ and refer to the rescaled system
$\config_n=\{z_j\}_1^n$ as the \textit{free boundary Ginibre process}.

\begin{figure}[ht]
\begin{center}
\includegraphics[width=.75\textwidth]{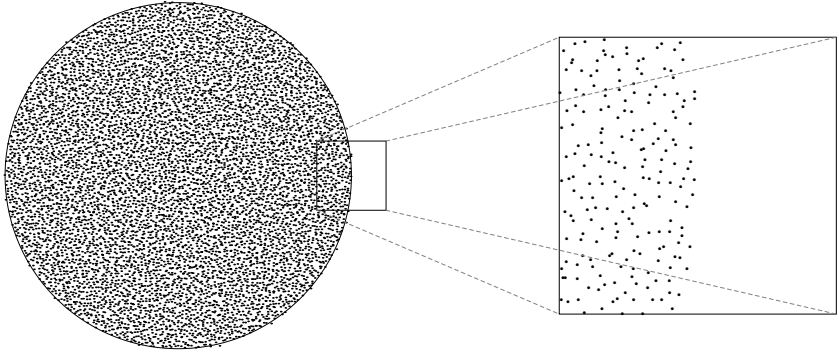}
\hspace{.05\textwidth}
\end{center}
\caption{A sample of the free boundary Ginibre process for a large value of $n$}
\end{figure}

The processes $\config_n$ converge as $n\to\infty$ to a
determinantal random
point field in $\C$ with correlation kernel
\begin{equation}\label{fhonn}K(z,w)=G(z,w)F\left(z+\bar{w}\right),\end{equation}
where we call $G(z,w):=e^{\,z\bar{w}\,-|\,z\,|^{\,2}/2\,-|\,w\,|^{\,2}/2}$ the \textit{Ginibre kernel} and $F(z):=\frac 1 2\erfc\left(\frac z{\sqrt{2}}\right)$ the \textit{free boundary plasma function}. To the best of our knowledge, this formula for the limiting point field first appeared in the paper \cite{FH} by Forrester and Honner;
a rigorous proof
was given in the paper \cite{BS}. (An alternative, simple argument depending on normal approximation of the Poisson distribution is given in Section \ref{pome}.)

We shall prove that the kernel \eqref{fhonn} appears "universally''
at regular points of the boundary, at least under an additional assumption of "translation invariance''.
This condition is satisfied e.g. when $Q$ is radially symmetric. In a certain sense,
\eqref{fhonn} is an analogue to the Airy kernel in Hermitian random matrix theory, i.e.,
\begin{equation}\label{airker}K(x,y)=\frac {\Ai(x)\Ai'(y)-\Ai'(x)\Ai(y)} {x-y},\end{equation}
which describes the eigenvalue spacing at the edge of the spectrum. See \cite{AGZ,BL,Fo,PS0} and the references there for further information.

We will
also establish existence and some basic properties of sequential limiting point fields pertaining to
a quite general $Q$ at an arbitrary point of the droplet. It is here useful to allow the point to vary with $n$, i.e., we can equally well zoom in on a "moving point''. This device is used in a separate paper \cite{AKMW} to study the distribution of eigenvalues near singular boundary points.

%particularly useful in the study of  It has turned out to be convenient to carry this out in the separate paper \cite{AKMW}.
%general (perhaps singular)

Our approach uses a relation between the $1$- and $2$-point functions, a particular case of Ward
identities for Boltzmann-Gibbs ensembles.  This relation is well-known in field theories \cite{KM} and has also been used in the papers \cite{AHM3,J} to study fluctuations of eigenvalues.

We here fix some point on the boundary and rescale Ward's identity about that point.
It turns out that, if this is done properly, then limiting  one- and two-point functions $R=R_1$ and $R_2$ can be defined in a way so that the
\textit{Berezin kernel}
$$B(z,w):=\frac {R(z)R(w)-R_2(z,w)}{R(z)}$$ satisfies the equation
\begin{equation}\label{thsh}\dbar C=R-1-\d\dbar \log R,\quad \text{where}\quad C(z):=\frac 1 \pi \int_\C \frac {B(z,w)}{z-w}\, d^2\,w.\end{equation}
We refer to the equation \eqref{thsh} as \textit{Ward's equation}. We stress that this equation is valid
at all (moving) points, provided that $\Lap Q$ does not vanish there and that $R$ does not vanish identically.

It is more or less immediate from the computations with the Ginibre ensemble that the
correlation kernel
\eqref{fhonn} gives rise to a solution to Ward's equation. However, in order to have \textit{uniqueness} of solution to Ward's equation, we
need to impose certain conditions, which depend on the nature of the point we are
rescaling about. Our verification of these conditions uses the formula for the
expectation of linear statistics from \cite{AHM3}, and some Bergman space techniques.
%Bergman space techniques.

The method of rescaled Ward identities applies to some other situations as well. We shall for example consider \textit{hard edge} ensembles,
where we completely confine the system $\{\zeta_j\}_1^n$ to the droplet by setting $Q=+\infty$ outside of
$S$.
In this case, a new kernel arises at regular boundary points: the \textit{hard edge plasma kernel}
\begin{equation}\label{hfuncc}K(z,w)=G(z,w)H(z+\bar{w})\,\1_\L(z)\,\1_\L(w),\quad H(z)=\frac 1 {\sqrt{2\pi}}\int_{-\infty}^{\,0}\frac {e^{\,-\,\left(z-t\right)^{\,2}/2}}
{F(t)}\, dt,\end{equation}
where $\1_\L$ is the indicator function for the \textit{left} half plane $\L=\{\,z\,;\, \re z<0\,\}$,
and $F$ is the free boundary plasma function.
%The figure below shows the graphs of $F$ and $H$.
\begin{figure}[ht]
\begin{center}
\includegraphics[width=.45\textwidth]{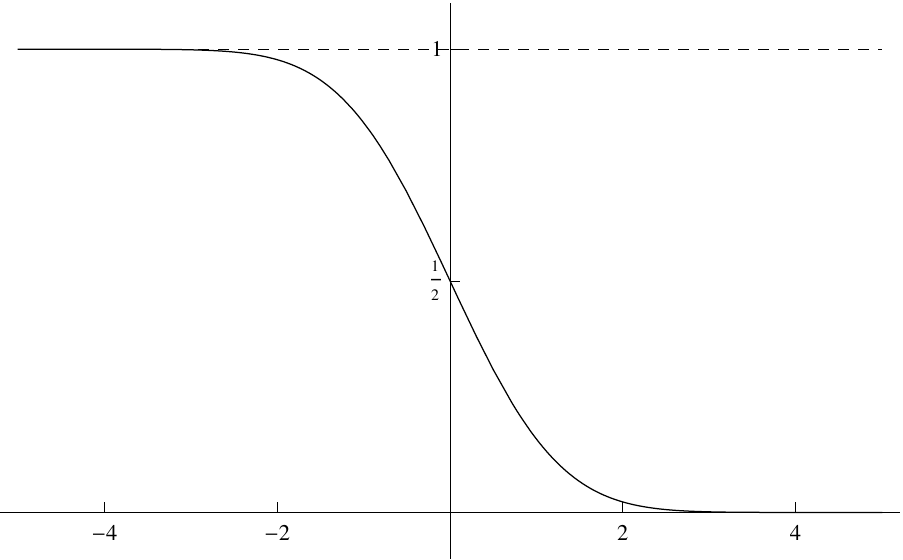}
\hspace{.05\textwidth}
\includegraphics[width=.45\textwidth]{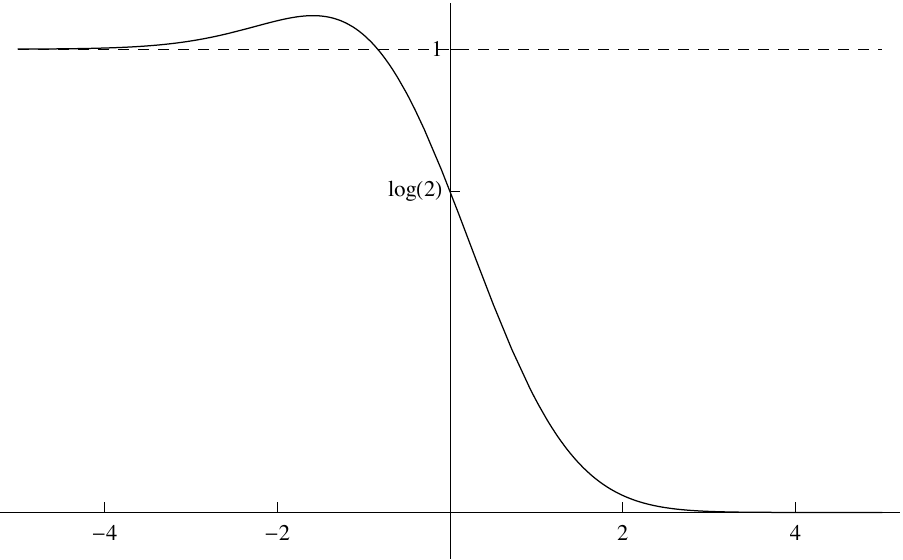}
\end{center}
\caption{The graphs of $F$ and $H$, restricted to the reals.}
\end{figure}

We will here give an elementary derivation of the formula for the case of the hard edge Ginibre ensemble. For reasons of length, it is convenient to postpone a complete treatment, including universality, to the companion
paper \cite{AKM2}. We remark that the hard edge kernel plays a role somewhat similar to the Bessel kernel in
Hermitian random matrix theory
\begin{equation*}K(x,y)=\frac {J_0(\sqrt{x}\,)\sqrt{y}\,J_0'(\sqrt{y}\,)-\sqrt{x}\,J_0'(\sqrt{x}\,)J_0(\sqrt{y}\,)}
{2(x-y)}.\end{equation*}
See e.g. \cite{FH,PS0,TW2} and the references there.

A detailed description of our results is given in the following section.

\subsection*{Notational conventions}
By $D(\zeta;r)$ we denote the open
disc with center $\zeta$ and radius $r$. We write $\d\omega$, $\bulk \omega$, $\cl\omega$, and $\omega^c$ for the
boundary, the interior, and the complement of a set $\omega\subset\C$.
The indicator
function of a set $E$ is denoted by $\1_E$.
We write $\d=\frac 1 2\left(\d/\d x-i\d/\d y\right)$ and $\dbar=\frac 1 2\left(\d/\d x+i\d/\d y\right)$ for the complex derivatives and
$\Lap=\d\dbar$ for the \textit{normalized} Laplacian. Thus $\Delta$ is $\frac 1 4$ times the standard Laplacian.
We write $dA(z)=d^{\,2} z/\pi$ for normalized Lebesgue measure. Thus the unit disc has measure $1$.
The volume measure
on $\C^k$ is defined by $dV_k(\zeta_1,\ldots,\zeta_k)=dA(\zeta_1)\cdots dA(\zeta_k)$.

A continuous function $f:\C^2\to\C$ is termed \textit{Hermitian} if $f(z,w)=\overline{f(w,z)}$. We shall
 say that $f$ is \textit{Hermitian-analytic}
 (or \textit{Hermitian-entire}) if $f$ is Hermitian and analytic (resp. entire) as a function of $z$ and $\bar{w}$. A Hermitian-entire function is uniquely determined by its diagonal values
 $f(z,z)$.

A Hermitian function $c$ is called a \textit{cocycle} if
\begin{equation*}\label{refco}c(z_1,z_2)c(z_2,z_3)\cdots c(z_{k-1},z_k)c(z_k,z_1)=1\end{equation*}
for all $k\ge 1$ and all sequences $(z_j)_1^k$. Alternatively, $c$ is a cocycle iff $c(z,w)=g(z)\overline{g(w)}$ for a continuous unimodular function $g$.

\section{Introduction and results}

\subsection{Potential theory and droplets}\label{subsec12} Fix a suitable function ("\textit{external potential}'') $Q:\C\to\R\cup\{+\infty\}$.
Let $\calP$ denote the class of positive, compactly supported Borel measures on $\C$.

Define
the \textit{weighted logarithmic energy} of $\mu\in\calP$ in external field $Q$ by
$$I_Q\left[\mu\right]=\iint_{\C^2}\log \frac 1 {\babs{\,\zeta-\eta\,}}\, d\mu(\zeta)\, d\mu(\eta)+\int_\C Q\, d\mu.$$
Thinking of $\mu$ as the distribution of an electric charge, $I_Q[\mu]$ is the sum of the Coulomb
interaction energy and the energy of interaction of $\mu$ with the external field $Q$.

We always assume that $Q$ is \textit{lower semi-continuous}, and that $Q$ is finite on
some set of positive logarithmic capacity. We will also assume that $Q$ is sufficiently
large at infinity, in the sense that
$$Q(\zeta)>>\log\babs{\,\zeta\,},\qquad (\zeta\to\infty).$$
To be precise, it will suffice to assume that $\liminf_{\zeta\to\infty}Q\left(\zeta\right)/\log\babs{\,\zeta\,}>2$.

A classical theorem of Frostman states that there then exists a unique \textit{equilibrium} measure
$\sigma$ which minimizes the weighted energy,
$$I_Q[\sigma]=\min_{\mu(\C)=1} I_Q[\mu],\qquad (\mu\in\calP).$$
See \cite{ST}.
We denote the compact support of the equilibrium measure by
$$S=S\left[Q\right]=\supp\sigma.$$
We refer to $S$ as the \textit{droplet} in the external field $Q$. It is known that if
$Q$ is smooth in some neighbourhood of $S$, then $\sigma$ is absolutely continuous and
takes the explicit form (see \cite{ST})
\begin{equation}\label{frostman}d\sigma=\1_S\cdot \Delta Q\,dA.\end{equation}
Since $\sigma$ is a probability measure, we have $\Lap Q\ge 0$ on $S$.

Our main assumptions \textit{throughout} are that there is some neighbourhood $\nbh$ of $S$ such that
\begin{enumerate}[label=(\alph*)]
\item \label{hubba} $Q$ is real analytic in $\nbh$,
\item \label{bubba} $\Lap Q>0$ in $\nbh$.
\end{enumerate}

With these assumptions, the complement $S^c$ has a local Schwarz function at each boundary point,
and we can rely on the fundamental theorem of Sakai \cite{Sa} concerning domains with local Schwarz functions.
In particular, we can apply Sakai's regularity theorem, which implies that all but
finitely many boundary points $p\in \d S$
%is either regular or singular
%of one of the
%following types.
are \textit{regular} in the sense that there is a disc $D=D(p;\epsilon)$ such that
$D\setminus S$ is a Jordan domain and $D\cap (\d S)$ is a simple real analytic arc.
A non-regular point $p\in\d S$ is called a \textit{singular} boundary point. Such points can be classified further as cusps or double points. We shall here study the regular case and refer to \cite{AKMW} for the singular case.
%, deferring our treatment of singular boundary points to the paper \cite{AKMW}.

%We shall here fix a \textit{regular} point $p\in \d S$ and study microscopic properties. The case of singular points is treated in \cite{AKMW}.

%There are two kinds of singular boundary points. A point $p\in\d S$ is a (conformal) \textit{cusp}
%if there is $D=D(p;\epsilon)$ such that $D\setminus S$ is a Jordan domain and every conformal
%map $\phi:D(0;1)\to D\setminus S$ with $\phi(1)=p$ extends analytically to a neighbourhood of $1$ and
%satisfies $\phi'(1)=0$; $p$ is a \textit{double point} if there is a disc $D$ about $p$ such that
%$D\setminus S$ is a union of two Jordan domains, and $p$ is a regular boundary point of each of them.

%One can further classify singular points according to degrees of tangency, but at this point
%it is not clear that such a classification would be relevant for our study. On the other hand, some cusps,
%in particular $3/2$-cusps, which are generic in Sakai's theory can not appear on a free boundary.

%\begin{figure}[ht]
%\begin{center}
%\includegraphics[width=.45\textwidth]{double2}
%\hspace{.05\textwidth}
%\includegraphics[width=.25\textwidth]{HScuspBlack}
%\end{center}
%\caption{A droplet with a double point.
%The picture on the right shows a $5/2$-cusp under Laplacian growth/Hele-Shaw evolution.}
%\label{fig1}
%\end{figure}

\subsection{Rescaling eigenvalue ensembles} \label{ree} Fix a potential $Q$ as above, and
let $(\zeta_j)_1^n$ denote a point in $\C^n$ picked randomly with respect to the measure $\Prob_n$
in \eqref{E1.1}.
We refer to the corresponding (unordered) configuration $\{\zeta_j\}_1^n$ as the \textit{$n$-point process} (or simply "system'')
associated to $Q$. We also speak of $\{\zeta_j\}_1^n$ as a "configuration picked randomly
with respect to $\Prob_n$''.

To each Borel set $B$ in $\C$ we associate a random variable $N_B$ by $N_B=\#\left\{\, j\,;\, \zeta_j\in B\,\right\}$.
The system $\{\zeta_j\}_1^n$ is then determined by the set of \textit{joint intensities} (a.k.a. "correlation functions'')
\begin{equation}\label{E1.2}\bfR_{n,k}\left(\eta_1,\ldots,\eta_k\right)=\lim_{\eps\downarrow 0}\frac {\Prob_n\lpar\, \bigcap_{j=1}^k \left\{\,N_{D( \eta_j;\eps)}\ge 1\,\right\}\,\rpar}
{\eps^{2k}},\qquad k=1,\ldots,n,\quad n\in\Z_+.\end{equation}

A few comments are in order.

The Hamiltonian $H(\zeta_1,\ldots,\zeta_n)$ is defined as $+\infty$ when $\zeta_j=\zeta_k$ for distinct $j$ and $k$.
It is then easy to see that the limit in \eqref{E1.2} exists, at least when the points $\eta_1,\ldots,\eta_k$ are away from eventual discontinuities of $Q$. Moreover, $\bfR_{n,k}(\eta_1,\ldots,\eta_k)=0$ when $\eta_i=\eta_j$ for some distinct indices $i,j\le k$.

The joint intensities are non-negative
symmetric functions, subject to the relations
\begin{equation}\label{bassrel}\int_\C \bfR_{n,1}\, dA=n ;\qquad
\int_\C
\bfR_{n,k+1}\left(\eta_1,\ldots,\eta_k,\eta_{k+1}\right)\, dA\left(\eta_{k+1}\right)=
\left(n-k\right)\,\bfR_{n,k}\left(\eta_1,\ldots,\eta_k\right).\end{equation}
We sometimes identify the intensity $\bfR_{n,k}$
with the measure $\bfR_{n,k}\,d V_k$.

According to Dyson's determinant formula
the joint intensities can be represented in the form
\begin{equation*}\label{E1.7}\bfR_{n,k}(\zeta_1,\ldots,\zeta_k)=\det\lpar \bfK_n(\zeta_i,\zeta_j)\rpar_{i,j=1}^k\end{equation*}
where $\bfK_n$ is a Hermitian function known as a \textit{correlation kernel} of the process. A proof of this formula, written for the case of Hermitian random matrices, is given e.g. in \cite{ST}, Section IV.7.2. A proof in the case of normal matrices
runs in the same way.

We are interested in microscopic properties of the system $\{\zeta_j\}_1^n$ near a point $p\in S$.
It is natural to magnify distances about $p$ by a factor $\sqrt{n\Lap Q(p)}$ and
fix an angle $\theta\in\R$. We shall
consider rescaled point processes of the form $\config_{n}=\config_{n}(p,\theta)=
\{z_j\}_1^n$, where
\begin{equation}\label{E1.3}z_j=e^{\,-i\theta}\sqrt{n\Delta Q(p)}\,\lpar \zeta_j-p\rpar,\quad j=1,\ldots,n.
\end{equation}
The \textit{law}
of $\config_n$ is defined as the
image of the Boltzmann-Gibbs measure \eqref{E1.1} under the map \eqref{E1.3}.
The rescaled system $\config_{n}$ then
has intensities denoted $R_{n,k}$, where
\begin{equation*}R_{n,k}\left(z_1,\ldots,z_k\right)=\frac 1 {\left(n\Lap Q(p)\right)^{k}}\,
\bfR_{n,k}\left(\zeta_1,\ldots,\zeta_k\right)
.\end{equation*}

We shall henceforth usually assume that $p$ is a regular boundary point.
We then throughout, by convention, take $e^{i\theta}$ as the \textit{outer normal} to $\d S$ at $p$.
%choose $\theta$ according to the following rule.

%\begin{conv} \label{conven}
% If $p$ is a regular boundary point of $\drop$, we
%\end{conv}

The point process $\config_n$ is determinantal with kernel $K_n$ given by
\begin{equation*}\label{detkernel}K_n(z,w)=\frac 1 {n\Lap Q(p)}\bfK_n(\zeta,\eta),\quad \text{where}\quad
\begin{cases}\,z=e^{\,-i\theta}\sqrt{n\Lap Q(p)}\,\left(\zeta-p\right)\cr
w=e^{\,-i\theta}\sqrt{n\Lap Q(p)}\,\left(\eta-p\right)\cr
\end{cases},\end{equation*}

The fundamental problem is existence and uniqueness of a limiting determinantal point field
of the processes $\config_n$, as $n\to\infty$.
For our purposes, convergence will mean locally uniform convergence of all intensities $R_{n,k}$
to some limits $R_k$ as $n\to\infty$. Whenever this is the case, $R_k$ can be interpreted
in terms of Lenard's theory (see \cite{S}) as
a $k$-point function for a "point field'' in $\C$, meaning a probability
law on a suitable space of (perhaps) infinite configurations $\{z_i\}_1^\infty\subset\C$. A precise
definition, and a discussion of relevant convergence results, is given in the appendix.

It here suffices to note that the desired convergence of the processes $\config_n$ will hold if
the correlation kernels $K_n$ converge to a limit $K$ locally uniformly on $\C^2$. Moreover, the
limiting point field is uniquely determined by $K$ if the functions $K_n(z,z)$ are uniformly bounded, and it is then determinantal with
intensity functions $$R_k(z_1,\ldots,z_k)=\det\left(K(z_i,z_j)\right)_{i,j=1}^k.$$
More generally, if $K_n$ is a correlation kernel and $(c_n)_1^\infty$ is a sequence of cocycles,
then $c_n K_n$ is another kernel
giving rise to the same joint intensities $R_{n,k}$. The problem is thus to show that there exists a sequence
of cocycles such that $c_n K_n$ converges locally uniformly to a non-trivial limit $K$
with bounded convergence on the diagonal in $\C^2$.

The best understood case is when $p\in\bulk S$. Then, under the much weaker assumption that $Q$ is
$C^2$-smooth near $p$, the rescaled processes $\config_n$ converge to the \textit{Ginibre$(\infty)$ point
field}. The correlation kernel of this field is the Ginibre kernel, \begin{equation}\label{E1.9}G(z,w)=e^{\,z\bar{w}\,-\,\babs{\,z\,}^{\,2}/2\,-\,
\babs{\,w\,}^{\,2}/2}.\end{equation}
If $p\in\bulk S$, and if we rescale about $p$ according
to \eqref{E1.3} with arbitrary angle $\theta$, then we have
\begin{equation*}\label{thebull}R_{n,k}\left(z_1,\ldots,z_k\right)\to \det\lpar G\left(z_i,z_j\right)\rpar_{i,j=1}^k\qquad \text{as}\quad n\to\infty\end{equation*}
with locally uniform convergence on $\C^k$. (Cf. \cite{AHM2}; a new proof is given in Section \ref{thor}.)

%We will henceforth consider the case $p\in\d S$.
In the following sections, we will state the main
results of the paper.

\subsection{Compactness, non-triviality, and Ward's equation} \label{ms1}
Our first theorem states the existence of sequential limits of the rescaled point processes $\config_n$
and specifies the form of limiting correlation kernels. With a view to later
applications \cite{AKMW} we shall adopt a somewhat broader point of view, rescaling about a \textit{moving point} $p=(p_n)_1^\infty$ where $p_n$ is a sequence of points in $S$. A constant sequence $p_n=p$ will be identified with the point $p$. In the following result we are also at liberty to choose a sequence of angles $\theta_n\in\R$ and rescale about $p$ according to
\begin{equation}\label{genresc}z_j=e^{-i\theta_n}\sqrt{n\Lap Q(p_n)}\left(\zeta_j-p_n\right).\end{equation}

\begin{mth} \label{TT1} Let $p\in\drop$ be a (possibly moving) point, and rescale about $p$ as above.
\begin{enumerate}[label=(\roman*)]
\item \label{TT1_1}
Compactness: There is a sequence $c_n$ of cocycles such that
every subsequence of $\left(c_nK_n\right)_1^\infty$ has a subsequence converging uniformly on compact subsets.
\item \label{TT1_2}
Analyticity: Each limit point in \ref{TT1_1} satisfies $K=G\Psi$ where $\Psi$ is a Hermitian entire function which satisfies the following "mass-one inequality'':
\begin{equation}\label{mocio}\int_\C e^{\,-\,|\,z-w\,|^{\,2}}\babs{\,\Psi(z,w)\,}^{\,2}\, dA(w)\le \Psi(z,z).\end{equation}
\end{enumerate}
\end{mth}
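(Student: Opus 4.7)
My approach is to exhibit cocycles $c_n$ such that $c_n K_n = \Psi_n G \cdot (1+o(1))$ for suitable Hermitian-entire $\Psi_n$ with locally uniform bounds, and then to extract subsequential limits by Montel; the mass-one inequality then falls out of Fatou.

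\emph{Factorization and cocycle.} Write $\mathbf{K}_n(\zeta,\eta) = k_n(\zeta,\eta)\, e^{-\frac{n}{2}(Q(\zeta)+Q(\eta))}$ with $k_n$ the Hermitian-entire reproducing kernel of the weighted polynomial subspace. Taylor-expand at $p_n$ in the rescaled variable $z=e^{-i\theta_n}\sqrt{n\Delta Q(p_n)}(\zeta-p_n)$:
$$nQ(\zeta) - nQ(p_n) = 2\re H_n(z) + |z|^2 + E_n(z),$$
where $H_n(z)$ is the degree-two polynomial collecting the holomorphic-in-$z$ part of the expansion and $E_n(z) = O(|z|^3/\sqrt{n})$ locally uniformly. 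Set $\phi_n(\zeta) := e^{-i\im H_n(z(\zeta))}$ (unimodular) and $c_n(\zeta,\eta) := \phi_n(\zeta)\overline{\phi_n(\eta)}$. Using the identity $e^{-\frac{1}{2}(|z|^2+|w|^2)} = G(z,w)\, e^{-z\bar w}$, a short computation gives
$$c_n K_n(z, w) = \Psi_n(z,w)\, G(z,w)\cdot(1+o(1)) \quad\text{locally uniformly on } \C^2,$$
where
$$\Psi_n(z,w) := \frac{k_n(\zeta,\eta)}{n\Delta Q(p_n)}\, e^{-nQ(p_n) - H_n(z) - \overline{H_n(w)} - z\bar w}$$
is Hermitian-entire in $(z,\bar w)$ since every factor is.

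\emph{A priori bounds.} On the diagonal, $\Psi_n(z,z) = R_n(z)$. Standard Bernstein-Walsh-type pointwise bounds for weighted polynomials (valid in a fixed neighbourhood of $S$ under hypotheses \ref{hubba}--\ref{bubba}) give $\mathbf{R}_{n,1}(\zeta) \leq Cn\Delta Q(\zeta)$ and hence $R_n \leq C$ uniformly on compacta. The reproducing identity $\int |\mathbf{K}_n|^2\, dA = \mathbf{K}_n(\zeta,\zeta)$ rescales, via $|G|^2 = e^{-|z-w|^2}$, to
$$\int_\C |\Psi_n(z,w)|^2\, e^{-|z-w|^2}\, dA(w) = R_n(z)(1+o(1)),$$
locally uniformly in $z$. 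Since $w \mapsto \overline{\Psi_n(z,w)}$ is entire, $|\Psi_n(z,w)|^2$ is subharmonic in $w$; the mean-value inequality on a unit disc $D(w,1)$ combined with the (trivially bounded-below) Gaussian weight there converts the $L^2$ bound into a pointwise bound $|\Psi_n(z,w)| \leq C(K)$ on every compact $K \subset \C^2$, uniformly in $n$.

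\emph{Compactness and limit properties.} Viewing $\Psi_n$ as an entire function of the two complex variables $(z,\bar w)\in\C^2$ with locally uniform bounds, Montel's theorem produces, from any given subsequence, a sub-subsequence converging locally uniformly to a Hermitian-entire limit $\Psi$. Along this sub-subsequence $c_n K_n \to G\Psi$ locally uniformly, which is \ref{TT1_1}. For \ref{TT1_2}, passing the mass-one identity to the limit via Fatou on the left and locally uniform convergence $R_n \to \Psi(\cdot,\cdot)$ on the right yields \eqref{mocio}. The delicate point is the uniform upper bound on $R_n$ at (possibly moving) regular boundary points $p_n$; this is where the regularity hypotheses on $Q$ and the local Schwarz-function structure of $\partial S$ (via Sakai's theorem) enter, to supply Bernstein-Walsh bounds up to the boundary. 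All other ingredients (subharmonicity, Montel, Fatou) are standard.
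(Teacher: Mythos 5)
Your proposal follows the same overall strategy as the paper — factor $c_nK_n$ into a universal Gaussian part $G$ times a Hermitian-entire correction $\Psi_n$, get locally uniform bounds on $\Psi_n$ from a bound on the one-point function, apply a normal-family argument, and then Fatou for the mass-one inequality. The factorization step, however, is executed differently and arguably more cleanly: instead of the paper's "bulk approximation'' $\bfK_n^\#(\zeta,\eta)=n(\d_1\bar\d_2A)(\zeta,\eta)e^{nA(\zeta,\eta)}\cdot e^{-nQ(\zeta)/2-nQ(\eta)/2}$ (built from the Hermitian-analytic extension $A$ of $Q$, which is only defined near the diagonal, so $\Psi_n:=K_n/K_n^\#$ is only Hermitian-analytic on a growing set $V_n$), you Taylor-expand $nQ$ directly around $p_n$, collect the holomorphic degree-two polynomial $H_n$, and absorb $e^{-i\im H_n}$ into the cocycle. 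This yields a $\Psi_n$ that is Hermitian-entire on all of $\C^2$, which is slightly tidier.

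There is one step that, as written, is incorrect and needs repair. You claim the global identity
\[
\int_\C |\,\Psi_n(z,w)\,|^{\,2}\,e^{-|z-w|^2}\,dA(w)=R_n(z)\,(1+o(1)).
\]
This equality does not hold, because the relation $c_nK_n=\Psi_nG\,(1+o(1))$ is only locally uniform: the error term $E_n(z)=n[Q(\zeta)-Q(p_n)]-2\re H_n(z)-|z|^2$ is $O(|z|^3/\sqrt n)$ only for $z$ in a bounded set, and is completely uncontrolled (indeed the Taylor expansion of $Q$ may not even converge) when $|z|\gtrsim\sqrt n$. You therefore cannot pull the $(1+o(1))$ out of an integral over all of $\C$. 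What \emph{is} true, and what you actually need, is the local one-sided inequality: for each fixed compact $K\subset\C^2$, and $(z,w_0)\in K$,
\[
\int_{D(w_0,1)}|\Psi_n(z,w)|^2e^{-|z-w|^2}\,dA(w)\le(1+o(1))\int_\C|K_n(z,w)|^2\,dA(w)=(1+o(1))\,R_n(z)\le C,
\]
because the factorization is uniformly valid on $K\times D(w_0,1)$. This local bound, combined with the subharmonicity of $w\mapsto|\Psi_n(z,w)|^2$ and the sub-mean-value inequality, gives your pointwise bound $|\Psi_n|\le C(K)$ on $K$, and a similar truncation-plus-monotone-convergence argument gives the mass-one inequality in the limit. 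So the gap is genuine but repairable; the paper avoids it altogether by getting the pointwise bound directly from the positive-matrix inequality $|K_n(z,w)|^2\le R_n(z)R_n(w)$ together with Lemma \ref{snor}, which is simpler than your subharmonicity argument, and by truncating to $D(0;\rho\sqrt{n_k})$ and invoking Fatou for the mass-one inequality. (A smaller slip: the a priori bound should be $\bfR_{n,1}\le Cn$ on a neighbourhood of $S$, not $\le Cn\Lap Q(\zeta)$, though this is harmless since $\Lap Q$ is bounded there.)
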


A limit point $K$ in Theorem \ref{TT1} will be called a \textit{limiting kernel}. It follows from \ref{TT1_1} that $K$ is a \textit{positive matrix} in Aronszajn's sense \cite{Ar}, i.e., for all finite sequences $(z_j)_1^N$ of points and all choices of scalars $(\alpha_j)_1^N$ we have
$$\sum_{j,k=1}^N\alpha_j\bar{\alpha}_k K(z_j,z_k)\ge 0.$$
Indeed, $K$ is a limit of $c_{n_k}K_{n_k}$ where $K_n$ are positive matrices and $c_n$ are cocycles.
%the kernels $\bfK_n$ are positive matrices
Moreover, by the general theory mentioned in the previous section, a limiting kernel $K$ is the correlation kernel of a some point field in the plane,
which we call a \textit{limiting point field}.
The $1$-point function of this point field is denoted by $R(z)=K(z,z)$. If $R\ne 0$ on $\C$
we can define the Berezin kernel
\begin{equation*}\label{nott}B(z,w)=\frac{\left|\,K(z,w)\,\right|^{\,2}}{K(z,z)}
,\end{equation*}
and the "Cauchy transform''
\begin{equation}\label{nott2}C(z)=\int_\C\frac {B(z,w)}{z-w}\, dA(w).\end{equation}

\begin{mth} \label{TT1.5} Let $K=G\Psi$ be a limiting kernel in Theorem \ref{TT1} and write $R(z)=K(z,z)$. Let
$p\in S$ be a (possibly moving) point.
%If $p$ is a singular boundary point, then we assume that $p$ is strongly laminar. We then have
\begin{enumerate}[label=(\roman*)]
\item \label{TT1.5_1} Zero-one law: Either $R$ is trivial, in the sense that $R=0$ identically, or $R>0$ everywhere.
\item \label{TT1.5_2} Ward's equation: If $R$ is non-trivial, then
the integral $C(z)$ in \eqref{nott2} converges and defines a smooth function such that
\begin{equation*}\label{deal}\dbar C(z)=R(z)-1-\Lap\log R(z),\qquad z\in \C.\end{equation*}
\item \label{TT1.5_3} Complementarity: The "complementary kernel''
$$\tilde{K}(z,w):=G(z,w)(1-\Psi(z,w))$$
is a positive matrix. In particular, $R(z)=\Psi(z,z)\le 1$ for all $z$.
\end{enumerate}
\end{mth}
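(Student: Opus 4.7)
The plan is to derive a distributional version of Ward's equation directly from the finite-$n$ Ward identity, use smoothness of $\dbar C$ to obtain the zero-one law (i), and then read off (ii) as the pointwise version once $R>0$ is in hand. Complementarity (iii) is handled separately via approximation by weighted Bergman kernels.

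\textbf{Step 1: Distributional Ward.} Integration by parts in the partition function $Z_n$ gives the finite-$n$ Ward identity
\begin{equation*}
\int \d\psi\cdot\bfR_{n,1}\,dA \;=\; n\int \d Q\cdot\psi\cdot\bfR_{n,1}\,dA \;-\; \iint \frac{\psi(\zeta)}{\zeta-\eta}\bfR_{n,2}(\zeta,\eta)\,dA(\zeta)\,dA(\eta),
\end{equation*}
for $\psi\in C_c^\infty(\C)$. Rescale about $p$ via $z=e^{-i\theta}\sqrt{n\Lap Q(p)}(\zeta-p)$, Taylor-expand $\d Q(\zeta)=\d Q(p)+\Lap Q(p)(\bar\zeta-\bar p)+O(|\zeta-p|^2)$, and split $\bfR_{n,2}=\bfR_{n,1}(\zeta)\bfR_{n,1}(\eta)-|\bfK_n|^2$. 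The leading $O(\sqrt n)$ contributions cancel via the Frostman equilibrium relation $\int_\drop \Lap Q(\eta)/(\zeta-\eta)\,dA(\eta)=\d Q(\zeta)$ on $\drop$. Passing $n\to\infty$ and a final integration by parts yields the distributional identity
\begin{equation*}
\dbar C \;=\; R-1-\Lap\log R \qquad \text{on }\C,
\end{equation*}
which makes no positivity assumption on $R$.

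\textbf{Step 2: Zero-one law (i).} Assume $R\not\equiv 0$ but $R(z_0)=0$. Positivity of the matrix $K$ gives $|K(z_0,w)|^2\le K(z_0,z_0)K(w,w)=0$, so $K(z_0,\cdot)\equiv 0$, and hence $\Psi(z_0,\cdot)\equiv 0$ since $G$ is nonvanishing. Hermitian-analyticity of $\Psi$ then yields a factorization $\Psi(z,w)=(z-z_0)(\bar w-\bar z_0)\Phi(z,w)$ with $\Phi$ Hermitian-entire, so $R(z)=|z-z_0|^2\tilde R(z)$. If $\tilde R(z_0)=0$, iterate the factorization to get vanishing of $R$ to infinite order at $z_0$, forcing $R\equiv 0$ by real-analyticity --- contradiction. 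If $\tilde R(z_0)>0$, then $\log R=\log|z-z_0|^2+\log\tilde R$ has a logarithmic singularity, and $\Lap\log R$ distributionally contains a nonzero delta at $z_0$; but $\dbar C$ and $R-1$ are smooth functions, contradicting Step 1.

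\textbf{Step 3: Classical Ward (ii) and complementarity (iii).} Part (ii) follows at once: with $R>0$ everywhere by (i), $\log R$ is smooth and the distributional identity of Step 1 holds pointwise. Convergence and smoothness of $C$ come from the bound $\int B(z,w)\,dA(w)\le 1$ (a direct consequence of \eqref{mocio}) together with local integrability of $1/|z-w|$. For (iii), at finite $n$ the kernel $\bfK_n$ is the reproducing kernel of the polynomial space $\calW_n=\spann\{1,\zeta,\ldots,\zeta^{n-1}\}$ inside $L^2(e^{-nQ}\,dA)$; letting $\bfK^{\mathrm{Berg}}_n$ denote the reproducing kernel of the full weighted Bergman space of entire $L^2$-functions, one has $\bfK_n\le \bfK^{\mathrm{Berg}}_n$ as positive matrices. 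Known universality for weighted Bergman kernels at regular boundary points of the droplet then implies that, modulo the same cocycle as $\bfK_n$, the rescaled $\bfK^{\mathrm{Berg}}_n$ converges to $G$. Passing to the limit gives $K\le G$ as a positive matrix, i.e., $\tilde K(z,w)=G(z,w)(1-\Psi(z,w))\ge 0$.

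\textbf{Main obstacle.} The hardest step is the distributional limit in Step 1: one must uniformly control $\bfR_{n,1}$ and $\bfR_{n,2}$ near $\d\drop$, handle the singular integral against $1/(\zeta-\eta)$, and track the $O(\sqrt n)$ cancellation. These estimates rest on real-analyticity of $Q$, the bound $\Lap Q>0$ on $\nbh$, and Sakai's regularity theorem for $\d\drop$.
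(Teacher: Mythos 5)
Your overall strategy mirrors the paper's in Steps 1 and 2, but there is a genuine gap in Step 2, and Step 3 takes a genuinely different (and under-justified) route.

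\textbf{Gap in Step 2.} You write that, when $\tilde R(z_0)>0$, the delta appearing in $\Lap\log R$ contradicts Step 1 because ``$\dbar C$ and $R-1$ are smooth functions.'' But smoothness of $\dbar C$ is not available at this stage --- it is precisely what the theorem sets out to establish, and near a zero of $R$ the Berezin kernel $B(z,w)=|K(z,w)|^2/K(z,z)$ is not even defined, so $C$ is a priori only known on $\{R>0\}$. Locally integrable or locally bounded $C$ can still have $\dbar C$ containing a delta (think $\dbar(1/(z-z_0))$), so no regularity of $\dbar C$ follows for free. The paper closes exactly this gap: it first proves (via $B_{n}(z,w)\le R_{n}(w)\le 1+o(1)$ and a direct estimate) that $C_{n_k}\to C$ \emph{boundedly} on the complement of the zero set, hence $|C|\le 3$ there; then, inserting the factorization $R=|z-z_0|^2\tilde R$ into the distributional Ward equation, one finds $C(z)=-\tfrac{1}{z-z_0}-C^\nu(z)+v(z)$ with $\nu\ge 0$ and $v$ smooth, which forces $|C(z)|\to\infty$ as $z\to z_0$ --- contradicting the established uniform bound. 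Without some a priori control of $C$ near $z_0$, your contradiction does not materialize. (A secondary point: your iteration ``if $\tilde R(z_0)=0$, iterate'' is correct, but note that it must be run with the positive-matrix property of the holomorphic kernel $L_1=e^{z\bar w}\Psi_1$ --- the mass-one inequality for $\Psi$ does \emph{not} descend to $\Psi_1$ after dividing by $(z-z_0)(\bar w-\bar z_0)$.)

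\textbf{Step 3 is a different, incomplete route.} The paper proves complementarity by constructing explicit finite-dimensional spaces $\calH_n$ of entire functions with reproducing kernels $L_n\to L$, showing via Fatou that the span of the limiting kernels $L_z$ embeds \emph{contractively} in the Fock space $L^2_a(e^{-|z|^2}dA)$, and then invoking Aronszajn's theorem on differences of reproducing kernels to conclude $L_0-L\ge 0$. You instead compare $\bfK_n$ to the full weighted Bergman kernel $\bfK^{\mathrm{Berg}}_n$ and invoke ``known universality'' to assert that the rescaled $\bfK^{\mathrm{Berg}}_n$ tends to $G$. This is a plausible alternative (the finite-$n$ domination $\bfK_n\le\bfK^{\mathrm{Berg}}_n$ is correct), but it shifts the entire burden to a statement about Bergman-kernel asymptotics which you do not justify, and which has to hold with the \emph{same} cocycle as used for $\bfK_n$ in order for the positive-matrix inequality to survive the limit. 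Moreover you restrict to ``regular boundary points,'' whereas Theorem~\ref{TT1.5} is stated for an arbitrary (possibly moving) point $p\in S$ --- the paper's construction works uniformly in $p$. If you want to pursue the Bergman-kernel route, you should state precisely what asymptotic you need for $\bfK^{\mathrm{Berg}}_n$ near an arbitrary moving $p$, verify the cocycles match, and then note that the conclusion follows for all $p\in S$, not just boundary-regular ones.

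Once the boundedness of $C$ is established (as in the paper's Lemma~\ref{belem}) your Step 2 closes properly, and Step 1 and the passage to pointwise Ward in Step 3 (the part for (ii)) are sound and essentially identical to the paper's argument.
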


In connection with the zero-one law, we mention that triviality (i.e. $R=0$) occurs when one rescales about singular boundary points. See \cite{AKMW}.

As a word of warning, we mention that the complementary kernel does not in general solve Ward's equation.

By the preceding results, it is natural to try to find all (or at least some)
limiting kernels $K$ giving rise to a solution to Ward's equation. In order to fix a solution
uniquely, we need to know that certain additional conditions are satisfied, which depend
on the nature of the point we are rescaling about. For regular points, the results are as follows.

\begin{mth}\label{TT2} Fix a regular boundary point $p$ and let $K$ be a
corresponding limiting kernel in Theorem \ref{TT1}. Write $x=\re z$.
\begin{enumerate}[label=(\roman*)]
\item \label{TT2_1} Exterior estimate: There is a constant $C$ such that
\begin{equation*}R(z)\le C\,e^{\,-\,2\,x^{\,2}}\quad\text{for}\quad x\ge 0.
\end{equation*}
 \item Interior estimate: \label{TT2_2} If $\el$ is any number with $\el<1/2$ then there is a constant $C$ (perhaps depending on $\el$) such that
\begin{equation*}
\babs{\,R(z)-1\,}\le C\,e^{\,-\,\el\, x^{\,2}}\quad \text{for}\quad x\le 0.
\end{equation*}
%\item Integrality condition: \label{TT2_3} If $\chi=\chi_{(-\infty,0)}$, then
\end{enumerate}
%\begin{enumerate}[label=(\roman*)]
%\item \label{fja} If $p$ is a regular boundary point, then
%\begin{equation*}
%\babs{\,R(z)-1\,}\le C\,e^{\,-\,\el\, x^{\,2}/2}\quad \text{for}\quad x\le 0,\quad \text{and}\quad
%R(z)\le C\,e^{\,-\,2\,x^{\,2}}\quad\text{for}\quad x\ge 0.
%\end{equation*}
%\item \label{nt} If $p$ is a cusp or a double point, then
%$$R(z)\le C\,e^{\,-\,2\el\, x^{\,2}},\quad x\in\R.$$
%\end{enumerate}
\end{mth}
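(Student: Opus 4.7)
The plan is to prove both bounds at the finite-$n$ level as pointwise estimates on $\bfK_n(\zeta,\zeta)$ in terms of $\dist(\zeta,\d S)$ near $p_n$, and then let them pass through the rescaling and the subsequence limit of Theorem~A; recall $R_n(z) = \bfK_n(\zeta,\zeta)/(n\Lap Q(p_n))$, and that the diagonal of $c_nK_n$ is unaffected by the cocycle gauge.

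For (i) I would invoke the classical weighted-polynomial (Christoffel-function) bound
\begin{equation*}
\bfK_n(\zeta,\zeta) \le Mn\, e^{-n(Q(\zeta) - \eqpot(\zeta))},
\end{equation*}
where $\eqpot$ is the obstacle function and $M$ depends only on $Q$; this is standard from the extremal characterization $\bfK_n(\zeta,\zeta) = \sup|p(\zeta)|^{\,2}e^{-nQ(\zeta)}/\|p\|^{\,2}$ taken over polynomials of degree $<n$. At a regular boundary point $p$ with outer normal $e^{i\theta}$, harmonicity of $\eqpot$ on $S^c$, the $C^1$-matching on $\d S$, and $(\d_s^{\,2}+\d_t^{\,2})\eqpot(p)=0$ combine with $(\d_s^{\,2}+\d_t^{\,2})Q(p) = 4\Lap Q(p)$ and $\d_t^{\,2}(Q-\eqpot)(p)=0$ to give $\d_s^{\,2}(Q-\eqpot)(p) = 4\Lap Q(p)$, hence
\begin{equation*}
Q(\zeta) - \eqpot(\zeta) = 2\Lap Q(p)\, s^{\,2} + O(s^{\,3}),\qquad \zeta = p + s e^{i\theta},\ s\ge 0.
\end{equation*}
Substituting the rescaling $s = x/\sqrt{n\Lap Q(p_n)}$ yields $n(Q - \eqpot) \to 2x^{\,2}$, and so $R_n(z) \le Ce^{-2x^{\,2}}$ for $x\ge 0$ on bounded tangential strips, with escape $|y|\to\infty$ handled by a separate elementary decay estimate for $\bfK_n$ far from $S$. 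This inequality passes to the limit.

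For (ii) I would combine a Bergman-space coherent-state lower bound with the complementarity of Theorem~B(iii). The quantitative input is a one-sided interior expansion
\begin{equation*}
0 \le n\Lap Q(\zeta) - \bfK_n(\zeta,\zeta) \le Cn\, e^{-c\, n d^{\,2}},\qquad d := \dist(\zeta,\d S),
\end{equation*}
for $\zeta$ inside $S$ near $p_n$, with $c$ arbitrarily close to $\Lap Q(p_n)/2$ from below. The upper bound on $\bfK_n$ is again Christoffel; the matching lower bound is obtained by testing $\bfK_n(\zeta,\zeta)$ against an explicit Gaussian coherent state centered at $\zeta$ (a local reproducing element for the Bargmann--Fock space with metric $\Lap Q$), and estimating the defect from its orthogonal projection onto $\calP_n \subset L^2(e^{-nQ})$ by the $L^2$ mass of that state outside a disc of radius $\asymp d$ about $\zeta$ -- a tail integral that is Gaussian in $\sqrt{n}\,d$ with rate governed by $\Lap Q(p_n)$. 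After rescaling ($n d^{\,2} = x^{\,2}/\Lap Q(p_n)$) and dividing by $n\Lap Q(p_n)$, we arrive at $|R_n(z)-1| \le C_\ell e^{-\ell x^{\,2}}$ for $x\le 0$ and any $\ell<1/2$, and this bound passes to the limit.

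The main obstacle is securing the quantitative interior expansion with an identifiable Gaussian rate that is uniform in the moving point $p_n$. The coherent-state scheme is standard in spirit, but extracting precisely the rate $\ell<1/2$ requires careful bookkeeping of the cross terms between the trial function and its orthogonal defect, and moving-point uniformity needs control of the local Schwarz function of $S^c$. Sharper rates -- the value is $2$ in the Ginibre case -- would require identifying the limit kernel itself and lie outside the general framework used here.
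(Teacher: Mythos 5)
Part (i) in your proposal is essentially the paper's argument: you invoke the Christoffel bound $\bfK_n(\zeta,\zeta)\le Cn\,e^{-n(Q-\eqpot)(\zeta)}$ (the paper's Lemma 3.4) and then Taylor-expand $Q-\eqpot$ at a regular boundary point, using harmonicity of $\eqpot$ in $S^c$, the $C^1$-matching along $\d S$, the vanishing of the tangential second derivative of $Q-\eqpot$ on $\d S$, and $(\d_s^2+\d_n^2)(Q-\eqpot)=4\Lap Q$ to identify the leading term $2\Lap Q(p)\,s^2$ in the normal direction — exactly what the paper does (Lemma 5.5), including the restriction to a $|z|\lesssim\log n$ window.

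For part (ii) the direction is right (compare $\bfK_n$ with the bulk kernel $\bfK_n^\#(\zeta,\zeta)=n\Lap Q(\zeta)$ and get Gaussian decay of the error in $\sqrt{n}\,d$), but the central mechanism is missing. You write that the defect of the coherent state from its $\calP_n$-projection is controlled by its $L^2(e^{-nQ})$ mass outside a disc of radius $\asymp d$; this is not how the estimate actually goes, because $\pi_n^\perp$ is not a local operator and the coherent state $\bfK_n^\#(\cdot,\zeta)$ is not globally $L^2_{nQ}$. The paper (Lemma 5.3) multiplies the coherent state by a cutoff $\chi_\zeta$ supported near $\zeta$, observes that $u:=\pi_n^\perp[\chi_\zeta g_\zeta]$ is the norm-minimal solution of $\bar\d u=(\bar\d\chi_\zeta)g_\zeta$, and bounds it by H\"ormander's $\bar\d$-inequality — crucially with the weight $n\phi=n\eqpot+\log(1+|\omega|^2)$ built from the obstacle function, not with $nQ$, because it is only with this choice that $A^2_{n\phi}=\calP_n$ (so that the corrected function actually is a polynomial of degree $<n$). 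This weight selection, together with the companion ``heat-kernel'' estimate of Theorem 5.1 comparing $\bfK_\zeta(\zeta)$ with $\Pi_n(\chi_\zeta\bfK_\zeta^\#)(\zeta)$ via an integration-by-parts identity, constitutes the substantive content of the proof; your sketch subsumes both into the phrase ``estimating the defect by the tail integral,'' which hides the non-routine step. Also, your one-sided claim $\bfK_n(\zeta,\zeta)\le n\Lap Q(\zeta)$ at finite $n$ is not established (and not needed): the paper proves the two-sided bound $|\bfK_n(\zeta,\zeta)-\bfK_n^\#(\zeta,\zeta)|\le C(1+n\,e^{-n\Lap Q\,\ell\delta^2})$ directly, from which both directions of $|R-1|\le Ce^{-\ell x^2}$ follow without invoking complementarity.
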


We believe that the interior estimate (ii) should be true with $\el=2$. However, the exact value of $\el$ is not important in the sequel.

\begin{mth}\label{TT2.25} Suppose that the droplet $S$ is connected with everywhere
regular boundary $\d S$. Then there is a subset $N\subset \d S$ of measure zero for the arclength such that if $K$ is a limiting rescaled kernel about a point $p\in(\d S)\setminus N$, then $R(x)=K(x,x)$ satisfies
\begin{equation*}\int_{-\infty}^{+\infty}x\cdot(R(x)-\1_{(-\infty,0)}(x))\, dx=\frac 1 8.
\end{equation*}
\end{mth}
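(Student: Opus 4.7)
The plan is to derive the identity by computing $\bfE_n\sum_j f(\zeta_j)-n\int_S f\,\Lap Q\,dA$ in two ways, for a test function concentrated near the boundary and designed to read off the first moment of the edge density.

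\textbf{Dipole test function.} Parametrize a tubular neighborhood of $\d S$ by boundary-layer coordinates $(s,t)$, with $s$ arclength and $t$ the signed distance (negative inside $S$). For a smooth $\psi:\d S\to\R$ and a smooth cutoff $\chi$ with $\chi(0)=1$, set $f(\zeta)=t\,\chi(t)\,\psi(s)$ inside the tube (extended by zero). Then $f|_{\d S}=0$ while $\d_n f|_{\d S}=\psi$.

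\textbf{Macroscopic side.} Apply the asymptotic formula of \cite{AHM3} for smooth linear statistics. Since $f\equiv 0$ on $\d S$, the $Q$-dependent boundary contributions simplify and there is a universal boundary term with coefficient $\tfrac{1}{8\pi}$:
\begin{equation*}
\bfE_n\sum_j f(\zeta_j)-n\!\int_S f\,\Lap Q\,dA \;=\; \frac{1}{8\pi}\int_{\d S}\psi\,ds + \calB_{\mathrm{A}}(f) + o(1),
\end{equation*}
where $\calB_{\mathrm{A}}(f)$ gathers the remaining, explicitly $Q$-dependent, contributions.

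\textbf{Microscopic side.} The same quantity equals $\int(\bfR_{n,1}-n\Lap Q\,\1_S)f\,dA$. Split into the boundary layer $|t|\le\eta$ (with $\eta\to 0$ slowly) and its complement. In the layer, change variable $x=\sqrt{n\Lap Q(p(s))}\,t$: Theorem~A gives (along a subsequence) the pointwise limit of the rescaled one-point function $R^{(s)}(x)$, and the exterior/interior decay bounds of Theorem~C provide uniform integrability of $x(R^{(s)}(x)-\1_{x<0})$. Hence the layer contribution tends to
\begin{equation*}
\frac{1}{\pi}\int_{\d S}\psi(s)\,I(s)\,ds,\qquad I(s):=\int_\R x\bigl(R^{(s)}(x)-\1_{x<0}\bigr)\,dx.
\end{equation*}
Outside the layer, the interior one-point asymptotics $\bfR_{n,1}=n\Lap Q+O(1)$ yield a quantity $\calB_{\mathrm{B}}(f)$, and a Green's-identity manipulation (using $f|_{\d S}=0$) shows $\calB_{\mathrm{A}}=\calB_{\mathrm{B}}$.

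\textbf{Conclusion and main obstacle.} Equating the two evaluations and varying $\psi$ yields $I(s)=\tfrac18$ for almost every $s\in\d S$. As a sanity check, for the Ginibre case $Q=|\zeta|^2$ one has $R(x)=\tfrac12\erfc(\sqrt2\,x)$, and a direct integration gives $I=1/8$. The principal technical obstacle is the exchange of limits in the microscopic step: Theorem~A furnishes compactness only pointwise in $p$, whereas the argument requires a common subsequence along which $R_{n,1}^{(p(s))}\to R^{(s)}$ for almost every $s$. This is handled by a diagonal extraction against a countable dense family of test functions $\psi$, with the decay bounds from Theorem~C providing the dominated convergence that justifies interchanging the limit with the $s$-integration. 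The exceptional set $N$ is precisely the arclength-zero set of $s\in\d S$ on which this pointwise convergence fails.
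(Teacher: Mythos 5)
Your overall architecture matches the paper's: test functions vanishing on $\d S$, the linear-statistics asymptotics of \cite{AHM3} producing the universal $\tfrac{1}{8\pi}\int_{\d S}\d_n f\,ds$ term, a boundary-layer/exterior split, and a change of variables $x=\sqrt{n\Lap Q(p(s))}\,t$ to expose the rescaled one-point function. The exterior is indeed controlled by the a priori exterior/interior estimates of Theorem~\ref{TT2}. All of that is right and is essentially what the paper does.

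The gap is in your final step, which you flag yourself as the main obstacle and then dispatch too quickly. After equating the two evaluations you are left with, for each test function $\psi$ and each $M$, a bound of the schematic form
\begin{equation*}
\Bigl|\,\int_{\d S}\psi(s)\,h_{n,M}(s)\,ds\,\Bigr|\le C e^{-cM^2}+C'\|f\|_1+o(1),\qquad h_{n,M}(s):=\int_{-M}^{M}t\bigl(R_{n,p(s)}(t)-\1_{(-\infty,0)}(t)\bigr)\,dt-\tfrac18,
\end{equation*}
and you must deduce that $\lim_M\lim_n h_{n,M}(s)=0$ for almost every $s$. A diagonal extraction over a countable dense family of $\psi$'s, combined with dominated convergence, only yields weak-$*$ smallness of $h_{n_k,M}\,ds$; it does not produce a single subsequence $n_k$ along which $R_{n_k,p(s)}$ converges for a.e.\ $s$ (these are functions of $s$ and could oscillate, e.g.\ like $\sin(ks)$, while integrating against every fixed $\psi$ to zero). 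Theorem~\ref{TT1} gives compactness for each fixed $p$ with a $p$-dependent subsequence, and there is no reason a common subsequence works for a.e.\ $s$. The paper circumvents this precisely by \emph{not} extracting a common subsequence: it forms $h^*(s)=\limsup_M\limsup_n h_{n,M}(s)$ and $h_*(s)=\liminf_M\liminf_n h_{n,M}(s)$, and shows $h^*\le 0\le h_*$ a.e.\ via a Lebesgue point-of-density argument (Lemma~\ref{frus}): assuming $|\{h^*>\alpha\}|>0$, one concentrates a test function with $\d_n f$ a tall, normalized bump around a density point of $\{h^*>\alpha\}$ and lets the height $P\to\infty$ to contradict the bounded right-hand side. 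That measure-theoretic device is the missing ingredient in your writeup; without it (or something equivalent), the passage from the integral identity to the pointwise a.e.\ conclusion does not go through.

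Two smaller notes. First, the paper's functionals are built with the corrected density $\bigl(n\Lap Q+\tfrac12\Lap\log\Lap Q\bigr)\1_S$ rather than $n\Lap Q\,\1_S$; since $f|_{\d S}=0$ the correction contributes only $O(1)$ interior terms, but it has to be tracked to reconcile the microscopic and macroscopic sides, whereas your ``$\calB_{\mathrm{A}}=\calB_{\mathrm{B}}$ by a Green's-identity manipulation'' is left unjustified. Second, once one knows $\lim_n\int t(R_{n,p}-\1)\,dt=\tfrac18$ for a.e.\ $p$, the transfer to a subsequential limit $R=\lim R_{n_k,p}$ still requires a dominated-convergence step in $t$ using Theorem~\ref{TT2}; you do mention this, and it is correct.
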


%The estimates \ref{fja} and \ref{nt} are mutually inconsistent, so limiting point fields at singular points are certainly different from those at regular points. Moreover,
In particular, the foregoing results imply that any limiting kernel
at a regular boundary point is non-trivial, i.e. $R(z)>0$ for all $z$.
Moreover, as $R$ is bounded, we can conclude that all limiting
point fields are uniquely determined by the corresponding limiting kernel.

The topological assumptions on the droplet in Theorem \ref{TT2.25} are made in order to be able to use the fluctuation theorem from \cite{AHM3}; the theorem should be true without this assumption.

%\begin{rem}
%Our proof of Theorem \ref{TT1.5} gives the
%locally uniform lower bound $R(w)\ge c_K>0$ for $w\in K$, where $K$ is a given compact subset of
%$\C$. By the same token, for each fixed number $\delta>0$, there is $c=c(\delta)>0$ such that
%$\bfR_n(\zeta)\ge cn$ whenever $\dist\left(\zeta,S\right)\le \delta/\sqrt{n}$. (All assuming strong laminarity.)
%\end{rem}

\subsection{Translation invariant solutions to Ward's equation} Let $K=G\Psi$ be a limiting kernel
in Theorem \ref{TT1}.
We say that $\Psi$ (or $K=G\Psi$) is (vertical) \textit{translation invariant}
(in short: \ti) if
\begin{equation*}\label{mack}\Psi\left(z+it,w+it\right)=\Psi\left(z,w\right),\qquad t\in\R.\end{equation*}
In this case, $\Psi$ can be represented in the form
\begin{equation*}\label{chack}\Psi(z,w)=\Phi(z+\bar{w})\end{equation*}
for some entire function $\Phi$.
%Our next theorem describes \ti
%solutions to Ward's equation.
To study kernels of this form, it is convenient to introduce the "Gaussian kernel''
\begin{equation*}\label{gau}\barg(z):=\frac 1 {\sqrt{2\pi}}\,e^{\,-\,z^{\,2}/2},\qquad z\in\C.\end{equation*}
Let $g$ be a bounded function on $\R$.
We shall use the symbol "$*$'' to denote the convolution operation
$$\barg*g(z):=\int_{-\infty}^{+\infty} \barg\left(z-t\right)g(t)\, dt,\qquad z\in\C.$$
Thus $\barg*g$ has the meaning of ordinary convolution in $\R$ followed
by analytic continuation. The function $z\mapsto (\barg * g)(z)\cdot e^{z^2/4}$ is a version of the \textit{Bargmann transform} of $g$ (see e.g. \cite{Fo}).
Note that the free boundary plasma function $\erker$ can be written
\begin{equation}\label{ffuncc}\erker(z)=\barg*\1_{(-\infty,0)}(z)=\int_{-\infty}^{\,0}\barg\left(z-t\right)\, dt.\end{equation}

The convolution operation is intimately connected with \ti limiting kernels. Indeed, if $K(z,w)=G(z,w)\Phi(z+\bar{w})$ is an arbitrary \ti
limiting kernel, then we shall prove that there exists a Borel function $f$ with $0\le f\le 1$ such that $\Phi=\gamma *f$.
Our proof of this fact relies on Bochner's theorem on positive definite functions; see Section \ref{transmetr}.

We will refer to the formula $\Phi=\gamma*f$ as the \textit{Gaussian representation} of the \ti limiting kernel $K$. Our next theorem says that
$f$ is necessarily a characteristic function.

% but the application is not quite standard, since $g$ does not necessarily belong to $L^2(\R)$. (See \cite{Fo} for more on the Bargmann transform.)
%We have the following theorem.

\begin{mth} \label{TT3}
Suppose that $\Phi$ is an entire function of the form $\Phi=\gamma *f$ where
$f$ is a bounded function. Then
the kernel $K(z,w)=G(z,w)\Phi(z+\bar{w})$
%\begin{equation}\label{integ}R(1-R)\in L^1(\R),\qquad R(z):=\Psi(z,z).\end{equation}
%Then $\Psi$
gives rise to a solution to Ward's equation if and only if there
is a connected set $I\subset \R$ of positive measure such that
$\Phi=\barg *\1_I$.
\end{mth}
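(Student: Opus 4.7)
Since $K$ is translation-invariant in the $y$-direction, $R(z)=\Phi(2x)$ and $C(z)$ depend only on $x=\re z$; writing $\tilde C(u):=C(u/2)$ and $u=2x$, Ward's equation becomes the ODE $\tilde C'(u)=\Phi(u)-1-(\log\Phi)''(u)$. My plan is to first compute $\Phi\tilde C$ as an explicit integral in $f$ using the Gaussian representation $\Phi=\gamma * f$, and then read off both directions from this. Starting from the Berezin kernel $B(z,w)=e^{-|z-w|^2}|\Phi(z+\bar w)|^2/\Phi(2x)$ and expanding $|\Phi(z+\bar w)|^2=\Phi(z+\bar w)\Phi(\bar z+w)$, the $\im w$-integral becomes a Poisson-type integral $\int e^{i\beta k}/(\alpha+i\beta)\,d\beta$, which by contour residues evaluates to a Heaviside times a decaying exponential. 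Completing the remaining Gaussian integrals in $\re w$ yields the compact \emph{master formula}
\begin{equation*}
\Phi(u)\,\tilde C(u) \;=\; \int_\R f(t)\bigl[P(t,u)-\Phi(u)F(u-t)\bigr]\,dt, \qquad P(t,u) := \int_{-\infty}^t \gamma(u-s)f(s)\,ds.
\end{equation*}

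\textbf{The ``if'' direction.} For $f=\1_{(a,b)}$ with $-\infty\le a<b\le +\infty$, I would evaluate the master formula explicitly, writing $F(t-u)=\int_t^\infty \gamma(\sigma-u)\,d\sigma$ and $F(u-t)=\int_{-\infty}^t\gamma(\sigma-u)\,d\sigma$ and carrying out the double integration. The key auxiliary identity is $\mathcal F(v)-\mathcal F(-v)=-v$ with $\mathcal F(z):=\int_z^\infty F(s)\,ds$, which follows in one line from $F(v)+F(-v)=1$. Using this one arrives at the closed form
\begin{equation*}
\tilde C(u) \;=\; \mathcal F(u-a) - \mathcal F(b-u) - \frac{\Phi'(u)}{\Phi(u)}.
\end{equation*}
Since $\Phi(u)=F(u-b)-F(u-a)$ and $F'=-\gamma$, differentiating and using $F(x)+F(-x)=1$ shows on the nose that $\tilde C'=\Phi-1-(\log\Phi)''$. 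The extreme cases $a=-\infty$, $b=+\infty$, and $I=\R$ are obtained by limits (or, for $I=\R$, directly from $\Phi\equiv 1$, $\tilde C\equiv 0$).

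\textbf{The ``only if'' direction.} Conversely, assume Ward holds. Setting $V:=\Phi\tilde C+\Phi'$, Ward integrates once to $(V/\Phi)'=\Phi-1$. Combining with the master formula and the splittings $\Phi\tilde C=\Lambda-\Phi\Psi$, with $\Lambda(u):=\iint_{s<t}\gamma(u-s)f(s)f(t)\,ds\,dt$ and $\Psi(u):=\int f(t)F(u-t)\,dt$, together with $G'=\Phi-1$ and $\Psi'=-\Phi$, one finds that $H:=G+\Psi$ satisfies $H'\equiv -1$, so $H(u)=-u+C_1$ for a constant $C_1$; this yields the functional equation
\begin{equation*}
\Lambda(u) \;=\; (C_1-u)\Phi(u) - \Phi'(u).
\end{equation*}
Writing $\Lambda(u)=\int\gamma(u-s)f(s)N(s)\,ds$ with $N(s):=\int_s^\infty f(t)\,dt$ and rewriting $(C_1-u)\Phi-\Phi'=\int\gamma(u-s)(C_1-s)f(s)\,ds$ (via $\Phi'=-\int(u-s)\gamma(u-s)f(s)\,ds$), the equation reduces to $(\gamma*h)(u)\equiv 0$ for $h(s):=f(s)[N(s)-(C_1-s)]$. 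Injectivity of Gaussian convolution (on tempered distributions, since $\widehat{\gamma}$ vanishes nowhere) forces $h\equiv 0$ a.e.; on $\supp f$ we then have $N(s)=C_1-s$, so $N'(s)=-1$ and hence $f(s)=1$ there, giving $f\in\{0,1\}$ a.e. Connectedness of $\supp f$ follows because a gap in $\supp f$ would leave $N$ constant on that gap, contradicting strict monotonicity of $C_1-s$.

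\textbf{Main obstacle.} The principal technicality is that $\Lambda$, $\Psi$, and $N$ may diverge when $f$ does not decay at $+\infty$ (e.g.\ $f=\1_{(a,\infty)}$). In that case the splittings $\Phi\tilde C=\Lambda-\Phi\Psi$ and $(G+\Psi)'=-1$ are only formal, and the clean derivation above breaks down. I plan to handle this by invoking the reflection symmetry $z\mapsto-z$ of Ward's equation---which replaces $f(t)$ by $f(-t)$ and swaps the two ends of $\R$---to reduce every case to one in which $\Lambda$, $\Psi$, $N$ all converge, apply the argument there, and then transfer the conclusion back.
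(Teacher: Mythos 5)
Your overall strategy is genuinely different from the paper's in mechanics, though it arrives at essentially the same functional equation. The paper works entirely on the Fourier side: it reformulates Ward's equation as $\iint e^{-st}e^{ix(s+t)}\hat{\rest}(t)\hat{G}(s)\,dsdt=0$ with $G=g*\barg$ and $g'=f-1$, collapses this by a change of variables to $(gf)*\barg=0$, and concludes $gf=0$ almost everywhere. Your physical-side master formula leads to $\barg*h\equiv 0$ with $h=f\cdot[N-(C_1-s)]$; since $N(s)-C_1+s$ has derivative $-(f-1)$, your $N-(C_1-s)$ is $-g$ up to a constant, so the two reductions are the same identity in different clothing. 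Your concrete master formula is an attractive way to make the ``if'' direction a clean, verifiable computation (and your spot-check via the identity $\calF(u)=\barg(u)-uF(u)$ is correct), and the subsequent connectedness argument (gap in $\supp f$ forces two distinct points to map to the same constant value of $N$, contradiction) matches the paper's interval argument.

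However, the ``only if'' direction has a genuine gap that your proposed reflection fix does not close. Your derivation of the functional equation $\Lambda(u)=(C_1-u)\Phi(u)-\Phi'(u)$ requires the splitting $\Phi\tilde C=\Lambda-\Phi\Psi$, which in turn requires that $\Lambda(u)=\iint_{s<t}\gamma(u-s)f(s)f(t)\,dsdt$, $\Psi(u)=\int f(t)F(u-t)\,dt$, and $N(s)=\int_s^\infty f$ all converge. These diverge whenever $f$ has a non-decaying tail at $+\infty$; and the reflected versions diverge whenever $f$ has a non-decaying tail at $-\infty$. You therefore cannot reduce every case to a convergent one by reflection: the case $f=\1_\R$, which \emph{is} a legitimate Ward solution ($\Phi\equiv 1$, $\tilde C\equiv 0$), has non-decaying tails at both ends and is fixed by the reflection. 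More fundamentally, in the ``only if'' direction $f$ is unknown --- you are trying to \emph{prove} it is an indicator of a connected set --- so you have no a priori control on which end (if either) $f$ decays; assuming one end decays, in order to apply the argument there, is circular. What is needed is a version of the argument that works directly with the (always convergent) combined integrand $f(t)\bigl[P(t,u)-\Phi(u)F(u-t)\bigr]$, or with a regularization/subtraction (e.g.\ subtracting off the contribution of a constant), without splitting it into two divergent pieces. As written, the ``only if'' half is incomplete.
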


%Let $K$ be an arbitrary limiting kernel in Theorem \ref{TT1}.
%Note that when we rescale about a regular boundary point, then the condition \eqref{integ} is satisfied for all limiting kernels (\ti or not) by Theorem \ref{TT2}.
 By Theorem \ref{TT2}, a \ti limiting kernel $K(z,w)=G(z,w)\Phi(z+\bar{w})$ must be of the form $\Phi=\gamma*\1_{(-\infty,a)}$ for some $a\in\R$. It is easy to show that the only choice of $a$ consistent with the $1/8$-formula in Theorem \ref{TT2.25} is $a=0$. See Section \ref{tir} for details. Thus we will have $\Phi=F$ at almost every boundary point, provided e.g. that the droplet $S$ is connected and having everywhere regular boundary.

In general, we do not know whether all limiting kernels necessarily are \ti It is however easy to show that this is the case when the potential
is radially symmetric, i.e., when $Q(z)=Q(|z|)$.
%See Section \ref{transmetr}. Moreover, for \ti solutions (at regular points), we can prove that
%the corresponding interval $I$ in \eqref{fjuju} is always $I=(-\infty,0)$ so that
%$\Phi=\erker$ where
%The proof of the last statement uses the integrality condition \ref{TT2_3} of Theorem \ref{TT2} (se Section \ref{maincorproof}). In particular, we will prove the following theorem.

%We shall here settle to carry out the details for the radially symmetric case but remark that the general case can be treated similarly.
%Note that for real $x$ we have $\erker(x)=\Prob(X\ge x)$ where $X$ is a standard normal random variable.

%As a result of these considerations we obtain the following theorem.

%We have the following universality result.

\begin{mth} \label{TT4} Suppose that
$Q$ is radially symmetric and that the droplet $S$ is connected. For a boundary point $p$ we let $\config_n$ be the free boundary process
rescaled about $p$ in the outer normal direction.
Then $\config_n$ converges to the unique point field $BG$
in $\C$ with the
correlation kernel
\begin{equation}\label{E1.12}K(z,w)=G(z,w)\cdot \erker\left(z+\bar{w}\right).\end{equation}
\end{mth}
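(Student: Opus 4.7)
The plan is to combine Theorems \ref{TT1}--\ref{TT3} with the radial symmetry of $Q$ to identify every subsequential limit of the rescaled kernels and then upgrade this to full convergence. Since $Q$ is radial and $S$ is connected, $S$ is a closed disc of some radius $R_0$; after a global rotation we may assume $p = R_0$ and that the outer normal is the positive real axis, i.e.\ $\theta = 0$. By Theorem \ref{TT1} there exist cocycles $c_n$ such that every subsequence of $(c_nK_n)$ has a further, locally uniformly convergent subsequence, with limit $K = G\Psi$ for some Hermitian-entire $\Psi$. It suffices to show that any such $K$ equals $G(z,w)\erker(z+\bar w)$: the full sequence will then converge, and the bounded diagonal $R(z) = \erker(2\re z)\le 1$ ensures that the limiting point field is uniquely determined by $K$.

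The key step is to derive translation invariance of $\Psi$ from radial symmetry. Because $Q(\zeta) = Q(|\zeta|)$, the $L^2(e^{-nQ})$-orthonormal polynomials are (normalized) monomials, so
\begin{equation*}
\bfK_n(e^{i\alpha}\zeta, e^{i\alpha}\eta) = \bfK_n(\zeta,\eta), \qquad \alpha \in \R.
\end{equation*}
Setting $s_n = \sqrt{n\Lap Q(p)}$ and $\alpha_n(t) = t/(s_n R_0)$, and passing to the rescaling \eqref{genresc}, a short expansion shows that $\phi_n(z) := s_n\bigl(e^{i\alpha_n}(R_0 + z/s_n) - R_0\bigr)$ converges to $z + it$ locally uniformly on $\C$. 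The rotational symmetry reads $K_n(\phi_n(z),\phi_n(w)) = K_n(z,w)$. Since cocycles are unimodular, $|c_nK_n|$ coincides with $|K_n|$, and passing to the limit along the chosen subsequence yields $|K(z+it, w+it)| = |K(z,w)|$ for every $t\in\R$. Because $|G|$ is already translation invariant, this gives $|\Psi(z+it, w+it)| = |\Psi(z,w)|$, and in particular $R(z+it) = R(z)$ on the diagonal. Both $\Psi(z,w)$ and $\Psi_t(z,w) := \Psi(z+it, w+it)$ are Hermitian entire and share the same diagonal, so the diagonal-uniqueness principle forces $\Psi_t = \Psi$. Hence $\Psi(z,w) = \Phi(z+\bar w)$ for some entire $\Phi$, and $K$ is vertical translation invariant.

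With translation invariance in hand, I identify $\Phi$. The interior estimate of Theorem \ref{TT2} gives $R(z) = \Phi(2\re z) \to 1$ as $\re z \to -\infty$, so $R \not\equiv 0$; Theorem \ref{TT1.5} then provides $R > 0$ everywhere and Ward's equation. The Gaussian representation discussed before Theorem \ref{TT3} yields $\Phi = \barg * f$ for some Borel $f$ with $0 \le f \le 1$, and Theorem \ref{TT3} forces $\Phi = \barg * \1_I$ for a connected $I \subset \R$ of positive measure. The exterior decay $R(z) \lesssim e^{-2x^2}$ and the interior decay $|R(z) - 1| \lesssim e^{-\ell x^2}$ from Theorem \ref{TT2} restrict $I$ to the form $(-\infty, a)$ for some real $a$. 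Finally, $S$ is a disc with everywhere real-analytic boundary, so Theorem \ref{TT2.25} applies at $p$; a direct computation of $\int x\bigl(R(x) - \1_{(-\infty,0)}(x)\bigr)\, dx$ with $R(x) = \tfrac{1}{2}\erfc\bigl((2x-a)/\sqrt{2}\bigr)$ shows that the $1/8$-identity pins down $a = 0$. Thus $\Phi = \erker$ and $K(z,w) = G(z,w)\erker(z+\bar w)$.

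The principal obstacle is the translation-invariance step: one must carefully track the rotation action through the rescaling \eqref{genresc} (which produces the auxiliary family $\phi_n$) and then use the Hermitian-entire rigidity to pass from invariance of $|K|$ to genuine invariance of $K$. Once this is accomplished, the remaining identification of $\Phi$ reduces to a mechanical combination of Theorems \ref{TT1.5}--\ref{TT3} with the $1/8$-formula, and the uniqueness of the limiting point field follows from boundedness of $R$.
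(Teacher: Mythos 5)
Your overall argument tracks the paper's strategy closely: obtain a subsequential limiting kernel via Theorem~\ref{TT1}, use radial symmetry to force translation invariance, invoke Theorem~\ref{TT1.5} to get Ward's equation, invoke Theorem~\ref{TT3} to reduce $\Phi$ to $\barg*\1_I$ with $I$ connected, use the decay estimates of Theorem~\ref{TT2} to pin $I=(-\infty,a)$, and then use the $1/8$-formula to force $a=0$. Your derivation of translation invariance is a nice variant: instead of tracking the rotation action through the explicit formula for $\Psi_n$ (as the paper does), you note that $|c_nK_n|=|K_n|$, deduce $|K(z+it,w+it)|=|K(z,w)|$ in the limit, pass to the diagonal to get $R(z+it)=R(z)$, and then use diagonal rigidity of Hermitian-entire functions to conclude $\Psi(z+it,w+it)=\Psi(z,w)$. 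That works.

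There is, however, one genuine gap. You write that Theorem~\ref{TT2.25} "applies at $p$." But Theorem~\ref{TT2.25} only produces a set $N\subset\d S$ of arclength measure zero outside of which the $1/8$-formula holds; it says nothing about your fixed, prescribed boundary point $p$, which could a priori lie in the exceptional set $N$. To close this you must use radial symmetry once more: the rescaled $1$-point functions satisfy $R_{n,p}=R_{n,pe^{i\theta}}$ for all real $\theta$, so the functional $\int t\,(R_{n,p}(t)-\1(t))\,dt$ is constant along each boundary circle, and a statement that holds for almost every boundary point therefore holds at every boundary point. Without this upgrade the conclusion $a=0$ is not justified at the particular $p$ under consideration. (A smaller slip: you assert that the connected droplet of a radial potential is a closed disc, but it can equally well be an annulus; the rest of your argument is insensitive to this, but the same "a.e.\ to everywhere" upgrade should be phrased componentwise.)
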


The topological assumption means that the droplet is either a disc or an annulus. This restriction should be regarded as a technical assumption made in order to apply Theorem \ref{TT2.25}.

We will call the kernel $K$ in \eqref{E1.12}, resp. the point field $BG$, the "boundary Ginibre''
kernel/point field. It is a natural conjecture that $BG$ is universal at regular boundary points
for all potentials. After this note was completed, the convergence was verified for the non-symmetric potentials
("ellipse ensembles'') $Q=|\,\zeta\,|^{\, 2}-t\re(\zeta^{\,2})$, $0<t<1$. See \cite{LR}. The verification depends on explicit computations with the orthogonal polynomials, which were obtained in the thesis \cite{Ri}.

%At the same time, we do not have enough evidence to
%make a similar conjecture for singular boundary points.
%We know that the limiting kernels are non-trivial, satisfy Ward's equation
%and the mass-one equation,
%and that they satisfy the estimate $R(iy)\le Ce^{\,-2\el x^{\,2}}$.
%A family of kernels
%which satisfy these conditions is provided by $K(z,w)=G(z,w)\cdot\left[\barg*\1_I\right]\left(z+\bar{w}\right)$, where
%$I\subset\R$ is a bounded interval.
%We shall moreover prove in Section \ref{concrem} that this $K$ is the correlation kernel of some random point field.
%It would be interesting to know if there are examples of cusps or double points where
%the limiting point field has such a kernel. If so, we do not know whether the limiting distribution depends
%on the type of the singular point.

\subsection{Berezin kernel and mass-one equation} \label{bkamo} Let $B_n$ be the Berezin kernels of the rescaled
processes $\config_n$, i.e.,
\begin{equation*}\label{E1.4}B_n(z,w)=\frac {R_{n,1}(z)R_{n,1}(w)-
R_{n,2}(z,w)}
{R_{n,1}(z)}=\frac {\babs{\,K_n(z,w)\,}^{\,2}}{K_n(z,z)}.\end{equation*}
We have (e.g. by \eqref{bassrel})
\begin{equation}\label{mocc1}\int_\C B_n(z,w)\, dA(w)=1.\end{equation}

\begin{figure}[ht]
\begin{center}
\includegraphics[width=.45\textwidth]{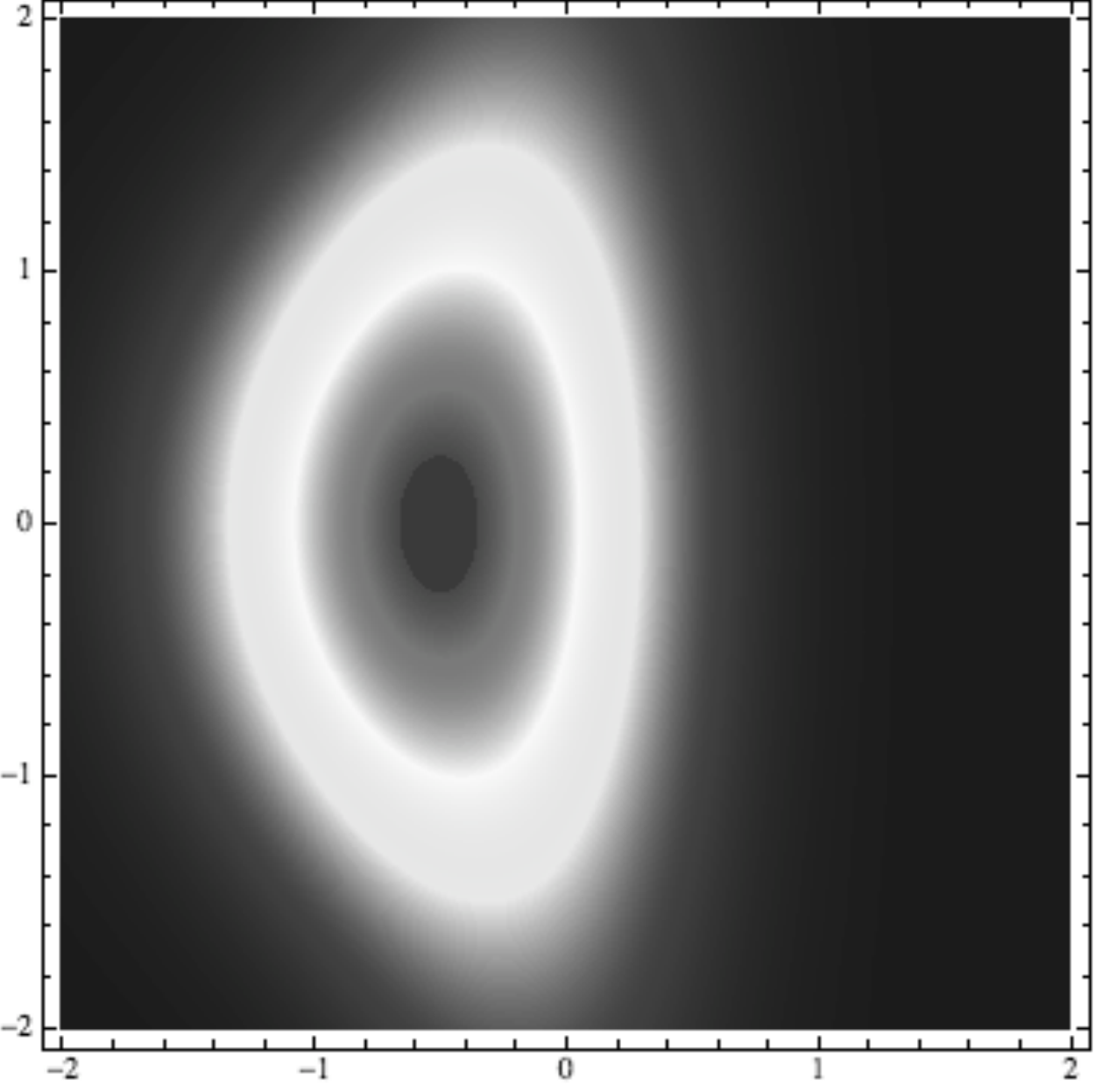}
\hspace{.05\textwidth}
\includegraphics[width=.45\textwidth]{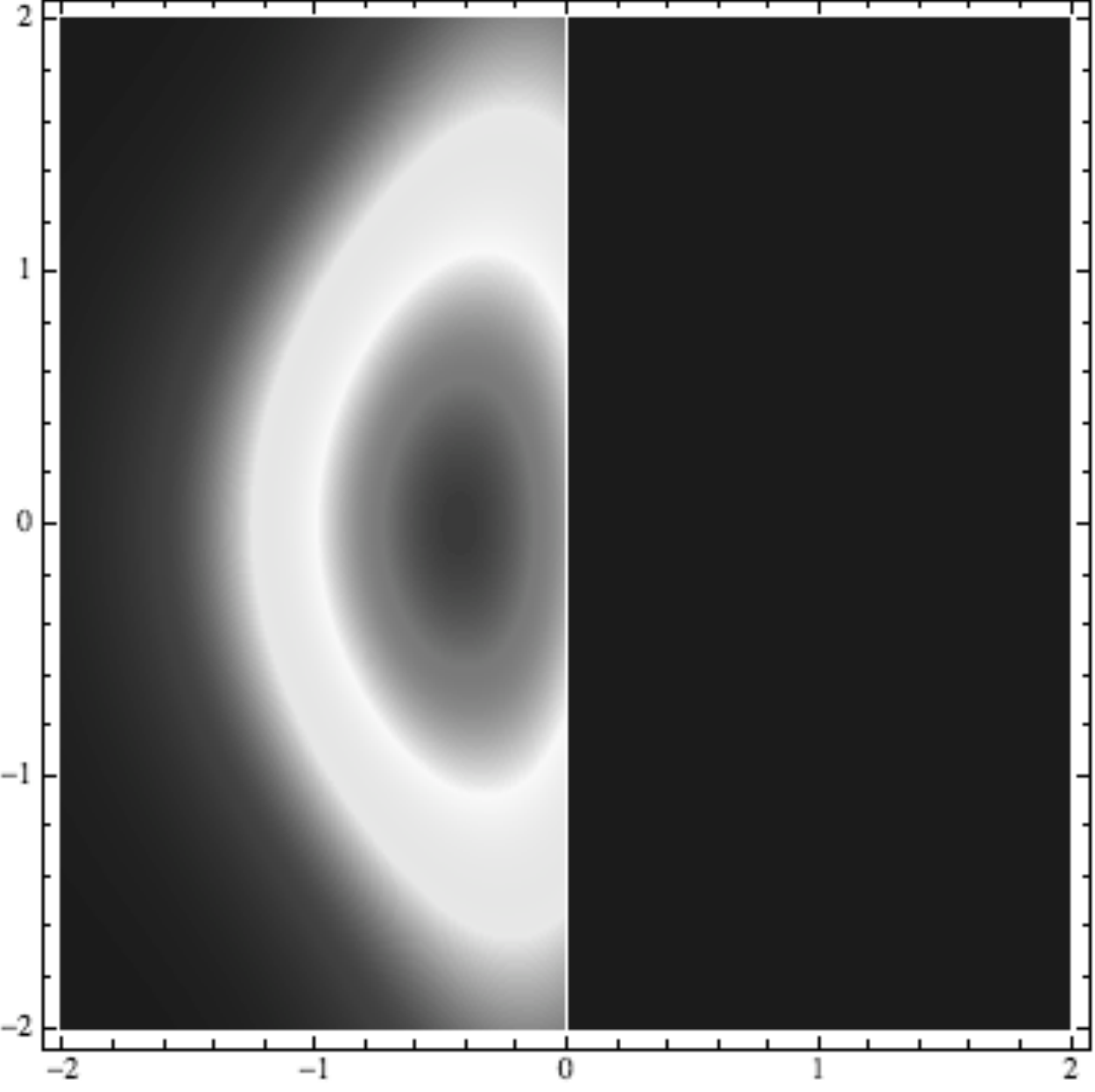}
\end{center}
\caption{The
Berezin kernel $B_n(0,w)$ for the free boundary Ginibre ensemble, and its hard edge counterpart, for a large
value of $n$.}
\label{fig1.5}
\end{figure}

Let us write $R_n=R_{n,1}$ and denote by $R_{n-1}^{\,(a)}$ the $1$-point function for the conditional $(n-1)$-point process $\config_n|\left\{a\in\config_n\right\}$.
For each fixed $a\in\C$, we then have \begin{equation}\label{wsoo}B_n(a,w)=R_{n}(w)-R_{n-1}^{\,(a)}(w).\end{equation}
(For details, see \cite{AHM2}, Section 7.6.)

It is natural to ask whether the relation \eqref{mocc1} also holds for the limiting kernel $B=\lim_n B_n$.

Figure 4 shows the Berezin kernel corresponding to the boundary Ginibre ensembles. In this case, direct computations show that $\int B(a,w)\, dA(w)=1$ and so we have
\begin{equation*}R^{\,(a)}(z)=R(z)-B(a,z)=F(2x)-e^{\,-\,\babs{\,z-a\,}^{\,2}}\babs{\,F(a+z)\,}^{\,2}/F(2a),\qquad z=x+iy,\end{equation*}
The corresponding conditional $1$-point function rescaled about a bulk point is $z\mapsto 1-e^{\,-\,\babs{\,z-a\,}^{\,2}}$.

Elementary calculations now show that if we rescale about
the point $p$ and insert a point charge at $a=0$, then the repulsion caused
if $p$ is in the bulk is stronger, by a factor
$\pi/(\pi-2)\approx 3$, compared to if the point $p$ is on the boundary.

%\medskip

Let us say that a limiting kernel $K=G\Psi$ satisfies the
\textit{mass-one equation} if the corresponding Berezin kernel $B$ satisfies
$\int_\C B(z,w)\, dA(w)=1$ for all $z$, i.e.,
\begin{equation*}\label{mass-one}\int_\C e^{\,-\,\babs{\,z-w\,}^{\,2}}\babs{\,\Psi(z,w)\,}^{\,2}
\, dA(w)=\Psi(z,z),\qquad z\in\C.\end{equation*}
This equation is technically similar to Ward's equation and it has a simple spectral interpretation
(see Section \ref{good}). Every kernel $K$ giving rise to a solution to the mass-one equation is furthermore
a correlation kernel of some point field.

In view of the Gaussian representability of \ti limiting kernels, it is natural to study solutions
to the mass-one equation of the form $\Phi=\gamma *f$ where $f$ is a bounded function
on $\R$. Of course, such solutions do not necessarily correspond to actual limiting kernels.

The following theorem describes all solutions of this form.
%In particular, it shows that Ward's equation implies the mass-one equation, in the \ti case.

\begin{mth}\label{TT5} Let $\Phi=\gamma *f$ where $f$ is a bounded Borel function on $\R$,
and put $\Psi(z,w)=
\Phi(z+\bar{w})$.
%Denote $K=G\Psi$ and
%Suppose that
%$$R(1-R)\in L^1(\R)\quad \text{where}\quad R(z)=\Psi(z,z).$$
Then $\Psi$ gives rise to a solution to the mass-one equation if and only if there is a Borel set
$e\subset\R$ of positive measure such that $f=\1_e$.
%$$\Phi(z)=\barg*\1_e(z)=\int_e\barg(z-t)\,dt.$$
\end{mth}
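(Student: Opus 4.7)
The plan is to reduce the mass-one equation, which lives on $\C$, to a scalar equation on $\R$ by exploiting the \ti structure $\Psi(z,w)=\Phi(z+\bar w)$. First I would change variables in the integral $\int_\C e^{-|z-w|^2}|\Psi(z,w)|^2\,dA(w)$ by writing $z=x+iy$, $w=u+iv$ and setting $s=x+u$, $t=y-v$. Then $z+\bar w=s+it$ and $|z-w|^2=(2x-s)^2+t^2$, so with $a:=2x=z+\bar z$ the mass-one equation becomes
\begin{equation*}
\frac{1}{\pi}\int_\R\!\int_\R e^{-(a-s)^2}\,e^{-t^2}|\Phi(s+it)|^2\,ds\,dt \;=\;\Phi(a),\qquad a\in\R.
\end{equation*}

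Next I would plug in the formula $\Phi(s+it)=\frac{e^{t^2/2}}{\sqrt{2\pi}}\int_\R e^{-(s-r)^2/2-it(s-r)}f(r)\,dr$, which follows by writing $(s+it-r)^2=(s-r)^2+2it(s-r)-t^2$ in the defining convolution. Multiplying $\Phi(s+it)\overline{\Phi(s+it)}$ the factors $e^{t^2/2}$ combine with the prefactor $e^{-t^2}$ to leave no real exponential in $t$; only $e^{it(r-r')}$ remains. Thus Fubini and the identity $\int_\R e^{it(r-r')}dt=2\pi\delta(r-r')$ collapse the double integral to
\begin{equation*}
\frac{1}{\pi}\int_\R\!\int_\R e^{-(a-s)^2-(s-r)^2}|f(r)|^2\,ds\,dr.
\end{equation*}
Completing the square in $s$ gives $\int_\R e^{-(a-s)^2-(s-r)^2}ds=\sqrt{\pi/2}\,e^{-(a-r)^2/2}$, so the mass-one equation becomes exactly
\begin{equation*}
(\gamma*|f|^2)(a)=(\gamma*f)(a),\qquad a\in\R.
\end{equation*}

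To finish I would invoke injectivity of convolution with $\gamma$ on $L^\infty(\R)$: since $|f|^2-f$ is bounded, it is a tempered distribution, and its convolution with the Schwartz function $\gamma$ vanishes; taking Fourier transforms, and using that $\hat\gamma$ is a (nowhere vanishing) Gaussian, forces $|f|^2=f$ almost everywhere. In particular $f$ is real-valued, and $f(1-f)=0$ a.e., so $f=\1_e$ for the Borel set $e=\{f=1\}$. The trivial case $|e|=0$ gives $\Phi\equiv 0$ and $\Psi\equiv 0$, excluded by the non-triviality implicit in calling $\Psi$ \emph{a solution} of the mass-one equation; hence $|e|>0$. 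The converse is immediate: when $f=\1_e$ one has $|f|^2=f$, so the displayed identity $\gamma*|f|^2=\gamma*f$ holds and running the computation in reverse recovers the mass-one equation on all of $\C$. The only non-routine point is the Fourier injectivity step, but because $\hat\gamma$ never vanishes this is really automatic; the main care is in the interchange of integrals in step two, which is justified by the Schwartz decay of $\gamma$ and boundedness of $f$.
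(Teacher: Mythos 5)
Your argument is correct and lands on the same fundamental reduction as the paper: the mass-one equation for a \ti kernel $\Psi(z,w)=\Phi(z+\bar w)$ is equivalent to the scalar identity $f=f^2$ a.e.\ (in your version, $|f|^2=f$, which also settles that $f$ is real-valued), and injectivity of convolution with $\barg$ finishes the job. The execution differs in flavor. You work in real coordinates, make the substitution $s=x+u,\ t=y-v$ to expose the \ti structure, expand $|\Phi(s+it)|^2$ as a double integral against $f(r)\overline{f(r')}$, and let the $t$-integral produce $2\pi\delta(r-r')$ followed by a Gaussian completion of the square — arriving at the clean intermediate equation $\barg*|f|^2=\barg*f$ on $\R$. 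The paper instead stays within the Fourier toolkit it builds for Section 6 (the generalized transform $\hat\rest=\hat\barg\hat f$, the Gaussian moment identity $\int_\C e^{-|w|^2}e^{iwt}e^{i\bar w s}\,dA(w)=e^{-st}$ of Lemma \ref{alem1}), which is the same computation done on the frequency side and gives the parallel expression $e^{-\xi^2/2}\hat f=\frac1{2\pi}e^{-\xi^2/2}\,\hat f*\hat f$. So both routes reduce mass-one to the same convolution identity, but your derivation is self-contained and doesn't rely on having set up the distributional Fourier framework, while the paper's invocation of that framework makes visible that Theorem \ref{TT5} is structurally parallel to Theorem \ref{TT3}. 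Two small remarks: (i) justifying the $\int_\R e^{it(r-r')}\,dt$ step is slightly more delicate than ``Schwartz decay of $\barg$ and boundedness of $f$'' — one should either truncate in $t$ and argue via an approximate identity, or, as the paper does, interpret everything distributionally from the start; (ii) the exclusion of $|e|=0$ (the trivial solution $\Psi\equiv 0$) is glossed over both in your proof and in the paper's, where it is implicitly handled by the zero-one law.
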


A noteworthy consequence of our theory is that all non-trivial \ti limiting kernels satisfy the mass-one equation. This follows by combining theorems \ref{TT1.5}, \ref{TT3}, and \ref{TT5}.

\subsection{Organization of the paper} In Section \ref{pome} we consider the boundary Ginibre ensembles, both
for the free boundary and the hard edge.
We give a short proof of the convergence of rescaled ensembles to the boundary Ginibre point fields
with kernels \eqref{fhonn} and \eqref{hfuncc}, respectively.

In Section \ref{parti}, we prove Theorem \ref{TT1} (compactness and analyticity).

In Section \ref{WEFA} we derive Ward's equation and prove Theorem \ref{TT1.5}.

In Section \ref{RBP}, we establish a priori bounds for regular points
(Theorem \ref{TT2}). We also prove the $1/8$-formula in Theorem \ref{TT2.25}
at almost every
boundary point.
%, as a consequence, we establish non-triviality
%in the case of regular boundary points
%and \ti kernels.

In Section \ref{tis} we specialize to \ti solutions and prove theorems
\ref{TT3}, \ref{TT4}, and \ref{TT5}.

%In Section \ref{SBP} we prove a priori estimates at singular boundary points, thus
%completing the proof of Theorem \ref{TT2} (part \ref{nt}) and the non-triviality part of Theorem \ref{TT1.5}.

The last section, Section \ref{concrem}, contains various concluding remarks. In particular, we comment
on the nature of the mass-one and Ward equations, we show that these equations take the form of twisted convolution equations, we discuss Hilbert spaces of entire functions associated to limiting kernels,
and write down versions of Ward's equation in some settings which are different from the free boundary
case studied in this paper (hard edge, bulk singularities, $\beta$-ensembles).

\section{Example: The Ginibre ensembles} \label{pome}

\subsection{Principles of notation} \label{wotan} Consider first a general potential $Q$.
By a \textit{weighted polynomial of order $n$} we mean
a function of the form $f=q\cdot e^{-nQ/2}$ where $q$ is an (analytic)
polynomial of degree at most $n-1$. Let $\Pol_n$ denote the space of all weighted polynomials
of order $n$, considered as a subspace of $L^2=L^2(\C,dA)$. It is well-known that the reproducing
kernel $\bfK_n(\zeta,\eta)$ for the space $\Pol_n$ is a correlation kernel for the process
$\{\zeta_j\}_1^n$ corresponding to $Q$.
%See e.g. \cite{HM}.
This implies that one has the formula
\begin{equation}\label{E1.8}\mathbf{K}_n(\zeta,\eta)=\sum_{j=0}^{n-1}
q_j(\zeta)\,\bar{q}_j(\eta)\,e^{\,-nQ(\zeta)/2-nQ(\eta)/2},\end{equation}
where $q_j$ is the $j$:th orthonormal polynomial with respect to the measure $e^{-nQ(\zeta)}dA(\zeta)$.

 Recall the Ginibre potential $Q=\left|\,\zeta\,\right|^{\,2}$. The corresponding droplet is $S=\left\{\,\zeta\,;\,\left|\,\zeta\,\right|\le 1\,\right\}$.

We shall give an elementary proof for $BG$-convergence using Poisson approximation of the normal distribution.
Our proof is quite similar to
the argument in the paper \cite{R}, where the spectral radius of a Ginibre matrix is studied.
Let us mention also that there are several other proofs of BG convergence
for the free boundary Ginibre ensemble.

\subsection{Free boundary Ginibre ensemble} \label{pome1}
 Let $\{\zeta_j\}_1^n$ denote a random configuration for the free boundary Ginibre
process.
We rescale
about the boundary point $p=1$ in the outer normal direction, via $z_j=\sqrt{n}\left(\zeta_j-1\right)$, writing $\config_n=\{z_j\}_1^n$ for the rescaled
process. Let $G$ and $F$ be the Ginibre kernel and the free boundary plasma function, respectively. See
\eqref{E1.9} and \eqref{ffuncc}.
 We shall prove the following theorem, found in \cite{FH} (cf. \cite{BS}).

\begin{thm} \label{ginibref} There exists a unique point field $BG$ with correlation kernel $K(z,w)=G(z,w)F(z+\bar{w})$, and
the processes $\config_n$ converge to $BG$ as $n\to\infty$ with locally uniform convergence of intensity
functions.
\end{thm}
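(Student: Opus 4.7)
The plan is to make the Ginibre correlation kernel fully explicit, strip off a cocycle, and reduce the convergence to a uniform saddle-point estimate for the normalized upper incomplete gamma function along a ray based close to its saddle.

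First, since the orthonormal basis for $\Pol_n$ in the Ginibre case consists of the monomials $q_j(\zeta) = (n^{j+1}/j!)^{1/2}\zeta^{j}$ for $0 \le j \le n-1$, the representation \eqref{E1.8} yields
\begin{equation*}
\mathbf{K}_n(\zeta,\eta) \;=\; n\,e^{-n(|\zeta|^2+|\eta|^2)/2}\sum_{j=0}^{n-1}\frac{(n\zeta\bar\eta)^{j}}{j!}.
\end{equation*}
After inserting $\zeta = 1+z/\sqrt{n}$, $\eta = 1+w/\sqrt{n}$ and dividing by $n$, a direct rearrangement of the exponent gives
\begin{equation*}
K_n(z,w) \;=\; e^{\,i\sqrt{n}(\im z-\im w)}\,G(z,w)\,E_n(\lambda_n), \qquad \lambda_n := n+\sqrt{n}(z+\bar w)+z\bar w,
\end{equation*}
where $E_n(\lambda) := e^{-\lambda}\sum_{j=0}^{n-1}\lambda^{j}/j! = \Gamma(n,\lambda)/\Gamma(n)$. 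The prefactor $e^{i\sqrt{n}(\im z-\im w)} = g_n(z)\overline{g_n(w)}$ with $g_n(z) = e^{i\sqrt{n}\,\im z}$ is a cocycle, which I will absorb to reduce the problem to the modified kernel $\widetilde K_n(z,w) := G(z,w)E_n(\lambda_n)$.

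The heart of the argument is then to show $E_n(\lambda_n) \to F(z+\bar w)$ locally uniformly in $(z,w)$. Writing $\Gamma(n,\lambda_n) = \int_{\lambda_n}^{+\infty} t^{n-1}e^{-t}\,dt$, I would perform steepest descent on a contour from $\lambda_n$ out to $+\infty$. The saddle of $t \mapsto t^{n-1}e^{-t}$ sits at $t = n-1 \approx n$, and the substitution $t = n + \sqrt{n}\,u$ converts the exponent $(n-1)\log t - t$ into $(n-1)\log n - n - u^{2}/2 + O(n^{-1/2})$ uniformly for $u$ in compact sets, while moving the lower endpoint to $a_n = (z+\bar w) + z\bar w/\sqrt{n} \to z+\bar w$. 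Combined with Stirling's formula $\Gamma(n) \sim \sqrt{2\pi n}\,n^{n-1}e^{-n}$, this yields
\begin{equation*}
E_n(\lambda_n) \;\longrightarrow\; \frac{1}{\sqrt{2\pi}}\int_{z+\bar w}^{+\infty} e^{-u^{2}/2}\,du \;=\; F(z+\bar w),
\end{equation*}
with locally uniform convergence.

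The main obstacle will be the clean justification of the saddle-point estimate uniformly for $z+\bar w$ in compact complex subsets: one must verify that along the shifted ray the real part of the exponent is genuinely concave away from the saddle, so that the tails of the contour integral decay uniformly and the quadratic expansion can be integrated term-by-term. Once the locally uniform limit $\widetilde K_n \to K$ with $K(z,w) = G(z,w)F(z+\bar w)$ is in hand, convergence of all intensity functions $R_{n,k} \to \det(K(z_i,z_j))_{i,j=1}^{k}$ follows from Dyson's determinantal formula, since determinants are unchanged under the cocycle adjustment. Existence and uniqueness of the limiting point field $BG$ then follow from the local boundedness of the diagonal $K(z,z) = F(2\re z)$ via the general criterion recalled in Section \ref{ree}.
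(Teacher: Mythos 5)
Your proposal is correct but takes a genuinely different route from the paper. The algebra at the start is actually cleaner than what the paper does: with $\lambda_n = n\zeta\bar\eta = n + \sqrt{n}(z+\bar w) + z\bar w$ (a \emph{complex} quantity) you strip off the cocycle $e^{i\sqrt{n}\im(z-w)}$ and the Ginibre kernel $G(z,w)$ exactly, so the whole problem collapses to a single special-function limit, $E_n(\lambda_n) = \Gamma(n,\lambda_n)/\Gamma(n) \to F(z+\bar w)$. The paper instead keeps $\lambda = n(|\zeta|^2+|\eta|^2)/2$ real, interprets the partial exponential sum as $\Expe[(n\zeta\bar\eta/\lambda)^{X_n}\1_{\{X_n<n\}}]$ for a Poisson variable $X_n\sim\text{Po}(\lambda)$, and extracts the Ginibre factor only asymptotically from the "drift'' term $A_n=(n\zeta\bar\eta/\lambda)^\lambda$; the remaining convergence is then the normal approximation to the Poisson, made uniform via Berry--Esseen. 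In short: the paper's route is probabilistically transparent and keeps all parameters real, at the cost of messier bookkeeping in Lemma~\ref{abel}; your route isolates the analysis into the incomplete gamma ratio, at the cost of running steepest descent along a contour with a complex base point. You correctly identify this as the remaining technical burden; the required uniform transition-region asymptotics $\Gamma(n,n+\sqrt{n}s)/\Gamma(n)\to\frac{1}{\sqrt{2\pi}}\int_s^\infty e^{-u^2/2}\,du$ (locally uniformly for complex $s$) are classical, so the gap is one of execution rather than a flaw of method. One small correction: for the uniqueness criterion (Corollary~\ref{easytouse}) you need \emph{global} boundedness of $K(z,z)=F(2\re z)$ on $\C$, not merely local; that holds here because $0\le F\le 1$ everywhere, so the conclusion is unaffected.
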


Since $K(z,z)<1$, it suffices to prove the statement about convergence of intensity
functions.

By \eqref{E1.8}, a correlation kernel for the process $\{\zeta_j\}_1^n$ is computed to
\begin{equation}\label{corro}\bfK_n(\zeta,\eta)=n\sum_{j=0}^{n-1}\frac {\lpar n\zeta\bar{\eta}\rpar^j} {j!}e^{\,-n\frac {\babs{\,\zeta\,}^{\,2}+
\babs{\,\eta\,}^{\,2}} 2}.\end{equation}
Now rescale according to
\begin{equation*}\zeta= 1+z/\sqrt{n}\quad ,\quad \eta= 1+w/\sqrt{n}.\end{equation*}
and note that the rescaled process $\config_n$ has correlation kernel
$K_{n}(z,w)=\frac 1 n \bfK_n\left(\zeta,\eta\right).$
Using \eqref{corro}, we write $K_n$ in the form
\begin{equation*}K_n(z,w)=\sum_{j=0}^{n-1}\lpar \frac {n\zeta\bar{\eta}} \lambda\rpar^j\frac {\lambda^j}{j!}e^{\,-\lambda}\end{equation*}
where
\begin{equation}\label{lambdan}\lambda=\lambda(n)=\frac n 2\lpar|\,\zeta\,|^{\,2}+|\,\eta\,|^{\,2}\rpar=n+\sqrt{n}\re(z+w)+\frac 1 2\left(|\,z\,|^{\,2}+|\,w\,|^{\,2}\right).\end{equation}
We next let $X_n$ be a Poisson distributed random variable with intensity $\lambda=\lambda(n)$
(in short: $X_n\sim \text{Po}(\lambda)$), \textit{i.e.},
$$\Prob\left\{ X_n=k\right\}=\frac {\lambda^k} {k!} e^{\,-\,\lambda},\quad k=0,1,\ldots.$$
We then have the identity
$$K_n(z,w)=\Expe\left[\lpar \frac {n\zeta\bar{\eta}} \lambda\rpar^{X_n}\cdot
\1_{\left\{X_n<n\right\}}\right].$$
Now introduce a new random variable $Y_n$ by
$$X_n=\lambda+\sqrt{\lambda}\, Y_n.$$
By the central limit theorem, $Y_n$ converges in distribution to the standard normal as $n\to\infty$; the convergence is moreover uniform. (This is the well-known "normal approximation of the Poisson distribution''; uniform convergence follows e.g. by the Berry-Esseen theorem.)

Now factorize $K_n(z,w)$ in the following way,
\begin{equation}\label{faan}K_n(z,w)=A_n\cdot B_n:=\lpar \frac {n\zeta\bar{\eta}}\lambda\rpar^\lambda\cdot
\Expe\left[\lpar \frac {n\zeta\bar{\eta}}\lambda\rpar^{\sqrt{\lambda}\cdot Y_n}
\cdot \1_{\left\{Y_n<\alpha_n\right\}}\right],\end{equation}
where
\begin{equation*}\alpha_n=\frac {n-\lambda}{\sqrt{\lambda}}.\end{equation*}
Note that $\alpha_n\to -\re(z+w)$ as $n\to\infty$.

\begin{lem} \label{abel} We have the convergence
$$B_n\to e^{\,-b^2/2}\erker(z,w)
\quad\text{as}\quad n\to\infty,$$
where $b=b(z,w)=\im(z+\bar{w})$ and $\erker$ is the free boundary kernel \eqref{ffuncc}. Moreover,
$$A_n=e^{\,ib\sqrt{n}}e^{\,b^2/2}G(z,w)(1+o(1))$$
where $G$ is the Ginibre kernel and $o(1)\to 0$ uniformly on compact sets as $n\to\infty$.
\end{lem}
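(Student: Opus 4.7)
The plan is to derive both asymptotics from the single algebraic identity
\begin{equation*}
n\zeta\bar\eta - \lambda = i\sqrt n\, b + \log G(z,w), \qquad \log G(z,w) := z\bar w - \tfrac12\bigl(|z|^2+|w|^2\bigr),
\end{equation*}
which follows by direct computation from $\zeta = 1+z/\sqrt n$, $\eta=1+w/\sqrt n$, the formula \eqref{lambdan} for $\lambda$, and the observation that $\re(z+w)=\re(z+\bar w)$. Setting $u_n := n\zeta\bar\eta/\lambda - 1$, this gives $u_n = O(n^{-1/2})$ uniformly on compact sets in $(z,w)$, so both claims reduce to controlled Taylor expansions of the logarithm.

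For the asymptotic of $A_n$: write $\log A_n = \lambda\log(1+u_n) = \lambda u_n - \tfrac12\lambda u_n^2 + O\bigl(\lambda|u_n|^3\bigr)$. The linear term is exactly $i\sqrt n\, b + \log G(z,w)$; since $\lambda/n\to 1$, the quadratic term equals $-(i\sqrt n\, b)^2/(2\lambda) + O(n^{-1/2}) = b^2/2 + o(1)$; and the remainder is $O(n^{-1/2})$. Exponentiating yields the claim, with uniform convergence on compacts.

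For the convergence of $B_n$: first observe that $Y_n<\alpha_n$ is equivalent to $X_n<n$ (by the definitions of $Y_n$ and $\alpha_n$). Setting $T_n := \sqrt\lambda\,\log(n\zeta\bar\eta/\lambda)$, the same Taylor expansion gives $T_n\to ib$, and $\alpha_n\to -\re(z+\bar w)$, both as deterministic quantities depending on $(z,w)$. The Berry--Esseen theorem gives uniform normal approximation of $Y_n$, and the bound $|e^{T_n Y_n}| = e^{\re(T_n)\, Y_n}\le e^{\eps|Y_n|}$ (valid for any $\eps>0$ and $n$ sufficiently large, since $\re(T_n)=O(n^{-1/2})$) supplies an integrable dominant. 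Passing to the limit yields
\begin{equation*}
B_n \longrightarrow \int_{-\infty}^{-\re(z+\bar w)} e^{ibt}\,\barg(t)\, dt.
\end{equation*}
Completing the square and shifting the contour by $-ib$ (legitimate since $\barg$ is entire of rapid decay) rewrites this integral as $e^{-b^2/2}\int_{-\infty}^{-(z+\bar w)}\barg(s)\, ds$; the even symmetry $\barg(-s)=\barg(s)$, extended to complex arguments as an identity of entire functions, identifies the last integral as $\erker(z+\bar w)$, completing the proof.

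The main technical point is the limit passage for $B_n$: distributional convergence of $Y_n$ alone is not sufficient, because the test function $e^{T_n t}\,\1_{\{t<\alpha_n\}}$ is complex-valued, unbounded on $\R$, and discontinuous at $\alpha_n$. The uniform tail control supplied by Berry--Esseen --- or equivalently by the explicit Poisson moment generating function $\Expe[e^{sX_n}] = e^{\lambda(e^s-1)}$, which is entire in $s$ --- is what justifies exchanging the limit with the expectation.
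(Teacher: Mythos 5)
Your argument is correct and lands in the same place as the paper's, but it differs in two small and worthwhile ways. First, your exact identity $n\zeta\bar\eta-\lambda=i\sqrt{n}\,b+\log G(z,w)$ (which holds with no error term since $\re(z+w)=\re(z+\bar w)$) makes the linear term in $\log A_n=\lambda\log(1+u_n)$ equal to $i\sqrt{n}\,b+\log G(z,w)$ \emph{exactly}, so only the quadratic term needs estimating. The paper instead Taylor-expands $n\zeta\bar\eta/\lambda=1+ib/\sqrt{n}+a/n+O(n^{-3/2})$ with an opaque auxiliary constant $a$ and only at the very end verifies that $ibc+a=z\bar w-\tfrac12(|z|^2+|w|^2)$; your route is algebraically cleaner and avoids $a$ altogether. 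Second, in passing to the limit in $B_n$ the paper writes ``$\sim$'' and cites asymptotic normality of $Y_n$ without comment; you are right that distributional convergence alone does not suffice because the test function $t\mapsto e^{T_n t}\,\1_{\{t<\alpha_n\}}$ is $n$-dependent, complex-valued, unbounded on $\R$, and discontinuous, and your dominating bound $|e^{T_nY_n}|\le e^{\eps|Y_n|}$ (using $\re T_n=O(n^{-1/2})$) together with a uniform control on $\Expe\,e^{\eps|Y_n|}$ via the explicit Poisson moment generating function is exactly the missing justification. Your contour-shift and even-symmetry computation that identifies $\int_{-\infty}^{-\re(z+\bar w)}e^{ibt}\barg(t)\,dt$ with $e^{-b^2/2}\erker(z+\bar w)$ is the same step the paper performs implicitly in its final equality. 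In short, the overall strategy (Poisson-to-normal approximation, factorization into $A_n B_n$) is the paper's; your write-up is a tighter and more rigorously justified version of that same proof rather than a genuinely new route.
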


\begin{proof}
By a straightforward calculation we have
\begin{equation}\label{facile}\frac {n\zeta\bar{\eta}}\lambda=1+\frac i {\sqrt{n}}\im(z+\bar{w})+\frac 1 n a+O\left(n^{-3/2}\right)\end{equation}
where
$$a=a(z,w)=z\bar{w}-(z+\bar{w})\re(z+w)-\frac 1 2 \left(|\,z\,|^{\,2}+|\,w\,|^{\,2}\right)+\left[\re(z+w)\right]^{\,2}.$$
Inserting these expressions into $B_n$ (see \eqref{faan}) using the fact that the $Y_n$ are
asymptotically normal, we now approximate as follows (the symbol "$\sim$'' stands for asymptotic equality
as $n\to\infty$)
\begin{align*}B_n&\sim \frac 1 {\sqrt{2\pi}}\int_{-\infty}^{\,\alpha_n}\lpar 1+\frac i {\sqrt{n}}\im(z+\bar{w})+O(1/n)\rpar^{\sqrt{n}\cdot t}\, e^{\,-\,t^{\,2}/2}\, dt\\
&\sim \frac 1 {\sqrt{2\pi}}\int_{-\infty}^{\,-\re(z+\bar{w})}e^{\,i\im(z+\bar{w})t}e^{\,-\,t^{\,2}/2}\, dt
= e^{\,-\,b^{\,2}/2}\cdot \erker(z+\bar{w}).
\end{align*}

We now turn to the factor $A_n$ in \eqref{faan}. To deal with it, we denote $c=\re(z+w)$.
By \eqref{facile}, we then have
\begin{align*}A_n=\lpar1+\frac {ib}{\sqrt{n}}+\frac a n+O\left(n^{-3/2}\right)\rpar^{n+c\sqrt{n}+O(1)}
=e^{\,ib\sqrt{n}}\,e^{\,b^{\,2}/2}\,e^{\,ibc+a}\,e^{\,o(1)}.\end{align*}
Noting that
$$ibc+a=z\bar{w}-\frac 1 2\left(|\,z\,|^{\,2}+|\,w\,|^{\,2}\right),$$
we finish the proof of the lemma.
\end{proof}

By the lemma and relation \eqref{faan}, we have
$$K_n(z,w)=e^{\,i\sqrt{n}\im(z-w)}K(z,w)(1+o(1))$$
where $K$ is the free boundary kernel defined in \eqref{E1.12}. Since the factor
$c_n(z,w)=e^{i\sqrt{n}\im(z-w)}$ is a cocycle, this factor can be dropped when computing intensity
functions $R_{n,k}(z)=\det(K_n(z_i,z_j))$. This proves the desired convergence of intensity functions, at the same time establishing existence and uniqueness of the field $BG$. The proof of Theorem \ref{ginibref}
is complete. $\qed$

\subsection{Hard edge Ginibre ensemble} \label{pome2}
Let $Q^S(z)=\babs{\,z\,}^{\,2}$ when $|\,z\,|\le 1$
and $Q^S=+\infty$ otherwise. Let $\{\zeta_i\}_1^n$ denote a random configuration
from the corresponding ensemble. Rescaling about $p=1$ via $z_j=\sqrt{n}\,(\zeta_j-1)$ we
obtain a process $\config_n$. Let $H$ be the hard edge plasma function \eqref{hfuncc}.

\begin{thm} \label{hardg} There exists a unique point field $BG_h$ with correlation kernel $$K(z,w)=G(z,w)H(z+\bar{w})
\,(\1_\L\otimes\1_\L)(z,w).$$ The processes $\config_n$ converge to $BG_h$ in the sense that
all intensity functions converge locally boundedly almost everywhere, and locally uniformly
in $\L^2$, to the intensity functions of $BG_h$.
\end{thm}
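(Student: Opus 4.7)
My plan is to imitate closely the argument of Section \ref{pome1}, with modifications accounting for the restriction of the orthonormal polynomials to the unit disc and the appearance of the denominator $\erker$ in the definition of $\hfun$.

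First I would compute the reproducing kernel for weighted polynomials on the disc. The monomials $\zeta^j$ are orthogonal with respect to $e^{-n|\zeta|^2}\,dA$ on $\{|\zeta|\le 1\}$, with squared norms $\int_0^1 s^j e^{-ns}\,ds = \gamma(j+1,n)/n^{j+1}$, where $\gamma(\cdot,\cdot)$ is the lower incomplete gamma function. Combined with the classical identity $\gamma(j+1,n)/j! = \Prob(X_n>j)$ for $X_n\sim\text{Po}(n)$, this yields, for $|\zeta|,|\eta|\le 1$,
\[\bfK_n(\zeta,\eta) = n\,e^{-n(|\zeta|^2+|\eta|^2)/2}\sum_{j=0}^{n-1}\frac{(n\zeta\bar\eta)^j}{j!\,\Prob(X_n>j)},\]
and $\bfK_n\equiv 0$ otherwise. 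After the rescaling $\zeta=1+z/\sqrt n$, $\eta=1+w/\sqrt n$ and with $\lambda$ as in \eqref{lambdan}, the extra factor $1/\Prob(X_n>j)$ survives inside the Poisson expectation and gives
\[K_n(z,w) = \Expe\!\left[\lpar\frac{n\zeta\bar\eta}{\lambda}\rpar^{X_n^\lambda}\frac{\1_{\{X_n^\lambda<n\}}}{\Prob(X_n>X_n^\lambda)}\right],\qquad z,w\in\L.\]

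Next I would factor $K_n = A_n\cdot B_n^h$ as in \eqref{faan}. The piece $A_n=(n\zeta\bar\eta/\lambda)^\lambda$ is literally the same as in the free boundary case, and Lemma \ref{abel} applies to give $A_n = e^{ib\sqrt n}e^{b^2/2}G(z,w)(1+o(1))$. For $B_n^h$ one uses the Berry-Esseen normal approximation \emph{twice}: once to replace the distribution of $Y_n=(X_n^\lambda-\lambda)/\sqrt\lambda$ by a standard normal uniformly in $n$, and once to identify $\Prob(X_n>\lambda+\sqrt\lambda\,t)\to \erker(\re(z+w)+t)$, which follows from $(j-n)/\sqrt n\to t+\re(z+w)$ and $\erker(s)=\Prob(Y>s)$ for $Y\sim N(0,1)$ and real $s$. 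The resulting limit is
\[B_n^h \to \frac{1}{\sqrt{2\pi}}\int_{-\infty}^{-\re(z+w)}\frac{e^{i\im(z+\bar w)t-t^2/2}}{\erker(\re(z+w)+t)}\,dt.\]
Completing the square $ibt-t^2/2 = -b^2/2-(t-ib)^2/2$, substituting $u=\re(z+w)+t$, and deforming the contour back to $\R$ (justified because the integrand is entire and decays appropriately) identifies this with $e^{-b^2/2}\hfun(z+\bar w)$. Combining with the asymptotics of $A_n$, one obtains $K_n(z,w) = e^{ib\sqrt n}G(z,w)\hfun(z+\bar w)(1+o(1))$ locally uniformly on $\L^2$. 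Since $c_n(z,w):=e^{i\sqrt n\,\im(z+\bar w)}$ is a cocycle, it disappears from the intensities $R_{n,k} = \det(K_n(z_i,z_j))$.

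For the complementary region, any $z$ with $\re z>0$ satisfies $|1+z/\sqrt n|>1$ for all sufficiently large $n$, so $K_n(z,w)\equiv 0$ there; this immediately yields a.e.\ convergence and local boundedness off the negligible set $\{\re z=0\}\cup\{\re w=0\}$. Uniform local boundedness of $K_n(z,z)$ on compacts then follows from the explicit bound $\Prob(X_n>j)\ge \Prob(X_n\ge n)\to 1/2$ for $j\le n-1$, which controls the new denominator. Uniqueness of the point field $BG_h$ is a consequence of the local boundedness of its correlation kernel via the Lenard-type criterion recalled in the appendix. The main technical difficulty is the uniformity of the normal approximation for $\Prob(X_n>j)$ in the regime $j\approx n$, where this probability is small but bounded below; justifying the exchange of limit and Poisson expectation there requires Berry-Esseen with an explicit rate, and care is needed near the boundary $\re z = 0$ where the kernel has a sharp transition.
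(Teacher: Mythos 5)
Your proposal follows the paper's argument step for step: the same reduction of the kernel to a Poisson expectation via $\gamma(j+1,n)=j!\,\Prob(U_n>j)$, the same $A_n\cdot\tilde B_n$ factorization with Lemma \ref{abel} disposing of $A_n$, and the same double use of the normal approximation to identify $\tilde B_n\to e^{-b^2/2}H(z+\bar w)$, with uniqueness handled off the appendix exactly as in the paper. One small inaccuracy: after completing the square and substituting $u=\re(z+w)+t$, the integral is already over the real interval $(-\infty,0)$ with $t-i\,\im(z+\bar w)=u-(z+\bar w)$ appearing as a complex argument of the (entire) Gaussian factor; no contour deformation is needed or performed in the paper, so your phrase about deforming back to $\R$ is superfluous (and, taken literally, would also move the argument of $F$ off the real axis, which one does not want).
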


Note that $K(z,z)<2$ for all $z$.
We prove in the appendix that the convergence of intensity functions in the theorem
implies the existence and uniqueness of a field $BG_h$ with correlation kernel $K$.
It thus suffices to prove convergence.

By  \eqref{E1.8} and a calculation, a correlation kernel for the hard edge Ginibre process is given by
$$\mathbf{K}_n(\zeta,\eta)=n\sum_{j=0}^{n-1}
\frac {(n\zeta\bar{\eta})^j} {\gamma\left(j+1,n\right)}e^{\,-nQ^S\left(\zeta\right)/2-nQ^S\left(\eta\right)/2},$$
where
$\gamma\left(j+1,n\right)=\int_0^n s^je^{-s}\,d s$
is the lower incomplete Gamma function.
Now rescale by
$$\zeta=1+z/\sqrt{n}\quad ,\quad \eta=1+w/\sqrt{n},\qquad z,w\in\L$$
and write
$$K_n(z,w)=\frac 1 n \mathbf{K}_n(\zeta,\eta)=\sum_{j=0}^{n-1}\lpar \frac {n\zeta\bar{\eta}} \lambda\rpar^j\frac {\lambda^j}{\gamma\left(j+1,n\right)}e^{-\lambda},$$
where $\lambda=n(|\,\zeta\,|^{\,2}+|\,\eta\,|^{\,2})/2$ is as in \eqref{lambdan}.
We shall use a rough estimate for $\gamma(j+1,n)$. Observe that, by a well-known fact, we have
$$\gamma\left(j+1,n\right)=j!\lpar 1-e^{-n}\sum_{k=0}^j\frac {n^k}
{k!}\rpar=j!\Prob\left(U_n>j\right)$$
where $U_n\sim \text{Po}(n)$. By normal approximation of the Poisson distribution
$$\Prob\left(U_n>j\right)= \fii\left(\xi_{j,n}\right)(1+o(1)),$$
where $\fii(\xi)=\frac 1 {\sqrt{2\pi}}\int_\xi^\infty e^{-t^2/2}\, dt$, $\xi_{j,n}=(j-n)/\sqrt{n}$, and
and $o(1)\to 0$ as $n\to\infty$ uniformly in $j$. We have shown that
\begin{equation*}K_n(z,w)=(1+o(1))\sum_{j=0}^{n-1}\lpar \frac {n\zeta\bar{\eta}} \lambda\rpar^j\frac {\lambda^j}{j!}e^{-\lambda}\frac 1 {\fii(\xi_{j,n})}.\end{equation*}
Finally, if $X_n\sim \text{Po}(\lambda)$, we can write the last sum in the form
\begin{equation*}\sum_{j=0}^{n-1}
\lpar\frac {n\zeta\bar{\eta}}\lambda\rpar^j \frac 1{\fii(\xi_{j,n})} \Prob(X_n=j)
=\Expe\left[\lpar\frac {n\zeta\bar{\eta}}\lambda\rpar^{X_n}\cdot
\frac 1 {\fii\lpar \frac {X_n-n}{\sqrt{n}}\rpar}\cdot \1_{\left\{X_n<n\right\}}\right].\end{equation*}
Defining $Y_n$ by $X_n=\lambda+\sqrt{\lambda}Y_n$, we now get a relation of the form
\begin{align*}K_n(z,w)&=(1+o(1))\left(\frac {n\zeta\bar{\eta}}\lambda\right)^\lambda
\cdot \Expe\left[\left(\frac {n\zeta\bar{\eta}}\lambda\right)^{\sqrt{\lambda} Y_n}
\frac 1 {\fii\left(\sqrt{\lambda/n}Y_n\,-\alpha_n\right)}
\cdot \1_{\left\{Y_n<\alpha_n\right\}}\right]\\
&=(1+o(1))A_n\cdot \tilde{B}_n,\qquad \alpha_n=(n-\lambda)/\sqrt{\lambda}.\end{align*}
Using the asymptotic identities $\alpha_n\sim -\re(z+w)$ and
$\lambda/n\sim 1$, we approximate the factor $\tilde{B}_n$
as follows, using the central limit theorem,
\begin{align*}\tilde{B}_n&\sim\frac 1 {\sqrt{2\pi}}\int_{-\infty}^{\,-\re(z+w)}
e^{\,i\im(z+\bar{w})t}\frac 1 {\fii\lpar\sqrt{\lambda/n}\,t-\alpha_n\rpar}
e^{\,-\,t^{\,2}/2}\, dt\\
&\sim e^{\,-\,b^{\,2}/2}\frac 1 {\sqrt{2\pi}}\int_{-\infty}^{\,-\,\re(z+w)}
\frac 1 {\fii\left(\,\re(z+\bar{w})+t\,\right)}e^{\,-\,\left(t-i\im(z+\bar{w})\right)^{\,2}/2}\, dt\\
&=e^{\,-\,b^{\,2}/2}\frac 1 {\sqrt{2\pi}}\int_{-\infty}^{\,0}\frac 1 {\fii(u)}e^{\,-\,\left(u-(z+\bar{w})\right)^{\,2}/2}\, du=
e^{\,-\,b^{\,2}/2}H(z+\bar{w}).
\end{align*}
Using the asymptotics for $A_n$ in Lemma \ref{abel} we now conclude that
$$K_n(z,w)=e^{\,i\sqrt{n}\im(z-w)}G(z,w)\hfun(z+\bar{w})(1+o(1)),\quad z,w\in\L,$$
where $o(1)\to 0$ uniformly on compacts. The first factor is a cocycle. We
conclude that if $K(z,w)=G(z,w)\hfun(z+\bar{w})\1_\L(z)\1_\L(w)$ is the hard edge kernel, then
$K_n\to K$ almost everywhere with locally bounded convergence, finishing the proof of the theorem.
q.e.d.

\section{Analyticity and compactness} \label{parti}
In this section, we prove Theorem \ref{TT1}. We thus fix a moving point $p=(p_n)\in S$ and a sequence of angles $\theta_n$ and rescale about $p$ according to
$$z_n=e^{-i\theta_n}\sqrt{n\Lap Q(p_n)}(\zeta-p_n).$$
To simplify notation we shall in the following write $p$ for $p_n$ and $\theta$ for $\theta_n$. The careful reader will note that all our estimates are independent of
$n$.
%even if $p$ is allowed to belong to a small neighbourhood of $S$.

\subsection{General notation}
For a measurable function $\phi:\C\to \R\cup\{+\infty\}$ we define $L^2_\phi$ to be the space of functions normed by
$$\left\|\,f\,\right\|_\phi^{\,2}=\int_\C\left|\,f\,\right|^{\,2}e^{\,-\,\phi}\,dA.$$
When $\phi=0$ we just write $L^2$ for $L^2_\phi$ and denote the norm by $\|\cdot\|$.

We denote by $C$ (large) and $c$ (small) various positive unspecified constants (independent of $n$) whose exact value
can change meaning from time to time.

\subsection{Potentials and reproducing kernels} \label{rkta}
%We shall henceforth assume that the potential $Q$ is \textit{real analytic and strictly subharmonic}
%in some neighbourhood of the
%reduced
%droplet $\drop$. More precisely, we
Fix a neighbourhood $\nbh$ of $\drop$ and
a number $\delta_0>0$ such that $Q$ is real-analytic and strictly subharmonic in the
$2\delta_0$-neighbourhood of $\nbh$.

%We give the definitions of
%some reproducing kernels related to a potential $Q$.

Let $\calP_n$ be the space of analytic polynomials of degree at most $n-1$; we equip $\calP_n$ with the norm of $L^2_{nQ}$.
The corresponding space $\Pol_n$ of weighted polynomials is defined to consist of all functions of the form $f=qe^{\,-\,nQ/2}$ where
$q\in\calP_n$; we regard $\Pol_n$ as a subspace of $L^2$ and denote the corresponding orthogonal projections by
$$\pi_n:L^2_{nQ}\to \calP_n\qquad \text{and}\qquad \Pi_n:L^2\to\Pol_n.$$
We write $\bfk_n$ and $\bfK_n$ for the reproducing kernels of $\calP_n$ and $\Pol_n$ respectively.
Then
$$\bfK_n(\zeta,\eta)=\bfk_n(\zeta,\eta)e^{\,-\,nQ(\zeta)/2}e^{\,-\,nQ(\eta)/2}.$$
It is easy to see that the assignment
$$U_n:L^2_{nQ}\to L^2\qquad :\qquad f\mapsto fe^{\,-\,nQ/2}$$
is unitary, maps $\calP_n$ onto $\Pol_n$, and satisfies $U_n\pi_n=\Pi_nU_n$.

\subsection{Analytic continuation and bulk approximations} \label{bula}
Let $A(\zeta,\eta)$ be a Hermitian-analytic function defined in a neighbourhood in $\C^2$ of the "diagonal'' $\diag:=\left\{\,(\zeta,\zeta)\,;\,\zeta\in \nbh\,\right\}$, such that
$$A(\zeta,\zeta)=Q(\zeta),\quad \zeta\in \nbh.$$
We can choose $\delta_0>0$ small enough that
$A(\zeta,\eta)$ is defined and Hermitian-analytic in the set of points $(\zeta,\eta)\in\C^2$ whose distance
to $\diag$ is $<2\delta_0$. Call this set $\Lambda$.

%such that $\dist(\zeta,\nbh)<2\delta_0$ and $|\,\zeta-\eta\,|<2\delta_0$.
%The existence of $A$, of course, depends on the real-analyticity of $Q$.

We now define "bulk approximations'' $\bfk_n^\#$ and $\bfK_n^\#$, defined in the domain of $A(\zeta,\eta)$ via
\begin{equation}\label{buckla}\bfk_n^\#(\zeta,\eta)=n\left(\d_1\bar{\d}_2A\right)(\zeta,\eta)\cdot e^{\,nA(\zeta,\eta)}\qquad \text{and}\qquad
\bfK_n^\#(\zeta,\eta)=\bfk_n^\#(\zeta,\eta)e^{\,-\,nQ(\zeta)/2}e^{\,-\,nQ(\eta)/2}.\end{equation}

\subsection{Elementary estimates for the one-point function} \label{behm}
 We write $\bfR_n(\zeta)=\bfK_n(\zeta,\zeta)$ for the one-point function. By a basic fact for reproducing kernels we have the identity
\begin{equation}\label{genrkth}\bfR_n(\zeta)=\sup\left\{\,\left|\,f(\zeta)\,\right|^{\,2}\,;\,f\in\Pol_n\,,\,
\left\|\,f\,\right\|\le1\,\right\}.
\end{equation}
We shall also use the following simple pointwise-$L^2$ estimate.

\begin{lem}\label{pl2} Suppose that $u$ is analytic in the disc $D:=D\left(p;c/\sqrt{n\Lap Q(p)}\right)$,
 where $Q$ is $C^2$-smooth at $p$. Let $f=ue^{-\,nQ/2}$. Then there is a number $C$ depending
 only on $c$ and $\Lap Q(p)$ such that
$$\babs{\,f(p)\,}^{\,2}\le Cn\int_{D}\babs{\,f\,}^{\,2}\, dA.$$
%,\qquad (C\sim e^{\,\Lap Q(p)c^2}/c^2).$$
\end{lem}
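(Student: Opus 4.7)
The plan is to reduce this pointwise-$L^2$ inequality to the sub-mean value property of subharmonic functions by peeling off the non-holomorphic part of the weight $e^{-nQ/2}$ via the holomorphic Taylor polynomial of $Q$ at $p$. The obstruction to applying sub-mean value directly to $|f|^2$ is that $f = ue^{-nQ/2}$ is not holomorphic, and the weight $e^{-nQ}$ varies by a factor of order $\exp(\sqrt{n})$ across the disc $D$ (the linear Taylor term of $Q$ contributes $n \cdot O(r)= O(\sqrt{n})$), so that $|f|^2$ is not comparable to a subharmonic function on $D$.

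To circumvent this, I would first introduce the second-order holomorphic Taylor polynomial
\[
H_p(z) := \tfrac{1}{2}Q(p) + \partial Q(p)(z-p) + \tfrac{1}{2}\partial^2 Q(p)(z-p)^2,
\]
so that $2\re H_p(p) = Q(p)$. Since $Q$ is real-valued and $C^2$ at $p$, a direct computation with the Taylor expansion gives
\[
Q(z) - 2\re H_p(z) = \Lap Q(p)\,|z-p|^2 + o(|z-p|^2) \qquad (z \to p),
\]
with remainder bounded by $M|z-p|^2$ on some fixed neighborhood of $p$, where $M$ depends only on the $C^2$-norm of $Q$ there. Now set $g(z) := u(z)\,e^{-nH_p(z)}$. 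Because $u$ and $H_p$ are both holomorphic on $D$, so is $g$, and therefore $|g|^2$ is subharmonic. Applying the sub-mean value property on $D = D(p, r)$ with $r = c/\sqrt{n\Lap Q(p)}$ yields
\[
|g(p)|^2 \le \frac{1}{\pi r^2}\int_D |g|^2\, d^2 z = \frac{1}{r^2}\int_D |g|^2\, dA.
\]

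The final step is bookkeeping. By construction $|g(p)|^2 = |u(p)|^2 e^{-nQ(p)} = |f(p)|^2$, while pointwise on $D$,
\[
|g(z)|^2 = |f(z)|^2 \exp\!\bigl(n[Q(z) - 2\re H_p(z)]\bigr).
\]
Using the Taylor estimate from the first paragraph, on $D$ we have
\[
n\bigl|Q(z) - 2\re H_p(z)\bigr| \le Mn|z-p|^2 \le Mn \cdot \frac{c^2}{n\Lap Q(p)} = \frac{Mc^2}{\Lap Q(p)},
\]
so the exponential correction is bounded by some $C_1 = C_1(c, \Lap Q(p))$. Combining,
\[
|f(p)|^2 \le \frac{C_1}{r^2}\int_D |f|^2\, dA = \frac{C_1\,n\Lap Q(p)}{c^2}\int_D |f|^2\, dA,
\]
which is the claimed bound with $C = C_1 \Lap Q(p)/c^2$.

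The only point where I would proceed carefully is ensuring that the Taylor remainder estimate has the constant $M$ uniform in $n$: this holds because for $n$ large the disc $D$ is contained in any fixed neighborhood of $p$ where $Q \in C^2$, and the bounded-$n$ regime is trivial since $r$ is then bounded below. Beyond that, the argument is routine, and the substantive content is concentrated in the choice of $H_p$, which transfers the sub-mean value machinery from $f$ (non-holomorphic) to $g$ (holomorphic) at the cost of a harmless $O(1)$ exponential factor from the $|z-p|^2$ Taylor remainder.
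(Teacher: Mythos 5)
Your proof is correct, and it takes a genuinely different route from the paper's. The paper rescales and adds a Gaussian weight: setting $F_n(z)=f(p+z/\sqrt{n})\,e^{a|z|^2/2}$ with $a>\Lap Q(p)$, one has $\Lap\log|F_n|^2\ge a-\Lap Q(p+z/\sqrt n)\ge 0$ on $|z|\le c$ for $n$ large, so $|F_n|^2$ is subharmonic; the correction $e^{a|z|^2}$ is then $O(1)$ on the fixed disc of radius $c$ and the sub-mean value property finishes. You instead subtract the holomorphic second-order jet: since $g=ue^{-nH_p}$ is itself holomorphic, $|g|^2$ is subharmonic \emph{exactly}, and the discrepancy $|g|^2/|f|^2=e^{n(Q-2\re H_p)}$ is $O(1)$ on $D$ by the Taylor estimate $Q-2\re H_p=\Lap Q(p)\,|z-p|^2+o(|z-p|^2)$. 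Both approaches transfer the problem to the same sub-mean value inequality and extract the same $n\Lap Q(p)/c^2$ from the change of variables; the paper produces a log-subharmonic function, while your construction produces a genuinely holomorphic one, which is marginally more structured but buys nothing extra here. One small point worth making explicit: to get the constant depending \emph{only} on $c$ and $\Lap Q(p)$ as claimed, the bound $M$ on the Taylor remainder should be taken as a fixed multiple of $\Lap Q(p)$, say $M=2\Lap Q(p)$, valid for $|z-p|$ small enough (hence for $n$ large), rather than as a constant tied to the $C^2$-norm of $Q$ on a fixed neighborhood; this implicit ``for $n$ large enough'' is precisely the same caveat that appears in the paper's argument when it requires $a-\Lap Q(p+z/\sqrt n)\ge0$.
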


\begin{proof} Suppose \textit{w.l.o.g.} that $\Lap Q(p)=1$ and pick a number $a>1$.
Consider the function $F_n(z)=f\left(p+z/\sqrt{n}\right)\cdot e^{a\babs{\,z\,}^{\,2}/2}$. We then have
$\Lap \log \babs{\, F(z)\,}^{\,2}\ge -\Lap Q\left(p+z/\sqrt{n}\right)+a\ge 0$ for
$\babs{\,z\,}\le c$ if $n$ is large enough. Then $F_n$ is subharmonic in $D$, which implies the desired estimate.
\end{proof}

Let $\gamma$ be the minimum of $Q+2U^\sigma$, where $U^\sigma(z)=\int_\C\log\frac 1 {\babs{\,z-w\,}}\, d\sigma(w)$
is the logarithmic potential of the equilibrium measure $\sigma=\Lap Q\cdot\1_S\, dA$.
By the \textit{obstacle function} corresponding to $Q$, we mean the
subharmonic function
$$\eqpot\left(\zeta\right)=-2U^\sigma\left(\zeta\right)+\gamma.$$
It is known (see \cite{ST}) that $\eqpot=Q$ on $S$ while $\eqpot$ is harmonic on $S^c$ and is of logarithmic increase
$$\eqpot\left(\zeta\right)= \log\babs{\,\zeta\,}^{\,2}+\Ordo(1),\qquad (\zeta\to\infty).$$
Furthermore $\eqpot$ has a Lipschitz continuous gradient on $\C$. We remind of the following basic result; the "maximum principle of weighted potential theory''.

\begin{lem} \label{maxlem} If $f\in\calW_n$ and $\babs{\,f\,}\le 1$ on $S$, then
%! A "/2'' was missing in the exponent.
$\babs{\, f\,}\le e^{\,-\,n\left(Q-\eqpot\right)/2}$ on $\C$.
\end{lem}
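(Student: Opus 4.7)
The plan is to apply the subharmonic maximum principle to the auxiliary function
\[
u(\zeta) \;:=\; \tfrac{1}{n}\log\babs{q(\zeta)}^{2}-\eqpot(\zeta),
\]
where $q$ is the analytic polynomial of degree $\le n-1$ with $f=q\,e^{-nQ/2}$. The desired inequality $\babs{f}\le e^{-n(Q-\eqpot)/2}$ is equivalent to $u\le 0$ on $\C$, so the task reduces to showing $u\le 0$.

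First I would verify subharmonicity on $S^{c}$. Since $q$ is entire, $\log\babs{q}^{2}$ is subharmonic on $\C$ (with value $-\infty$ at the zeros of $q$). Because $\eqpot$ is harmonic on $S^{c}$ (by the defining properties of the obstacle function recalled just before the lemma), $-\eqpot$ is harmonic there, and hence $u$ is subharmonic on the open, possibly disconnected, unbounded set $S^{c}$.

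Next I would control the boundary data and the behavior at infinity. On $S$ one has $\eqpot=Q$, and the hypothesis $\babs{f}\le 1$ on $S$ translates to $\tfrac{1}{n}\log\babs{q}^{2}\le Q=\eqpot$ on $S$, so $u\le 0$ on $S$ and in particular on $\d S$. At infinity, $\deg q\le n-1$ gives $\tfrac{1}{n}\log\babs{q(\zeta)}^{2}\le \tfrac{2(n-1)}{n}\log\babs{\zeta}+O(1)$, while the asymptotic $\eqpot(\zeta)=2\log\babs{\zeta}+O(1)$ yields
\[
u(\zeta)\;\le\;-\tfrac{2}{n}\log\babs{\zeta}+O(1)\;\longrightarrow\;-\infty\quad(\zeta\to\infty).
\]

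Finally I would invoke the maximum principle for subharmonic functions on the unbounded domain $S^{c}$: since $u$ is subharmonic on $S^{c}$, bounded above by $0$ on $\d S^{c}\subset \d S$, and tends to $-\infty$ at $\infty$, it follows that $u\le 0$ throughout $S^{c}$. Combined with $u\le 0$ on $S$, this gives $u\le 0$ on all of $\C$, which is exactly $\babs{f}\le e^{-n(Q-\eqpot)/2}$. There is no genuine obstacle here; the only delicate point is that $S^{c}$ need not be connected and is unbounded, but the growth bound at infinity together with the boundary estimate on $\d S$ is precisely what the classical maximum principle (as in \cite{ST}) requires.
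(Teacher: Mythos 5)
Your proof is correct, and it takes a slightly different route from the paper's. The paper simply invokes the extremal characterization of the obstacle function: $\eqpot(\zeta)$ is the supremum over all subharmonic minorants of $Q$ that grow no faster than $\log\babs{\zeta}^{\,2}+\const$ at infinity (as cited from \cite{ST}), and observes that $\tfrac{1}{n}\log\babs{q}^{\,2}$ belongs to this class. What you do instead is supply the direct maximum-principle argument that underlies that characterization in the case at hand: you pass to $u=\tfrac{1}{n}\log\babs{q}^{\,2}-\eqpot$, note it is $\le 0$ on $S$ by hypothesis, and then exploit subharmonicity of $u$ on $S^{c}$ (where $\eqpot$ is harmonic), the boundary bound on $\d S$, and the decay $u\to-\infty$ at infinity coming from $\deg q\le n-1$ versus the growth $\eqpot=\log\babs{\zeta}^{\,2}+O(1)$. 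Your version has the pedagogical advantage of being self-contained (modulo the standard facts about $\eqpot$ that the paper already records), and it makes transparent \emph{why} it suffices to know $\tfrac{1}{n}\log\babs{q}^{\,2}\le Q$ only on $S$ rather than on all of $\C$; the paper's version is shorter but leans on the reader recognizing the extremal characterization from \cite{ST}. Both treatments correctly handle the unbounded and possibly disconnected nature of $S^{c}$: you address it explicitly via the decay at infinity, while the paper absorbs it into the cited theorem.
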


\begin{proof} If $f=p\cdot e^{-nQ/2}$, then $\frac 1 n \log\babs{\, p\,}^{\,2}$ is a subharmonic minorant of $Q$ which grows no faster than $\log\babs{\,\zeta\,}^{\,2}+\const$
as $\zeta\to \infty$. It is well-known that $\eqpot(\zeta)$ is the supremum of $f(\zeta)$ where $f$ ranges over the functions having these properties (see e.g. \cite{ST}).
%This implies $\frac 1 n\log\babs{\, p\,}^{\,2}\le \eqpot$.
\end{proof}

Combining the preceding lemmas
with the identity \eqref{genrkth}
gives the following bound for the $1$-point function.

\begin{lem}\label{snor}
There is a constant $C$ independent of $n$ and $\zeta$ such that
$$\bfR_n(\zeta)\le Cne^{\,-\,n\left(Q-\eqpot\right)(\zeta)},\qquad \zeta\in\C.$$
\end{lem}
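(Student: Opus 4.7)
The plan is to combine the three tools just established: the extremal characterization \eqref{genrkth} of $\bfR_n$, the pointwise-$L^2$ estimate of Lemma \ref{pl2}, and the weighted maximum principle of Lemma \ref{maxlem}. By \eqref{genrkth} it suffices to show that every $f\in\Pol_n$ with $\|f\|\le 1$ satisfies $|f(\zeta)|^{\,2}\le Cn\,e^{\,-\,n(Q-\eqpot)(\zeta)}$ on $\C$, with $C$ independent of $\zeta$ and $n$.

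First I would obtain the desired bound on the droplet $\drop$ itself. Fix $f=p\,e^{\,-\,nQ/2}\in\Pol_n$ with $\|f\|\le 1$, and let $\zeta\in\drop$ be arbitrary. Because $\drop$ is compact and both $Q$ and $\Lap Q$ are continuous and strictly positive on the neighbourhood $\nbh\supset\drop$ (assumptions \ref{hubba}--\ref{bubba} in \S\ref{subsec12}), there exist uniform constants $c>0$ and $M\ge m>0$ with $m\le\Lap Q\le M$ on $\drop$, so the disc $D(\zeta;c/\sqrt{n\Lap Q(\zeta)})$ is contained in $\nbh$ for every $\zeta\in\drop$ (once $n$ is large enough). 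Applying Lemma \ref{pl2} at $\zeta$ with this uniform choice of $c$ gives
\[
|f(\zeta)|^{\,2}\le C_0\, n\int_{D(\zeta;c/\sqrt{n\Lap Q(\zeta)})}|f|^{\,2}\,dA\le C_0\,n\,\|f\|^{\,2}\le C_0\,n,
\]
with $C_0$ independent of $\zeta\in\drop$ and $n$. Hence $|f|\le C_1\sqrt{n}$ on $\drop$ for some constant $C_1$.

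Second I would propagate this sup-norm bound to all of $\C$ via the maximum principle. Put $g:=f/(C_1\sqrt{n})$. Then $g\in\Pol_n$ and $|g|\le 1$ on $\drop$, so Lemma \ref{maxlem} yields $|g(\zeta)|\le e^{\,-\,n(Q-\eqpot)(\zeta)/2}$ for all $\zeta\in\C$. Squaring and undoing the normalisation gives $|f(\zeta)|^{\,2}\le C_1^{\,2}\,n\,e^{\,-\,n(Q-\eqpot)(\zeta)}$. Taking the supremum over all admissible $f$ and invoking \eqref{genrkth} yields the claimed bound with $C=C_1^{\,2}$.

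The only mildly delicate point is ensuring that Lemma \ref{pl2} furnishes a constant independent of $\zeta\in\drop$; this is the step where I have to use the compactness of $\drop$ together with the strict subharmonicity $\Lap Q>0$ on $\nbh$ to get uniform upper and lower bounds on $\Lap Q$ along $\drop$, so that a single radius $c$ works simultaneously for every centre. Everything else is a straightforward assembly of the preceding lemmas.
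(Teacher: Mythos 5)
Your proof is correct and is exactly the argument the paper intends: the paper's "proof" of this lemma is the single remark that it follows by combining the extremal identity \eqref{genrkth}, the pointwise--$L^2$ estimate of Lemma~\ref{pl2}, and the weighted maximum principle of Lemma~\ref{maxlem}, which is precisely the assembly you carry out (first a sup-norm bound $|f|\le C_1\sqrt n$ on $S$ via Lemma~\ref{pl2} and compactness, then propagation to $\C$ via Lemma~\ref{maxlem}, then the supremum in \eqref{genrkth}).
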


\subsection{Rescaled kernels} Let $p\in\C$ and fix a real parameter $\theta$.
Let $\{\zeta_j\}_1^n$ denote the point process corresponding to $Q$. Recall that rescaled point process at $p$ in the direction $e^{\,i\theta}$ is the process
$\config_n=\{z_j\}_1^n$ where $z_j=e^{\,-\,i\theta}\sqrt{n\Delta Q(p)}\,(\zeta_j-p)$. A kernel $K_n$ for the rescaled process
is given by
$$K_n(z,w)=\frac 1 {n\Lap Q(p)}\, \bfK_n(\zeta,\eta)$$
where
\begin{equation}\label{reo}z=e^{\,-\,i\theta}\sqrt{n\Delta Q(p)}\,(\zeta-p)\quad ,\quad w =e^{\,-\,i\theta}\sqrt{n\Delta Q(p)}\,(\eta-p).\end{equation}
We define the rescaled bulk approximation $K_n^\#$ by
\begin{equation*}\label{bulapp}K_n^\#(z,w)=\frac 1 {n\Lap Q(p)} \bfK_n^\#(\zeta,\eta).\end{equation*}
Here $\bfK_n^\#$ is the bulk approximation to $\bfK_n$ defined in \eqref{buckla}.

\subsection{Convergence of the approximate kernels} \label{cak} The kernel $G(z,w)=e^{\,z\bar{w}\,-\,|\,z\,|^{\,2}/2\,-\,|\,w\,|^{\,2}/2}$ satisfies
$$\left|\,G(z,w)\,\right|^{\,2}=e^{\,-\,|\,z-w\,|^{\,2}}.$$
Let $V_n$ denote the set of points $(z,w)$ such that $(\zeta,\eta)\in \Lambda$ and \eqref{reo} holds.
Here $\Lambda$ is the $2\delta_0$-neighbourhood of the diagonal $X$, see Section \ref{bula}.

It is clear from \eqref{reo} that the sets $V_n$ eventually contains each compact subset of $\C^2$. Indeed, there is a constant $\rho>0$ depending only on $\Lap Q(p)$ such that
\begin{equation}\label{obsvn}\left\{\,(z,w)\,;\, |\,z\,|\le \rho\sqrt{n}\,,\, |\,w\,|\le \rho\sqrt{n}\,\right\}\subset V_n.\end{equation}
We have the following lemma.
%By this observation,
%we can formulate the following simple lemma.

\begin{lem} \label{cocy} We have $K_n^\#(z,w)=c_n(z,w)G(z,w)(1+o(1))$ as $n\to\infty$ where
$c_n$ are cocycles on $V_n$ and $o(1)\to 0$ as $n\to\infty$, uniformly on compact subsets of $\C^2$.
\end{lem}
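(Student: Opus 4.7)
The plan is to carry out a careful Taylor expansion of the Hermitian-analytic function $A(\zeta,\eta)$ around the diagonal point $(p,p)$, and to sort the resulting terms of $n\bigl[A(\zeta,\eta)-\tfrac12Q(\zeta)-\tfrac12Q(\eta)\bigr]$ into three classes: a main contribution that assembles into $\log G(z,w)$, purely imaginary terms that factor as $\psi_n(z)-\psi_n(w)$ and so give a cocycle, and a remainder of size $o(1)$ uniformly on compacts.

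Concretely, I set $u=\zeta-p=e^{i\theta}z/s$ and $\bar v=\bar\eta-\bar p=e^{-i\theta}\bar w/s$ with $s=\sqrt{n\Lap Q(p)}$, so $|u|,|\bar v|=\Ordo(n^{-1/2})$ on any fixed compact in $(z,w)$. By Hermitian-analyticity I get a convergent expansion $A(\zeta,\eta)=\sum_{j,k\ge 0}a_{jk}u^{\,j}\bar v^{\,k}$ with $a_{kj}=\overline{a_{jk}}$, and differentiating $A(\zeta,\zeta)=Q(\zeta)$ at $\zeta=p$ identifies $a_{10}=\d Q(p)$, $a_{11}=\Lap Q(p)$, $a_{20}=\tfrac12\d^{\,2}Q(p)$, and likewise for the higher coefficients. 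Compactness of $\drop$ together with uniform analyticity of $A$ on $\Lambda$ makes all error estimates uniform in $p=p_n$.

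Expanding $Q(\zeta)$, $Q(\eta)$ the same way and grouping $A(\zeta,\eta)-\tfrac12Q(\zeta)-\tfrac12Q(\eta)$ by total degree: the zeroth order vanishes; the linear terms reduce after cancellation to $i\im\bigl[\d Q(p)(u-v)\bigr]$, which multiplied by $n$ becomes $i\sqrt{n/\Lap Q(p)}\,\im\bigl[\d Q(p)e^{i\theta}(z-w)\bigr]$, a coboundary $i(\alpha_n(z)-\alpha_n(w))$; in the quadratic piece the mixed term $\Lap Q(p)\,u\bar v$ from $A$ combines with $-\tfrac12\Lap Q(p)(|u|^{\,2}+|v|^{\,2})$ from $-\tfrac12Q(\zeta)-\tfrac12Q(\eta)$ to yield, after multiplying by $n$, exactly $z\bar w-\tfrac12|z|^{\,2}-\tfrac12|w|^{\,2}=\log G(z,w)$, while the leftover pure $u^{\,2},\bar v^{\,2}$ parts combine into another imaginary coboundary $i(\beta_n(z)-\beta_n(w))$; finally, any term of degree $j+k\ge 3$ is $\Ordo(n^{-(j+k)/2})$, hence $\Ordo(n^{-1/2})=o(1)$ after the factor $n$. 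Putting this together,
\begin{equation*}
nA(\zeta,\eta)-\tfrac{n}{2}Q(\zeta)-\tfrac{n}{2}Q(\eta)=\log G(z,w)+i\bigl(\psi_n(z)-\psi_n(w)\bigr)+o(1),
\end{equation*}
with $\psi_n=\alpha_n+\beta_n$ real, so $c_n(z,w):=e^{\,i\psi_n(z)}\overline{e^{\,i\psi_n(w)}}$ is a unimodular cocycle. For the prefactor, $(\d_1\dbar_2 A)(\zeta,\eta)/\Lap Q(p)=1+\Ordo(n^{-1/2})$ uniformly on compacts since $(\d_1\dbar_2 A)(p,p)=\Lap Q(p)$ and $\Lap Q$ is uniformly positive on $\nbh$. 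Combining the two factors gives $K_n^{\#}(z,w)=c_n(z,w)G(z,w)(1+o(1))$.

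The whole argument is essentially bookkeeping of a Taylor expansion; the only conceptually delicate step I anticipate is that the linear contribution has magnitude of order $\sqrt n$ and naively seems to cause divergence. The point is that it is purely imaginary and factors as $\alpha_n(z)-\alpha_n(w)$, so it is absorbed into $c_n$, which is irrelevant for intensity functions. The $\Ordo(1)$ imaginary quadratic piece is handled by the same observation.
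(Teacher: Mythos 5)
Your argument is essentially the same as the paper's: Taylor-expand $A(\zeta,\eta)$ around the diagonal, observe that the linear and purely quadratic contributions to $n\bigl[A(\zeta,\eta)-\tfrac12Q(\zeta)-\tfrac12Q(\eta)\bigr]$ are purely imaginary and split as differences $\psi_n(z)-\psi_n(w)$ (hence cocycles), that the mixed quadratic term yields $\log G(z,w)$ after rescaling, and that the degree-$\ge 3$ remainder is $O(n^{-1/2})$. The separate observation that $(\d_1\dbar_2 A)(\zeta,\eta)/\Lap Q(p)\to 1$ handles the prefactor. This is precisely the decomposition the paper uses (modulo the paper's normalization $p=0$, $\theta=0$ versus your explicit carrying of $e^{i\theta}$), so no substantive difference.
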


\begin{proof} We can assume that $p=0$ and $\theta=0$. Put $\Lap Q(0)=\delta$.
Recall that
$$K_n^\#(z,w)=\frac 1 \delta \left(\d_1\bar{\d}_2 A\right)(\zeta,\eta) e^{\,n\,\left[A(\zeta,\eta)-A(\zeta,\zeta)/2-A(\eta,\eta)/2\right]}.$$
As $n\to\infty$, our rescaling means that $\d_1\bar{\d}_2 A(\zeta,\eta)\to \delta$.
Moreover, by Taylor's formula, the expression in the exponent is
\begin{align*}
n&\left[\d_1 A(0,0)\cdot \zeta+\dbar_2 A(0,0)\cdot\bar{\eta}+\frac 1 2\left(\d_1^{\,2} A(0,0)\cdot \zeta^{\,2}+
2\d_1\dbar_2 A(0,0)\cdot \zeta\bar{\eta}\,\,\,\,+\dbar_2^{\,2}A(0,0)\cdot\bar{\eta}^{\,2}\right)\right]\\
-\frac n 2 &\left[\d_1 A(0,0)\cdot \zeta+\dbar_2 A(0,0)\cdot \bar{\zeta}+\frac 1 2
\left(\d_1^{\,2} A(0,0)\cdot \zeta^{\,2}+2\d_1\dbar_2 A(0,0)\cdot |\,\zeta\,|^{\,2}+\dbar_2^{\,2} A(0,0)\cdot
\bar{\zeta}^{\,2}\right)\right]\\
-\frac n 2 &\left[\d_1 A(0,0)\cdot \eta+\dbar_2 A(0,0)\cdot \bar{\eta}+
\frac 1 2
\left(\d_1^{\,2} A(0,0)\cdot \eta^2+2\d_1\dbar_2 A(0,0)\cdot |\,\eta\,|^{\,2}+\dbar_2^{\,2} A(0,0)\cdot
\bar{\eta}^{\,2}\right)\right]\\
&+n\cdot \Ordo\left(\left\|(\zeta,\eta)\right\|^{\,3}\right),
\end{align*}
and this equals
\begin{align*}i\sqrt{\frac n \delta}&\im\left\{\d_1 A(0,0)(z-w)\right\}+\frac i {2\delta}\im\left\{\d_1^{\,2} A(0,0)\left(z^{\,2}-w^{\,2}\right)\right\}\\
&+z \bar{w}-|\,z\,|^{\,2}/2-|\,w\,|^{\,2}/2+
\frac 1 {\sqrt{n}}\Ordo\left(\left\|(z,w)\right\|^{\,3}\right).
\end{align*}
The first two terms correspond to cocycles.
\end{proof}

\begin{rem}
The proof of the lemma shows that if $|\,\zeta\,|<\delta_n$ where $n\delta_n^{\,3}\to 0$ as $n\to\infty$, then
$$n\re A(\zeta,0)-nQ(\zeta)/2-nQ(0)/2=-n\Lap Q(0)\left|\,\zeta\,\right|^{\,2}/2+o(1).$$
Hence (replacing "$0$'' by "$p$'') we see that there is a number $C$ such that
\begin{equation*}\label{rtayl}e^{\,n\re A(\zeta,p)-nQ(\zeta)/2}\le Ce^{\,nQ(p)/2-n\Lap Q(p) \,|\,\zeta-p\,|^{\,2}/2}.
\end{equation*}
\end{rem}

\subsection{Compactness} \label{freecomp} Recall that $\nbh$ denotes some sufficiently small neighbourhood of $S$.
Fix a point $p$ in $\nbh$ and rescale
about $p$ as in \eqref{E1.3}.

\begin{thm} \label{cpthm} Let $p$ be an arbitrary point of $\nbh$.
There is a sequence of cocycles $c_n$ such that every subsequence of $c_nK_n$ has a subsequence converging
uniformly on compact subsets of $\C^2$. Furthermore, every limit point has the form
$K=G\Psi$ where $\Psi(z,w)$ is an Hermitian entire function.
\end{thm}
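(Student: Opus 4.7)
The plan is to factor the rescaled kernel $K_n$ as a product of a ``Gaussian'' piece $L_n$, which carries all the oscillation and produces the cocycle, times a Hermitian-analytic ``core'' $\Psi_n$, and then apply Montel's theorem to $\Psi_n$. Concretely, I would set
\begin{equation*}
L_n(z,w) := e^{\,n[A(\zeta,\eta)-A(\zeta,\zeta)/2-A(\eta,\eta)/2]}, \qquad \Psi_n(z,w) := \frac{\bfk_n(\zeta,\eta)}{n\Lap Q(p)}\, e^{\,-nA(\zeta,\eta)},
\end{equation*}
where $\zeta,\eta$ come from $z,w$ via the rescaling \eqref{reo}. Because $A(\zeta,\zeta)=Q(\zeta)$ on $\nbh$ and $\bfK_n(\zeta,\eta)=\bfk_n(\zeta,\eta)e^{-nQ(\zeta)/2-nQ(\eta)/2}$, the identity $K_n=\Psi_n L_n$ holds on $V_n$. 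Since $\bfk_n$ is Hermitian-analytic on all of $\C^2$ and $e^{-nA(\zeta,\eta)}$ is Hermitian-analytic on $\Lambda$, the core $\Psi_n$ is Hermitian-analytic on $V_n$; by \eqref{obsvn} these domains exhaust $\C^{\,2}$.

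Next I would establish uniform bounds. Lemma \ref{snor} together with $Q\ge\eqpot$ gives $\bfR_n(\zeta)\le Cn$, whence
\begin{equation*}
R_n(z)=K_n(z,z)\le C/\Lap Q(p)
\end{equation*}
uniformly in $z\in\C$ and in $n$; by compactness of $S$ and assumption (b) the denominator is bounded below, so $R_n$ is globally bounded. Since $K_n$ is a positive matrix, Cauchy--Schwarz gives $|K_n(z,w)|\le\sqrt{R_n(z)R_n(w)}$, hence $|K_n|$ is bounded on all of $\C^{\,2}$. On the other hand, Lemma \ref{cocy} furnishes explicit cocycles $c_n$ with $L_n=c_n G(1+o(1))$ locally uniformly; because $|G(z,w)|=e^{-|z-w|^{2}/2}$ is bounded below on each compact subset of $\C^{\,2}$, so is $|L_n|$ for large $n$. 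Dividing, $|\Psi_n|=|K_n|/|L_n|$ is uniformly bounded on compacts.

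Finally, I would apply Montel's theorem in the variables $z$ and $\bar w$: from any subsequence of $(\Psi_n)$ one extracts a further subsequence converging locally uniformly on $\C^{\,2}$ to a Hermitian-entire function $\Psi$. Along that subsequence,
\begin{equation*}
c_n(z,w)^{\,-1}\,K_n(z,w) = \Psi_n(z,w)\,G(z,w)\,(1+o(1)) \longrightarrow G(z,w)\,\Psi(z,w)
\end{equation*}
locally uniformly. Relabelling $c_n^{-1}$ as the sequence of cocycles in the statement completes the proof.

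The only genuinely substantive step is the factorization $K_n=\Psi_n L_n$: one must use precisely the Hermitian-analytic continuation $A(\zeta,\eta)$ of $Q$ in order to redistribute the non-holomorphic weight $e^{-nQ(\zeta)/2-nQ(\eta)/2}$ as $L_n$ times a Hermitian-analytic factor, because $Q(\zeta)$ itself is only real-analytic in $\zeta$. Once this is done and Lemma \ref{cocy} is in hand, the extraction of limits reduces to the global one-point bound of Lemma \ref{snor} plus a standard normal-family argument.
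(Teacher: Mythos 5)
Your proposal is correct and follows essentially the same route as the paper. You factor $K_n$ into a Hermitian-analytic core $\Psi_n$ and a Gaussian piece, bound $R_n$ uniformly via Lemma \ref{snor}, deduce local boundedness of $\Psi_n$ by using positivity of $K_n$ (Cauchy--Schwarz) together with the lower bound on the Gaussian piece from Lemma \ref{cocy}, and then invoke a normal-family argument in the variables $(z,\bar w)$. This is exactly the paper's scheme (there via $K_n=\Psi_n K_n^\#$ and Lemmas \ref{AL1}--\ref{AL2}). The only cosmetic difference is that the paper's bulk approximation $K_n^\#$ carries the extra factor $(\d_1\dbar_2 A)(\zeta,\eta)$, whereas your $L_n$ does not; consequently the cocycle asymptotics $L_n=c_nG(1+o(1))$ does not follow verbatim from the \emph{statement} of Lemma \ref{cocy} (which concerns $K_n^\#$) and needs the one-line remark that $L_n=K_n^\#\cdot\Lap Q(p)/(\d_1\dbar_2 A)(\zeta,\eta)$ with the correction factor tending to $1$ uniformly on compacts. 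With that noted, the argument is complete.
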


The theorem implies Theorem \ref{TT1}; this is but the special case when $p$ is a boundary point of $\drop$.

In the proof, we will use the
functions $\Psi_n$ defined on $V_n$ by the equation
$$K_n=\Psi_n K_n^\#.$$
We will need two lemmas. Recall the definition of the set $V_n$ from the beginning of Section \ref{cak}.

\begin{lem} \label{AL1} The function $\Psi_n$ is Hermitian-analytic
in the set $V_n$.
\end{lem}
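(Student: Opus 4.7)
The plan is to observe that the non-analytic weight factors cancel in the quotient $\Psi_n = K_n/K_n^\#$, leaving only analytic ingredients, and then to verify that the denominator does not vanish on $\Lambda$.

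First I would undo the rescaling: by the definitions in Section \ref{rkta} and Section \ref{bula}, both $K_n$ and $K_n^\#$ carry the same Gaussian weight $e^{-nQ(\zeta)/2}e^{-nQ(\eta)/2}$ as well as the same overall factor $1/(n\Lap Q(p))$. Consequently
\[
\Psi_n(z,w) = \frac{K_n(z,w)}{K_n^\#(z,w)} = \frac{\bfk_n(\zeta,\eta)}{\bfk_n^\#(\zeta,\eta)},
\]
where $z,w$ and $\zeta,\eta$ are related by \eqref{reo}. Since the rescaling map $(\zeta,\eta)\mapsto(z,w)$ is holomorphic in $\zeta$ and anti-holomorphic in $\eta$, it suffices to show that the right-hand side is Hermitian-analytic on $\Lambda$.

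Next I would treat the numerator and denominator separately. The numerator $\bfk_n(\zeta,\eta)$ is the reproducing kernel of the space $\calP_n$ of analytic polynomials of degree $\le n-1$, hence it is a polynomial in $\zeta$ and $\bar\eta$; in particular it is Hermitian-entire. For the denominator, recall from \eqref{buckla} that
\[
\bfk_n^\#(\zeta,\eta) = n\,(\d_1\dbar_2 A)(\zeta,\eta)\cdot e^{\,nA(\zeta,\eta)}.
\]
Since $A$ is Hermitian-analytic on $\Lambda$, so are $\d_1\dbar_2 A$ and $e^{nA}$, and hence their product.

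The only thing one has to check is that $\bfk_n^\#$ does not vanish on $\Lambda$; here the exponential is harmless, so the point is that $\d_1\dbar_2 A \neq 0$. On the diagonal one has $\d_1\dbar_2 A(\zeta,\zeta) = \Lap Q(\zeta)$, which is strictly positive on $\nbh$ by assumption \ref{bubba}. By continuity of $\d_1\dbar_2 A$ and compactness, after shrinking $\delta_0$ if necessary, one may assume $\d_1\dbar_2 A\neq 0$ throughout $\Lambda$. Then $\Psi_n$ is a quotient of two Hermitian-analytic functions with non-vanishing denominator, hence is itself Hermitian-analytic on $\Lambda$, and therefore on $V_n$. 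This is really the only mildly delicate step; once the denominator is controlled, the rest is bookkeeping.
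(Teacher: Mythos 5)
Your proof is correct and follows essentially the same route as the paper's: cancel the common weight factors $e^{-nQ(\zeta)/2}e^{-nQ(\eta)/2}$ and the normalization, reduce $\Psi_n$ to the ratio $\bfk_n(\zeta,\eta)\big/\bigl[n(\d_1\dbar_2 A)(\zeta,\eta)\,e^{nA(\zeta,\eta)}\bigr]$, and observe that this is Hermitian-analytic since $(\zeta,\eta)\mapsto(z,w)$ is a holomorphic/anti-holomorphic change of variables. The one point you make more explicit than the paper — that $\d_1\dbar_2 A\neq 0$ on $\Lambda$, which holds because $\d_1\dbar_2 A(\zeta,\zeta)=\Lap Q(\zeta)>0$ on $\nbh$ and one may shrink $\delta_0$ — is a legitimate detail the paper leaves implicit; it is a good instinct to flag it.
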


\begin{proof} For $(z,w)\in V_n$ we have $(\zeta,\eta)\in \Lambda$ and
$$\Psi_n(z,w)=\frac {K_n(z,w)}{K_n^\#(z,w)}=\frac{\bfK_n(\zeta,\eta)}
{\bfK_n^\#(\zeta,\eta)},$$
whence
$$\Psi_n(z,w)=\frac {\bfk_n(\zeta,\eta)e^{\,-nQ(\zeta)/2}e^{\,-nQ(\eta)/2}}
{\left(\d_1\dbar_2 A\right)(\zeta,\eta)\cdot ne^{\,n\left(A(\zeta,\eta)-Q(\zeta)/2-Q(\eta)/2\right)}}
=\frac {\bfk_n(\zeta,\eta)}
{\left(\d_1\dbar_2 A\right)(\zeta,\eta)\cdot ne^{\,nA(\zeta,\eta)}}.$$
The statement follows since $\zeta$ and $\eta$ depend analytically on $z$ and $w$.
\end{proof}

\begin{lem} \label{AL2} For each compact set $K\subset\C^2$ there is a constant $C=C_K$ such that
$\left|\,\Psi_n(z,w)\,\right|^{\,2}\le Ce^{\,|z-w|^2}$ when $(z,w)\in K$ and $n$ is large enough.
\end{lem}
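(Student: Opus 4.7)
The plan is to reduce the pointwise bound on $\Psi_n$ to a pointwise bound on the rescaled one-point function $R_n(z) = K_n(z,z)$, and then invoke Lemma \ref{snor} together with the fact that the obstacle function $\eqpot$ matches $Q$ to second order across $\d S$.

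First I would use the relation $\Psi_n = K_n/K_n^\#$ and rewrite
\[
|\Psi_n(z,w)|^{2} \;=\; \frac{|K_n(z,w)|^{2}}{|K_n^\#(z,w)|^{2}}.
\]
By Lemma \ref{cocy} and the fact that the cocycles $c_n$ there are unimodular, $|K_n^\#(z,w)| = |G(z,w)|(1+o(1)) = e^{-|z-w|^{2}/2}(1+o(1))$ uniformly on the compact set $K$, so for $n$ large we have $|K_n^\#(z,w)|^{-2} \le 2 e^{|z-w|^{2}}$ on $K$. Thus the problem is reduced to establishing a uniform bound $|K_n(z,w)|^{2} \le C_K$ on $K$.

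Next, since $K_n$ is the rescaling of a reproducing kernel (and hence a positive matrix), the Cauchy--Schwarz inequality for positive matrices gives
\[
|K_n(z,w)|^{2} \;\le\; K_n(z,z)\,K_n(w,w) \;=\; R_n(z)\,R_n(w).
\]
So it suffices to show that $R_n(z) \le C$ for $z$ in any fixed compact set, uniformly in $n$ (and in the moving point $p$, which varies in the compact set $S$).

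The key step uses Lemma \ref{snor}: $R_n(z) = \frac{1}{n\Lap Q(p)}\bfR_n(\zeta) \le \frac{C}{\Lap Q(p)}\, e^{-n(Q-\eqpot)(\zeta)}$, where $\zeta = p + e^{i\theta}z/\sqrt{n\Lap Q(p)}$. Since $\Lap Q$ is bounded below on $\nbh$ by assumption \ref{bubba}, it is enough to check that $n(Q-\eqpot)(\zeta)$ is bounded above on $K$. Here the main point is that $Q - \eqpot$ vanishes on $S$ while being nonnegative everywhere, and since $\eqpot$ has a Lipschitz continuous gradient that matches $\nabla Q$ along $\d S$, one has the quadratic vanishing estimate
\[
0 \le (Q-\eqpot)(\zeta) \le C\,\dist(\zeta,S)^{2},
\]
on a fixed neighborhood of $S$. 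Because $|\zeta - p| = |z|/\sqrt{n\Lap Q(p)} = O(1/\sqrt{n})$ uniformly for $z\in K$ and $p\in S$, we get $\dist(\zeta,S) \le |\zeta-p| = O(1/\sqrt{n})$, so $n(Q-\eqpot)(\zeta) = O(1)$ on $K$. Hence $R_n(z)\le C_K$, and combining the three estimates gives $|\Psi_n(z,w)|^{2} \le C\, e^{|z-w|^{2}}$ on $K$ for $n$ large.

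The step I expect to be the main (minor) obstacle is the quadratic vanishing of $Q - \eqpot$ near $\d S$ with a constant that is uniform as $p$ varies along the boundary. This is where regularity of the boundary is convenient: at regular points the Schwarz function machinery of Sakai gives a controlled local representation of $\eqpot$ outside $S$, from which the $O(\dist(\cdot,S)^{2})$ bound follows from Taylor expansion at any point of $\d S$, and compactness of $S$ upgrades this to a uniform constant. For $p$ in the interior, $\zeta \in S$ for $n$ large and the bound is trivial since $Q = \eqpot$ there.
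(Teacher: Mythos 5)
Your overall route is the same as the paper's: write $\Psi_n = K_n/K_n^\#$, use Lemma~\ref{cocy} to replace $|K_n^\#|$ by $|G|$ on the compact set, use the positive-matrix Cauchy--Schwarz bound $|K_n(z,w)|^2\le R_n(z)R_n(w)$, and then appeal to Lemma~\ref{snor} for a uniform bound $R_n\le C$. That is exactly the paper's proof, and the logic is correct.

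However, the last step is over-engineered, and the excess would actually harm you. From Lemma~\ref{snor} you have $\bfR_n(\zeta)\le Cn\,e^{-n(Q-\eqpot)(\zeta)}$, so
\[
R_n(z)=\frac{\bfR_n(\zeta)}{n\Lap Q(p)}\le \frac{C}{\Lap Q(p)}\,e^{-n(Q-\eqpot)(\zeta)}.
\]
The obstacle function satisfies $\eqpot\le Q$ everywhere on $\C$ (it is the largest subharmonic minorant of $Q$ with logarithmic growth), so the exponential factor is $\le 1$ unconditionally, and $\Lap Q$ is bounded below on $\nbh$ by assumption~\ref{bubba}. That is all that is needed: $R_n\le C$ uniformly, with no Taylor expansion, no quadratic vanishing of $Q-\eqpot$, and no appeal to boundary regularity at all. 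Your additional argument that $n(Q-\eqpot)(\zeta)=O(1)$ via $(Q-\eqpot)(\zeta)\le C\,\mathrm{dist}(\zeta,S)^2$ is not just unnecessary --- it restricts the argument to regular boundary points (as you yourself flag), whereas Lemma~\ref{AL2} is used for arbitrary $p\in\nbh$, including singular boundary points where the Schwarz-function/regular-boundary machinery you invoke is unavailable. The whole point of the uniform $R_n\le C$ bound in Lemma~\ref{snor} is that it holds with no regularity hypothesis on $p$; you should drop the quadratic-vanishing digression and simply use $Q\ge\eqpot$.
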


\begin{proof} Choose $n_0$ large enough that $K\subset V_{n_0}$.
Since $K_n$ is a positive kernel,
and since (by Lemma \ref{cocy}) $\left|\,K_n^\#(z,w)\,\right|\to \left|\,G(z,w)\,\right|$ uniformly on compact subsets, we have uniformly on $K$,
$$\left|\,\Psi_n(z,w)\,\right|^{\,2}=\babs{\,\frac {K_n(z,w)}{K_n^\#(z,w)}\,}^{\,2}\le C \frac {R_{n}(z)R_{n}(w)}
{\left|\,G(z,w)\,\right|^{\,2}},\quad (R_n(z)=K_n(z,z)),$$
for all $n\ge n_0$.
By Lemma \ref{snor}, we have a uniform bound $R_{n}\le C$, which finishes the proof
of the lemma.
\end{proof}

\begin{proof}[Proof of Theorem \ref{cpthm}] Lemma \ref{AL2} shows that the family $\{\Psi_n\}$ is
locally bounded on $\C^2$, viz. is a normal family. Pick a locally uniformly convergent subsequence
$\{\Psi_{n_k}\}$
converging to a limit $\Psi$.
Also fix $z$ and recall that $\int\babs{\,K_{n_k}(z,w)\,}^{\,2}\, dA(w)=K_{n_k}(z,z)$.
In terms of the functions $\Psi_n$,
$$\int_{D\left(0;\rho\sqrt{n_k}\right)} \babs{\,K_{n_k}^\#(z,w)\cdot\Psi_{n_k}(z,w)\,}^{\,2}\, dA(w)=\Psi_{n_k}(z,z)(1+o(1)),\quad (k\to\infty),$$
where $\rho>0$ is the constant in \eqref{obsvn}.
Letting $k\to\infty$ we get, by Fatou's lemma, that the mass-one inequality
\eqref{mocio} holds.

Finally we use Lemma \ref{cocy} to select cocycles $c_n$ such that $c_nK_n^\#\to G$ uniformly on compact subsets of $\C^2$. Then $c_{n_k}K_{n_k}=\Psi_{n_k}\cdot c_{n_k}K_{n_k}^\#\to \Psi G$, finishing the proof of the theorem.
\end{proof}

Let $K=\Psi G$ be a limiting kernel and write $R(z)=K(z,z)=\Psi(z,z)$; we have shown above that the mass-one inequality holds, i.e.
$\int e^{-\babs{z-w}^2}\babs{\Psi(z,w)}^2\, dA(w)\le R(z)$. It will be useful to reformulate this inequality.

\begin{lem} \label{moi} The mass-one inequality holds if and only if
\begin{equation}\label{ipde}\sum_{n=0}^\infty\frac {\babs{\,\d^{\,n} R\,}^{\,2}} {n!}\le R.\end{equation}
\end{lem}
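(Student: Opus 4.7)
\textbf{Proof plan for Lemma \ref{moi}.} The idea is to Taylor-expand the Hermitian-entire function $\Psi(z,w)$ about the diagonal in the anti-holomorphic slot and then compute the mass-one integral termwise using the Gaussian orthogonality relation.

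First I would fix $z$ and expand $\Psi(z,w)$ as an entire function of $\bar{w}$ centered at $\bar{w}=\bar{z}$. Since $\Psi$ is Hermitian ($\Psi(z,w)=\overline{\Psi(w,z)}$) and analytic in $z$ and $\bar{w}$, the anti-holomorphic derivatives at the diagonal are complex conjugates of the holomorphic ones:
\begin{equation*}
\frac{\partial^{n}}{\partial \bar{w}^{n}}\Psi(z,w)\Big|_{w=z}=\overline{\frac{\partial^{n}}{\partial z^{n}}\Psi(z,z)}=\overline{\partial^{n}R(z)},
\end{equation*}
so that
\begin{equation*}
\Psi(z,w)=\sum_{n=0}^{\infty}\frac{\overline{\partial^{n}R(z)}}{n!}\,(\bar{w}-\bar{z})^{n}.
\end{equation*}

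Next I would change variables $w=z+u$ in the mass-one integral. Since $|z-w|^{2}=|u|^{2}$ and $\bar{w}-\bar{z}=\bar{u}$,
\begin{equation*}
\int_{\C}e^{-|z-w|^{2}}|\Psi(z,w)|^{2}\,dA(w)=\int_{\C}e^{-|u|^{2}}\Bigl|\sum_{n=0}^{\infty}\frac{\overline{\partial^{n}R(z)}}{n!}\,\bar{u}^{n}\Bigr|^{2}\,dA(u).
\end{equation*}
Now I would invoke the standard orthogonality
\begin{equation*}
\int_{\C}e^{-|u|^{2}}u^{m}\bar{u}^{n}\,dA(u)=n!\,\delta_{mn},
\end{equation*}
(which follows from a polar-coordinate computation, recalling $dA=d^{2}u/\pi$), applied termwise to the squared series. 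This collapses the double sum to a diagonal and yields
\begin{equation*}
\int_{\C}e^{-|z-w|^{2}}|\Psi(z,w)|^{2}\,dA(w)=\sum_{n=0}^{\infty}\frac{|\partial^{n}R(z)|^{2}}{(n!)^{2}}\cdot n!=\sum_{n=0}^{\infty}\frac{|\partial^{n}R(z)|^{2}}{n!}.
\end{equation*}

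Comparing with $\Psi(z,z)=R(z)$ gives the claimed equivalence. The only thing that requires a bit of care is justifying the term-by-term integration of the squared power series; this is routine because the partial sums converge locally uniformly in $u$ and are dominated by a fixed entire majorant times the Gaussian weight, so dominated convergence (or Fubini on the double series) applies. There is no real obstacle beyond bookkeeping of the Hermitian symmetry that identifies $\partial_{2}^{n}\Psi$ on the diagonal with $\overline{\partial^{n}R}$.
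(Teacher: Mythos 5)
Your proof is correct and follows essentially the same route as the paper: Taylor-expand $\Psi$ in the anti-holomorphic slot about the diagonal, identify the coefficients with $\overline{\partial^n R(z)}$ (equivalently $\bar\partial^n R$ as the paper writes it, using $R$ real-valued) via Hermitian-analyticity, and then integrate termwise against the Gaussian using the orthogonality of the monomials $u^n$ in $L^2(e^{-|u|^2}dA)$. The paper justifies the interchange via monotone convergence on the balls $|w|<M$, which is the same "positivity/Fubini" observation you appeal to.
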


\begin{proof} Since $R(z)=\Psi(z,z)$
we have
$$\Psi(z+w,z)=\sum_{n=0}^\infty \frac {\d_1^{\,n}\Psi(z,z)}{n!}w^{\,n}=\sum_{n=0}^\infty
\frac {\d^{\,n} R(z)}{n!} w^{\,n}.$$
Likewise,
$$\Psi(z,z+w)=\sum_{n=0}^\infty \frac {\dbar^{\,n} R(z)}{n!}\bar{w}^{\,n}.$$
It follows that
\begin{align*}
\int  e^{\,-\,|\,w\,|^{\,2}}&|\,\Psi(z,z+w)\,|^{\,2}\, dA(w)= \lim_{M\to\infty}\int_{\,|\,w\,|\,<\,M} e^{\,-\,|\,w\,|^{\,2}}|\,\Psi(z,z+w)\,|^{\,2}\, dA(w)\\
&=\lim_{M\to\infty}\sum_{n=0}^\infty \frac {\babs{\,\d^{\,n} R(z)\,}^{\,2}} {(n!)^2}
\int_{\,|\,w\,|\,<\,M}|\,w\,|^{\,2n}e^{\,-\,|\,w\,|^{\,2}}\, dA(w)
=\sum_{n=0}^\infty \frac {\babs{\,\d^{\,n} R(z)\,}^{\,2}} {n!}.
\end{align*}
The proof of the lemma is finished.
\end{proof}

\begin{rem} The proof of Lemma \ref{moi} shows that the
mass-one equation for a kernel $\Psi(z,w)$ is equivalent to that the function $R(z)=\Psi(z,z)$ satisfy
\begin{equation}\label{MOEQ}R=\sum_{n=0}^\infty\frac {\babs{\,\d^{\,n} R\,}^{\,2}} {n!}.\end{equation}
One can
regard this as a differential equation of infinite order.
\end{rem}

\section{Ward's equation and the mass-one inequality %equation
} \label{WEFA}
%! also also
In this section, we prove Ward's equation and the triviality alternative, i.e., part \ref{TT1.5_2} of Theorem \ref{TT1.5}. In the next section, we start by deriving
a slightly modified (or "localized'') form of the Ward identity used in \cite{AHM3}.
This modification is necessary when dealing with hard edge processes, and is in general
quite convenient.

\subsection{Ward's identity} \label{theidentity}
To set things up, fix a test-function $\psi\in C^\infty_0(\C)$. Define a function $W_n^+[\psi]$ of
$n$ variables by
\begin{equation*}W_n^+[\psi]=I_n[\psi]-II_n[\psi]+III_n[\psi]\end{equation*}
where
\begin{equation*}I_n[\psi]\lpar \zeta\rpar=\frac 1 2\sum_{j\ne k}^n\frac
{\psi\lpar \zeta_j\rpar-\psi\lpar \zeta_k\rpar} {\zeta_j-\zeta_k}
\quad ;\quad II_n[\psi]\lpar \zeta\rpar=n\sum_{j=1}^n \left[\d Q\cdot\psi\right] \lpar\zeta_j\rpar\quad
;\quad III_n[\psi](\zeta)=\sum_{j=1}^n \d\psi\lpar \zeta_j\rpar.\end{equation*}
Here we think of $\lpar \zeta_j\rpar_1^n$ as being randomized with respect to the Boltzmann-Gibbs law $\Prob_n$,
see \eqref{E1.1}.

We assume only that $Q$ be smooth in a neighbourhood of the support of $\psi$. We can then make sense of $W_n^+[\psi]$ even though $\d Q$ may be undefined in portions of the plane.
Indeed, we \textit{define}
$$\left[\d Q\cdot \psi\right](\zeta)=\begin{cases} \d Q(\zeta)\cdot \psi(\zeta)& \text{if}\quad \zeta\in\supp\psi,\cr
\quad 0& \qquad \text{otherwise.}\cr
\end{cases}$$
We then have the following form of \textit{Ward's identity}.

\begin{thm} \label{war1}  Suppose that $Q$ is $C^2$-smooth in a neighbourhood of
$\supp \psi$. Then
$\Expe_n\, W_n^+[\psi]=0.$
\end{thm}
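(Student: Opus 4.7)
My plan is to derive the identity by integration by parts against the Boltzmann--Gibbs density, one eigenvalue variable at a time, and then sum over the index. The input is the Stokes-theorem identity
\[
\int_{\C} \partial_{\zeta_m} \bigl[\, \psi(\zeta_m)\, e^{-H_n(\zeta)}\,\bigr] \, dA(\zeta_m) = 0
\]
for each fixed $m$ and each fixed configuration of the remaining points. This is legitimate because the integrand has compact $\zeta_m$-support (from $\psi$), $Q$ is smooth on that support, and the apparent $\log$-singularities produced by the particle--particle interaction are killed by the quadratic vanishing of $e^{-H_n}$ on the diagonals $\zeta_m = \zeta_l$.

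Next I would carry out the $\partial_{\zeta_m}$-differentiation. The contribution of $nQ(\zeta_m)$ gives $n\,\partial Q(\zeta_m)$, and collecting the two symmetric contributions from the ordered pair sum $\sum_{j\ne k}\log|\zeta_j - \zeta_k|^{-1}$ (those with $j=m$ and those with $k=m$, each producing a factor $-\tfrac{1}{2}$) yields
\[
\partial_{\zeta_m} H_n(\zeta) = -\sum_{l \ne m}\frac{1}{\zeta_m - \zeta_l} + n\,\partial Q(\zeta_m).
\]
Substituting this into the Stokes identity, summing over $m = 1,\dots,n$, and integrating over the remaining variables against $dV_n/Z_n$ produces three terms: $\Expe_n III_n[\psi]$ from the derivative of $\psi$, $-\Expe_n II_n[\psi]$ from the external field, and a double-sum expectation $\Expe_n\bigl[\sum_{m\ne l}\psi(\zeta_m)/(\zeta_m-\zeta_l)\bigr]$ from the Coulomb interaction. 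The final step is to symmetrize the double sum under $m \leftrightarrow l$, which converts it into $\tfrac{1}{2}\sum_{m\ne l}[\psi(\zeta_m) - \psi(\zeta_l)]/(\zeta_m - \zeta_l) = I_n[\psi]$. Adding the three pieces gives the claim.

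The main technical subtlety is ensuring that the integration by parts is valid across the diagonal singularities: one must check that $\psi(\zeta_m)(\zeta_m - \zeta_l)^{-1} e^{-H_n}$ is a continuous, integrable function despite the pole in the middle factor, which is true because $e^{-H_n}$ vanishes quadratically at $\zeta_m = \zeta_l$. Beyond that, the growth condition $Q(\zeta)/\log|\zeta| \to \infty$ at infinity guarantees $e^{-H_n}\in L^1(\C^n)$, so Fubini applies; and outside $\supp\psi$ the $\zeta_m$-integrand vanishes identically, so any poor behaviour of $Q$ away from $\supp\psi$ (including $Q = +\infty$ on large sets) never enters the calculation.
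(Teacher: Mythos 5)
Your proof is correct, and it takes a genuinely different route from the paper. The paper derives Ward's identity by a \emph{change of variables}: it perturbs the configuration via $\eta_j = \zeta_j + \eps\psi(\zeta_j)/2$, Taylor-expands the Jacobian and the Hamiltonian $\Ham_n(\eta)$ in $\eps$, observes that the partition function $Z_n$ is, by construction, independent of $\eps$, and reads off $\Expe_n W_n^+[\psi] = 0$ as the vanishing of the $O(\eps)$ coefficient. You instead integrate $\d_{\zeta_m}$ against the Boltzmann density one variable at a time, differentiate $H_n$ directly, sum over $m$, and symmetrize. The two arguments are the familiar ``reparametrization invariance'' and ``integration by parts'' routes to the same loop equation.

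What each buys: the paper's perturbative argument never differentiates $\log|\zeta_i - \zeta_j|$; the interaction term transforms through the \emph{bounded} difference quotient $\frac{\psi(\zeta_i)-\psi(\zeta_j)}{\zeta_i - \zeta_j}$, so the diagonal singularities require no special attention, and the fact that the map $\phi_\eps$ is the identity off $\supp\psi$ makes it transparent that discontinuities or $+\infty$ values of $Q$ away from $\supp\psi$ never enter. Your integration-by-parts argument is more elementary and more clearly exposes the mechanism (the factor $\prod_{j<k}|\zeta_j-\zeta_k|^2$ in $e^{-H_n}$ is real-analytic, so $\psi(\zeta_m)e^{-H_n}$ is $C^1$ in $\zeta_m$ despite the pole in $\d_{\zeta_m}H_n$), but it does require you to pause and verify this regularity, which you do correctly; it also gives the identity directly without passing through a one-parameter family. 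Both arguments are valid under the stated hypotheses, including the case where $Q$ is $+\infty$ or non-smooth away from $\supp\psi$.

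One small remark: when you write ``each producing a factor $-\tfrac12$'', what you mean is that $\d_\zeta \log|\zeta - a| = \tfrac{1}{2(\zeta-a)}$ and that the ordered-pair sum over $j\ne k$ hits the index $m$ twice; the end result $\d_{\zeta_m}\Ham_n = -\sum_{l\ne m}(\zeta_m - \zeta_l)^{-1} + n\,\d Q(\zeta_m)$ is correct.
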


\begin{proof} We modify the argument in \cite{AHM3}.
Given $\zeta\in \C$ and $\eps>0$ we let $D_{\eps\psi}(\zeta)$ be the
closed disc centered at $\zeta$ of radius $\eps\babs{\psi(\zeta)}$. Choosing $\eps=\eps(\psi)>0$ sufficiently small, there are two alternatives for each point $\zeta\in\C$, (i) $Q$ is $C^2$-smooth in a neighbourhood
of $D_{\eps\psi}(\zeta)$, or (ii) $\psi(\zeta)=0$.

Now fix an arbitrary sequence $\zeta=(\zeta_j)_1^n$ and
put $\eta_j=\phi\lpar\zeta_j\rpar=\zeta_j+\eps\psi\lpar \zeta_j\rpar/2$, $1\le j\le n$. The Jacobian for $\phi$ is
%change of variables $\zeta_j\mapsto \eta_j$ is
$$J(\zeta)=\babs{\,\d\phi\lpar \zeta\rpar\,}^{\,2}-\babs{\,\dbar\phi\lpar\zeta\rpar\,}^{\,2}=
1+\eps\re\d\psi\lpar\zeta\rpar+\Ordo\lpar \eps^2\rpar,\quad (\eps\to 0),$$
whence
(with $III_n=III_n[\psi]$)
\begin{equation*}d V_n(\eta)=\prod_{j=1}^n \babs{J(\zeta_j)}dA(\zeta_j)=\left[\,1+\eps\,\re \,III_n\lpar \zeta\rpar+
\Ordo\lpar\eps^2\rpar\,\right]\,d V_n(\zeta).
\end{equation*}
Moreover,
\begin{equation*}\begin{split}\log\babs{\,\eta_i-\eta_j\,}^{\,2}&=\log\babs{\,\zeta_i-\zeta_j\,}^{\,2}+
\log\babs{\,1+\frac \eps 2 \frac {\psi(\zeta_i)-\psi(\zeta_j)}
{\zeta_i-\zeta_j}\,}^{\,2}\\
&=\log\babs{\,\zeta_i-\zeta_j\,}^{\,2}+
\eps\re\frac {\psi(\zeta_i)-\psi(\zeta_j)}
{\zeta_i-\zeta_j}+\Ordo\lpar\eps^2\rpar,\\
\end{split}
\end{equation*}
so that
\begin{equation}\label{fs1}\sum_{j\ne k}^n\log\babs{\,\eta_j-\eta_k\,}^{\,-\,1}=
\sum_{j\ne k}^n\log\babs{\,\zeta_j-\zeta_k\,}^{\,-\,1}-
\eps  \re\, I_n(\zeta)+\Ordo\lpar\eps^2\rpar,\quad (\eps\to 0).
\end{equation}
If $D_{\eps\psi}(\zeta_j)$ is contained in a domain where $Q$ is $C^2$-smooth, then, by Taylor's formula,
\begin{equation}\label{fisnik}Q\lpar \eta_j\rpar=Q\lpar \zeta_j+\frac \eps 2 \psi\lpar\zeta_j\rpar\rpar
=Q\lpar\zeta_j\rpar+\eps\, \re\,\left[ \d Q\cdot\psi\right] (\zeta_j)+\Ordo\left(\eps^2\right).
\end{equation}
For other $j$'s we have $\psi(\zeta_j)=0$ and $\eta_j=\zeta_j$, whence \eqref{fisnik} holds, since
$\left[\d Q\cdot\psi\right](\zeta_j)=0$ by definition. Hence \eqref{fisnik} holds in all cases, so
\begin{equation}\label{fs2}n\sum_{j=1}^n Q\lpar \eta_j\rpar=
n\sum_{j=1}^n Q\lpar\zeta_j\rpar+\eps\, \re \,II_n(\zeta)+\Ordo\lpar\eps^2\rpar.
\end{equation}
Now \eqref{fs1} and \eqref{fs2} imply that the Hamiltonian
$\Ham_n\left(\eta\right)=\sum_{j\ne k}\log\babs{\,\eta_j-\eta_k\,}^{\,-\,1}+n\sum_{j=1}^n Q\left(\eta_j\right)$ satisfies
\begin{equation*}\label{fs3}\Ham_n \left(\eta\right)=\Ham_n\left(\zeta\right)
+\eps\cdot  \re\, \lpar- I_n(\zeta)+II_n(\zeta)\rpar+
\Ordo\lpar\eps^2\rpar.\end{equation*}
It follows that the partition function $Z_n:=\int_{\C^n}e^{- \Ham_n\left(\eta\right)}\,
d V_n\left(\eta\right)$ satisfies
\begin{equation*}\begin{split}Z_n&=\int_{\C^n}e^{-\Ham_n(\zeta)-\eps\re\,\lpar- I_n(\zeta)+II_n(\zeta)\rpar+
\Ordo\lpar\eps^2\rpar}\,\left[1+\eps \re III_n(\zeta)+\Ordo\lpar\eps^2\rpar\right]\,
d V_n(\zeta).\\
\end{split}\end{equation*}
Since the integral is independent of $\eps$, the coefficient of $\eps$
in the right hand side must vanish, which means that
\begin{equation*}\re \int_{\C^n}
\lpar III_n(\zeta)+ I_n(\zeta)-II_n(\zeta)\rpar
\,e^{- \Ham_n(\zeta)}\,d V_n(\zeta)=0,\end{equation*}
or $\re \Expe_n \,W_n^+[\psi]=0$. Replacing $\psi$ by
$i\psi$ in the preceding argument gives $\im \Expe_n
\,W_n^+[\psi]=0$ and the theorem follows.
\end{proof}

\subsection{Rescaled version} \label{funo} We now fix a point $p$ in a small neighbourhood
$\nbh$ of $S$, where $Q$ is $C^2$-smooth.
We rescale the system $\left\{\zeta_j\right\}_1^n$
about $p$ in the usual way, obtaining the rescaled system $\config_n=\{z_j\}_1^n$ where
$$z_j=e^{\,-i\theta}\sqrt{n\Lap Q(p)}\,(\zeta_j-p).$$
The value of $\theta$ is here irrelevant.
Recall that the rescaled intensity functions are defined via
$$R_{n,k}(z_1,\ldots,z_k)=\frac 1 {(n\Lap Q(p))^k}\bfR_{n,k}(\zeta_1,\ldots,\zeta_k),$$
while the
Berezin kernel rooted at $p$ is defined by
\begin{equation*}B_n(z,w)
=\frac {R_{n,1}(z)R_{n,1}(w)-
R_{n,2}(z,w)}
{R_{n,1}(z)}.\end{equation*}

The following result is a rescaled
form of Ward's identity.

\begin{thm} \label{THL} %Suppose that $Q$ is ${\mathcal C}^2$-smooth
 %and strictly subharmonic in the closure of a neighbourhood $\nbh$ of $\drop$ and rescale about a
%! point
%point
%$p$ in $\nbh$. Then
%for the free boundary process we have
In the above circumstances we have the equation
\begin{equation*}\label{E1.5}\dbar C_n(z)
=R_n(z)-1-\Lap \log R_n(z)+o(1),\qquad z\in\C,\end{equation*}
where
$$C_n(z):=
\int_\C \frac { B_n(z,w)} {z-w}\, dA(w),\quad \text{and}\quad R_n(z):=R_{n,1}(z)=B_n(z,z),$$
and $o(1)\to 0$ uniformly on compact subsets of $\C$ as $n\to\infty$.
\end{thm}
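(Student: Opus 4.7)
I would start from Ward's identity $\Expe_n W_n^+[\psi]=0$ of Theorem~\ref{war1}, applied to $\psi(\zeta)=\phi(z)$ with $z=e^{-i\theta}\sqrt{n\Lap Q(p)}(\zeta-p)$ and arbitrary $\phi\in C^\infty_0(\C)$, and convert it into an identity between integrals against $R_n$. Writing $\bfR_{n,2}(\zeta,\eta)=\bfR_{n,1}(\zeta)\bfR_{n,1}(\eta)-\bfR_{n,1}(\zeta)\mathbf{B}_n(\zeta,\eta)$ and using the symmetrization $\tfrac12\iint\frac{\psi(\zeta)-\psi(\eta)}{\zeta-\eta}f=\iint\frac{\psi(\zeta)}{\zeta-\eta}f$ valid for Hermitian $f$, the $I_n$-piece splits as
$$\Expe_n I_n[\psi]=\int\psi\,\bfR_{n,1}\,\mathbf{C}_n^{(1)}\,dA-\int\psi\,\bfR_{n,1}\,\mathbf{C}_n\,dA,$$
where $\mathbf{C}_n^{(1)}(\zeta):=\int \bfR_{n,1}(\eta)/(\zeta-\eta)\,dA(\eta)$ and $\mathbf{C}_n(\zeta):=\int \mathbf{B}_n(\zeta,\eta)/(\zeta-\eta)\,dA(\eta)$. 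An integration by parts gives $\Expe_n III_n[\psi]=-\int\psi\,\bfR_{n,1}\,\d\log\bfR_{n,1}\,dA$, so Ward's identity reads
$$\int\psi\,\bfR_{n,1}\,\bigl[\bfL_n-\mathbf{C}_n-\d\log\bfR_{n,1}\bigr]\,dA=0,\qquad \bfL_n:=\mathbf{C}_n^{(1)}-n\d Q.$$
The point of introducing $\bfL_n$ is the identity $\dbar\bfL_n=\bfR_{n,1}-n\Lap Q$: it shows that $\bfL_n$ absorbs the leading-$n$ contribution coming from the $II_n$-term.

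\textbf{Rescaling.} Setting $\zeta=p+e^{i\theta}z/\sqrt{n\Lap Q(p)}$ and tracking Jacobians one computes
$$\mathbf{C}_n(\zeta)=e^{-i\theta}\sqrt{n\Lap Q(p)}\,C_n(z),\qquad \d\log\bfR_{n,1}(\zeta)=e^{-i\theta}\sqrt{n\Lap Q(p)}\,\d\log R_n(z).$$
Hence the correct rescaling of $\bfL_n$ is $L_n(z):=e^{i\theta}\bfL_n(\zeta)/\sqrt{n\Lap Q(p)}$; by the chain rule
$$\dbar_z L_n(z)=\frac{\dbar_\zeta\bfL_n(\zeta)}{n\Lap Q(p)}=R_n(z)-\frac{\Lap Q(\zeta)}{\Lap Q(p)}=R_n(z)-1+o(1)$$
locally uniformly in $z$, since $\zeta-p=O(n^{-1/2})$ on compact rescaled sets and $\Lap Q$ is continuous at $p$. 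Substituting these expressions into the identity displayed above and dividing out the common factor $e^{-i\theta}\sqrt{n\Lap Q(p)}$ produces the weak equation
$$\int\phi\,R_n\bigl[L_n-C_n-\d\log R_n\bigr]\,dA=0\qquad\text{for every }\phi\in C^\infty_0(\C).$$

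\textbf{From weak to pointwise, and conclusion.} The functions $R_n,L_n,C_n$ are smooth (inherited from real-analyticity of $\bfR_{n,1}$), and $R_n(z)=K_n(z,z)>0$ everywhere (being the diagonal of the reproducing kernel of the nontrivial space $\Pol_n$). Since $\phi\in C^\infty_0$ is arbitrary, the weak equation forces the pointwise identity $L_n=C_n+\d\log R_n$. Applying $\dbar_z$ and using $\dbar\d=\Lap$ together with the relation $\dbar L_n=R_n-1+o(1)$ derived above yields
$$\dbar C_n(z)=R_n(z)-1-\Lap\log R_n(z)+o(1),$$
locally uniformly in $z$, as asserted.

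\textbf{Main obstacle.} The substantive point is justifying that the rescaled function $L_n$ is bounded on compact subsets of $\C$ (equivalently, that $\bfL_n(\zeta)=O(\sqrt{n})$ on the rescaling window around $p$), so that the passage from the $\zeta$-identity to the $z$-identity is meaningful. This is what the relation $\dbar\bfL_n=\bfR_{n,1}-n\Lap Q$ delivers: the right-hand side is of size $O(n)$ concentrated in an $O(n^{-1/2})$-neighbourhood of $\d S$, which by a standard $\dbar$-estimate produces a solution of size $O(\sqrt{n})$. Making this rigorous uses the pointwise upper bound of Lemma~\ref{snor} together with the comparison $\bfR_{n,1}\sim n\Lap Q\cdot\1_S$ provided by Frostman's theorem. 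A secondary technical point is the convergence of the Cauchy-type transform $\mathbf{C}_n^{(1)}$ (and the validity of Fubini in the symmetrization step), which is handled by the exponential decay of $\bfR_{n,1}$ outside the droplet.
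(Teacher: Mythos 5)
Your proof is correct and takes essentially the same route as the paper: start from the localized Ward identity, split $\bfR_{n,2}$ into the $\bfR_{n,1}\cdot\bfR_{n,1}$ part and the Berezin part, rescale, and differentiate the resulting Cauchy-transform identity with $\dbar$, keeping track of the error $\Lap Q(\zeta)/\Lap Q(p)-1=o(1)$. Your $\bfL_n=\mathbf{C}^{(1)}_n-n\,\d Q$ is just a convenient relabelling of the two terms the paper keeps side by side in its equation \eqref{18}; after rescaling and applying $\dbar$ you recover exactly the paper's formula $R_{n}-\dbar C_n=\Lap Q(z/\sqrt{n\delta})/\delta+\Lap\log R_n$. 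One remark: the ``main obstacle'' you identify (bounding $L_n$ on compacts) is not actually needed. To pass from the weak identity $\int\phi\,R_n\,[L_n-C_n-\d\log R_n]\,dA=0$ to the pointwise one you only need continuity of the bracketed expression and $R_n>0$; both are immediate once $\mathbf{C}_n^{(1)}$ is seen to be a convergent Cauchy transform of a continuous, exponentially decaying integrand (Lemma~\ref{snor}). The paper circumvents the issue entirely by differentiating the distributional identity directly without ever isolating $L_n$, and you could do the same, shortening the argument.
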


\begin{proof}
Fix a point $p$ such that $Q$ is $C^2$-smooth and strictly subharmonic in a neighbourhood $U$ of $p$.
We can without loss of generality assume that $p=0$ and $\theta=0$.
Fix a test-function $\psi$ supported in the dilated set $\sqrt{n\delta}\cdot U$, where $\delta=\Lap Q(p)$.
Write
$$z=\sqrt{n\delta}\cdot\zeta\qquad ,\qquad w=\sqrt{n\delta}\cdot\eta,$$
and let $\psi_n\left(\zeta\right)=\psi\lpar z\rpar$. Thus $\supp\psi_n\subset U$.
The change of variables $(\zeta,\eta)\mapsto \lpar z,w\rpar$ gives
\begin{equation}\label{rep0}\begin{split}\Expe_n\, I_n[\psi_n]&=\frac 1 2\,
\iint_{\C^2}\frac {\psi\lpar \zeta\sqrt{n\delta}\rpar-\psi\lpar \eta\sqrt{n\delta}\rpar} {\zeta-\eta}\, \bfR_{n,2}(\zeta,\eta)\,d V_2(\zeta,\eta)\\
&=\sqrt{n\delta}\,\frac 1 2 \,\iint_{\C^2} \frac {\psi(z)-\psi(w)} {z-w}\,
R_{n,2}(z,w)\,d V_2(z,w)\\
&=\sqrt{n\delta}\,\int_{\sqrt{n\delta}\cdot U} \psi(z)\,dA(z)
\int_{\C} \frac {R_{n,2}(z,w)}{z-w}\,dA(w)
.\\
\end{split}
\end{equation}

Similarly, since $\supp\psi_n\subset U$,
\begin{equation}\label{rep1}\begin{split}\Expe_n \,II_n[\psi_n]&=n\,\int_U
\left[\d Q\cdot\psi_n\right]\lpar \zeta\rpar\,\bfR_{n,1}(\zeta)\,dA(\zeta)\\
&=n \int_{\sqrt{n\delta}\cdot U} \d Q\lpar \frac z {\sqrt{n\delta}}\rpar \cdot \psi(z)
\cdot R_{n,1}(z)\,dA(z).\\
\end{split}\end{equation}

Finally, in the sense of distributions,
\begin{equation}\label{rep2}\begin{split}\Expe_n \,III_n[\psi_n]&=\int_U \d\psi_n\cdot \bfR_{n,1}\,dA
=\sqrt{n\delta}\,\int_U \d\psi\left(\zeta\sqrt{n\delta}\right)\,\bfR_{n,1}\left(\zeta\right)\,dA\left(\zeta\right)\\
&=\sqrt{n\delta}\,\int_{\sqrt{n\delta}\cdot U} \d\psi\cdot R_{n,1}\,dA
=-\sqrt{n\delta}\,\int_{\sqrt{n\delta}\cdot U} \psi\cdot \d R_{n,1}\,dA.\\
\end{split}\end{equation}

After dividing by $\sqrt{n\delta}$ in Ward's identity (Theorem \ref{war1}), we deduce from eq.'s \eqref{rep0}--\eqref{rep2} that
\begin{align*}\int_{\sqrt{n\delta}\cdot U} \psi(z)\,
\left[\,\int_\C \frac {R_{n,2}(z,w)} {z-w}\,dA(w)\,\right]\,
dA(z)&=\int_{\sqrt{n\delta}\cdot U} \frac {\sqrt{n}} {\sqrt{\delta}}\,\d Q\lpar \frac z {\sqrt{n\delta}}\rpar\cdot
\psi(z)\cdot R_{n,1}(z)\,dA(z)\\
&+
\int_{\sqrt{n\delta}\cdot U} \psi\cdot \d R_{n,1}\,dA.\end{align*}
Since $\psi$ is arbitrary, we get the following identity, in the sense of
distributions,
\begin{equation}\label{18}\int_\C \frac {R_{n,2}(z,w)} {z-w}\,dA(w)=
\frac {\sqrt{n}} {\sqrt{\delta}}\,\d Q\lpar \frac z {\sqrt{n\delta}}\rpar
\cdot R_{n,1}(z)+\d R_{n,1}(z),\qquad z\in\sqrt{n\delta}\cdot U.
\end{equation}
But
\begin{equation*}\frac {R_{n,2}(z,w)} {R_{n,1}(z)}=
R_{n,1}(w)-B_n(z,w)\end{equation*}
so we can write \eqref{18} as
\begin{equation*}\int_\C \frac {R_{n,1}(w)} {z-w}dA(w)-C_n(z)=
\frac {\sqrt{n}} {\sqrt{\delta}}\,\d Q\lpar \frac z {\sqrt{n\delta}}\rpar+\d \log R_{n,1}(z).\end{equation*}
Here $C_n(z)=\int \frac {B_n(z,w)} {z-w}dA(w)$.
Taking a $\dbar$-derivative now gives
\begin{equation*}R_{n,1}(z)-\dbar\, C_n(z)=\frac 1 \delta \Delta Q\lpar \frac z {\sqrt{n\delta}}\rpar+\Delta\log R_{n,1}(z),\qquad z\in\sqrt{n\delta}\cdot U.\end{equation*}
As $n\to \infty$ we have that $\Delta Q(\zeta/\sqrt{n\delta})/\delta\to 1$ uniformly on compact subsets of $\C$.
We have shown
\begin{equation*}\label{fhaep}R_{n,1}(z)-\dbar C_n(z)=1+\Delta\log R_{n,1}(z)+o(1),\quad z\in\C.\end{equation*}
Recalling that $R_{n,1}(z)=B_n(z,z)$ we  conclude the proof of Theorem
\ref{THL}.
\end{proof}

\subsection{Ward's equation} \label{vixen}
Let $K=\Psi G$ denote any limiting kernel in
%Theorem \ref{cpthm} (or
Theorem \ref{TT1}. Referring to a suitable subsequence, we write $$R(z)=\lim_{k\to\infty}R_{n_k}(z)=K(z,z)=\Psi(z,z)$$
for the one-point function. In the following, we shall assume that
$R$ does not vanish identically.

We shall also
use the corresponding \textit{holomorphic kernel}
$$L(z,w):=e^{z\bar{w}}\Psi(z,w)$$
%We shall now use the Hermitian-entire kernel $L=L_\Psi$ defined by
%$$L(z,w)=e^{z\bar{w}}\Psi(z,w).$$
and write
$$L_z(w):=L(w,z).$$
%and
%$R(z)=\Psi(z,z)$, so $L$ is Hermitian-entire and
%$R$ is real-analytic in $\C$.

\begin{lem} \label{ma1}  $L(z,w)$ is a positive matrix and $z\mapsto L(z,z)$ is logarithmically
subharmonic, i.e.,  the function $|z|^2+\log R(z)$ is subharmonic.
\end{lem}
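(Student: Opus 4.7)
The plan is to reduce both assertions to classical facts about reproducing kernel Hilbert spaces of entire functions.

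First, I would establish that $L$ is a positive matrix. The identity $L(z,w)=e^{z\bar w}\Psi(z,w)=K(z,w)\,e^{|z|^2/2}e^{|w|^2/2}$ shows that $L$ differs from the known positive matrix $K$ by a factor of the form $h(z)\overline{h(w)}$ with $h(z)=e^{|z|^2/2}$. Positivity is preserved under such rank-one Gramian twists, so for any finite $(z_j)$ and scalars $(\alpha_j)$ we get $\sum\alpha_j\bar\alpha_kL(z_j,z_k)=\sum(\alpha_jh(z_j))\overline{(\alpha_kh(z_k))}K(z_j,z_k)\ge 0$. This handles the first claim.

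For log-subharmonicity, since $L$ is a positive matrix that is Hermitian-entire (entire in $z$, anti-entire in $w$), Aronszajn's theorem produces a Hilbert space $\calH_L$ whose reproducing kernel is $L$. The key subordinate claim is that every $f\in\calH_L$ is in fact an entire function: the sections $L_w=L(\cdot,w)$ are entire by hypothesis and span a dense subspace; the reproducing inequality $|f(z)|^2\le\|f\|_{\calH_L}^2\cdot L(z,z)$ together with continuity of $L$ on the diagonal gives local uniform bounds, so Montel plus the density argument upgrades norm convergence to local uniform convergence, preserving entireness in the limit. Consequently, for any orthonormal basis $(e_n)$ of $\calH_L$, Parseval yields the pointwise expansion
\begin{equation*}
L(z,z)=\sum_{n}|e_n(z)|^2,
\end{equation*}
a convergent series of squared moduli of entire functions.

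The final step is to invoke the classical fact that the pointwise sum of log-subharmonic functions is log-subharmonic. Each $|e_n(z)|^2$ is log-subharmonic (since $\log|e_n|$ is subharmonic for entire $e_n$), and this property is stable under sums and monotone limits of non-negative summands; hence $\log L(z,z)=|z|^2+\log R(z)$ is subharmonic, which is exactly the stated conclusion.

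The main obstacle is the technical lemma that $\calH_L$ consists of entire functions. If one prefers to bypass the Montel argument, an alternative (and perhaps cleaner) route is to note that Hermitian-entireness of $L$ means $L(z,w)$ equals its bidiagonal Taylor series in $(z,\bar w)$, and to use this to construct an orthonormal basis of entire polynomials or monomials directly; either way, once the sum-of-squares representation is in hand, log-subharmonicity drops out immediately from the classical preservation result.
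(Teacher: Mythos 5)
Your positivity argument coincides with the paper's (both use the rank-one twist $L(z,w)=K(z,w)\,e^{|z|^2/2}e^{|w|^2/2}$ and observe that positivity transfers under conjugation by $g(z)\overline{g(w)}$). The log-subharmonicity step, however, is genuinely different. The paper stays inside the Aronszajn space $\calH_*$ spanned by the sections $L_z$ and computes directly: it uses Hermitian-entireness of $L$ to differentiate under the inner product, obtaining at points where $L(z,z)>0$ the identity $\Lap\log L(z,z) = \bigl(\|\dbar_z L_z\|_*^2\|L_z\|_*^2 - |\langle\dbar_zL_z,L_z\rangle_*|^2\bigr)/L(z,z)^2$, which is $\ge 0$ by Cauchy--Schwarz; zeros of $L(z,z)$ are then absorbed by the sub-mean-value property since $\log L(z,z)=-\infty$ there. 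Your route instead passes through an orthonormal basis $(e_n)$ of entire functions in $\calH_L$ and the Parseval expansion $L(z,z)=\sum_n|e_n(z)|^2$, then invokes that finite sums of log-subharmonic functions are log-subharmonic (verified by the AM--GM trick on the Hessian inequality $u\,\d\dbar u\ge|\d u|^2$) together with stability under increasing limits which are upper semicontinuous — here guaranteed by continuity of $L(z,z)$. Both approaches require the same structural ingredient (that $\calH_L$ consists of entire functions, a Montel argument you correctly identify), but yours trades the paper's one-line Cauchy--Schwarz computation for a sum-of-squares decomposition and a classical closure property of log-subharmonic functions; the paper's route avoids choosing a basis and does not need the monotone-limit lemma. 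Your proof is correct, though you should make explicit the upper-semicontinuity needed at potential zeros of $L(z,z)$ when passing the log-subharmonicity through the increasing limit, and it would be tidy to cite or prove the finite-sum preservation fact rather than refer to it as classical.
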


\begin{proof} We know that $K(z,w)=L(z,w)e^{-|z|^2/2-|w|^2/2}$ is a positive matrix, i.e. $\sum \alpha_j\bar{\alpha}_k K(z_j,z_k)\ge 0$ for all
choices of points $z_j$ and scalars $\alpha_j$. This means that
$\sum \beta_j\bar{\beta}_k L(z_j,z_k)\ge 0$ where $\beta_j=\alpha_j e^{-|z_j|^2/2}$,
i.e., $L$ is a positive matrix.
Following Aronszajn \cite{Ar} we can then define a semi-definite inner product
on the span of the $L_z$'s by $\left\langle L_z,L_w\right\rangle_*=L(w,z)$. The completion of the span the functions
$L_z$ ($z\in\C$) is a semi-normed Hilbert space $\calH_*$ of entire functions, and the reproducing kernel in this space is $L$.

Since the kernel $L$ is Hermitian-entire, the function $F(z,w)=\left\langle L_w,L_z\right\rangle_*$ is
also Hermitian-entire, and
we have identities such as $\d_z F(z,w)=\left\langle L_w,\dbar_z L_z\right\rangle_*$,
$\dbar_w\d_z F(z,w)=\left\langle \dbar_w L_w,\dbar_z L_z\right\rangle_*$, etc. It follows that
at points where $L(z,z)>0$ we have by the Cauchy-Schwarz inequality that
$$\Lap \log L(z,z)=\frac {\left\|\,\dbar_z L_z\,\right\|_*^{\,2}\cdot\left\|\,L_z\,\right\|_*^{\,2}-\babs{\langle \dbar_z L_z,L_z\rangle_*}^{\,2}}
{L(z,z)^{\,2}}\ge 0.$$
On the other hand, when $L(z,z)=0$ we have $\log L(z,z)=-\infty$. It follows that $\log L(z,z)$ has the
sub-mean value property, and hence is subharmonic.
\end{proof}

%\begin{rem} We shall below find a concrete representation of the space $\calH_*$.
%\end{rem}

\begin{lem} \label{isol} If $R(z_0)=0$ then there is a real-analytic function
$\tilde{R}$ such that
$$R(z)=\babs{\,z-z_0\,}^{\,2}\tilde{R}(z).$$
Moreover, if $R$ does not vanish identically, then all zeros of $R$ are isolated.
\end{lem}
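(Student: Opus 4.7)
I split the statement into two pieces: the local factorization $R(z) = |z-z_0|^2 \tilde R(z)$, and the fact that if $R \not\equiv 0$ the zeros of $R$ are isolated.

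For the factorization, the plan is to exploit that by Lemma~\ref{ma1} the kernel $L(z,w) = e^{z\bar w}\Psi(z,w)$ is a positive matrix. Aronszajn's Cauchy--Schwarz inequality $|L(z,w)|^2 \le L(z,z)L(w,w)$ then forces $L(z,z_0) = 0$ for all $z$ as soon as $L(z_0,z_0) = e^{|z_0|^2}R(z_0)$ vanishes; by Hermitian symmetry also $L(z_0,w) = 0$ for all $w$. In other words, $\Psi(z,z_0) \equiv 0 \equiv \Psi(z_0,w)$. Writing $\Psi(z,w) = F(z,\bar w)$ with $F$ entire on $\C^2$, these identities say $F$ vanishes on the hyperplanes $\{v = \bar z_0\}$ and $\{u = z_0\}$, so successive division in the Taylor expansion gives $F(u,v) = (u - z_0)(v - \bar z_0) H(u,v)$ with $H$ entire. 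Setting $\Phi(z,w) := H(z,\bar w)$ and using the symmetry $\overline{\Psi(w,z)} = \Psi(z,w)$ to deduce $\overline{\Phi(w,z)} = \Phi(z,w)$, I obtain $\Psi(z,w) = (z-z_0)(\bar w - \bar z_0)\Phi(z,w)$ with $\Phi$ Hermitian-entire. Then $R(z) = \Psi(z,z) = |z-z_0|^2 \tilde R(z)$ with $\tilde R(z) := \Phi(z,z)$ real-analytic.

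For the isolation of zeros, the plan is to iterate on the order of vanishing. I would define $L_1(z,w) := L(z,w)/((z-z_0)(\bar w-\bar z_0)) = e^{z\bar w}\Phi(z,w)$ and prove it is again a positive matrix using the Hilbert space picture from the proof of Lemma~\ref{ma1}: every $f$ in the Aronszajn space $\calH_*$ with reproducing kernel $L$ satisfies $f(z_0) = \langle f, L_{z_0}\rangle_* = 0$, so the map $f \mapsto f/(\cdot - z_0)$ is an isometry onto another Hilbert space of entire functions whose reproducing kernel is precisely $L_1$. Consequently $\tilde R(z) = e^{-|z|^2}L_1(z,z) \ge 0$, and if $\tilde R(z_0) > 0$ then by continuity $z_0$ is an isolated zero of $R$. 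Otherwise $\tilde R(z_0) = 0$ and the same argument applied to $L_1$ produces a further factor $(z-z_0)(\bar w - \bar z_0)$. After $m$ iterations one obtains $\Psi(z,w) = (z-z_0)^m(\bar w - \bar z_0)^m \Phi_m(z,w)$ with $\Phi_m$ Hermitian-entire. Nontermination would force every Taylor coefficient of $\Psi$ at $(z_0,z_0)$ in the variables $(z - z_0,\, \bar w - \bar z_0)$ to vanish, i.e.\ $\Psi \equiv 0$ and hence $R \equiv 0$, contradicting the hypothesis.

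The step I expect to be the main obstacle is the descent, namely verifying that dividing $L$ by $(z-z_0)(\bar w-\bar z_0)$ yields a positive-matrix kernel; the reproducing-kernel Hilbert-space argument sketched above handles this cleanly, and without it one cannot even assert $\tilde R \ge 0$, let alone run the iteration. Once the descent is in place, the rest is power-series bookkeeping together with Hermitian symmetry.
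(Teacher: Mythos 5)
Your argument is correct, but it follows a genuinely different route from the paper's in both halves. For the factorization, the paper goes through Lemma~\ref{moi} (the mass-one inequality), observing that $R(z_0)=0$ forces $\partial^j R(z_0)=0$ for all $j$ and hence $\Psi(\cdot,z_0)\equiv 0\equiv\Psi(z_0,\cdot)$; you instead invoke Lemma~\ref{ma1} (positivity of $L$) and the Cauchy--Schwarz inequality $|L(z,z_0)|^2\le L(z,z)L(z_0,z_0)=0$, which yields the same vanishing more directly. Either route is fine, and yours is arguably the more transparent of the two. For the isolation of zeros, however, the paper's argument is considerably shorter and avoids the RKHS descent entirely: if $(z_j)$ is a sequence of \emph{distinct} zeros of $R$ accumulating at some point, then by the first part each $z_j$ gives $\Psi(z_j,w)=0$ for all $w$; fixing $w$, the entire function $\psi_w(z)=\Psi(z,w)$ has accumulating zeros, so $\psi_w\equiv 0$ by the identity theorem, and since $w$ was arbitrary $\Psi\equiv 0$, i.e.\ $R\equiv 0$. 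Your iterated-divisibility argument does establish the same conclusion — and the quotient-RKHS step (dividing by $(z-z_0)(\bar w-\bar z_0)$ preserves positivity because every $f\in\calH_*$ kills $z_0$) is carried out correctly and even gives the finer statement that $\Psi$ is divisible by $((z-z_0)(\bar w-\bar z_0))^m$ for the precise order $m$ of vanishing of $R$ at $z_0$ — but this extra machinery is not needed for the lemma as stated, and you may want to notice that once $\Psi(z_j,\cdot)\equiv 0$ for all $j$ is available the identity theorem finishes everything in two lines.
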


\begin{proof} By Lemma \ref{moi}, we have the mass-one inequality
$$\sum_{n=0}^\infty \frac {\babs{\,\d^j R(z)\,}^{\,2}}{j!}\le R(z),\qquad (z\in\C),$$
so since $R(z_0)=0$ we have $\d^j R(z_0)=0$ for all $j$. However,
$\d^j R(z_0)=\d_1^j \Psi(z_0,z_0)$ and $\dbar^j R(z_0)=\dbar_2^j\Psi(z_0,z_0)$
so the Hermitian function $\Psi(z,w)$ vanishes whenever $(z-z_0)(w-z_0)=0$. Hence we can write $\Psi(z,w)=(z-z_0)(\bar{w}-\bar{z}_0)\Psi_1(z,w)$ where $\Psi_1$ is another Hermitian-entire function. If we define $\tilde{R}(z)=\Psi_1(z,z)$ we now have
$R(z)=\babs{\,z-z_0\,}^{\,2}\tilde{R}(z)$.

To prove the second statement, assume that the zeros of $R$ have an accumulation point, i.e. that there exists a convergent sequence $(z_j)_1^\infty$ of distinct zeros of $R$. Fix a point $w$ and put $\psi_w(z)=\Psi(z,w)$. By the argument above, we have that $\Psi(z,w)=0$ if $z=z_j$, so the holomorphic function $\psi_w$ vanishes
at all points $z_j$, whence $\psi_w$ vanishes identically. Since $w$ was arbitrary,
$\Psi=0$ and hence $R(z)=\Psi(z,z)=0$.
\end{proof}

Note that Lemma \ref{ma1} says that the distribution $1+\Lap\log R$ is a positive measure.

\begin{lem} In the situation of lemma \ref{isol}, the measures
$1+\Lap\log\tilde{R}$ is positive in a neighbourhood of $z_0$.
\end{lem}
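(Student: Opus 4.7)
The plan is to derive the subharmonicity of $\babs{z-z_0}^{\,2}+\log\tilde R$ from the corresponding property of $R$ (Lemma \ref{ma1}) by a classical removable singularity argument.

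For notational convenience take $z_0=0$, and fix a neighborhood $U$ of $0$ on which $\tilde R$ is defined and real-analytic. Set $u:=\babs{z}^{\,2}+\log R$ and $v:=\babs{z}^{\,2}+\log\tilde R$; the desired positivity $1+\Lap\log\tilde R\ge 0$ on $U$ is exactly the subharmonicity of $v$ on $U$. From the factorization $R(z)=\babs{z}^{\,2}\tilde R(z)$ we have the identity
$$v(z)=u(z)-\log\babs{z}^{\,2},\qquad z\in U\setminus\{0\}.$$
By Lemma \ref{ma1}, $u$ is subharmonic on all of $\C$, and $\log\babs{z}^{\,2}$ is harmonic on $U\setminus\{0\}$. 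Consequently $v$ is subharmonic on the punctured neighborhood $U\setminus\{0\}$.

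It remains to extend subharmonicity across the point $0$. Since $\tilde R$ is real-analytic on $U$, it is bounded on any compact subset of $U$, so $\log\tilde R$, and therefore $v$, is bounded above near $0$. The classical removable singularity theorem for subharmonic functions then yields a unique subharmonic function $\hat v$ on $U$ which agrees with $v$ on $U\setminus\{0\}$. Because $\hat v$ and $v$ differ at most at the single point $0$, they coincide as distributions on $U$, so $\Lap v=\Lap\hat v\ge 0$. This is exactly the asserted positivity.

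The only delicate step is the boundedness-above hypothesis in the removable singularity theorem. This is where continuity of $\tilde R$ (guaranteed by its real-analyticity) is essential: even in the potentially awkward case $\tilde R(0)=0$, the function $\log\tilde R$ has a $-\infty$ singularity at the origin but remains bounded above on a compact neighborhood. Everything else is routine.
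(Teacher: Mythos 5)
Your argument is correct and follows essentially the same route as the paper: you identify $v=\babs{z-z_0}^{\,2}+\log\tilde R$ (the paper's $S=\log(e^{|z|^2}\tilde R)$) as subharmonic on the punctured disc because its Laplacian there equals $1+\Lap\log R$, and then you extend across $z_0$. The only cosmetic difference is that the paper verifies the extension by hand (sub-mean value property plus upper semicontinuity of $S$, splitting into the cases $\tilde R(z_0)>0$ and $\tilde R(z_0)=0$), whereas you invoke the classical removable-singularity theorem for subharmonic functions, with the needed bounded-above hypothesis correctly supplied by the real-analyticity of $\tilde R$.
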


\begin{proof} Let $z_0$ be a zero of $R$ and
let $\chi=\1_{D}$ be the characteristic function of some small disc
$D=D(z_0;\eps)$ about $z_0$. Put
$\mu=\chi\cdot (1+\Lap\log R)$ so $\mu$ is a positive measure by the previous lemma. By lemma \ref{isol}, we can write
$\mu=\delta_{z_0}+\chi\cdot (1+\Lap\log\tilde{R})$, so the function
$$S(z):=\log\left(e^{|z|^2}\tilde{R}(z)\right)$$
must satisfy that $\Lap S\ge 0$ in the sense
of distributions on the punctured disc $D'=D\setminus\{z_0\}$.
If $\tilde{R}(z_0)>0$ then $S$ extends analytically to $z_0$ and is hence
subharmonic in $D$.
 Otherwise $S(z_0)=-\infty$. Then $S$ has the sub-mean value property in $D$. Since $S$ is also upper semicontinuous, $S$ is subharmonic in the entire disc $D$ as desired.
\end{proof}

Now let $K=G\Psi$ be a non-trivial limiting kernel. Referring to a suitable subsequence we write $K=\lim c_{n_k}K_{n_k}$, etc., and
$$R(z)=\lim_{k\to\infty}R_{n_k}(z)=K(z,z)=\Psi(z,z).$$
Write $Z$ for the set of isolated zeros of $R$. When $z\not\in Z$ we can write
$$B(z,w)=\frac {\babs{\,K(z,w)\,}^{\,2}}{K(z,z)}$$
and define
\begin{equation}\label{cont}C(z)=\int_\C \frac {B(z,w)}{z-w}\, dA(w).\end{equation}
This clearly defines a smooth function in the complement of $Z$.

\begin{lem} \label{belem} $C_{n_k}$ converges boundedly and locally uniformly on $Z^c$ to $C$ as $k\to\infty$. In particular, $C$ is uniformly bounded on $Z^c$.
\end{lem}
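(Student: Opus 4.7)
The strategy is to split $C_n(z)=\int_\C B_n(z,w)/(z-w)\,dA(w)$ into three regions depending on the size of $|z-w|$: a small disc $|z-w|<\eps$, a compact annulus $\eps\le|z-w|\le M$, and a tail $|z-w|>M$; each piece will be controlled uniformly in $n$, and then the middle piece will be shown to converge.

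For the tail the decisive input is the exact mass-one identity $\int_\C B_n(z,w)\,dA(w)=1$ from \eqref{mocc1}, which immediately yields
$$\left|\int_{|z-w|>M}\frac{B_n(z,w)}{z-w}\,dA(w)\right|\le\frac{1}{M},$$
uniformly in $n$ and in $z$. For the singular region, Cauchy--Schwarz applied to the positive matrix $K_n$ gives the pointwise bound $B_n(z,w)\le K_n(w,w)=R_n(w)$, and Lemma \ref{AL2} (together with $|G(w,w)|=1$) gives a uniform local-in-$w$ bound $R_n(w)\le C$. Hence the singular contribution is at most $2C\eps$, again uniform in $n$.

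For the middle region, I fix a compact set $K\subset Z^c$. Since by Lemma \ref{isol} the zeros of $R$ are isolated, $\inf_K R>0$, and the locally uniform convergence $R_{n_k}\to R$ then forces $R_{n_k}\ge c>0$ on $K$ for $k$ large. Combined with the uniform convergence $c_{n_k}K_{n_k}\to K$ on compact subsets of $\C^2$, this gives $B_{n_k}\to B$ uniformly on $K\times\{\eps\le|z-w|\le M\}$, so the integral over the middle region converges uniformly in $z\in K$. Sending first $k\to\infty$, then $\eps\to 0$ and $M\to\infty$, yields the desired uniform convergence $C_{n_k}\to C$ on $K$.

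Finally, the uniform boundedness of $C$ on $Z^c$ is obtained by applying the same splitting directly to $B$: the tail of $B$ is controlled by Fatou's lemma applied to $\int B_{n_k}\,dA\equiv 1$, so $\int_{|z-w|>1}B(z,w)\,dA(w)\le 1$; the singular piece is bounded using $B(z,w)\le R(w)\le 1$, where the estimate $R\le 1$ comes from the $n=0$ term of the mass-one inequality \eqref{ipde}. This gives $|C(z)|\le 3$ for all $z\in Z^c$. The main technical obstacle is precisely the uniform tail control: the exact mass-one identity for the pre-limit Berezin kernels $B_n$ is the essential tool, and it is exactly this identity (in its limiting inequality form from Theorem \ref{TT1}) that guarantees no mass escapes to infinity and makes $C$ well-defined and uniformly bounded.
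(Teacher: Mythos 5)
Your argument is correct and relies on the same two essential ingredients as the paper's proof: the locally uniform convergence $c_{n_k}K_{n_k}\to K$ (together with $\inf_K R>0$ on compacts $K\subset Z^c$) to pass to the limit where $w$ is at a bounded distance from $z$, and the exact mass-one identity $\int_\C B_n(z,w)\,dA(w)=1$ from \eqref{mocc1} to kill the tail. The paper accomplishes this with a single two-region split at radius $1/\eps$ (letting the integrability of $1/|z-w|$ absorb the near-diagonal singularity as $\eps\to0$), whereas you use a three-region decomposition with separate parameters $\eps$ and $M$; the content is the same.
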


\begin{proof} %Let $M$ be a number such that $R_{n_k}\le M$ for all $k$ and note that
%$B_{n_k}(z,w)\le R_{n_k}(w)\le M$.
%We shall use the elementary inequality
%\begin{equation}\label{elemi}\babs{B_n(z,w)-B_m(z,w)}\le \frac {R_m(w)}{R_n(z)}\babs{R_m(z)-R_n(z)}+\frac {\babs{\babs{K_n(z,w)}^2-\babs{K_m(z,w)}^2}}
%{R_n(z)},\end{equation}
%which follows from the definition $B_n(z,w)=\babs{K_n(z,w)}^2/R_n(z)$ after some simple manipulations. When convenient, we can here choose indices so that $R_n(z)\ge R_m(z)$. Then \eqref{elemi} implies
%\begin{equation}\label{elii}\babs{B_n(z,w)-B_m(z,w)}\le
%R_n(w)+2R_m(w).\end{equation}
%This estimate is useful when $z\in Z$ and $w$ is close to $z$.

Fix a number $\eps$ with $0<\eps<1$. By %\eqref{elemi}, \eqref{elii}, and
the locally uniform convergence $c_{n_k}K_{n_k}\to K$ we can pick $N$ such that if $k>N$,
$|\,z\,|<1/\eps$, $\dist(z,Z)\ge \eps$, and $|\,w\,|<2/\eps$, then
$$\babs{\,B_{n_k}(z,w)-B(z,w)\,}<\eps^{\,2}.$$
For $k>N$ and $z$ with $\dist(z,Z)\ge \eps$, $|\,z\,|<1/\eps$ it follows that
\begin{align*}\babs{\,C_{n_k}(z)-C(z)\,}&\le \int\babs{\,\frac {B_{n_k}(z,w)-B(z,w)}
{z-w}\,}\, dA(w)\\
&\le\left(\int_{\babs{\,z-w\,}<1/\eps}+\int_{\babs{\,z-w\,}>1/\eps}\right)
\babs{\,\frac {B_{n_k}(z,w)-B(z,w)}{z-w}\,}\, dA(w)\\
&\le
\eps^{\,2} \int_{\babs{\,z-w\,}<1/\eps}\frac 1 {\babs{\,z-w\,}}\, dA(w) +
\eps\int\babs{\,B_{n_k}(z,w)-B(z,w)\,}\, dA(w)
\le 4 \eps.
\end{align*}
%[There is a minor type-o in the last line of the corresponding inequality in our preprint, p. 22.]

We have shown that the convergence $C_{n_k}\to C$ is uniform on compact subsets of $Z^c$.
%limit $C=\lim C_{n_k}$ defines a continuous function,
%which
%obviously equals to $\int \frac {B(z,w)}{z-w}\, dA(w)$ when $z\not\in Z$.
We next recall the inequalities
$$B_{n_k}(z,w)\le R_{n_k}(w)\le 1+o(n_k)$$
where $o(n_k)\to 0$ uniformly on compacts as $k\to\infty$. (Note that $R\le 1$, since the mass-one inequality implies $R-R^2\ge 0$).
It follows that
$$\babs{\,C_{n_k}(z)\,}\le (1+o(n_k))\int_{\babs{w-z}\le 1} \frac 1 {\babs{\,w-z\,}}\, dA(w)+\int_{\babs{\,w-z\,}>1}
B_{n_k}(z,w)\, dA(w)\le 3+o(n_k).$$
This proves the uniform bound $\babs{\,C(z)\,}\le 3$ on the complement of $Z$.
\end{proof}

\begin{lem} Suppose that $K$ is non-trivial. Then
Ward's equation
\begin{equation}\label{wwaarr}\dbar C=R-1-\Lap \log R\end{equation} holds in the
sense of distributions.
\end{lem}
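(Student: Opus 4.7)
The plan is to pass the pre-limit Ward identity of Theorem \ref{THL} to the distributional limit along the subsequence $(n_k)$ defining $K = G\Psi$. Writing Theorem \ref{THL} as
\[
\dbar C_{n_k} = R_{n_k} - 1 - \Lap \log R_{n_k} + o(1),
\]
with $o(1)\to 0$ locally uniformly, I will verify distributional convergence of each non-trivial term on the right-hand side, together with $\dbar C_{n_k}\to \dbar C$, and then simply conclude.

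Two of the three terms are straightforward. Since $c_{n_k}K_{n_k} \to K$ locally uniformly, $R_{n_k} \to R$ locally uniformly, which gives $L^1_{\rm loc}$ and hence distributional convergence. For the left-hand side, Lemma \ref{belem} provides a uniform bound on $C_{n_k}$ together with pointwise convergence to $C$ on $Z^c$; since $Z$ is discrete by Lemma \ref{isol} and therefore Lebesgue-null, dominated convergence yields $C_{n_k} \to C$ in $L^p_{\rm loc}(\C)$ for every $p<\infty$, and in particular $\dbar C_{n_k} \to \dbar C$ in the sense of distributions.

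The delicate term is $\Lap \log R_{n_k}$, since $R$ may vanish at the isolated points of $Z$ and there is no direct control on how fast $R_{n_k}(z)$ decays there. The idea is to exploit subharmonicity: the RKHS/Cauchy--Schwarz argument of Lemma \ref{ma1} carries over verbatim to each pre-limit kernel, because $K_n$ is a positive matrix, and shows that $u_n(z) := \log R_n(z) + |z|^2$ is subharmonic on $\C$ for every $n$. The bound $R \le 1$, which follows from the mass-one inequality by retaining only the $n=0$ term in \eqref{ipde}, combined with locally uniform convergence $R_{n_k}\to R$, ensures that $u_{n_k}$ is locally uniformly bounded above; meanwhile $u_{n_k} \to u := \log R + |z|^2$ pointwise on the full-measure set $Z^c$. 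By the classical compactness theorem for subharmonic functions (e.g.\ H\"ormander, Theorem 4.1.9), $u_{n_k} \to u$ in $L^1_{\rm loc}(\C)$, and since $\Lap |z|^2 = 1$, this yields $\Lap \log R_{n_k} \to \Lap \log R$ in distributions.

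Combining the three convergences and sending $k\to\infty$ in the pre-limit identity gives $\dbar C = R - 1 - \Lap \log R$ as distributions on $\C$, which is the statement of the lemma. The main obstacle is the subharmonic compactness step: without the pointwise subharmonicity of $u_n$ to hand, controlling $\log R_n$ near an accumulation of zeros of $R$ would be awkward, and nothing in locally uniform convergence $R_{n_k}\to R$ alone is strong enough to pass to the $\Lap \log$ derivative in the presence of possible zeros.
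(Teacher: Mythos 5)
Your route differs genuinely from the paper's. The paper passes to the limit in the pre-limit Ward identity and then asserts, rather briefly, that since $R_{n_k}\to R$ locally uniformly the distributional limit of $\Lap\log R_{n_k}$ must equal $\Lap\log R$; your idea of invoking subharmonic-function compactness is precisely the mechanism that would make that identification rigorous near the (a priori possible) zeros of $R$, and your treatment of the other two terms ($\dbar C_{n_k}\to\dbar C$ via Lemma \ref{belem} and dominated convergence; $R_{n_k}\to R$ locally uniformly) matches the paper.

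There is, however, a concrete error in the pre-limit subharmonicity step. The claim that the Cauchy--Schwarz argument of Lemma \ref{ma1} carries over verbatim to show $u_n(z):=\log R_n(z)+|z|^{2}$ subharmonic is not correct: that argument requires the holomorphic kernel to be a positive matrix, but $e^{z\bar w}\Psi_n(z,w)=K_n(z,w)\bigl/\bigl(K_n^\#(z,w)e^{-z\bar w}\bigr)$ is not known to be one, since $K_n^\#(z,w)e^{-z\bar w}$ is not a rank-one positive kernel (the Hermitian-analytic factor $(\d_1\dbar_2 A)(\zeta,\eta)e^{nA(\zeta,\eta)-z\bar{w}}$ is not separable unless $Q$ is exactly quadratic). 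Concretely, from $R_n(z)=\bfk_n(\zeta,\zeta)e^{-nQ(\zeta)}/(n\Lap Q(p))$ one gets $\Lap_z\bigl(\log R_n(z)+|z|^{2}\bigr)=\Lap_z\log\bfk_n(\zeta,\zeta)-\Lap Q(\zeta)/\Lap Q(p)+1$, and the last two terms can sum to something negative, so $u_n$ generally fails to be subharmonic. The quantity that is exactly subharmonic is $\log\bfk_n(\zeta(z),\zeta(z))=\log R_n(z)+nQ(\zeta(z))+\log(n\Lap Q(p))$, because $\bfk_n$ is the reproducing kernel of the holomorphic space $\calP_n$. Since $nQ(\zeta(z))=|z|^{2}+(\text{harmonic in }z)+o(1)$ with $o(1)\to 0$ in $C^2$ on compacts (Taylor expansion about $p$; compare the proof of Lemma \ref{cocy}), the correct substitute for $u_n$ is $v_n(z):=\log R_n(z)+nQ(\zeta(z))-(\text{harmonic Taylor part})$, which is honestly subharmonic, locally uniformly bounded above, and converges pointwise on $Z^c$ to $\log R(z)+|z|^{2}$. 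With $v_n$ in place of $u_n$ your compactness argument goes through and closes the gap; as stated, the appeal to subharmonicity of $\log R_n+|z|^{2}$ is not justified.
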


\begin{proof}
 By Theorem 4.2, we know that
 $\dbar C_{n}=R_n-1-\Lap\log R_n+o(1)$ where "$o(1)$'' is some function
 which converges to $0$ uniformly on compacts as $n\to \infty$. By Lemma \ref{belem}, the functions $C_{n_k}$ converge to $C$ boundedly and locally uniformly on $Z^c$.
 Since $Z$ is discrete, this implies that $\int C_{n_k}f\, dA\to \int Cf\, dA$ for each test-function $f$, viz. $C_{n_k}\to C$ in the sense of distributions, and also
 $\dbar C_{n_k}\to \dbar C$ in that sense. It follows that the functions $\Lap\log R_{n_k}$ converge in the sense of distributions.
 Since $R_{n_k}\to R$ locally uniformly, they must converge to $\Lap \log R$, which finishes the proof.
\end{proof}

\begin{thm} \label{nt} If $R$ does not vanish identically, then $R>0$ everywhere. Moreover, Ward's equation \eqref{wwaarr} holds pointwise on $\C$.
\end{thm}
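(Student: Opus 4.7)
The strategy is to argue by contradiction: I assume that $R(z_0)=0$ at some point $z_0$ and then extract a contradiction with the boundedness of the Cauchy transform $C$ obtained in Lemma \ref{belem}. Once zeros are ruled out, the distributional Ward's equation automatically upgrades to a pointwise identity since both sides become smooth.

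First, suppose $R(z_0)=0$ for some $z_0\in\C$. By Lemma \ref{isol}, $z_0$ is an isolated zero of $R$ and
\[
R(z)=|z-z_0|^{\,2}\,\tilde R(z)
\]
for some real-analytic $\tilde R$. Taking $\Lap\log$ in the distributional sense on a small disc about $z_0$ gives
\[
\Lap\log R=\tfrac12\,\delta_{z_0}+\Lap\log\tilde R
\]
(up to the normalization convention in the paper), where $\Lap\log\tilde R$ is locally integrable near $z_0$ by the preceding lemma (which tells us that $1+\Lap\log\tilde R$ is a positive Radon measure in a neighbourhood of $z_0$). In particular, the right-hand side of Ward's equation
\[
\dbar C=R-1-\Lap\log R
\]
contains a negative point mass at $z_0$, plus a distribution without atoms at $z_0$.

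Now I use Lemma \ref{belem}: $C$ is uniformly bounded on $Z^c$, and $Z$ is discrete, so $C$ extends to a bounded (hence locally integrable) function on all of $\C$. A bounded distribution cannot have $\dbar$-derivative containing a point mass: indeed, if $\dbar C=c\,\delta_{z_0}+(\text{less singular})$ with $c\neq 0$, then convolution with the fundamental solution $1/(\pi\bar z)$ of $\dbar$ would force $C$ to have a $(\bar z-\bar z_0)^{-1}$ singularity at $z_0$, contradicting boundedness. Hence the atom at $z_0$ must vanish, which is absurd since its coefficient is a fixed negative constant. This contradiction forces $Z=\emptyset$, i.e. $R>0$ everywhere.

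With $R>0$ throughout $\C$, $\log R$ is real-analytic (since $\Psi$ is Hermitian-entire and strictly positive on the diagonal), so $\Lap\log R$ is smooth; similarly the Berezin kernel $B(z,w)=|K(z,w)|^{\,2}/K(z,z)$ is smooth and the integral defining $C(z)$ in \eqref{cont} converges to a smooth function on $\C$ (the singularity of $1/(z-w)$ is integrable and $B(z,w)\le R(w)\le 1$ provides decay away from the diagonal, just as in the proof of Lemma \ref{belem}). Both sides of Ward's equation are then smooth functions, and since the identity holds in the sense of distributions, it holds pointwise on $\C$.

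The main obstacle is the second paragraph: making precise the claim that a bounded distribution cannot acquire a point mass under $\dbar$. This is standard (convolution with $1/(\pi \bar z)$, or equivalently an application of Weyl's lemma-type regularity for $\dbar$), but one must be careful that the ``less singular'' remainder in $\dbar C$ is indeed regular enough that subtracting the model singularity $c/(\pi(\bar z-\bar z_0))$ leaves something locally bounded; this is guaranteed by the local integrability of $\Lap\log\tilde R$ near $z_0$, established in the lemma immediately preceding the statement.
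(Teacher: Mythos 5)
Your proposal follows essentially the same route as the paper: the zero of $R$ at $z_0$ forces a Dirac mass in $\Lap\log R$, hence a point mass in $\dbar C$, which is incompatible with the boundedness of $C$ established in Lemma \ref{belem}; once $R>0$ everywhere, smoothness upgrades the distributional identity to a pointwise one. The paper simply repackages the argument by introducing the auxiliary Cauchy transforms $C^\mu$ and $C^\nu$ (with $\mu=\1_D\cdot(1+\Lap\log R)=\delta_{z_0}+\nu$, both positive) and showing that $C+C^\mu$ is smooth by Weyl's lemma, but the contradiction is the same.

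Two small corrections. First, the fundamental solution of $\dbar$ in the paper's normalization is $1/(z-z_0)$, not $1/(\bar z-\bar z_0)$: the Cauchy transform $C^\mu(z)=\int (z-w)^{-1}\,d\mu(w)$ satisfies $\dbar C^\mu=\mu$. Second, and more substantively, the resolution you point to for the gap you flag is not quite right: the lemma preceding Theorem \ref{nt} shows that $1+\Lap\log\tilde R$ is a \emph{positive measure} near $z_0$, not that $\Lap\log\tilde R$ is in $L^1_{\rm loc}$; and even if it were absolutely continuous, the Cauchy transform of an $L^1$ density need not be bounded, so ``subtracting the model singularity leaves something locally bounded'' would still not be automatic. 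What actually closes the gap is the more robust observation the paper relies on: a locally bounded function cannot have an atom in its $\dbar$-derivative, \emph{provided} $\dbar C$ is locally a measure (which is exactly what the preceding lemmata give). Concretely, test against $\phi_\eps(z)=\phi_0((z-z_0)/\eps)$ with $\phi_0\ge 0$, $\phi_0(0)=1$; then $\int|\dbar\phi_\eps|\,dA=O(\eps)\to 0$, so $\langle\dbar C,\phi_\eps\rangle\to 0$, while $\langle\dbar C,\phi_\eps\rangle\to(\dbar C)(\{z_0\})$, forcing the atom to vanish. (Also, with the paper's convention $\Lap\log|z-z_0|^2=\delta_{z_0}$ as a measure for $dA$, the Dirac coefficient is $1$, not $1/2$; you hedged this, and it is immaterial.)
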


\begin{proof} Suppose that $R(z_0)=0$. Let $D=D(z_0,\eps)$ be a small disc about $z_0$ and consider the measures
$\mu=\chi\cdot\left(1+\Lap\log R\right)$ and $\nu=\chi\cdot\left(1+\Lap\log\tilde{R}\right)$ where $\chi=\1_{D}$. By the previous lemmata
we know that the measures $\mu$ and $\nu$ are both positive, and clearly $\mu=\delta_{z_0}+\nu$.
Now consider the Cauchy transform
$$C^\mu(z)=\int_\C\frac 1 {z-w}\, d\mu(w).$$
Evidently
$$C^\mu(z)=\frac 1 {z-z_0}+C^\nu(z),\quad |z-z_0|<\eps,$$
and $\dbar C^\nu=\nu\ge 0$.
Now when $\babs{\,z-z_0\,}<\eps$,
the right hand side in Ward's equation equals
$$R(z)-(1+\Lap\log R)(z)=R(z)-\dbar C^\mu(z),$$
If
$$C(z):=\int \frac {B(z,w)}{z-w}\, dA(w)$$
is the left hand side in Ward's equation, we then have
$$\dbar\left(C(z)+C^\mu(z)\right)=R(z),$$
and hence (by Weyl's lemma)
$$C(z)=-\frac 1 {z-z_0}-C^\nu(z)+v(z)$$
where $v$ is smooth in some neighbourhood of $z_0$. If $C^\mu(z)$ remains bounded as
$z\to z_0$, then $\mu=\nu+\delta_{z_0}$ cannot
contain any point mass at $z_0$, so $\nu$ can be written $-\delta_{z_0}+\rho$ where $\rho(\{z_0\})=0$. This contradicts the fact that $\nu\ge 0$.
Hence
\begin{equation}\label{contr}\babs{\,C(z)\,}\to\infty\quad \text{as}\quad  z\to z_0.\end{equation}
This contradicts the boundedness of $C$ in Lemma \ref{belem}.
Hence $R(z_0)= 0$
is impossible.

We have shown that $R>0$ everywhere. Since Ward's equation $\dbar C=R-1-\Lap \log R$ holds in the sense of distributions and the right hand side is smooth, and application of Weyl's lemma now shows that $C(z)$ is smooth and that Ward's equation holds pointwise.
\end{proof}

\subsection{Holomorphic kernels and complementarity}\label{logs}
We now prove part \ref{TT1.5_3} of Theorem \ref{TT1.5}, i.e., we prove that
if $K=G\Psi$ is a limiting kernel, then the
complementary kernel $\tilde{K}=G(1-\Psi)$ is a positive matrix.

It is convenient to first prove the corresponding properties
for the holomorphic kernel
$$L(z,w):=e^{z\bar{w}}\Psi(z,w).$$
We shall find that $L$ is
the reproducing kernel for a certain Hilbert space $\calH_*$ of entire functions which is contractively embedded in the Fock space. By "Fock space'', we mean the Bergman space $L^2_a(\mu)$
of entire functions square-integrable with respect to the measure
$$d\mu(z)=e^{-|z|^2}\, dA(z).$$
We shall also prove that the \textit{complementary} holomorphic kernel
$\tilde{L}(z,w):=e^{z\bar{w}}(1-\Psi(z,w))$ is a positive matrix. Our proof
of the latter fact depends on a theorem of Aronszajn on differences of reproducing kernels.

%The following construction was carried out together with Aron Wennman in connection
%with our work on singular points in \cite{AKMW}. We are grateful to be able to reproduce it here.

%To construct $\calH_*$ we shall modify an argument from the section 8.4 about the
%mass-one equation in \cite{AKM}. To compare, it might be a good idea that
%you have the remark on p.55 available in the following.

To set things up, suppose that we rescale about the boundary point $p=0$ in the positive real direction. Suppose also that $\Lap Q(0)=1$. The rescaling is then simply
$$z=\sqrt{n}\zeta,\qquad w=\sqrt{n}\eta,\quad \text{etc.}$$
Let $A(\zeta,\eta)$ be the Hermitian-analytic function defined in a neighbourhood of the diagonal such that $A(\zeta,\zeta)=Q(\zeta)$. Recall (cf. Section 3) that, along some subsequence, we have
$\Psi=\lim\Psi_{n}$ where
\begin{equation}\label{pn}\Psi_n(z,w):=\frac {K_n(z,w)}{K_n^\#(z,w)}=\frac {\bfk_n(\zeta,\eta)}
{n(\d_1\dbar_2 A)(\zeta,\eta)e^{nA(\zeta,\eta)}}.\end{equation}

By Taylor's formula there is $\delta>0$ such that
\begin{equation}\label{mann}A(\zeta,\eta)=\zeta\bar{\eta}+f(\zeta,\eta)+H(\zeta)+\bar{H}(\eta),
\qquad \babs{\,\zeta\,}<\delta,\,\babs{\,\eta\,}<\delta
\end{equation}
where
$$H(\zeta)=\sum_{j=0}^2\frac {\d_1^j A(0,0)}{j!}\zeta^j,\qquad f(\zeta,\eta)=\sum_{j+k\ge 3}
(\d_1^j\dbar_2^k A)(0,0)\frac {\zeta^j}{j!}\frac{\bar{\eta}^k}{k!}.$$
Let us extend $f$ to a continuous function on $\C^2$ in some way. For example we can require that $f(z,w)=0$
when $\babs{\,\zeta\,}+\babs{\,\eta\,}>3\delta$. The function $H$ is of course well-defined everywhere, being a second-degree
polynomial.
It is important to observe that
$$f(\zeta,\eta)=O\left(\left\|\,(\zeta,\eta)\,\right\|^{\,3}\right)\quad \text{as}\quad (\zeta,\eta)\to (0,0).$$

Put
$$A_n(z,w):=nA(\zeta,\eta)=nA\left(z/\sqrt{n},w/\sqrt{n}\right),$$
and similarly
$$f_n(z,w):=nf\left(z/\sqrt{n},w/\sqrt{n}\right),\quad H_n(z):=nH\left(z/\sqrt{n}\right).$$
The equation \eqref{mann} then gives
\begin{equation}\label{ann}A_n(z,w)=z\bar{w}+f_n(z,w)+H_n(z)+\bar{H}_n(w).\end{equation}

Now define
$$E_n(z,w):=e^{z\bar{w}+f_n(z,w)}$$
and
$$L_n(z,w):=E_n(z,w)\cdot \Psi_n(z,w).$$
Then by \eqref{pn} and \eqref{mann}
$$L_n(z,w)=\frac 1 n\bfk_n\left(\frac z{\sqrt{n}},\frac w{\sqrt{n}}\right)e^{-A_n(z,w)+z\bar{w}+f_n(z,w)}=
k_n(z,w)e^{-H_n(z)-\bar{H}_n(w)}.$$
Next define an $n$-dimensional Hilbert space
$$\calH_n=\left\{f=q\cdot e^{-H_n};\, q\in\calP_n\right\}$$
equipped with the norm of $L^2(\mu_n)$ where
$$d\mu_n(z):=e^{-|z|^2-f_n(z,z)}\, dA(z).$$
Note that $\calH_n$ consists of entire functions.
Finally recall that
$$L(z,w)=e^{z\bar{w}}\Psi(z,w).$$

\begin{lem} \label{ln} $L_n$ is the reproducing kernel for $\calH_n$ and $L_n\to L$ locally uniformly on $\C^2$ as $n\to\infty$. Moreover, for all $z\in\C$ we have
$\int_\C\babs{L(z,w)}^2e^{-|w|^2}\, dA(w)<\infty.$
\end{lem}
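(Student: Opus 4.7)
The plan is to establish the three claims of the lemma in sequence: the reproducing kernel identification, the locally uniform convergence, and the $L^2$-integrability.

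For the reproducing kernel property, I would first note that $z \mapsto L_n(z,w)$ lies in $\calH_n$ for each fixed $w$, because from the explicit formula
\[
L_n(z,w) = k_n(z,w) e^{-H_n(z)-\bar H_n(w)}
\]
and $k_n(z,w) = \tfrac{1}{n}\bfk_n(z/\sqrt{n},w/\sqrt{n})$, the $z$-dependence is a polynomial of degree at most $n-1$ times $e^{-H_n(z)}$ times a scalar. To verify the reproducing identity for $F = q\, e^{-H_n}\in \calH_n$, the key step is the identity
\[
H_n(z)+\bar H_n(z)+|z|^2+f_n(z,z)=A_n(z,z)=nQ(z/\sqrt{n}),
\]
which is just equation~\eqref{ann} specialized to $w=z$. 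Substituting this into $\langle F,L_n(\cdot,w)\rangle_{\calH_n}$ absorbs all the $H_n$, $f_n$, and $|z|^2$ weights into $e^{-nQ(z/\sqrt{n})}$; a change of variables $z=\sqrt{n}\xi$ then turns the inner product into
\[
e^{-H_n(w)}\int q(\sqrt{n}\xi)\,\overline{\bfk_n(\xi,w/\sqrt{n})}\,e^{-nQ(\xi)}\,dA(\xi),
\]
which by the reproducing property of $\bfk_n$ on $\calP_n$ equals $q(w)e^{-H_n(w)}=F(w)$.

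For the locally uniform convergence $L_n\to L$, since $L_n = E_n\,\Psi_n$ and $L = e^{z\bar w}\Psi$, and along the chosen subsequence $\Psi_n\to\Psi$ locally uniformly on $\C^2$ by the compactness argument from Theorem~\ref{cpthm}, it suffices to show $E_n\to e^{z\bar w}$ locally uniformly, equivalently $f_n\to 0$. This is immediate from the cubic vanishing $f(\zeta,\eta)=O(\|(\zeta,\eta)\|^3)$ at the origin: on any compact set $\|(z,w)\|\le M$, one has $|f_n(z,w)|=n\cdot O(M^3/n^{3/2})=O(n^{-1/2})$.

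For the $L^2$-integrability, a direct calculation gives
\[
|L(z,w)|^2 e^{-|w|^2} = e^{|z|^2}\, e^{-|z-w|^2}\,|\Psi(z,w)|^2,
\]
and the mass-one inequality~\eqref{mocio} from Theorem~\ref{TT1}\ref{TT1_2} (established in the previous section) yields
\[
\int_\C |L(z,w)|^2 e^{-|w|^2}\,dA(w)\le e^{|z|^2}\,\Psi(z,z)=e^{|z|^2}R(z)<\infty.
\]
The main obstacle will be the bookkeeping in the reproducing-kernel step: one must track the scaling factors of $\sqrt{n}$ and $1/n$ between $\bfk_n$ and $k_n$, between $H$ and $H_n$, and between $A$ and $A_n$ carefully enough so that the assorted exponential weights combine exactly to $e^{-nQ}$ after the change of variables. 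The other two assertions are essentially routine consequences of Theorem~\ref{cpthm} and the Taylor decomposition~\eqref{ann} used to define $L_n$.
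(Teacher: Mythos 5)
Your proposal is correct and follows essentially the same route as the paper: identify the exponential weights via \eqref{ann} specialized to $w=z$ so they collapse to $e^{-nQ(z/\sqrt{n})}$, rescale and invoke the reproducing property of $\bfk_n$ in $L^2_{nQ}$; note $f_n = O(n^{-1/2})$ on compacts to get $E_n\to e^{z\bar w}$; and bound the integral via the mass-one inequality \eqref{mocio}. The bookkeeping you flag as the main obstacle does indeed work out (the Jacobian $n$ from $z=\sqrt{n}\zeta$ cancels the $1/n$ in $k_n$), and your version is complete.
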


\begin{proof} To show that $L_n\to L$ locally uniformly, it suffices to note that
$$E_n(z,w)=e^{z\bar{w}+nf(z/\sqrt{n},w/\sqrt{n})}=e^{z\bar{w}+O(n^{-1/2})},\quad
(n\to\infty),$$
where the $O$-constant is uniform on compact subsets of $\C^2$.
Moreover the mass-one inequality shows that
\begin{align*}\int_\C\babs{\,L(z,w)\,}^{\,2}e^{-\,|\,w\,|^{\,2}}\, dA(w)=e^{\,|\,z\,|^{\,2}}
\int_\C e^{-\,\babs{\,z-w\,}^{\,2}}\babs{\,\Psi(z,w)\,}^{\,2}\, dA(w)\le e^{\,|\,z\,|^{\,2}}\Psi(z,z).\end{align*}

It remains to show that $L_n$ has the reproducing property stated above.
Write $L_{n,w}(z)=L_n(z,w)$.
For an element $f=q\cdot e^{-H_n}$ of $\calH_n$ we then have
\begin{align*}\left\langle f,L_{n,w}\right\rangle_{\calH_n}&=
\int_\C q(z)\,e^{-\,H_n(z)}\,\bar{L}_n(z,w)\, e^{-\,|\,z\,|^{\,2}\,-\,f_n(z,z)}\, dA(z)\\
&=e^{-\,H_n(w)}\,\int_\C q(z)\,\bar{k}_n(z,w)\,e^{-\,H_n(z)\,-\bar{H}_n(z)\,-\,|\,z\,|^{\,2}\,-\,f_n(z,z)}\, dA(z).
\end{align*}
The expression in the exponent equals $-Q_n(z):=-nQ(z/\sqrt{n})$.
Writing $\tilde{q}(\zeta)=q(z)$ and recalling that $\bfk_n$ is the reproducing kernel
for the subspace $\calP_n$ of polynomials of the space $L^2_{nQ}$ normed by
$\left\|\,p\,\right\|_{nQ}^2:=\int \babs{\,p\,}^{\,2}\,e^{-nQ}\, dA$, we now find
\begin{align*}\left\langle f,L_{n,w}\right\rangle_{\calH_n}
&=e^{-\,H_n(w)}\,\left\langle \tilde{q},\bfk_{n,\eta}\right\rangle_{nQ}=e^{-\,H_n(w)}\,\tilde{q}(\eta)=
e^{-\,H_n(w)}\,q(w)=f(w).
\end{align*}
Also $L_{n,w}$ belongs to the space $\calH_n$ since $L_{n,w}(z)=C_w k_{n,w}(z)e^{-H_n(z)}$ where $C_w=e^{-\bar{H}_n(w)}$. The proof of the lemma is complete.
\end{proof}

We can now finish the proof of part \ref{TT1.5_3} of Theorem \ref{TT1.5}.

Let $\calM$ be the algebraic linear span of the kernels $L_z$, $z\in\C$, with semi-definite inner product $\left\langle L_z,L_w\right\rangle_*:=L(w,z)$.
By the zero-one law we can assume that $L(z,z)>0$ for all $z$, so the inner product is
actually a true (positive definite) inner product. By Fatou's lemma and the convergence $L_n\to L$
we now derive a basic inequality (where $d\mu(z)=e^{-\,|\,z\,|^{\,2}}\,dA(z)$)
\begin{align*}\left\|\,\sum_{j=1}^N\alpha_j L_{z_j}\,\right\|_{L^2(\mu)}^{\,2}&\le
\liminf_{n\to\infty}\sum_{j,k=1}^N\alpha_j\bar{\alpha}_k \int_\C L_n(w,z_j)\bar{L}_n(w,z_k)\, d\mu_n(w)\\
&=\liminf_{n\to\infty} \sum_{j,k=1}^N\alpha_j\bar{\alpha}_k L_n(z_j,z_k)
=\sum_{j,k=1}^N \alpha_j\bar{\alpha}_k L(z_j,z_k)\\
&=\left\|\,\sum_{j=1}^N\alpha_jL_{z_j}\,\right\|_*^{\,2},
\end{align*}
so $\calM$ is contained in $L^2(\mu)$ and the inclusion $I:\calM\to L^2(\mu)$ is a contraction. (Moreover, the mass-one equation is equivalent to the statement that $I$
be isometric; see Section \ref{good} for related comments.)

In any case, it follows that the completion $\calH_*$ of $\calM$ can be regarded as a
(possibly non-closed) subspace of $L^2_a(\mu)$. We will write $\calH_\Psi$ for $\calH_*$ and speak of the space (of entire functions) associated to the kernel
$L(z,w)=e^{\,z\bar{w}\,}\,\Psi(z,w)$.

Note that the Fock space $L^2_a(\mu)$ has reproducing kernel $L_0(z,w)=e^{\,z\bar{w}}$.
Let us define a Hermitian entire function $\tilde{L}$ by $\tilde{L}=L_0-L$, i.e.
$$\tilde{L}(z,w):=e^{\,z\bar{w}}\,\left(1-\Psi(z,w)\right).$$

Since the inclusion $I:\calH_*\to L^2_a(\mu)$ is contractive, we can apply a theorem of Aronszajn (\cite{Ar}, Theorem II, p. 355), which implies that the corresponding reproducing kernels then satisfy that the difference $L_0-L$ is a positive matrix.

This implies that the kernel $\tilde{K}=G(1-\Psi)$ is a positive matrix as well, since
$\tilde{K}(z,w)=\tilde{L}(z,w)\,e^{-\,|\,z\,|^{\,2}/2\,-\,|\,w\,|^{\,2}/2}.$
The proof of Theorem \ref{TT1.5} part \ref{TT1.5_3} is complete. q.e.d.

%We are now ready to prove Ward's equation:
%\begin{equation}\label{wwaarr}\dbar C=R-1-\Lap\log R\quad \text{on}\quad\C.\end{equation}

% Fix an arbitrary $\eps$ in the interval $0<\eps<1$, and pick $N$ such that if $k>N$, $|\,z\,|<1/\eps$, and $|\,w\,|<2/\eps$, then
%$$\babs{\,B_{n_k}(z,w)-B(z,w)\,}<\eps^3.$$
% Since the right hand side is smooth and $C$ is continuous, the equation \eqref{wwaarr} holds pointwise, by Weyl's lemma.
%$\qed$

%\medskip
\subsection{Reformulation of Ward's equation}
It is convenient to somewhat reformulate Ward's equation. Given a Hermitian-entire function
$\Psi$ (positive on the diagonal in $\C^2$) we define the functions
$$R(z)=R^\Psi(z)=\Psi(z,z)$$
and
\begin{equation}\label{dex}D(z)=D^\Psi(z)=\int\frac {e^{\,-\,|\,z-w\,|^{\,2}}}{z-w}\babs{\,\Psi(z,w)\,}^{\,2}\, dA(w).\end{equation}
Thus $D=RC$.

\begin{lem} \label{psiprop} Ward's equation \eqref{wwaarr} is satisfied if and only if there exists a smooth function $P(z)$ such that
\begin{equation}\label{I}\dbar P=R-1\end{equation}
and
\begin{equation}\label{II}D=PR-\d R.\end{equation}
\end{lem}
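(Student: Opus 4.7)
The plan is to read off the candidate for $P$ directly from equation (II) and then verify that Ward's equation is equivalent to $\bar\partial P = R-1$. Since $R > 0$ everywhere by the zero–one law (Theorem \ref{nt}), we may divide freely by $R$, and since $D = RC$ by the very definitions of $B$, $C$ and $D$, equation (II) forces
\[
P = \frac{D+\partial R}{R} = C + \frac{\partial R}{R} = C + \partial \log R.
\]
This is smooth because $R$ is smooth and strictly positive, so the only thing to check is that $\bar\partial P = R-1$ is the same statement as Ward's equation.

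The key computational ingredient is the identity
\[
\bar\partial\bigl(\partial \log R\bigr) = \Delta \log R,
\]
which is immediate from $\Delta = \partial \bar\partial$. Applying $\bar\partial$ to $P = C + \partial \log R$ and using this identity yields
\[
\bar\partial P = \bar\partial C + \Delta \log R.
\]
Hence the condition $\bar\partial P = R-1$ becomes $\bar\partial C = R - 1 - \Delta \log R$, which is precisely Ward's equation. This establishes the forward direction: if Ward's equation holds, define $P := C + \partial \log R$; then $P$ is smooth, satisfies (I), and satisfies (II) because $PR - \partial R = RC = D$.

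For the converse, suppose some smooth $P$ satisfies (I) and (II). From (II) we recover $P = (D+\partial R)/R = C + \partial \log R$ (again using $D = RC$). Applying $\bar\partial$ and using the same identity as above gives $\bar\partial C = \bar\partial P - \Delta \log R = (R-1) - \Delta \log R$, which is Ward's equation.

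There is no real obstacle here beyond bookkeeping: the lemma is essentially the observation that, when $R > 0$, the nonlinear term $\Delta \log R$ in Ward's equation can be absorbed into a pure $\bar\partial$-derivative via the substitution $P = C + \partial \log R$, converting the Ward identity into a scalar $\bar\partial$-problem for $P$ together with the algebraic side condition (II). The positivity $R > 0$ granted by Theorem \ref{nt} is the only hypothesis that is used in a nontrivial way.
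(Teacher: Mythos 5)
Your proof is correct and follows essentially the same route as the paper's: both hinge on the identity $\Lap\log R=\dbar\lpar \d R/R\rpar$, which lets the nonlinear term in Ward's equation be absorbed into a $\dbar$-derivative. You simply identify $P$ explicitly as $C+\d\log R$, whereas the paper writes $P=P_0+E$ with $P_0$ an arbitrary $\dbar$-antiderivative of $R-1$ and $E$ entire; since $R>0$, equation (II) pins $P$ down uniquely to $(D+\d R)/R=C+\d\log R$, so the two are the same up to exposition.
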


\begin{proof} The equation \eqref{wwaarr} means that
\begin{equation}\label{WA2}\dbar(D/R)=R-1-\dbar(\d R/R).\end{equation}
Let $P_0$ be an arbitrary solution to the equation $\dbar P_0=R-1$. Then \eqref{WA2} becomes
$$\dbar\left[\frac D R-P_0+\frac {\d R} R\right]=0.$$
This last identity is fulfilled if and only if there is an entire function $E$ such that
$$D-P_0R+\d R=ER.$$
Setting $P=P_0+E$, we see that \eqref{I} and \eqref{II} are satisfied. Conversely, if \eqref{I} and \eqref{II}
hold, then
$$\dbar (D/R)=\dbar (P-\d R/R)=R-1-\dbar(\d R/R),$$
i.e. \eqref{WA2} holds.
\end{proof}

\subsection{Relations for the free boundary plasma kernel}
%\subsection{The mass-one inequality} \label{tmoi}
%Consider a Hermitian-entire function $\Psi(z,w)$ and write $R(z):=\Psi(z,z)$.
%Recall (cf. \eqref{mocio}) that $\Psi$ is said to satisfy the mass-one inequality if
%\begin{equation*}\int_\C e^{\,-\,|\,w\,|^{\,2}}\left|\,\Psi(z,z+w)\,\right|^{\,2}\, dA(w)\le R(z),\qquad z\in\C.\end{equation*}
%We now reformulate this inequality.

We finish this section by noting the following theorem.

\begin{thm} \label{exx} The kernel $K(z,w)=G(z,w)\erker(z+\bar{w})$
satisfies Ward's equation and the mass-one equation.
\end{thm}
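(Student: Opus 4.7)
The plan is to verify both identities by direct Gaussian computation. Writing $\Psi(z,w) = \erker(z+\bar w)$, one has $R(z) = \Psi(z,z) = \erker(2x)$ where $x = \re z$. Using the integral representation
\begin{equation*}
\erker(\zeta) = \frac{1}{\sqrt{2\pi}}\int_{-\infty}^0 e^{-(\zeta-t)^2/2}\, dt,
\end{equation*}
together with the fact that $\erker$ is real on $\R$ (so $\overline{\erker(z+\bar w)} = \erker(\bar z + w)$), the squared modulus $|\erker(z+\bar w)|^2$ can be written as a product of two such Gaussian integrals in auxiliary parameters $t, s \in (-\infty, 0]$.

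For the mass-one equation, I would substitute this product into $\int e^{-|z-w|^2}|\erker(z+\bar w)|^2\, dA(w)$ and write $w = u+iv$. After expanding $(z+\bar w - t)^2$ and $(\bar z + w - s)^2$ into real and imaginary parts, the $(y{-}v)^2$ contributions cancel against $|z-w|^2$, leaving an exponent that is Gaussian in $u$ and purely imaginary-linear in $v$ (the $v$-dependence being $iv(s-t)$). The $v$-integration therefore produces $2\pi\delta(s-t)$, collapsing the $(s,t)$-double integral to a single one. A completion of squares and a Gaussian integration in $u$ then give
\begin{equation*}
\int_\C e^{-|z-w|^2}|\erker(z+\bar w)|^2\, dA(w) = \frac{1}{\sqrt{2\pi}}\int_{-\infty}^0 e^{-(2x-t)^2/2}\, dt = \erker(2x),
\end{equation*}
i.e.\ the mass-one equation.

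For Ward's equation I would invoke Lemma \ref{psiprop}: it suffices to produce a smooth $P$ with $\dbar P = R - 1$ and $D = PR - \d R$, where $D(z) = \int e^{-|z-w|^2}|\erker(z+\bar w)|^2/(z-w)\, dA(w)$. Since $R$ depends only on $x$, I would set $P(x) := 2\int_{-\infty}^{x}(\erker(2s)-1)\, ds$ (whose integrand decays super-exponentially at $-\infty$, so the integral converges); then $\dbar P = \tfrac{1}{2}P'(x) = \erker(2x)-1 = R-1$. Using $\d R = \erker'(2x) = -\barg(2x)$, the remaining target becomes $D(z) = P(x)\erker(2x) + \barg(2x)$. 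To compute $D$ I would follow the same Gaussian template as for mass-one but with the extra factor $1/(z-w) = 1/(\xi + i\eta)$ under $\xi = x-u$, $\eta = y-v$. The $\eta$-integral is no longer a delta but the Fourier transform of a Cauchy kernel, which by residue calculus equals $-2\pi\,\operatorname{sgn}(s-t)\,e^{(s-t)\xi}\,\mathbf{1}[(s-t)\xi<0]$. Splitting the $(s,t)$-region by the sign of $s-t$, carrying out the Gaussian integration in $\xi$, and regrouping should yield $P(x)\erker(2x) + \barg(2x)$.

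The main obstacle is organizing this Cauchy-type computation cleanly: unlike the mass-one case, the transverse integral does not collapse the auxiliary $(s,t)$-integral to one dimension, so the Heaviside cutoffs $\mathbf{1}[(s-t)\xi<0]$ must be propagated through the remaining Gaussian integrations and recombined to reproduce $P(x)$. A conceptually cleaner alternative that bypasses the direct evaluation of $D$ is to observe that $K = G\cdot \erker(z+\bar w)$ arises as the limiting kernel of the rescaled free boundary Ginibre process (Theorem \ref{ginibref}) and is non-trivial since $R > 0$ everywhere; then Ward's equation follows immediately from part (ii) of Theorem \ref{TT1.5}, and the mass-one equation from dominated convergence applied to the finite-$n$ identity $\int B_n(z,w)\, dA(w) = 1$, using the exterior one-point bound of Theorem \ref{TT2}(i) for uniform integrability.
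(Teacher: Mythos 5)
Your direct Gaussian computation of the mass-one equation is correct: writing $|\erker(z+\bar w)|^2$ as a double integral of Gaussians over auxiliary parameters $s,t\in(-\infty,0]$, the transverse ($v$-)integral indeed produces $2\pi\delta(s-t)$ after the $(y-v)^2$ terms cancel against $|z-w|^2$, and the remaining Gaussian integral in $u$ collapses to $\erker(2x)$. This is a genuinely different route from both of the paper's proofs: Proof~1 derives mass-one as a consequence of Ward's equation together with the translation-invariance machinery of Section~\ref{tis} (Theorems~\ref{TT3} and~\ref{TT5}), while Proof~2 verifies the equivalent infinite-order differential equation \eqref{MOEQ} through Hermite-polynomial identities. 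Your computation is, in spirit, a hands-on special case of the Fourier argument in part~(I) of the proof of Theorem~\ref{TT5} (with $E=(-\infty,0)$), executed in explicit real coordinates with delta functions rather than via Lemma~\ref{alem1}; what it buys is a completely self-contained elementary check of this one instance, at the price of re-deriving a fact the paper establishes in greater generality later. For Ward's equation, your direct attempt via Lemma~\ref{psiprop} is not carried to completion: you correctly identify the sign-cutoff structure of the Cauchy-kernel Fourier transform, but you do not propagate the resulting Heaviside factors through the remaining $(s,t,u)$-integrations to exhibit $D=PR-\d R$, and you say so yourself. Your fallback — realize $K=G\cdot\erker(z+\bar w)$ as a limiting kernel of the free-boundary Ginibre ensemble via Theorem~\ref{ginibref}, note $R=\erker(2\re z)>0$ for nontriviality, and invoke Theorem~\ref{TT1.5} part~\ref{TT1.5_2} — is exactly the paper's Proof~1 for the Ward half. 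Your secondary fallback for mass-one (dominated convergence on $\int B_n\,dA=1$) is morally right but needs more than the exterior bound of Theorem~\ref{TT2}\,\ref{TT2_1}, which only controls $R_n$ in the half-plane $\re z>0$; for the Ginibre ensemble the explicit kernel formulas supply the needed tail control in all directions.
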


\begin{proof}[Proof 1]
The proof of Ward's equation in Section \ref{vixen} and the example of the Ginibre ensemble in
Section \ref{pome1} shows that Ward's equation is satisfied. The mass-one equation can be deduced
in a similar way; in fact we shall prove in Section \ref{tis} that the mass-one equation is a consequence of Ward's
equation in the translation invariant case. (Another verification of the mass-one equation is found
in \cite{AOC}, Lemma 8.6.)
\end{proof}

\begin{proof}[Proof 2]
 We here give an alternative, direct verification that the function
$R(z)=\erker(2\re z)$ satisfies the mass-one equation \eqref{MOEQ}. To this end, note that
$$\lim_{x\to-\infty}\erker^{(n)}(x)=\delta_{n0},\qquad n=0,1,2,\ldots.$$
Using this, we obtain by differentiating in \eqref{MOEQ}
the \textit{equivalent} equation
\begin{equation}\label{churn}\sum_{n=0}^\infty\frac {\erker^{(n)}(z)\erker^{(n+1)}(z)}
{n!}=\frac 1 2 \erker'(z).\end{equation}
Dividing by $\erker'$, and using the Rodrigues formula for the
Hermite polynomial $h_n$,
$$h_n(z)=(-1)^ne^{z^2/2}\frac {d^n} {dz^n}\left( e^{-z^2/2}\right)=(-1)^n\frac {\erker^{(n+1)}(z)} {\erker'(z)},$$
one can rewrite \eqref{churn} in the form
\begin{equation}\label{churn2}
\erker(z)-\frac 1 2=\erker'(z)\sum_{n=1}^\infty\frac {h_{n-1}(z)h_n(z)}
{n!}\end{equation}
But both sides of \eqref{churn2} have a zero at the origin, so we need
only verify that the derivatives are equal. Using the recursion
$h_n'(z)=nh_{n-1}(z)$, one realizes that our assertion is equivalent to that
\begin{equation}\label{churn3}
\erker'(z)=\erker'(z)\sum_{n=1}^\infty\frac 1 {n!}\left(nh_{n-1}^2(z)+
(n-1)h_{n-2}(z)h_n(z)-zh_{n-1}(z)h_n(z)\right).\end{equation}
However, since $h_0=1$ we have
$$\sum_{n=1}^\infty \frac 1 {n!}\left(nh_{n-1}^2-h_n^2\right)=
\sum_{n=1}^\infty\left\{\frac {h_{n-1}^2}
{(n-1)!}-\frac {h_n^2} {n!}\right\}=1,$$
so the sum in the right hand side of \eqref{churn3} equals
$$1+\sum_{n=1}^\infty\frac 1 {n!}h_n\left(h_n+(n-1)h_{n-2}-zh_{n-1}\right),$$
and this equals $1$ by the recursive definition of Hermite polynomials: $h_0=1$, $h_1=z$, and $h_{n}=zh_{n-1}-(n-1)h_{n-2}$  for $n\ge 2$.
\end{proof}

\section{A priori estimates at regular boundary points} \label{RBP}

In this section we prove asymptotic estimates for the rescaled $1$-point function at a regular boundary point
$p\in\d\drop$.
We rescale about $p$ in the outer normal direction and let $K=G\Psi$
denote an arbitrary limiting kernel in Theorem \ref{TT1}. As usual we write $R(z)=K(z,z)$.

%Throughout the section, we assume that the potential $Q$ is real analytic and strictly subharmonic in a neighbourhood of $\drop$.
%We shall divide our discussion into two types of estimates:
%"HK'' stands for "Heat Kernel'' and "BP'' for "Bergman Projection''. We shall use a suitable adaptation of the arguments from the appendix in \cite{AHM3}.

\subsection{Heat kernel estimate} \label{fhke}
Fix a number $\vt<1$ (close to $1$) and a
smooth function
$\psi$ with $\psi=1$ in $D(0;\vt)$ and $\psi=0$ outside $D(0;1)$.
For given $\zeta\in\C$ and $\delta>0$ we define
$\chi$ by $\chi(\omega)=\psi\left((\omega-\zeta)/\delta\right)$. Then $\chi=1$ in $D\left(\zeta;\vt\delta\right)$,
$\chi=0$ outside $D\left(\zeta;\delta\right)$, and the Dirichlet norm
$\left\|\bar{\d}\chi\right\|$
depends only on $\vt$. We sometimes write $\chi_\zeta$
for $\chi$.

We next fix a sequence $(\delta_n)_1^\infty$ of positive numbers in the interval $(2\gamma/\sqrt{n},\delta_0/2)$,
where $\gamma$ is a sufficiently small positive number independent of $n$, and
$$n\delta_n^{\,3}\to0\quad \text{as}\quad n\to\infty.$$

Below
we will fix a point $\zeta$ in $S$.
We shall also use the Hermitian analytic extension
$A(\zeta,\eta)$ satisfying $A(\zeta,\zeta)=Q(\zeta)$ for $\zeta\in \nbh$ (a neighbourhood of $S$).
We assume that $\delta_0$ is small enough that $A(\zeta,\eta)$ is defined whenever $\babs{\zeta-\eta}<2\delta_0$, $\zeta\in S$.

We will use the kernels $\bfK_n$ and $\bfK_n^\#$, where we recall that (cf. Section \ref{bula})
$$\bfK_n^\#(\zeta,\eta)=n\left(\d_1\bar{\d}_2A\right)(\zeta,\eta)\cdot e^{\,nA(\zeta,\eta)}e^{\,-nQ(\zeta)/2}e^{\,-nQ(\eta)/2}.$$
For a fixed $\eta$ we will use abbreviations such as $\bfK_\eta(\zeta)=\bfK_{n,\eta}(\zeta)=\bfK_n(\zeta,\eta)$ etc. Moreover, if
$f$ is a function supported in the domain of the function $A(\cdot,\eta)$, we write
$$\Pi_n^\#f(\eta)=\left\langle f\, ,\, \bfK_\eta^\#\right\rangle:=\int_{\supp f} f(\zeta)\bar{\bfK}_\eta^\#(\zeta)\, dA(\zeta).$$

\begin{thm} \label{T1} For $\zeta\in \bulk S$ define $\delta=\delta(\zeta)=\dist(\zeta,\d S)$.
There is a constant $C$ such that $\delta<\delta_n$
implies
\begin{equation*}\left|\,\bfK_\zeta(\zeta)-\Pi_n\left(\chi_{\zeta}\bfK_\zeta^\#\right)(\zeta)\,\right|\le CM_n(\delta)\sqrt{\,\bfK_\zeta(\zeta)}\end{equation*}
where
\begin{equation}\label{E2}M_n(\delta)=\frac 1 {\sqrt{n\Lap Q(\zeta)}}+\frac 1 \delta e^{\,-n\Lap Q(\zeta)\left(\vt\delta\right)^2/2}.\end{equation}
In particular,
\begin{equation*}\left|\,\bfK_\zeta(\zeta)-\Pi_n\left(\chi_{\zeta}\bfK_\zeta^\#\right)(\zeta)\,\right|\le C\sqrt{n}\,M_n(\delta).\end{equation*}
\end{thm}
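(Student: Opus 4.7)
The strategy is to build a near-reproducing element $f^* := \Pi_n(\chi_\zeta \bfK_\zeta^\#) \in \Pol_n$ by the $\dbar$-technique, and then use the extremal/reproducing property of $\bfK_\zeta$ together with an explicit Gaussian integral computation to compare $f^*(\zeta)$ with $\bfK_\zeta(\zeta)$. Writing $F := \chi_\zeta \bfK_\zeta^\# = u\cdot e^{-nQ/2}$ with $u(\omega) = \chi_\zeta(\omega)\bfk_\zeta^\#(\omega)e^{-nQ(\zeta)/2}$, I decompose $u = \pi_n u + v$ in $L^2_{nQ}$ with $v \perp \calP_n$. The growth assumption $\liminf Q(\zeta)/\log|\zeta| > 2$ forces entire functions in $L^2_{nQ}$ to be polynomials of degree at most $n-1$, so $v$ coincides with H\"ormander's $L^2$-minimal solution to $\dbar v = \dbar u = (\dbar\chi_\zeta)\bfk_\zeta^\# e^{-nQ(\zeta)/2}$, which is supported in the annulus $\vt\delta \le |\omega - \zeta| \le \delta$. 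Setting $V := v\cdot e^{-nQ/2} = F - \Pi_n F \in \Pol_n^\perp$, the orthogonal decomposition is $\Pi_n F = F - V$.

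The H\"ormander estimate $\|v\|_{nQ}^{\,2} \le \int |\dbar u|^{\,2}(n\Lap Q)^{-1}e^{-nQ}\,dA$ combined with the Taylor expansion
$$2\re A(\omega, \zeta) - Q(\omega) - Q(\zeta) = -\Lap Q(\zeta)\,|\omega - \zeta|^{\,2} + O\bigl(|\omega - \zeta|^{\,3}\bigr),$$
the cubic remainder being negligible in the annulus because $n\delta_n^{\,3} \to 0$, yields $|\bfK_\zeta^\#(\omega)|^{\,2} \lesssim (n\Lap Q)^{\,2}\,e^{-cn\Lap Q(\vt\delta)^{\,2}}$ there; using $|\dbar\chi_\zeta| \lesssim 1/\delta$ and the area bound $\lesssim \delta^{\,2}$ then gives $\|V\|^{\,2} \lesssim n\Lap Q(\zeta)\cdot e^{-cn\Lap Q(\zeta)(\vt\delta)^{\,2}}$. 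Since $\dbar v$ vanishes on $D(\zeta, \vt\delta)$, $v$ is holomorphic there, and Lemma \ref{pl2} (applicable because $\vt\delta \ge c/\sqrt{n\Lap Q(\zeta)}$, guaranteed by $\delta \ge 2\gamma/\sqrt{n}$ with $\gamma$ large enough) gives $|V(\zeta)|^{\,2} \le Cn\|V\|^{\,2}$, hence $|V(\zeta)| \lesssim n\sqrt{\Lap Q(\zeta)}\,e^{-cn\Lap Q(\zeta)(\vt\delta)^{\,2}/2}$. Combined with the trivial bound $\bfK_\zeta(\zeta) \le Cn$ from Lemma \ref{snor}, this piece is dominated by the second summand of $M_n(\delta)\sqrt{\bfK_\zeta(\zeta)}$.

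Since $\chi_\zeta(\zeta) = 1$ and $\bfK_\zeta^\#(\zeta) = n\Lap Q(\zeta)$, we have $f^*(\zeta) = n\Lap Q(\zeta) - V(\zeta)$. The reproducing property and Cauchy-Schwarz (now \emph{inside} $\Pol_n$) give
$$|\bfK_\zeta(\zeta) - f^*(\zeta)|^{\,2} = |\langle \bfK_\zeta - f^*, \bfK_\zeta\rangle|^{\,2} \le \|\bfK_\zeta - f^*\|^{\,2}\cdot \bfK_\zeta(\zeta),$$
so the claim reduces to $\|\bfK_\zeta - f^*\|^{\,2} \le CM_n(\delta)^{\,2}$. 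Expanding $\|\bfK_\zeta - f^*\|^{\,2} = \bfK_\zeta(\zeta) - 2\re f^*(\zeta) + \|f^*\|^{\,2}$, one computes $\|f^*\|^{\,2} = \|F\|^{\,2} - \|V\|^{\,2}$, with $\|F\|^{\,2}$ evaluated explicitly by Gaussian integration: using the Taylor expansion of $A$ to order $3$ (the odd-order contributions vanishing by symmetry), one finds $\|F\|^{\,2} = n\Lap Q(\zeta)\bigl(1 + O(1/\sqrt{n\Lap Q(\zeta)})\bigr)$; this is where the first summand of $M_n(\delta)$ enters. The extremal property $\bfK_\zeta(\zeta) \ge |f^*(\zeta)|^{\,2}/\|f^*\|^{\,2}$ provides a matching lower bound on $\bfK_\zeta(\zeta)$, and an upper bound is produced by applying the sub-mean-value principle to $|u\cdot e^{-nA(\cdot, \zeta)}|^{\,2}$ on a disc of optimized radius $\sim 1/\sqrt{n\Lap Q(\zeta)}$. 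Collecting these estimates closes the inequality.

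\textbf{Main obstacle.} The delicate point is the last paragraph: one needs $\|\bfK_\zeta - f^*\|^{\,2} \le CM_n(\delta)^{\,2}$, equivalently the sharp two-sided bulk asymptotic $\bfK_\zeta(\zeta) = n\Lap Q(\zeta) + O(1)$ at interior points of $S$. The lower bound is immediate from extremality once $\|f^*\|^{\,2}$ is known to $1+o(1)$ accuracy, but the matching upper bound requires tracking both the cubic Taylor correction to $A$ and the cancellation of first-order Gaussian moments; this is in effect a quantitative Tian--Catlin expansion, and the factor $1/\sqrt{n\Lap Q(\zeta)}$ in $M_n(\delta)$ measures precisely the relative accuracy of the local Gaussian approximation $\bfK_\zeta^\#$ to the true reproducing kernel.
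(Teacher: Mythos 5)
Your strategy is genuinely different from the paper's, and it contains a gap that I do not think can be repaired along the lines you indicate.

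Your reduction via Cauchy--Schwarz is correct as far as it goes: since $\bfK_\zeta, f^*\in\Pol_n$ and $\bfK_\zeta$ reproduces, $|\bfK_\zeta(\zeta)-f^*(\zeta)|=|\langle \bfK_\zeta-f^*,\bfK_\zeta\rangle|\le\|\bfK_\zeta-f^*\|\sqrt{\bfK_\zeta(\zeta)}$, so it suffices to show $\|\bfK_\zeta-f^*\|\le CM_n(\delta)$. The difficulty is that $M_n(\delta)^2\sim (n\Lap Q(\zeta))^{-1}$ in the deep bulk, so you must prove $\|\bfK_\zeta-f^*\|^2=\bfK_\zeta(\zeta)-2\re f^*(\zeta)+\|f^*\|^2=O(1/n)$, i.e.\ a cancellation of the two $\Ordo(n)$ quantities $\bfK_\zeta(\zeta)$ and $\|f^*\|^2$ to \emph{two} orders below leading. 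The inputs you propose --- a Gaussian evaluation of $\|F\|^2$ with odd Taylor terms vanishing by parity (giving $\|f^*\|^2=n\Lap Q(\zeta)+\Ordo(1)$), the extremal lower bound for $\bfK_\zeta(\zeta)$, and a sub--mean--value upper bound of Tian--Catlin type --- yield at best $\bfK_\zeta(\zeta)=n\Lap Q(\zeta)+\Ordo(1)$, hence only $\|\bfK_\zeta-f^*\|^2=\Ordo(1)$. After Cauchy--Schwarz this gives $|\bfK_\zeta(\zeta)-f^*(\zeta)|=\Ordo(\sqrt{n})$, which is off from the stated bound by a factor $\sqrt{n}$. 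Moreover, the sharp two-sided diagonal asymptotic you invoke is precisely the content of Theorem~\ref{TT2} (equivalently Theorem~\ref{T2}), which the paper \emph{derives from} Theorem~\ref{T1} together with Lemma~\ref{L3}; taking it as an input to Theorem~\ref{T1} is circular.

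The paper avoids all of this by working on the dual side. Using the Hermitian symmetry one observes
$$\Pi_n\bigl(\chi_\zeta\bfK_\zeta^\#\bigr)(\zeta)=\overline{\Pi_n^\#\bigl(\chi_\zeta\bfK_\zeta\bigr)(\zeta)},$$
and then proves (Lemma~\ref{L1}, by an integration-by-parts argument involving the factor $\dbar_\omega\bigl[e^{-n(A(\omega,\omega)-A(\zeta,\omega))}\bigr]$) a testing estimate valid for every weighted analytic $f=ue^{-nQ/2}$: $|f(\zeta)-\Pi_n^\#(\chi_\zeta f)(\zeta)|\le CM_n(\delta)\|f\|$. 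Applying this with $f=\bfK_\zeta$ and $\|\bfK_\zeta\|=\sqrt{\bfK_\zeta(\zeta)}$ gives the theorem directly: the factor $\sqrt{\bfK_\zeta(\zeta)}$ appears as the norm of the test function, not as the Cauchy--Schwarz cost. (Your intermediate claim $\|\bfK_\zeta-f^*\|\le CM_n(\delta)$ is in fact true, but it is itself a consequence of Lemma~\ref{L1} --- test it against $f=\bfK_\zeta-f^*\in\Pol_n$ --- so establishing it is the actual work, not a byproduct of bulk asymptotics.) The Hörmander--$\dbar$ ingredient in your plan does appear in the paper, but it is deployed in the \emph{companion} Lemma~\ref{L3}, which controls $|\bfK_\zeta^\#(\zeta)-\Pi_n(\chi_\zeta\bfK_\zeta^\#)(\zeta)|$ rather than $\bfK_\zeta(\zeta)-\Pi_n(\chi_\zeta\bfK_\zeta^\#)(\zeta)$; the two are complementary and only \emph{together} yield Theorem~\ref{T2}.
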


The proof relies on the following lemma.

\begin{lem} \label{L1}
If $f=ue^{-nQ/2}$ where $u$ is analytic in $D(\zeta;\delta)$, then
$$\left|\,f(\zeta)-\Pi_n^\#\left[\chi_\zeta f\right](\zeta)\,\right|\le CM_n(\delta)\left\|\,f\,\right\|.$$
\end{lem}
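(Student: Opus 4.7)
My plan is to decompose the defect into an interior contribution (from $|\omega-\zeta|\le\vartheta\delta$, where $\chi_\zeta\equiv 1$) and a boundary contribution (from the annulus $\vartheta\delta<|\omega-\zeta|<\delta$). The two summands in $M_n(\delta)$ correspond to these two regions.

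For the boundary contribution $E_{\text{ann}}$, the key ingredient is the Gaussian decay of the bulk kernel off the diagonal. Taylor expansion of the Hermitian phase gives
\[
2\re A(\zeta,\omega)-Q(\zeta)-Q(\omega)=-\Lap Q(\zeta)|\omega-\zeta|^2+O(|\omega-\zeta|^3),
\]
so $|\bfK_n^\#(\omega,\zeta)|\le Cn\,e^{-cn\Lap Q(\zeta)|\omega-\zeta|^2}$ for $|\omega-\zeta|<\delta_0$ and any fixed $c<1/2$ (the cubic remainder being absorbed by choosing $c$ slightly smaller than $1/2$). Cauchy--Schwarz then gives
\[
|E_{\text{ann}}|\le\|f\|\cdot\|\chi_\zeta\bfK_{n,\zeta}^\#\|_{L^2(\vartheta\delta<|\omega-\zeta|<\delta)},
\]
and a polar-coordinate estimate of the Gaussian tail yields the bound $(C/\delta)\,e^{-n\Lap Q(\zeta)(\vartheta\delta)^2/2}\,\|f\|$, with any polynomial prefactor produced by the radial integration absorbed into the exponent since $c$ can be chosen arbitrarily close to $1/2$.

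For the interior contribution, the point is that $\bfK_n^\#$ is constructed precisely to reproduce weighted holomorphic functions at $\zeta$ up to a $O(n^{-1/2})$ error. After conjugation, the interior integrand reads
\[
n\,u(\omega)\,(\partial_1\dbar_2 A)(\zeta,\omega)\,e^{nA(\zeta,\omega)-nQ(\omega)-nQ(\zeta)/2},
\]
whose modulus is Gaussian-concentrated on scale $1/\sqrt{n\Lap Q(\zeta)}$. In the model case $Q(\omega)=|\omega|^2$ (Ginibre), a direct Gaussian moment computation shows that $\Pi_n^\#$ reproduces $f(\zeta)$ exactly; after the rescaling $z=\sqrt{n\Lap Q(\zeta)}(\omega-\zeta)$ the general case differs from this model only through the cubic Taylor remainder in the phase, which contributes a factor $O(|z|^3/\sqrt{n})$. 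Cauchy--Schwarz against the Gaussian weight then converts this to the $L^2$-bound $(1/\sqrt{n\Lap Q(\zeta)})\|f\|$, matching the first term of $M_n(\delta)$.

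The main obstacle is phrasing the interior estimate in terms of $\|f\|$ rather than pointwise data on $u$. The cleanest route is to compare $\bfK_{n,\zeta}^\#$ on $D(\zeta;\vartheta\delta)$ with the exact Bergman reproducing kernel for the model Gaussian weight $e^{-n\Lap Q(\zeta)|\omega-\zeta|^2}$ and to control the difference using the cubic Taylor remainder together with the exponential concentration of the weight; alternatively, one may solve the local $\dbar$-problem $\dbar v=u\,\dbar\chi_\zeta$ with a weighted Hörmander estimate and thereby shift the error from the bulk onto the controllable annulus.
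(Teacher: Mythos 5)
The interior estimate is where the proposal breaks down. First, the kernel $\bfK_n^\#$ differs from the model Gaussian reproducing kernel $n\Lap Q(\zeta)\,e^{-n\Lap Q(\zeta)|\omega-\zeta|^2}$ (after conjugating $u$ by a holomorphic factor $e^{-nh}$ so that the quadratic part of the phase is exactly Gaussian) not only through the cubic phase remainder $e^{-nR(\omega)}-1=O(n|\omega-\zeta|^3)$, but also through the factor $(\d_1\dbar_2 A)(\zeta,\omega)/\Lap Q(\zeta) - 1 = O(|\omega-\zeta|)$; your write-up omits the latter. More seriously, Cauchy--Schwarz against the Gaussian weight does \emph{not} give $n^{-1/2}\|f\|$: the conjugated norm $\bigl(\int\chi^2|\tilde u|^2 e^{-n\Lap Q(\zeta)|\omega-\zeta|^2}\,dA\bigr)^{1/2}$ is $\approx \|f\|$, and after rescaling to $z=\sqrt{n\Lap Q(\zeta)}\,(\omega-\zeta)$ the $L^2$ norm against $e^{-|z|^2}dA(z)$ picks up a factor $\sqrt{n\Lap Q(\zeta)}$, so that multiplying by the $O(n^{-1/2})$ size of the discrepancy factor leaves a bound of order $\|f\|$ --- a full factor $\sqrt{n}$ too large. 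The $O(n^{-1/2})\|f\|$ bound in the lemma rests on a cancellation that Cauchy--Schwarz cannot see: writing $\zeta=0$ and Taylor-expanding, the $\bar\omega$-coefficient of $(\d_1\dbar_2 A)(0,\omega)/\Lap Q(0)-1$ and the $\omega\bar\omega^2$-coefficient of $-nR(\omega)$ conspire so that their pairings with $\tilde u'(0)\,\omega$ against $\int|\omega|^{2}e^{-n\Lap Q(0)|\omega|^2}dA$ and $\int|\omega|^{4}e^{-n\Lap Q(0)|\omega|^2}dA$ cancel identically, and the genuine leading contribution comes from the $k=2$ Taylor coefficient of $\tilde u$, which does give $O(n^{-1/2})\|f\|$.

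The paper exploits this cancellation automatically, without any Taylor bookkeeping, by a different mechanism: one rewrites the integrand so that the exponential appears under a $\dbar_\omega$, namely
\[
-\,u(\omega)\chi(\omega)\,\frac{F(\omega)}{\omega}\,\dbar_\omega\!\left[e^{-n\left(A(\omega,\omega)-A(0,\omega)\right)}\right],
\qquad
F(\omega)=\frac{\omega\,(\d_1\dbar_2 A)(0,\omega)}{\dbar_2 A(\omega,\omega)-\dbar_2 A(0,\omega)},
\]
and integrates by parts. The distributional identity for $\dbar(1/\omega)$ then recovers $f(0)$ exactly with no interior defect to approximate; the residual errors carry either $\dbar\chi$ (supported on the annulus, giving your exponential term) or $\dbar F$ --- and the crucial point, which encodes the same cancellation described above, is that $\dbar F(\omega)=O(\omega)$ rather than $O(1)$, because the leading antiholomorphic Taylor coefficients of the numerator and denominator of $F$ coincide. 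That extra zero removes the $1/\omega$ singularity from the residual integrand and replaces it by a bounded factor, so Cauchy--Schwarz then gives the required $n^{-1/2}\|f\|$. Your alternative Hörmander $\dbar$-route does not escape this: $\Pi_n^\#$ is not a projection, so even after solving $\dbar v = u\,\dbar\chi_\zeta$ you would still have to show that $\Pi_n^\#$ reproduces a holomorphic function at $\zeta$ to relative accuracy $O(n^{-1/2})$, which is precisely the interior estimate that the crude $L^2$ argument fails to deliver.
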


\begin{proof} Assume that $\zeta=0$ and write $\chi=\chi_\zeta$. Then $\Pi_n^\#\left(\chi f\right)(\zeta)$ equals to the integral
$$I=e^{\,-nQ(0)/2}\int\chi(\omega)u(\omega)\left(\d_1\bar{\d}_2 A\right)(0,\omega)\cdot ne^{\,-n\left(A(\omega,\omega)-
A(0,\omega)\right)}\, dA(\omega)$$
which means that
\begin{equation}\label{E3}I=-e^{\,-nQ(0)/2}\int\frac 1 \omega u(\omega)\chi(\omega)F(\omega)\bar{\d}_\omega\left[e^{\,-n\left(A(\omega,\omega)-
nA(0,\omega)\right)}\right]\, dA(\omega),\end{equation}
where
$$F(\omega)=\frac {\omega\left(\d_1\bar{\d}_2A\right)(0,\omega)}
{\bar{\d}_2A(\omega,\omega)-\bar{\d}_2A(0,\omega)}.$$
Since the holomorphic function $H(\xi)=\bar{\d}_2A(\xi,\omega)-\bar{\d}_2A(0,\omega)$ satisfies
$H(\omega)=a\cdot \omega+\Ordo(\omega^2)$ as $\omega\to 0$, where $a=\Lap Q(0)>0$, we have that
$F(\omega)=1+\Ordo(\omega)$ and $\bar{\d}F(\omega)=\Ordo(\omega)$ as $\omega\to 0$. Our assumptions
imply that the $\Ordo$-constant can be chosen independent of $\zeta$.

Integrating by parts in \eqref{E3} one obtains
$$I=f(0)+e^{\,-\,nQ(0)/2}\left(\epsilon_1+\epsilon_2\right)$$
where
$$\epsilon_1=\int\frac {u\cdot\bar{\d}\chi\cdot F} {\omega}e^{\,-\,n\left[A(\omega,\omega)-A(0,\omega)\right]}
\quad ,\quad \epsilon_2=\int \frac {u\cdot\chi\cdot \bar{\d}F} {\omega}e^{\,-\,n\left[A(\omega,\omega)-A(0,\omega)\right]}.$$
It follows that there is a constant $C$ (independent of $\zeta$, $n$ and $\delta$) such that
$$\left|\,\epsilon_1\,\right|\le C\frac 1 {\vt\delta}\int\left|\,u\,\right|\left|\,\bar{\d}\chi\,\right|\, e^{\,-n\left[Q(\omega)-\re A(0,\omega)\right]}
\quad ,\quad |\,\epsilon_2\,|\le C\int\chi\,|\,u\,|\,e^{\,-n\left[Q(\omega)-\re A(0,\omega)\right]}.$$
By the remark after Lemma \ref{cocy} and the assumption $n\delta_n^{\,3}\to 0$, we have the estimate
\begin{equation}\label{E3.5}e^{\,-\,n\left[Q(\omega)/2-\re A(0,\omega)\right]}\le Ce^{\,nQ(0)/2-n\Lap Q(0)\,|\,\omega\,|^{\,2}/2},\quad |\,\omega\,|<2\delta_n.\end{equation}
Using this and the Cauchy-Schwarz inequality, we now find
(since $|\,\omega\,|\ge \vt\delta$ when $\dbar\chi(\omega)\ne 0$)
$$|\,\epsilon_1\,|\,e^{\,-\,nQ(0)/2}\le C\frac 1 \delta e^{\,-\,n\Lap Q(0)\,(\vt\delta)^{\,2}/2}\left\|\,f\,\right\|\left\|\,\bar{\d}\chi\,\right\|\le C\frac 1 \delta e^{\,-\,n\Lap Q(0)\,(\vt\delta)^{\,2}/2}\left\|\,f\,\right\|,$$
and
$$|\,\epsilon_2\,|\,e^{\,-nQ(0)/2}\le C\left\|\,f\,\right\|\left(\int_\C e^{\,-\,n\Lap Q(0)\,|\,\zeta\,|^{\,2}}\right)^{1/2}\le C\frac 1 {\sqrt{n\Lap Q(0)}}\left\|\,f\,\right\|.$$
The proof of the lemma is complete.
\end{proof}

\begin{proof}[Proof of Theorem \ref{T1}]
We have that
$$\overline{\Pi_n^\#\left[\chi_\zeta\bfK_\zeta\right](\zeta)}=
\overline{\left\langle\,\chi_\zeta\bfK_\zeta\,,\,\bfK_\zeta^\#\,\right\rangle}=
\left\langle\,\chi_\zeta\bfK_\zeta^\#\,,\,\bfK_\zeta\,\right\rangle
=\Pi_n\left[\chi_\zeta\bfK_\zeta^\#\right](\zeta),$$
whence
\begin{equation}\label{E4}\left|\,\bfK_\zeta(\zeta)-\Pi_n\left[\chi_\zeta\bfK_\zeta^\#\right](\zeta)\,\right|=
\left|\,\bfK_\zeta(\zeta)-\Pi_n^\#\left[\chi_\zeta\bfK_\zeta\right](\zeta)\,\right|.\end{equation}
It now suffices to take $f=\bfK_\zeta$ in Lemma \ref{L1}, since
$\left\|\,\bfK_\zeta\,\right\|^{\,2}=\bfK_n(\zeta,\zeta)\le Cn$ (cf. Lemma \ref{snor}).
\end{proof}

\subsection{Bergman projection estimate} \label{fbpe}
Recall that $L^2_\phi$ denotes the space of functions $f$ normed by
$\left\|\,f\,\right\|_\phi^{\,2}=\int\left|\,f\,\right|^{\,2}e^{-\phi}$.
We shall let $A^2_\phi$ denote the subspace of $L^2_\phi$ consisting of entire functions.
We write $\pi_\phi$ for the orthogonal (Bergman) projection $L^2_\phi\to A^2_\phi$.

When $\pi$ is the orthogonal projection of a Hilbert space onto a closed subspace, we denote
by $\pi^\bot=I-\pi$ the complementary projection.

Our starting point is a simple
"Hörmander estimate'' (cf. \cite{H}, p. 250): if $\phi$ is smooth and strictly subharmonic in $\C$, and if $u\in C^\infty_0(\C)$, then
\begin{equation}\label{E5.5}\left\|\,\pi_\phi^\bot u\,\right\|_\phi^{\,2}\le
\int_\C\left|\,\bar{\d}u\,\right|^{\,2}\frac {e^{\,-\phi}}{\Lap\phi}.
\end{equation}

\begin{lem} \label{L3}
Fix $\zeta\in \bulk S$. Put
$\delta=\dist(\zeta,\d S)$ and assume that $\delta>2\gamma/\sqrt{n}$.
%<\delta_n$.
Then there is a constant $C$  such that
\begin{equation}\label{E5}\left|\,\bfK_\zeta^\#(\zeta)-\Pi_n\left[\chi_\zeta\bfK_\zeta^\#\right](\zeta)\,\right|\le Cne^{\,-\,n\Lap Q(\zeta)\,(\vt\delta)^{\,2}/2}.\end{equation}
\end{lem}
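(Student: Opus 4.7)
The plan is to identify $\bfK_\zeta^\#(\zeta) - \Pi_n[\chi_\zeta \bfK_\zeta^\#](\zeta)$ with a pointwise value of $\Pi_n^\perp u$ (where $u := \chi_\zeta \bfK_\zeta^\#$), transfer to the weighted space $L^2_{nQ}$ via the unitary $U_n$, apply an $L^2$-estimate of H\"ormander type, and then upgrade to a pointwise estimate using a subharmonicity argument. Writing $v := u\,e^{nQ/2}$, one has $\pi_n^\perp v = (\Pi_n^\perp u)\,e^{nQ/2}$, and on $D(\zeta;\vt\delta)$ the function $v$ coincides with
$$h(\xi) := n\,(\partial_1 \bar{\partial}_2 A)(\xi,\zeta)\, e^{nA(\xi,\zeta)-nQ(\zeta)/2},$$
which is holomorphic in $\xi$. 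Consequently $\bar\partial v = h\,\bar\partial\chi_\zeta$ is supported in the annulus $\vt\delta \le |\xi-\zeta|\le \delta$.

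First I would apply the H\"ormander estimate \eqref{E5.5} with $\phi = nQ$ to the smooth compactly supported $v$, giving
$$\|\pi_n^\perp v\|_{nQ}^{\,2}\;\le\;\int_\C |h|^{\,2}|\bar\partial\chi_\zeta|^{\,2}\,\frac{e^{-nQ}}{n\,\Delta Q}.$$
The Taylor-remainder bound noted after Lemma \ref{cocy}, namely
$e^{n\re A(\xi,\zeta)-nQ(\xi)/2-nQ(\zeta)/2} \le C\,e^{-n\Delta Q(\zeta)|\xi-\zeta|^{\,2}/2}$ valid for $|\xi-\zeta|<2\delta_n$, yields
$|h(\xi)|^{\,2}e^{-nQ(\xi)}\le Cn^{\,2}\,e^{-n\Delta Q(\zeta)|\xi-\zeta|^{\,2}}$. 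Restricting to the annular support of $\bar\partial\chi_\zeta$ kills the exponential by $e^{-n\Delta Q(\zeta)(\vt\delta)^{\,2}}$, and using the uniform bound $\|\bar\partial\chi_\zeta\|_{L^2}\le C$ I obtain
$$\|\pi_n^\perp v\|_{nQ}^{\,2}\;\le\;Cn\,e^{-n\Delta Q(\zeta)(\vt\delta)^{\,2}}.$$

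Next I would convert this to a pointwise bound. On $D(\zeta;\vt\delta)$ the function $w := v-\pi_n v$ coincides with $h-\pi_n v$ and is therefore holomorphic. The hypothesis $\delta > 2\gamma/\sqrt{n}$ (with $\gamma$ chosen suitably) ensures $D(\zeta;c/\sqrt{n\Delta Q(\zeta)})\subset D(\zeta;\vt\delta)$, so the pointwise--$L^2$ bound of Lemma \ref{pl2} applied to $w\,e^{-nQ/2}$ gives
$$|w(\zeta)|^{\,2}e^{-nQ(\zeta)}\;\le\;Cn\int_{D(\zeta;c/\sqrt{n})}|w|^{\,2}e^{-nQ}\,dA\;\le\;Cn\,\|\pi_n^\perp v\|_{nQ}^{\,2}.$$
Combining with the first step and taking square roots,
$|w(\zeta)|\,e^{-nQ(\zeta)/2}\le Cn\,e^{-n\Delta Q(\zeta)(\vt\delta)^{\,2}/2}$, which is precisely the claimed estimate once $U_n$ is used to translate back.

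The principal technical obstacle is that H\"ormander's inequality, in its standard form, bounds the orthogonal projection onto the \emph{entire} Bergman space $A^2_{nQ}$, whereas $\pi_n$ projects onto the smaller subspace $\calP_n$ of polynomials of degree $\le n-1$. The missing piece is thus the "high-degree tail'' $(I-\pi_n)(P_\infty v)$, where $P_\infty$ is the full Bergman projection. This has to be controlled separately, using the fact that $Q$ has super-logarithmic growth at infinity (more precisely, $\check Q(\zeta)=\log|\zeta|^{\,2}+O(1)$), so that $L^2_{nQ}$-normalized monomials of degree $\ge n$ are negligibly small on the compact set where $v$ is supported. This is a standard estimate in the RNM literature (as used in \cite{AHM2} and cited in earlier sections here), and I would invoke it rather than redoing the computation; aside from this subtlety the proof is a direct H\"ormander--plus--subharmonicity argument.
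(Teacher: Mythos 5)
Your outline gets the right general shape (H\"ormander $L^2$-estimate followed by a pointwise--$L^2$ subharmonicity argument via Lemma~\ref{pl2}), but there is a genuine gap, and it sits exactly at the two points where you apply \eqref{E5.5} with $\phi=nQ$.

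First, the H\"ormander estimate \eqref{E5.5} requires $\phi$ to be smooth and strictly subharmonic on \emph{all} of $\C$. The standing assumptions only give $\Delta Q>0$ and real-analyticity in a neighbourhood $\Omega$ of $S$; outside $\Omega$, $Q$ is merely lower semicontinuous with superlogarithmic growth (indeed in the hard-edge case $Q\equiv+\infty$ there). So the hypothesis of \eqref{E5.5} simply fails for $\phi=nQ$, and your first displayed inequality is not justified. Second, as you yourself note, $\pi_\phi$ with $\phi=nQ$ projects onto the full Bergman space $A^2_{nQ}$, which is strictly larger than $\calP_n$ (because $Q$ grows like $(2+\eps)\log|\zeta|$). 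You propose to dispose of the resulting high-degree tail by ``invoking a standard estimate,'' but no such estimate is invoked or needed in the paper's argument; the construction is designed precisely so that this gap never opens.

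The paper's mechanism, which you should internalize, is a single substitution of weight that fixes both problems simultaneously: set
$$\phi(\omega)=\eqpot(\omega)+\tfrac1n\log\bigl(1+|\omega|^{\,2}\bigr),$$
where $\eqpot$ is the obstacle function. Then (i) $\phi$ is strictly subharmonic on all of $\C$ (since $\eqpot$ is subharmonic and the logarithmic correction is strictly subharmonic), so H\"ormander applies globally; (ii) $n\phi(\omega)=(n+1)\log|\omega|^{\,2}+O(1)$ at infinity, so $A^2_{n\phi}=\calP_n$ \emph{exactly} --- the minimal $\dbar$-solution $v_0=\pi^\perp_{n\phi}(\chi_\zeta g_\zeta)$ automatically has $v_0-\chi_\zeta g_\zeta\in\calP_n$; and (iii) on $\supp\chi_\zeta\subset S$ one has $\Delta\phi\ge\Delta Q\ge\const>0$, while globally $n\phi\le nQ+O(1)$, which converts the $L^2_{n\phi}$ bound into an $L^2_{nQ}$ bound. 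Your closing step (Lemma~\ref{pl2} on the disc $D(\zeta;\gamma/\sqrt{n})$) is correct and matches the paper, but the estimate \eqref{E6} it builds on must be obtained by the modified-weight argument, not by a direct application with $\phi=nQ$ plus an unspecified tail control.
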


\begin{proof}
Let $g_\zeta(\omega)=\bfK_\zeta^\#(\omega)e^{nQ(\omega)/2}$. Observe that $g_\zeta$ is holomorphic near $\zeta$. Put
$$u(\omega)=\chi_\zeta(\omega)g_\zeta(\omega)-\pi_n\left[\chi_\zeta g_\zeta\right](\omega),$$
and observe that $u$ is a norm-minimal solution in $L^2_{nQ}$ to the problem $\dbar u=\dbar f$ where
$f=\chi_\zeta\cdot g_\zeta$. We shall prove that
\begin{equation}\label{E6}\left\|\,u\,\right\|_{nQ}\le Cn^{-1/2}\left\|\,\bar{\d}\left(\chi_\zeta\cdot g_\zeta\right)\,\right\|_{nQ}.\end{equation}
To do this, we introduce the strictly subharmonic function
$$\phi(\omega)=\eqpot(\omega)+n^{-1}\log\left(1+|\,\omega\,|^{\,2}\right),$$
and consider $v_0=\pi_{n\phi}^\bot\left(\chi_\zeta g_\zeta\right)$. Here $\eqpot$ is the equilibrium
potential, cf. Section \ref{behm}.

By the estimate \eqref{E5.5} we have
$$\left\|\,v_0\,\right\|_{n\phi}^{\,2}\le \int\left|\,\bar{\d}\left(\chi_\zeta\cdot g_\zeta\right)\,\right|^{\,2}\frac {e^{\,-\,n\phi}}
{n\Lap \phi}\, dA.$$
Since $\chi_\zeta$ is supported in $\drop$, and since $\Lap\phi> \Lap Q\ge \text{const.}>0$ there, we obtain
$$\left\|\,v_0\,\right\|_{n\phi}\le Cn^{-1/2}\left\|\,\bar{\d}\left(\chi_\zeta\cdot g_\zeta\right)\,\right\|_{nQ}.$$
Next note the estimate $n\phi\le nQ+\text{const.}$ on $\C$, which is obvious in view of the growth assumption on
$Q$ near infinity. This gives $\left\|\,v_0\,\right\|_{nQ}\le C\left\|\,v_0\,\right\|_{n\phi}$, and we have shown \eqref{E6} with $u=v_0$.

Since $n\phi(\omega)=(n+1)\log|\,\omega\,|^{\,2}+O(1)$ as $\omega\to\infty$ (see Section \ref{behm}), we have the equality
$A^2_{n\phi}=\calP_{n}$ in the sense of sets. Hence $u=v_0$ solves, in addition to
\eqref{E6}, the problem
$$\bar{\d} u=\bar{\d}\left(\chi_\zeta g_\zeta\right)\qquad \text{and}\qquad u-\chi_\zeta g_\zeta\in\calP_{n}.$$

 Since $g_\zeta(\omega)=\bfk_n^\#(\omega,\zeta)e^{\,-\,nQ(\zeta)/2}$ is analytic, we have $\bar{\d}u=\bar{\d}\chi_\zeta\cdot g_\zeta$.
Recalling that $\bfk_n^\#=n(\d_1\bar{\d}_2A)e^{\,nA}$, the remark after Lemma \ref{cocy} gives
$\left|\,\bar{\d} u(\omega)\,\right|^{\,2}e^{\,-\,nQ(\omega)}\le Cn^{\,2}\left|\,\bar{\d}\chi_\zeta(\omega)\,\right|^{\,2}e^{\,-n\Lap Q(\zeta) |\,\omega-\zeta\,|^{\,2}}.$
Since $|\,\omega-\zeta\,|\ge \vt\delta$ when $\bar{\d}\chi_\zeta(\omega)\ne 0$, we find
$$\left|\,\bar{\d}\left(\chi_\zeta g_\zeta\right)(\omega)\,\right|^{\,2}e^{\,-n\,Q(\omega)}\le Cn^{\,2}\left|\,\bar{\d}\chi_\zeta(\omega)\,\right|^{\,2}e^{\,-n\Lap Q(\zeta)\,(\vt\delta)^{\,2}}.$$
We have shown that
\begin{equation*}\label{E7}\left\|\,\bar{\d}\left(\chi_\zeta g_\zeta\right)\,\right\|_{nQ}\le Cne^{\,-n\Lap Q(\zeta)\,(\vt\delta)^{\,2}/2}.\end{equation*}
Applying the estimate \eqref{E6}, one obtains
\begin{equation}\label{E8}\left\|\,u\,\right\|_{nQ}\le C\sqrt{n}\,e^{\,-n\Lap Q(\zeta)\,(\vt\delta)^{\,2}/2}.\end{equation}
We shall now finally use the assumption that $\delta\ge \gamma/\sqrt{n}$. This gives that the function
$u$ is analytic in the disc $D\left(\zeta;\gamma/\sqrt{n}\right)$, so that Lemma \ref{pl2} applies. We obtain that
$$\left|\,u(\zeta)\,\right|^{\,2}e^{\,-\,nQ(\zeta)}\le Cn\left\|\,u\,\right\|_{nQ}^{\,2},$$
where $C$ depends on $\gamma$ and $\Lap Q(\zeta)$.
Combining with \eqref{E8}, we have shown (with a new $C$)
\begin{equation*}\label{EE8}\left|\,g_\zeta(\zeta)-\pi_n\left[\chi_\zeta g_\zeta\right](\zeta)\,\right|e^{\,-\,nQ(\zeta)/2}\le Cne^{\,-\,n\Lap Q(\zeta)\,(\vt\delta)^{\,2}/2}.\end{equation*}
The proof of the lemma is complete.
\end{proof}

The following result is just a restatement of part \ref{TT2_1} of Theorem \ref{TT2}.

\begin{thm} \label{T2} Fix a constant $\el<1/2$. There is then a constant $C$ such that
if $\zeta\in \drop$ and $\delta=\dist(\zeta,\d \drop)$, then
\begin{equation}\label{E10}\left|\,\bfK_{n}(\zeta,\zeta)-\bfK_{n}^\#(\zeta,\zeta)\,\right|\le C\left(1+n e^{\,-\,n\Lap Q(\zeta)\el\, \delta^{\,2}}\right).\end{equation}
\end{thm}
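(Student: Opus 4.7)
The plan is to combine Theorem \ref{T1} and Lemma \ref{L3} via the triangle inequality with $\Pi_n[\chi_\zeta\bfK_\zeta^\#](\zeta)$ as an intermediate quantity, then verify that the resulting bound simplifies to the desired form. Write $a=\Lap Q(\zeta)$; by our standing assumption \ref{bubba}, $a$ is bounded above and away from $0$ for $\zeta\in\drop$.

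First, I would decompose
\begin{equation*}
\left|\,\bfK_n(\zeta,\zeta)-\bfK_n^\#(\zeta,\zeta)\,\right|
\le
\left|\,\bfK_\zeta(\zeta)-\Pi_n[\chi_\zeta\bfK_\zeta^\#](\zeta)\,\right|
+\left|\,\Pi_n[\chi_\zeta\bfK_\zeta^\#](\zeta)-\bfK_\zeta^\#(\zeta)\,\right|.
\end{equation*}
In the regime $2\gamma/\sqrt{n}<\delta<\delta_n$, Theorem \ref{T1} bounds the first term by $C\sqrt{n}\,M_n(\delta)$ and Lemma \ref{L3} bounds the second by $Cne^{-na(\vt\delta)^2/2}$. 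Unpacking $M_n(\delta)$ from \eqref{E2}, the contribution $\sqrt{n}\cdot(n a)^{-1/2}=a^{-1/2}$ is an $O(1)$ constant, while the contribution $\sqrt{n}\,\delta^{-1}e^{-na(\vt\delta)^2/2}$ is bounded by $Cn\,e^{-na(\vt\delta)^2/2}$ since $\delta>2\gamma/\sqrt{n}$ gives $\sqrt{n}/\delta\le n/(2\gamma)$. Adding everything,
\begin{equation*}
\left|\,\bfK_n(\zeta,\zeta)-\bfK_n^\#(\zeta,\zeta)\,\right|\le C\bigl(1+ne^{-n a\vt^2\delta^2/2}\bigr).
\end{equation*}
Since $\el<1/2$, I can fix $\vt\in(\sqrt{2\el},1)$ once and for all, so that $\vt^2/2\ge\el$, and the claimed estimate \eqref{E10} follows in this range.

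Next I would dispose of the two leftover regimes. When $\delta\le 2\gamma/\sqrt{n}$ the exponential $e^{-na\el\delta^2}$ is bounded below by the constant $e^{-4\el\gamma^2\sup_S a}$, so the RHS of \eqref{E10} is of order $n$; but also $\bfK_n(\zeta,\zeta)\le Cn$ by Lemma \ref{snor} and $\bfK_n^\#(\zeta,\zeta)=na(1+o(1))$ by a direct Taylor expansion (as in the remark after Lemma \ref{cocy}), so the difference is trivially $O(n)$. When $\delta\ge\delta_n$, one can repeat the proofs of Theorem \ref{T1} and Lemma \ref{L3} with the cut-off radius $\delta$ replaced by $\delta_n$: the disc $D(\zeta;\delta_n)$ still sits inside $S$, the Taylor estimate \eqref{E3.5} still applies in this disc, and the same calculation produces the bound $C(1+ne^{-na\vt^2\delta_n^2/2})$, which is actually far smaller than the RHS of \eqref{E10} in this regime.

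The main obstacle is really bookkeeping across the three regimes of $\delta$, together with ensuring that the parameter $\vt<1$ can be chosen uniformly in $\zeta$ to absorb the loss from $\vt^2/2$ to $\el<1/2$; once one commits to such a $\vt$ at the outset, the remainder is essentially arithmetic.
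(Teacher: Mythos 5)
Your proof is correct and follows essentially the same route as the paper: triangle inequality with $\Pi_n[\chi_\zeta\bfK_\zeta^\#](\zeta)$ as the intermediate quantity, Theorem~\ref{T1} controlling one piece via $M_n(\delta)$ and Lemma~\ref{L3} the other, with the choice $\vt\in(\sqrt{2\el},1)$, i.e.\ $\el=\vt^2/2$, absorbing the cut-off loss, and a trivial $O(n)$ bound when $\delta\lesssim n^{-1/2}$. Your explicit handling of the regime $\delta\ge\delta_n$ (rerunning the heat-kernel and $\dbar$-estimates with cut-off radius $\delta_n$ in place of $\delta$) is a useful clarification that the paper leaves implicit, given that Theorem~\ref{T1} as stated only covers $\delta<\delta_n$.
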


\begin{proof} By Lemma \ref{snor}
we have $\left\|\bfK_\zeta\right\|^{\,2} \le Cn$.
This gives that
\eqref{E10} holds trivially when
$\delta<\gamma/\sqrt{n}$ (because $\bfK_n(\zeta,\zeta)\le Cn$ and $\bfK_n^\#(\zeta,\zeta)\le Cn$ for
sufficiently large $C$). We can thus assume that $\delta>\gamma/\sqrt{n}$.
To this end, we put $\el=\vt^{\,2}/2$.
Then
\begin{align*}\left|\,\bfK_\zeta(\zeta)-\bfK_\zeta^\#(\zeta)\,\right|&\le
\left|\,\bfK_\zeta(\zeta)-\Pi_n\left(\chi_{\zeta}\bfK_\zeta^\#\right)(\zeta)\,\right|+
\left|\,\Pi_n^\bot\left(\chi_{\zeta}\bfK_\zeta^\#\right)(\zeta)\,\right|\\
&\le C\left(1+ne^{\,-\,n\Lap Q(\zeta)\el \delta^{\,2}}\right)+Cne^{\,-n\Lap Q(\zeta)\el \delta^{\,2}}.\end{align*}
where we have used Theorem \ref{T1} to estimate the first term and
the estimate \eqref{E5} to estimate the second one.
\end{proof}

\subsection{An exterior estimate}
%We shall in the following consider only the free boundary process.
Recall from Lemma \ref{snor} that there is a constant $C$ such that
\begin{equation}\label{E18}\bfR_n(\zeta)\le Cne^{\,-\,n\left(Q-\eqpot\right)(\zeta)},\qquad \zeta\in\C.\end{equation}
Now fix a regular boundary point and rescale in the usual way
$$z=e^{-i\theta}\sqrt{n\Lap Q(p)}(\zeta-p),\qquad R_n(z)=\frac 1 {n\Lap Q(p)}
\bfR_n(\zeta).$$
%An analysis of the exponent in \eqref{E18} gives the following %well-known
%result.

\begin{lem} \label{L4} Suppose that $p$ is a regular boundary point
at distance at least $\delta$ from all singular boundary points, where $\delta>0$ is independent of $n$. There is then a constant $C=C(\delta)$ such that, whenever
$\zeta\in S^c$ and $|z|\le \log n$ we have
$$R_n(z)\le Ce^{-2x^2},\qquad z=x+iy.$$
\end{lem}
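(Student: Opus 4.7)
The plan is to combine the global pointwise bound for the unrescaled one-point function, Lemma \ref{snor} (which gives $\bfR_n(\zeta)\le Cn\,e^{-n(Q-\eqpot)(\zeta)}$), with a quantitative Taylor expansion of the function $Q-\eqpot$ near the regular boundary point $p$. After dividing by $n\Lap Q(p)$, the bound becomes
\begin{equation*}
R_n(z)\le \frac{C}{\Lap Q(p)}\,e^{-n(Q-\eqpot)(\zeta)},
\end{equation*}
so the task reduces to showing that $n(Q-\eqpot)(\zeta)\ge 2x^{2}-O(1)$ uniformly for $\zeta\in S^{c}$ with $|z|\le\log n$.

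The key analytic input is a second-order expansion of $f:=Q-\eqpot$ in the exterior of $S$ near the regular boundary point $p$. Since $p$ lies at distance $\ge\delta$ from all singular points, $\d S$ is real-analytic in a fixed neighborhood of $p$, and in that neighborhood $\eqpot$ extends harmonically across $\d S$ (via the Schwarz function of $S^{c}$); thus $f$ is real-analytic up to $\d S$ from the exterior side. I would work in Fermi (arclength, signed-normal) coordinates $(s,t)$ along $\d S$, with $t>0$ outside $S$. In these coordinates $f(s,0)\equiv 0$ and $f_{t}(s,0)\equiv 0$ (the latter from the $C^{1,1}$-matching), so $f_{ss}(s,0)=0$ and the boundary value of the standard (unnormalized) Laplacian reduces to $f_{tt}(s,0)=4\Lap Q(p_{s})$, where $p_{s}$ is the boundary point at arclength $s$. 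Taylor expansion in $t$ therefore yields
\begin{equation*}
f(s,t)=2\Lap Q(p_{s})\,t^{2}+O(t^{3}),
\end{equation*}
with $O$-constants uniform on the fixed neighborhood (depending only on $\delta$). In particular $f(\zeta)=2\Lap Q(p)\,t(\zeta)^{2}+O(|s|\,t^{2})+O(t^{3})$ near $p$.

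Next I would translate this into rescaled variables. Writing $\zeta-p=e^{i\theta}(X+iY)$ with $X+iY=z/\sqrt{n\Lap Q(p)}$, and parametrizing $\d S$ locally as $X=g(Y)=O(Y^{2})$, the distance and arclength satisfy $t(\zeta)=X-g(Y)+O(Y^{3})$, $s=Y+O(Y^{3})$. A direct computation gives $n\Lap Q(p)\,t(\zeta)^{2}=x^{2}+O(|x|y^{2}/\sqrt{n})+O(y^{4}/n)$, and similarly the error terms $n|s|t^{2}$ and $nt^{3}$ are of order $(\log n)^{3}/\sqrt{n}$ in the range $|z|\le\log n$. Consequently
\begin{equation*}
n(Q-\eqpot)(\zeta)=2x^{2}+o(1)\qquad (n\to\infty),
\end{equation*}
uniformly for $\zeta\in S^{c}$ with $|z|\le\log n$, and the desired estimate $R_n(z)\le Ce^{-2x^{2}}$ follows by substitution.

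The main obstacle is the second-order expansion step: identifying the leading coefficient as exactly $2\Lap Q(p)$, which after rescaling produces the precise factor $2$ in the exponent. This is not merely a matter of order-of-magnitude, and it relies essentially on the harmonicity of $\eqpot$ in $S^{c}$ (so that $\Lap f=\Lap Q$ there), the $C^{1,1}$-matching of $f$ across $\d S$, and the fact that $f\equiv 0$ along $\d S$ (killing tangential second derivatives). The rest is a somewhat lengthy but routine bookkeeping of error terms in Fermi coordinates and under rescaling, where the saving $|z|\le\log n$ is used precisely to absorb the $O(t^{3})$ and curvature corrections.
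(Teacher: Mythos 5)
Your proof is correct and takes essentially the same route as the paper's: both reduce to the pointwise bound $\bfR_n(\zeta)\le Cn\,e^{-n(Q-\eqpot)(\zeta)}$, and both extract the leading term $2x^2$ from a second-order Taylor expansion of $Q-\eqpot$ in Fermi coordinates, using (i) $Q-\eqpot=0$ on $\d S$ with $C^{1,1}$-matching to kill the zeroth and first order terms and the tangential second derivative, and (ii) harmonicity of $\eqpot$ in $S^c$ to identify $\d^2(Q-\eqpot)/\d n^2=4\Lap Q$ at $p$. The paper's write-up is more compressed (it introduces a harmonic continuation $V$ of $\eqpot$ across $\d S$ and expands only along the outer normal), while you track the tangential and curvature corrections explicitly and show they are $O((\log n)^3/\sqrt{n})$; this is the same bookkeeping the paper leaves implicit.
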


\begin{proof} Let $N=e^{i\theta}$ be the outer normal direction at $p$ and let $V$ be the harmonic
continuation of $\check{Q}|_{S^c}$ to a neighbourhood of $p$.
We write $\delta_n=C\log n/\sqrt{n}$.

By Taylor's formula we have, for $x>0$, with $M=Q-V$,
$$M\left(p+xN/\sqrt{n\Lap Q(p)}\right)=\frac 1 {2n\Lap Q(p)} \frac {\d^2 M}{\d n^2}(p)x^2+\frac 1 {6(n\Lap Q(p))^{3/2}}\frac {\d^3 M}{\d n^3}(p+\theta)x^3,$$
where "$\d/\d n$'' is exterior normal derivative (having nothing to do with the integer $n$) and
$\theta=\theta(n,p)$ is some number between $0$ and $\delta_n$. However, since
$p$ is a regular point and $Q=V$ on $\d S$ we have
$\frac {\d^2 M}{\d s^2}(p)=0$ where $\d/\d s$ denotes differentiation in the
tangential direction.
 Adding this to the above Taylor expansion, using that
$(\d_s^2+\d_n^2)M=4\Lap M=4\Lap Q$, we obtain, when $|z|\le C\log n$,
\begin{equation*}nM\left(p+zN/\sqrt{n\Lap Q(p)}\right)=2x^2+O\left(\log^3 n/\sqrt{n}\right),\qquad 0\le x\le \delta_n.\end{equation*}
\end{proof}

%\begin{cor} If $K$ is a limiting kernel for the rescaling a regular boundary point,
%then the one-point function $R(z)=K(z,z)$ satisfies the estimate $R(z)\le Ce^{-2x^2}$
%where $C$ depends only on the distance to the nearest singular boundary point.
%\end{cor}

It follows from the lemma that each limiting $1$-point function $R(z)=K(z,z)$ at a regular boundary point must satisfy
$R(z)\le Ce^{-2x^2}$ where $x=\re z$. This proves Theorem \ref{TT2}, part \ref{TT2_1}. Theorem \ref{TT2} is therefore
completely proved. q.e.d.

\subsection{The $1/8$-formula} We now prove Theorem \ref{TT2.25}.

Suppose that the droplet $S$ is connected and that the boundary $\d S$ is everywhere regular, so that the theory from \cite{AHM3} applies. Consider
the class $\calC_0$ of test-functions $f\in\coity(\C)$ with $f=0$ on $\d S$. For $f\in\calC_0$ we define functionals
$$\rho_n(f)=\int_\C f\cdot \left(\bfR_n-\left(n\Lap Q+\frac 1 2\Lap\log\Lap Q\right)\cdot\1_S \right)\, dA.$$
 We shall use the main result of the paper \cite{AHM3}, which implies that the limit $\rho(f)=\lim \rho_n(f)$ exists and equals
\begin{equation}\label{com}\rho(f)=\frac 1 {8\pi}\int_{\d S} \frac {\d f} {\d n}\, ds\end{equation}
where $\d f/\d n$ is the exterior normal derivative and $ds$ is arclength measure on $\d S$.

Next fix a parameter $M>0$ and consider the tubular $M/\sqrt{n\Lap Q}$-neighbourhood of $\d S$, defined by
$$\calN_M:=\left\{\,p+\eta\,;\, p\in\d S,\, \babs{\,\eta\,}<M/\sqrt{n\Lap Q(p)}\,\right\}.$$
To simplify, we now assume that $\d S$ is connected; a simple modification will prove the general case.

Now consider an arclength parametrization $p=p(s)$ ($0\le s\le s_0$) of $\d S$.
Also denote by $N_p$ the exterior unit normal at $p\in\d S$.

Define a coordinate system $(s,t)$ where $s$ and
$t$ are real parameters, related to the corresponding point $\zeta=\zeta_n(s,t)\in\calN_M$ by
$$\zeta=p(s)+\frac t{\sqrt{n\Lap Q(p(s))}}
N_{p(s)}.$$
In the $(s,t)$-system, the set $\calN_M$
corresponds to a strip
$$\{(s,t);\, 0\le s\le s_0,\, -M< t<M\}.$$
%where $s_0$ is the length of $\d S$.

A simple geometric consideration shows that the area element satisfies the relation
\begin{equation}\label{jac}dA(\zeta)=(1+o(1))\frac 1 \pi\frac 1 {\sqrt{n\Lap Q(p(s))}}\, dsdt,\quad (\zeta=\zeta_n(s,t)\in\calN_M).\end{equation}
Here $o(1)\to 0$ as $n\to\infty$, and the $o(1)$-constant depends on $M$.

The rescaled $1$-point function about $p(s)$ will be denoted
$$R_{n,p(s)}(t):=\frac 1 {n\Lap Q(p(s))}\bfR_n\left(p(s)+\frac t{\sqrt{n\Lap Q(p(s))}}
N_{p(s)}\right),\quad (t\in\R)$$

We now define the functionals
\begin{align*}\rho_{n}'(f)&=\int_{\calN_M}f\cdot \left(\bfR_n-\left(n\Lap Q+\frac 1 2\Lap\log\Lap Q\right)\cdot \1_S\right)\, dA,\\
\rho_n''(f)&=\int_{\C\setminus\calN_M}
f\cdot \left(\bfR_n-\left(n\Lap Q+\frac 1 2\Lap\log\Lap Q\right)\cdot\1_S\right)\, dA.
\end{align*}
Clearly, $\rho_n(f)=\rho_n'(f)+\rho_n''(f)$.

%We first claim that if $f\in\calC_0$, then the sequence $\rho_n'(f)$ converges to
%$$\rho'(f)=\frac 1 \pi\int_{\d S}ds\int_{-M}^{M}t\cdot (R_s(t)-\1(t))\, dt,\quad %(\1=\1_{(-\infty,0)}).$$
%Here $ds$ is arclength measure on $\d S$.

To study $\rho_n'(f)$ we use Taylor's formula in the tubular neighbourhood $\calN_M$,
$$f(\zeta_n(s,t))=\frac {\d f} {\d n}(p(s))\cdot \frac t {\sqrt{n\Lap Q(p(s))}}+O(1/n).$$
This gives
\begin{align*}\rho_n'(f)&=(1+o(1))\frac 1\pi\int_{\d S}\frac {\d f}{\d n}(p(s))\, ds
\int_{-M}^{M}t\cdot(R_{n,p(s)}(t)-\1(t))\, dt,\quad (\chi=\chi_{(-\infty,0)}).\end{align*}
We have here made a change of variables in the integral defining $\rho_n'(f)$ and used relation \eqref{jac}.

Next, the apriori estimates in Theorem \ref{TT2} imply that there are constants $C$, $c>0$
such that
\begin{equation}\label{decay}\babs{\,\bfR_n(\zeta)-n\Lap Q(\zeta)\1_S(\zeta)\,}\le Cne^{-\,c\,n\,\delta(\zeta)^{\,2}},\quad \delta(\zeta):=\dist\left(\zeta,\d S\right).\end{equation}

Now for $f\in\calC_0$ let us write $\tilde{f}(\zeta)=f(\zeta)/\delta(\zeta)$, so
$\tilde{f}\in\coity(\C)$.

The estimates in \eqref{decay} give that there are constants $C$, $C'$ such that
\begin{align*}\babs{\,\rho_n''(f)\,}\le C\left\|\,\tilde{f}\,\right\|_\infty \int_{M}^\infty
t e^{-\,c\,t^{\,2}}\, dt+C'\int_S\babs{\,f\,}\, dA.\end{align*}
By \eqref{com} we now get
\begin{equation}\label{spit}\begin{split}(1+o(1))\frac 1 \pi &\babs{\int_{\d S}\frac {\d f}{\d n}(p(s))\, ds\left[\int_{-M}^M
t\cdot \left(R_{n,p(s)}(t)-\1(t)\right)\, dt-\frac 1 8\right]}=\babs{\,\rho_n''(f)\,}\\
&\le C\left\|\,\tilde{f}\,\right\|_\infty e^{-\,c\,M^{\,2}}+C'\left\|\,f\,\right\|_1.\\
\end{split}
\end{equation}
It is convenient to introduce a notation for the expression appearing in the inner integral,
$$h_{n,M}(s):=\int_{-M}^M
t\cdot \left(R_{n,p(s)}(t)-\1(t)\right)\, dt\,-\,\frac 1 8.$$

\begin{lem} \label{frus} For almost all $s\in [0,s_0]$ we have
$$\lim_{M\to\infty}\lim_{n\to\infty}h_{n,M}(s)=0.$$
\end{lem}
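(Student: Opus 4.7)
The plan is to combine the uniform a priori estimates of Theorem \ref{TT2} with inequality \eqref{spit} through a duality argument, obtain $L^{1}$-decay of $h_{n,M}$ as $n,M\to\infty$, and then deduce the pointwise a.e.\ statement by subsequence extraction.

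First, the exterior and interior bounds of Theorem \ref{TT2} give a uniform majorant
\[
|t(R_{n,p(s)}(t)-\1(t))|\le C|t|e^{-ct^{2}},\qquad n\in\N,\ s\in[0,s_{0}],\ t\in\R,
\]
with $c,C>0$ independent of $n,s$ (uniformity in $s$ using regularity of $\d S$ and compactness of $[0,s_{0}]$). Hence $h_{n,M}$ is uniformly bounded and the tails satisfy $|h_{n,M}(s)-h_{n,M'}(s)|\le C'e^{-cM^{2}}$ for $M'\ge M$, uniformly in $n,s$; the limit $h_{n}(s):=\int_{-\infty}^{+\infty}t(R_{n,p(s)}(t)-\1(t))\,dt-\tfrac{1}{8}$ therefore exists uniformly with $|h_{n}-h_{n,M}|\le C'e^{-cM^{2}}$.

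Second, for any $\phi\in L^{\infty}(\d S)$ and $\eps>0$ I construct $f_{\eps,\phi}\in\calC_{0}$ concentrated in an $\eps$-collar of $\d S$ via $f_{\eps,\phi}(\zeta)=\tilde\phi(\pi(\zeta))\,\eta_{\eps}(d(\zeta))$, where $\pi$ is nearest-point projection to $\d S$, $d$ is signed distance, $\tilde\phi$ is a smooth tangential extension of $\phi$, and $\eta_{\eps}$ is smooth with $\eta_{\eps}(0)=0$, $\eta_{\eps}'(0)=1$, $\supp\eta_{\eps}\subset(-\eps,\eps)$. Then $\d f_{\eps,\phi}/\d n=\phi$ on $\d S$, $\|\tilde f_{\eps,\phi}\|_{\infty}\le C\|\phi\|_{\infty}$, and $\|f_{\eps,\phi}\|_{1}\le C\eps\|\phi\|_{\infty}$. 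Plugging $f_{\eps,\phi}$ into \eqref{spit} and letting $\eps\downarrow 0$ yields, for large $n$,
\[
\tfrac{1}{\pi}\Big|\int_{\d S}\phi(p(s))\,h_{n,M}(s)\,ds\Big|\le 2C\|\phi\|_{\infty}e^{-cM^{2}},
\]
(the factor $2$ absorbing the $(1+o_{n}(1))$ in \eqref{spit}). Approximating $\mathrm{sgn}(h_{n,M})$ by smooth $\phi$ on $\d S$ gives $\|h_{n,M}\|_{L^{1}([0,s_{0}])}\le 2\pi Ce^{-cM^{2}}$ for all sufficiently large $n$, and combining with the uniform tail bound shows $h_{n}\to 0$ in $L^{1}([0,s_{0}])$.

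Finally, $L^{1}$-convergence furnishes a subsequence $n_{k}$ with $h_{n_{k}}(s)\to 0$ for a.e.\ $s$. By Theorem \ref{TT1} and a diagonal extraction over a countable dense subset of $[0,s_{0}]$ and over $M\in\N$, refine $n_{k}$ so that $h_{n_{k},M}(s)\to h_{M}^{(s)}$ exists for each $M$ and a.e.\ $s$. The tail bound yields $|h_{M}^{(s)}-h_{n_{k}}(s)|\le C'e^{-cM^{2}}$, so taking $k\to\infty$ then $M\to\infty$ gives $\lim_{M}\lim_{k}h_{n_{k},M}(s)=0$ for a.e.\ $s$, as required. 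The principal technical obstacle I foresee is reconciling the two subsequence extractions---the $L^{1}$-convergence one and the pointwise one at each fixed $M$---which is carried out by a Cantor diagonal argument and is routine once the uniform tail control of the first step is in hand.
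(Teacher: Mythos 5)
Your strategy — establish $L^{1}$-decay of $h_{n,M}$ by a duality argument and then pass to a.e.\ convergence — has a genuine gap in the duality step. The inequality \eqref{spit} is derived for a \emph{fixed} test function $f$: the $(1+o(1))$ prefactor comes from Taylor-expanding $f$ in the tubular coordinates, and the background convergence $\rho_n(f)\to\rho(f)$ (from \cite{AHM3}) also depends on $f$, so your intermediate bound $\tfrac{1}{\pi}\big|\int_{\d S}\phi\,h_{n,M}\,ds\big|\le 2C\|\phi\|_\infty e^{-cM^2}$ holds only for $n\ge N(M,\phi)$ with a threshold that depends on $\phi$ (equivalently, it is a $\limsup_{n}$ statement for each fixed $\phi$). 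When you then choose $\phi$ to approximate $\operatorname{sgn}(h_{n,M})$ — a function that itself depends on $n$ — the argument becomes circular: there is no single $n_0(M)$ that works uniformly over $\phi\in L^\infty$, and weak decay $\limsup_n\big|\int\phi\,h_{n,M}\big|\le C\|\phi\|_\infty e^{-cM^2}$ for each fixed $\phi$ does not control $\|h_{n,M}\|_{L^1}$ (compare $g_n=\operatorname{sgn}\sin(ns)$, for which $\int\phi\,g_n\to 0$ for every continuous $\phi$ while $\|g_n\|_{1}$ stays bounded below). So the key estimate $\|h_{n,M}\|_{1}\le 2\pi Ce^{-cM^2}$ for large $n$ is not established.

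The paper avoids this by never going through an $L^1$ bound. It sets $h^*=\limsup_M\limsup_n h_{n,M}$, $h_*=\liminf_M\liminf_n h_{n,M}$, supposes for contradiction that $E_\alpha=\{h^*>\alpha\}$ has positive measure, and invokes the Lebesgue density theorem: near a density point of $E_\alpha$ one chooses a single fixed $f\in\calC_0$ with $\d f/\d n$ concentrated on a short arc (taking values $0$ and a large constant $P$) and $\|f\|_1\le 1$; plugging this fixed $f$ into \eqref{spit}, taking $\limsup_n$ and $\limsup_M$ first, and then letting $P\to\infty$, $\eps\downarrow 0$ produces the contradiction. This uses exactly the weak estimate you correctly establish for each fixed $\phi$, but exploits it through a localization instead of a $\operatorname{sgn}$-approximation, so the test function never has to track $n$. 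Lastly, even if your $L^1$ bound held, step 3 would be both unnecessary and insufficient: unnecessary because reverse Fatou applied to the uniformly bounded $|h_{n,M}|$, together with your uniform tail estimate $|h_{n,M}-h_n|\le C'e^{-cM^2}$, would give $\int\limsup_n|h_n|\,ds\le (C+C')e^{-cM^2}$ for every $M$, hence $h_n\to 0$ a.e.\ along the \emph{full} sequence; insufficient because the lemma (as it is used in the proof of Theorem~\ref{TT2.25}, where the $1/8$-formula must hold for \emph{every} subsequential limiting kernel) needs the full-sequence statement, and a single-subsequence a.e.\ limit does not deliver that.
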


\begin{proof} We define auxiliary functions
$$h^*(s)=\limsup_{M\to\infty}\limsup_{n\to\infty}h_{n,M}(s),\quad
h_*-(s)=\liminf_{M\to\infty}\liminf_{n\to\infty}h_{n,M}(s).$$
We must prove that $h^*(s)\le 0$ and $h_*(s)\ge 0$ for almost every $s$. The two cases are similar, so we just prove the
inequality for $h^*$.

Suppose that the set $E_\alpha:=\left\{h^*>\alpha\right\}$ has positive measure for some $\alpha>0$. Take $\eps\in (0,1)$ to be fixed later.

Since the set $E_\alpha$ has positive measure, it contains a point of density
(see e.g. \cite{Co}, p. 183)
i.e. there is $x\in E_\alpha$ and $\delta>0$ such that
\begin{equation}\label{dens}\left|\,E_\alpha\cap (x-u,x+u)\,\right|\ge (1-\eps)\cdot 2u,\qquad (0<u<\delta).\end{equation}
(Here $|\,E\,|$ denotes Lebesgue measure of a set $E\subset\R$.)

Put $\omega=(x-\delta,x+\delta)$ and $\tilde{\omega}=(x-\delta/2,x+\delta/2)$.
Also pick a large number $P>0$.

 Now fix a test-function $f\in\calC_0$ such that $0\le \d f/\d n\le P$ on $\d S$, $\d f/\d n=0$
outside $\omega$, $\d f/\d n=P$ on $\tilde{\omega}$, and
$\left\|\,f\,\right\|_1\le 1$.

Taking $\limsup_{n\to\infty}$ and then $\limsup_{M\to\infty}$ in \eqref{spit}, we find that
$$\int_{\omega\cap E_\alpha}\frac {\d f}{\d n} (p(s))\,h^*(s)\, ds-
\int_{\omega\setminus E_\alpha}\frac {\d f}{\d n} (p(s))\,h^*(s)\, ds
\le C''.$$
It is easy to see that $h^*$ is bounded from below, say $h^*\ge -m$ so by \eqref{dens},
$$\int_{\omega\setminus E_\alpha}\frac {\d f}{\d n} (p(s))\,h^*(s)\, ds\le
mP\eps\cdot 2\delta,$$
and hence
$$\int_{\omega\cap E_\alpha}\frac {\d f}{\d n} (p(s))\,h^*(s)\, ds\le
2m\eps\delta P+C''.$$
Since $\d f/\d n=P$ on $\tilde{\omega}$ while
$h^*\ge \alpha$ on $E_\alpha$, the integral in the left hand side is at least
$\left|\,E_\alpha\cap\tilde{\omega}\,\right|\cdot P\alpha \ge (1-\eps)\delta P\alpha$. We have shown that
 $$(1-\eps)\delta\alpha P
\le P\cdot m\eps\cdot 2\delta+C''.$$
Choosing $\eps>0$ small enough and $P$ large enough, we can get a contradiction regardless of the values of $\alpha>0$, $m$, and $C''$. The contradiction shows that $\left|\,E_\alpha\,\right|=0$.
\end{proof}

The conclusion of Lemma \ref{frus} means that there exists a set $N\subset\d S$ of arclength zero such that if $p\in(\d S)\setminus N$, then
$$\lim_{n\to\infty}\int_{-\infty}^{+\infty}t\cdot \left(R_{n,p}(t)-\1(t)\right)\, dt=\frac 1 8.$$
Now fix any $p\in (\d S)\setminus N$.
Use compactness to choose a subsequence $n_k$ such that $R_{n_k,p}$ converges locally uniformly to some limit $R_{p}$. We then have
\begin{align*}\lim_{n\to\infty}&\int_{-\infty}^{+\infty}t\cdot \left(R_{n,p}(t)-\1(t)\right)\, dt=\lim_{k\to\infty}\int_{-\infty}^{+\infty}t\cdot \left(R_{n_k,p}(t)-\1(t)\right)\, dt\\
&=\int_{-\infty}^{+\infty}t\cdot \lim_{k\to\infty}\left(R_{n_k,p}(t)-\1(t)\right)\, dt=
\int_{-\infty}^{+\infty}t\cdot \left(R_{p}(t)-\1(t)\right)\, dt.\end{align*}
We have by this completely proved Theorem \ref{TT2.25}.

\section{Translation invariant solutions} \label{tis}

In this section, we study \ti limiting kernels and prove theorems \ref{TT3},
\ref{TT4}, and \ref{TT5}.

\subsection{The Gaussian representation of a \ti limiting kernel} \label{transmetr}
In this section we prove the following result.

\begin{lem} \label{TT2.5} Let $K(z,w)=G(z,w)\Phi(z+\bar{w})$ is an arbitrary \ti
limiting kernel. Then there exists a Borel function $f$ with $0\le f\le 1$ such that $\Phi=\gamma *f$.
\end{lem}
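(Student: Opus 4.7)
The plan is to exploit translation invariance to reduce the problem to an application of Bochner's theorem on $\R$, using the complementary positive matrix $\tilde K = G(1-\Psi)$ supplied by Theorem \ref{TT1.5} to bound the resulting spectral measure from above by the Gaussian $\gamma$.

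The first step is to restrict both positive matrices $K = G\Psi$ and $\tilde K = G(1-\Psi)$ to the imaginary axis. Setting $z = iy_1$ and $w = iy_2$, a direct computation gives
\begin{equation*}
z\bar w - \tfrac12|z|^2 - \tfrac12|w|^2 = -\tfrac12(y_1 - y_2)^2,\qquad z + \bar w = i(y_1 - y_2),
\end{equation*}
so that $K(iy_1, iy_2) = F(y_1 - y_2)$ and $\tilde K(iy_1, iy_2) = \tilde F(y_1 - y_2)$, where
\begin{equation*}
F(y) := e^{-y^2/2}\Phi(iy),\qquad \tilde F(y) := e^{-y^2/2}\bigl(1-\Phi(iy)\bigr).
\end{equation*}
Both $F$ and $\tilde F$ are continuous (since $\Phi$ is entire), positive-definite on $\R$ (since $K$ and $\tilde K$ are positive matrices), and bounded by Cauchy--Schwarz for positive kernels, using $K(iy,iy) = \Phi(0) \le 1$.

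Bochner's theorem then furnishes finite positive Borel measures $\mu, \tilde\mu$ on $\R$ with
\begin{equation*}
F(y) = \int_\R e^{iy\xi}\,d\mu(\xi),\qquad \tilde F(y) = \int_\R e^{iy\xi}\,d\tilde\mu(\xi).
\end{equation*}
Adding the two identities gives $F(y) + \tilde F(y) = e^{-y^2/2}$, which is itself the Fourier transform of $\gamma(\xi)\,d\xi$; by injectivity of the Fourier transform, $\mu + \tilde\mu = \gamma\,d\xi$. Since both summands are nonnegative, there exists a Borel function $f$ with $0 \le f \le 1$ such that $d\mu = f\,\gamma\,d\xi$.

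To close the argument, I plan to check that $\Phi = \gamma * f$ by equating the two entire functions on the imaginary axis. Completing the square in the integral defining $\gamma * f$ gives
\begin{equation*}
(\gamma * f)(iy) = \frac{1}{\sqrt{2\pi}}\int_\R e^{-(iy-t)^2/2} f(t)\,dt = e^{y^2/2}\int_\R e^{iyt} f(t)\gamma(t)\,dt = e^{y^2/2} F(y) = \Phi(iy).
\end{equation*}
Since $f$ is bounded and $\gamma$ decays like a Gaussian, $\gamma * f$ extends to an entire function; agreement with the entire function $\Phi$ on $i\R$ forces $\Phi = \gamma * f$ on all of $\C$. The only substantive obstacle is fixing the Fourier convention so that the identification $\mu \leftrightarrow f$ is clean and the bound $0 \le f \le 1$ is preserved; beyond this, the proof is essentially just Bochner combined with complementarity.
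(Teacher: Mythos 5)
Your proof is correct and follows essentially the same route as the paper's: restrict the positive matrices $K$ and $\tilde K$ to the imaginary axis, apply Bochner's theorem to the resulting positive definite difference kernels, and use complementarity ($\Phi + (1-\Phi) = 1$, hence the two spectral measures sum to a Gaussian) to deduce that the Radon--Nikodym derivative $f$ lies in $[0,1]$. The paper phrases the imaginary-axis restriction via the holomorphic kernel $L = e^{z\bar w}\Psi$ and the substitution $\Phi(iz)=V(z)e^{z^2/2}$, whereas you work directly with $K = G\Psi$ and absorb the Gaussian factor into $F(y)=e^{-y^2/2}\Phi(iy)$; these are the same function ($F=V$), so the difference is purely one of bookkeeping.
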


Recall that a limiting kernel $K=G\Psi$ is called \textit{translation invariant} (or \ti) if
$$\Psi(z+it,w+it)=\Psi(z,w),\qquad t\in\R.$$
Let us start the discussion of this important case by proving a simple lemma.

\begin{lem} $\Psi$ is translation invariant if and only if $\Psi(z,w)=\Phi(z+\bar{w})$ for some
entire function $\Phi$.
\end{lem}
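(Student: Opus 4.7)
The ``if'' direction is a one-line check: if $\Psi(z,w)=\Phi(z+\bar{w})$ for an entire function $\Phi$, then for every real $t$,
\[
\Psi(z+it,w+it)=\Phi\bigl((z+it)+\overline{w+it}\bigr)=\Phi(z+\bar{w}+it-it)=\Psi(z,w).
\]

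For the ``only if'' direction, the plan is to exploit the fact that $\Psi$ is Hermitian-entire, so it may be viewed as a function $g(z,\bar w)=\Psi(z,w)$ that is entire in the two independent complex variables $z$ and $\bar w$. The first step is to observe that the translation invariance $g(z+it,\bar{w}-it)=g(z,\bar{w})$ (valid for all real $t$) is, for fixed $(z,\bar{w})$, an identity between two entire functions of the complex variable $s:=it$: the map $s\mapsto g(z+s,\bar{w}-s)$ is entire because $g$ is entire in both arguments, and it agrees with the constant $g(z,\bar{w})$ on the imaginary axis $s\in i\R$. By the identity theorem it agrees everywhere, so
\[
g(z+s,\bar{w}-s)=g(z,\bar{w}) \quad\text{for all }s\in\C.
\]

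The second step is to specialize this to $s=\bar{w}$, which gives $g(z+\bar{w},0)=g(z,\bar{w})$; equivalently $\Psi(z,w)=\Psi(z+\bar{w},0)$. Setting $\Phi(u):=\Psi(u,0)$, this function is entire because $\Psi$ is entire in its first argument, and by construction $\Psi(z,w)=\Phi(z+\bar{w})$.

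I do not foresee a real obstacle here; the only subtle point is the analytic continuation in the complexified translation parameter, which just uses that $g$ is jointly entire in $(z,\bar w)$. One can also note, as a sanity check, that the Hermitian symmetry $\Psi(z,w)=\overline{\Psi(w,z)}$ forces $\Phi(u)=\overline{\Phi(\bar{u})}$, so $\Phi$ is automatically real on the real axis, consistent with its later use as the generator of the Gaussian representation.
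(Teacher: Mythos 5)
Your proof is correct and uses essentially the same idea as the paper: extend across the imaginary axis by the identity theorem for analytic functions of $\bar{w}$. The paper's version is a touch more direct — it fixes $z$, observes that $\Psi(z,w)$ and $\Psi(z+\bar{w},0)$ are both analytic in $\bar{w}$, and notes they coincide when $\bar{w}\in i\R$ — whereas you first complexify the translation parameter $t\to s$ and then specialize $s=\bar{w}$, which is the same analytic-continuation step performed in two moves rather than one.
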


\begin{proof} If $\Psi$ is translation invariant, we define $\Phi(z)=\Psi(z,0)$. We must prove that
$$\Psi(z,w)=\Psi(z+\bar{w},0).$$
However, for fixed $z$ both functions are analytic in $\bar{w}$ and they coincide
on the imaginary axis.
\end{proof}

We must prove the representation formula $\Phi=\gamma *f$ where
$\gamma(z)=\frac 1 {\sqrt{2\pi}}\, e^{-\,z^{\,2}/2}$ is the Gaussian kernel and
$f$ is a bounded Borel function with $0\le f\le 1$.

In the following, we fix any limiting holomorphic kernel
$$L(z,w)=e^{z\bar{w}}\Psi(z,w).$$
We shall apply Theorem \ref{TT1.5} part \ref{TT1.5_3}, which states that both $L$ and
$\tilde{L}(z,w)=e^{z\bar{w}}(1-\Psi(z,w))$ are positive matrices.

%Now let $L(z,w)=e^{z\bar{w}}\Phi(z+\bar{w})$ be any \ti limiting holomorphic kernel.
%Recall that the Gaussian kernel is denoted
%$$\gamma(z)=\frac 1 {\sqrt{2\pi}}e^{-z^2/2}.$$

In the following we will use the convolution operation
$$\gamma *\mu(z)=\int_\R \gamma(z-t)\, d\mu(t),$$
where $\mu$ is a positive measure on $\R$. If $d\mu(t)=f(t)\, dt$ is absolutely continuous, we write, as before, $\gamma*\mu=\gamma*f$.

We now use the positivity of $L$ only on the imaginary axis, to conclude that for all finite subsets $\{x_i\}_1^N\subset \R$ and all complex scalars $\alpha_i$ we have
\begin{equation}\label{mjuk}\sum \alpha_j\bar{\alpha}_k e^{x_jx_k}\Phi(ix_j-ix_k)\ge 0.\end{equation}
Make the substitution $\Phi(iz)=V(z)e^{z^2/2}$. The positivity condition above then becomes
$$\sum \alpha_j\bar{\alpha}_k e^{x_j^2/2+x_k^2/2}V(x_j-x_k)\ge 0,\qquad (x_j)_1^N\subset\R.$$
This can be written
$$\sum \beta_j\bar{\beta}_kV(x_j-x_k)\ge 0,\qquad (\beta_j=\alpha_j e^{x_j^2/2}).$$
The function $V(x)$ is hence positive definite, and by Bochner's theorem
(e.g. \cite{K}),
it is the inverse Fourier transform of a positive measure $\mu$. We have shown that
$$\Phi(ix)e^{(ix)^2/2}=\int_\R e^{itx}\, d\mu(t),\quad x\in\R.$$
Since the entire function $\Phi$ is determined by its values on the imaginary axis, this gives
$$\Phi(z)=\int_\R e^{tz-z^2/2}\, d\mu(t)=\int_\R e^{-(t-z)^2/2}\, e^{t^2/2}d\mu(t),
\qquad z\in\C.$$
Writing $d\nu(t)=\sqrt{2\pi} e^{t^2/2}d\mu(t)$ and $\gamma(t)=\frac 1 {\sqrt{2\pi}}e^{-t^2/2}$
we now have the representation
$$\Phi=\gamma *\nu,$$
where $\nu$ is some positive measure.

Similarly, since the kernel $\tilde{L}(z,w)=e^{z\bar{w}}(1-\Phi(z+\bar{w}))$ is a positive matrix (see Theorem \ref{TT1.5}, cf. Section \ref{logs}), there is a positive measure $\nu_1$ so that
$$1-\Phi=\gamma *\nu_1.$$

The positive measures $\nu$ and $\nu_1$ have the property that
$$\gamma *(\nu+\nu_1)=\Phi+(1-\Phi)=1=\gamma *\1_\R.$$
The map $\rho\mapsto \gamma *\rho$ is 1-1 (as can be seen by taking Fourier transforms), so we obtain
$$\nu+\nu_1=\1_\R.$$
As both $\nu$ and $\nu_1$ are positive, this forces both measures to be absolutely
continuous, $d\nu(x)=f(x)\, dx$ and $d\nu_1(x)=f_1(x)\, dx$ where $f$ and $f_1$ are some non-negative Borel functions with $f(x)+f_1(x)=1$. In particular,
$0\le f\le 1$. By this, the proof of Lemma \ref{TT2.5} is complete. $\qed$

\subsection{T.i. solutions to Ward's equation} \label{setu} We shall now prove
Theorem \ref{TT3}.
%Let us remark first that to this point, we have only used Ward's equation at one instance, namely in the
%proof of the positivity in Theorem \ref{TT1.5}.
Thus we shall find all solutions to Ward's equation
of the special form $K(z,w)=G(z,w)\Phi(z+\bar{w})$ where $\Phi=\gamma *f$ for some bounded Borel function $f$.

Thus assume that $\Phi=\gamma *f$ is an entire function.
%In the translation invariant case $\Psi(z,w)=\Phi(z+\bar{w})$ we will consider the restriction
It will be convenient to denote the restriction to $\R$ by
$\rest:=\Phi|_\R$, i.e.,
$$\rest(x)=R(x/2),\qquad x\in\R.$$
Observe that a function $V(z)$, which is translation invariant in the sense that $V(z+it)=V(z)$ for all $z\in\C$ and $t\in \R$ satisfies
$\d V=\frac 1 2 \d _x V$.
It is convenient to formulate the following reformulation of Ward's equation in terms of $\rest$:

\begin{lem}\label{wardnew} A \ti kernel $\Psi(z,w)=\Phi(z+\bar{w})$ satisfies Ward's equation if and only
if there exists a smooth function $G$ on $\R$ such that
\begin{equation}\label{dd1}G'=\rest-1,\end{equation}
and
\begin{equation*}\label{dd2}L=G\rest-\rest',\end{equation*}
where
\begin{equation}\label{dd3}L(x)=\int_\C\frac {e^{\,-\,|\,w\,|^{\,2}}}{w}\babs{\,\Phi(x-w)\,}^{\,2}\, dA(w),\qquad x\in\R.
\end{equation}
\end{lem}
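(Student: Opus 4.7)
The plan is to reduce Ward's equation for the translation-invariant kernel $\Psi(z,w)=\Phi(z+\bar{w})$ to one-dimensional equations on the real axis via the reformulation in Lemma~\ref{psiprop}, which states that Ward's equation is equivalent to the existence of a smooth $P$ with $\dbar P=R-1$ and $D=PR-\d R$. Since the coordinate $x:=2\re z=z+\bar z$ satisfies $\d x=\dbar x=1$ and $R(z)=\Phi(x)$ depends only on $x$, one has the elementary identities $\d R(z)=\dbar R(z)=\rest'(x)$. The strategy is then to show that $D$, and hence $P$, are also functions of $x$ alone, so that the two-dimensional Ward identity collapses pointwise to the claimed equations for $G$ on $\R$.

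The main computation is the identification $D(z)=L(2\re z)$. Starting from
\[D(z)=\int_{\C}\frac{e^{-|z-w|^{2}}}{z-w}\,|\Phi(z+\bar{w})|^{2}\,dA(w),\]
the substitution $u=z-w$ yields $z+\bar{w}=x-\bar{u}$ and hence
\[D(z)=\int_{\C}\frac{e^{-|u|^{2}}}{u}\,|\Phi(x-\bar{u})|^{2}\,dA(u).\]
A subsidiary observation is that $\rest=\Phi|_{\R}=R(\cdot/2)\geq 0$ forces $\Phi$ to be real on $\R$, so that $\overline{\Phi(x-v)}=\Phi(x-\bar v)$ by Schwarz reflection and $|\Phi(x-v)|^{2}=\Phi(x-v)\Phi(x-\bar v)$ is invariant under $v\mapsto\bar v$. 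Combining this invariance with the measure-preserving reflection $u\mapsto\bar u$ identifies the last integral with $L(x)$; this change of variables, together with the reality of $\Phi$ that underpins it, is the only genuinely computational step.

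With this in hand the equivalence is straightforward. For the forward direction, assume Ward's equation; Lemma~\ref{psiprop} produces a smooth $P$ with $\dbar P=R-1$ and $D=PR-\d R$, and since $R>0$ everywhere (zero-one law, Theorem~\ref{TT1.5}\,\ref{TT1.5_1}) and $R$, $\d R$, $D$ all depend on $x$ alone, the formula $P=(D+\d R)/R$ shows that $P(z)=G(x)$ for some smooth $G\colon\R\to\C$. The two identities for $P$ then collapse pointwise to $G'(x)=\rest(x)-1$ and $L(x)=G(x)\rest(x)-\rest'(x)$, which are the two equations asserted in the statement. Conversely, given such $G$, the function $P(z):=G(2\re z)$ satisfies $\dbar P=G'(x)=R-1$ and $PR-\d R=G\rest-\rest'=L=D$, so Lemma~\ref{psiprop} yields Ward's equation. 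The main technical obstacle is thus the identification $D(z)=L(2\re z)$.
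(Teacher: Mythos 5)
Your proof is correct and follows essentially the same route as the paper's: the paper's own proof is the one-line instruction to set $G(x)=P(x/2)$ and $L(x)=D(x/2)$ in Lemma~\ref{psiprop}, and your write-up supplies the calculations that justify it, namely that $D(z)=L(2\re z)$ (via $u=z-w$ and the Schwarz symmetry $\overline{\Phi(\zeta)}=\Phi(\bar\zeta)$) and that $P=(D+\d R)/R$ depends only on $\re z$. Two minor remarks: the positivity $R>0$ used to divide by $R$ is part of the standing hypothesis of Lemma~\ref{psiprop} (a Hermitian-entire $\Psi$ assumed positive on the diagonal), so invoking the zero-one law is unnecessary; and once you know $|\Phi(x-\bar u)|^2=|\Phi(x-u)|^2$ pointwise, the extra reflection $u\mapsto\bar u$ of the integral is redundant.
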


\begin{proof} Set $G(x)=P(x/2)$ and $L(x)=D(x/2)$ in Lemma \ref{psiprop}, where we recall that $D(z)$
is defined by the integral \eqref{dex}.
\end{proof}

We will need two elementary lemmas.

\begin{lem} \label{alem1}For all $s,t\in\C$ we have
$$\int_\C e^{\,-\,|\,w\,|^{\,2}}e^{\,iwt}e^{\,i\bar{w}s}\, dA(w)=e^{\,-\,st}.$$
\end{lem}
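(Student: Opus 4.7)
The plan is to reduce this to two one-dimensional Gaussian integrals by writing $w = u + iv$ and then verify the identity, which is manifestly an identity between entire functions of $(s,t) \in \C^2$, by first checking it for real $s, t$ and then invoking analytic continuation.

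First I would check convergence. Writing $w = u+iv$ and $s = s_1+is_2$, $t = t_1+it_2$, a direct computation gives
\[
|e^{iwt}e^{i\bar w s}| = e^{-u(s_2+t_2)+v(s_1-t_1)},
\]
which has only linear growth in $(u,v)$ and is therefore dominated by $e^{-u^2-v^2}$, so the integral converges absolutely and defines a function which, by differentiation under the integral sign, is entire in $(s,t) \in \C^2$. Since the right-hand side $e^{-st}$ is also entire, it suffices by the identity principle to verify the equality for $s, t \in \R$.

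Assuming now $s, t \in \R$, I would insert $w = u+iv$ and $dA(w) = du\, dv/\pi$, and compute the exponent:
\[
iwt + i\bar w s = (-v+iu)t + (v+iu)s = v(s-t) + iu(s+t).
\]
The integral then factors as
\[
\frac{1}{\pi}\left(\int_\R e^{-u^2+iu(s+t)}\, du\right)\left(\int_\R e^{-v^2+v(s-t)}\, dv\right).
\]
Using the standard Gaussian formula $\int_\R e^{-x^2+cx}\, dx = \sqrt{\pi}\, e^{c^2/4}$ (valid for any $c \in \C$, but here with $c = i(s+t)$ and $c = s-t$ respectively), this equals
\[
e^{-(s+t)^2/4}\cdot e^{(s-t)^2/4} = e^{[(s-t)^2-(s+t)^2]/4} = e^{-st}.
\]

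This is entirely routine; there is no real obstacle beyond keeping track of signs and the factor of $\pi$ in $dA$. The only mild subtlety is justifying the complex-parameter version of the Gaussian integral, which is why I prefer to dispatch the real case directly and then extend by analyticity rather than manipulate contour shifts.
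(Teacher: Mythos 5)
Your proof is correct. The paper computes the integral in polar coordinates: it Taylor-expands $e^{ir(te^{i\theta}+se^{-i\theta})}$ in $r$, integrates termwise against $e^{-r^2}r\,dr$ to produce Gamma factors, extracts the zeroth Fourier coefficient of $(te^{i\theta}+se^{-i\theta})^n$ (which vanishes for odd $n$ and is $\binom{n}{n/2}(st)^{n/2}$ for even $n$), and sums the resulting series to $\sum_k(-st)^k/k! = e^{-st}$. You instead split into Cartesian Gaussian integrals, evaluate each by the one-variable formula $\int_\R e^{-x^2+cx}\,dx=\sqrt\pi\,e^{c^2/4}$, and then pass from real $(s,t)$ to complex $(s,t)$ by the identity theorem for entire functions. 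Both are valid. Your route is more elementary (standard Gaussian integral, no Gamma functions or binomial sums) and cleanly sidesteps the polar bookkeeping; its only extra overhead is the short analyticity argument, which you take care of correctly by first estimating $|e^{iwt}e^{i\bar{w}s}|$ to get dominated convergence. The paper's route handles complex $s,t$ directly without an analytic-continuation step, at the cost of a longer series manipulation.
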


\begin{proof} A Taylor expansion of $r\mapsto e^{\,ir(te^{\,i\theta}+se^{\,-i\theta})}$ around $r=0$ gives
$$\int_0^\infty e^{\,-\,r^{\,2}}e^{\,i\,r\,\left(te^{\,i\theta}+se^{\,-\,i\theta}\right)}\,r\, dr=\frac 1 2 \sum_{n=0}^\infty
\frac {i^{\,n}\Gamma\left(1+n/2\right)} {n!}\left(te^{\,i\theta}+se^{\,-\,i\theta}\right)^{\,n}.$$
If $n$ is odd, the zeroth Fourier coefficient of $(te^{i\theta}+se^{-i\theta})^n$ vanishes, while if $n$ is even, then
$$\frac 1 {2\pi}\int_0^{2\pi}\left(te^{\,i\theta}+se^{\,-i\theta}\right)^{\,n}\, d\theta=(st)^{\,n/2}\,{n\choose {n/2}}.$$
We have shown that
$$\frac 1 \pi \int_0^{2\pi}d\theta\int_0^\infty e^{\,-\,r^{\,2}}e^{\,i\,r\,\left(te^{\,i\theta}+se^{\,-\,i\theta}\right)}\,r\, dr=
\sum_{k=0}^\infty\frac {\left(-st\right)^{\,k}} {k!}=e^{\,-\,st},$$
finishing the proof of the lemma.
\end{proof}

\begin{lem}\label{alem2} For all $s,t\in\C$ we have
\begin{equation}\label{conte}\int_\C\frac {e^{\,-\,|\,w\,|^{\,2}}} {w}e^{\,itw}e^{\,is\bar{w}}\, dA(w)=i\frac {1-e^{\,-\,st}} s.
\end{equation}
\end{lem}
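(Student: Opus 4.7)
The identity will follow by differentiating Lemma \ref{alem1} in the parameter $t$ and integrating back up. Both sides of the claimed equation are entire in $(s,t)$: the right-hand side obviously, and the left-hand side by an application of Morera's theorem combined with the Gaussian decay of $e^{-|w|^2}$ (the apparent singularity at $w=0$ is harmless, since $|1/w|$ is locally integrable against $dA$). It therefore suffices to fix $s\in\C$ and verify the identity as functions of $t$, by matching values and derivatives.

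First I would check the initial condition at $t=0$. The right-hand side clearly equals $0$ there. For the left-hand side, I would pass to polar coordinates $w=re^{i\theta}$, expand
\[
e^{is\bar w}=\sum_{k=0}^\infty \frac{(isr)^k}{k!}e^{-ik\theta},
\]
and observe that every resulting angular integral $\int_0^{2\pi}e^{-i(k+1)\theta}\,d\theta$ vanishes. Hence
\[
\int_\C \frac{e^{-|w|^2}}{w}e^{is\bar w}\,dA(w)=0,
\]
which is the $t=0$ case of \eqref{conte}.

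Next I would differentiate under the integral sign in $t$. The $t$-derivative of the integrand is $ie^{-|w|^2}e^{itw}e^{is\bar w}$, which is dominated, uniformly for $t$ in compact subsets of $\C$, by an integrable Gaussian-type bound; thus differentiation is justified, and
\[
\frac{d}{dt}\int_\C \frac{e^{-|w|^2}}{w}e^{itw}e^{is\bar w}\,dA(w)=i\int_\C e^{-|w|^2}e^{itw}e^{is\bar w}\,dA(w)=ie^{-st},
\]
the last equality being Lemma \ref{alem1}. On the right-hand side of \eqref{conte}, $\frac{d}{dt}\bigl(i(1-e^{-st})/s\bigr)=ie^{-st}$ as well. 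Since two entire functions of $t$ that agree at $t=0$ and have the same derivative are identical, the identity holds for all $t\in\C$, and then, as noted above, for all $s\in\C$ by analytic continuation.

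\textbf{Main obstacle.} There is nothing truly hard here; the only point requiring care is the verification that the $t=0$ value of the left-hand side vanishes, since naive bounds would only give absolute integrability. The polar-coordinate Taylor-series computation handles this cleanly, after which Lemma \ref{alem1} does all the remaining work.
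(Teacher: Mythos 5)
Your proof is correct and takes essentially the same route as the paper's: fix $s$, observe that the left-hand side vanishes at $t=0$, differentiate in $t$ and invoke Lemma \ref{alem1} to get the derivative $ie^{-st}$, then integrate back up. The additional verifications you supply (the polar-coordinate computation showing $I(0)=0$ and the justification for differentiating under the integral sign) are details the paper simply takes for granted.
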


\begin{proof} Fix $s$ and write $I(t)$ for the left hand side in \eqref{conte}. Then $I(0)=0$
and Lemma \ref{alem1} shows that $I'(t)=ie^{-st}$. It follows that
$$I(t)=i\int_0^{\,t}e^{\,-\,s\tau}\, d\tau=i\frac {1-e^{\,-\,st}} s.$$
The proof of the lemma is complete.
\end{proof}

%Let $\Psi(z,w)=\Phi(z+\bar{w})$ be translation invariant and $\rest(x)=\Phi(x)$. By Theorem \ref{mugs}, we know that
Since we are assuming that $\Phi=\gamma *f$ for some suitable function $f$, the restriction $\rest$ of $\Phi$ to $\R$ has the structure of the ordinary convolution
$$\rest=\gamma*f.$$
%where $f$ is some Borel function with $0\le f\le 1$.
%satisfies the mass-one inequality and that $\rest-\rest^{\,2}\in L^1(\R)$. Cf. Lemma \ref{fiaac}.
We will use the Fourier transform of the function $\rest$ in a suitable generalized sense:
$$\hat{\rest}:=\hat{\gamma}\cdot\hat{f}$$
where $f$ is regarded as a tempered distribution. This is well-defined, since $\hat{\gamma}=\sqrt{2\pi}\,\gamma$ is a Schwartz test function.

We will frequently use the following consequence of Fourier's inversion theorem
\begin{equation}\label{stes}\Phi(z)=\frac 1 {2\pi}\int_{-\infty}^{+\infty}e^{izt}\hat{\rest}(t)\, dt.\end{equation}
 Here the integral is interpreted as the value of the distribution $\hat{f}$ applied to the Schwarz test-function $t\mapsto \hat{\gamma}(t)e^{izt}$.

With these conventions, %and Lemma \ref{alem2},
we conclude that
\begin{equation*}\Phi(x-w)=\frac 1 {2\pi}\int_\R e^{\,i(x-w)t}\hat{\rest}(t)\,dt,\qquad
\Phi(x-\bar{w})=\frac 1 {2\pi}\int_\R e^{\,i(x-\bar{w})s}\hat{\rest}(s)\,ds.\end{equation*}
Multiplying these identities together, we find that
\begin{equation}\label{bbab}\babs{\,\Phi(x-w)\,}^{\,2}=\frac 1 {(2\pi)^2}\iint_{\R^2}e^{\,ix(t+s)}e^{\,-\,iwt}e^{\,-\,i\bar{w}s}\hat{\rest}(t)\hat{\rest}(s)\, dsdt.\end{equation}

Now recall the expression for the function $L(x)$ in
\eqref{dd3}. Using \eqref{bbab} and Lemma \ref{alem2} we have
\begin{align*}L(x)&=\frac 1 {(2\pi)^2}\iint_{\R^2}e^{\,ix(s+t)}\hat{\rest}(t)\hat{\rest}(s)\, dsdt\int_\C \frac {e^{\,-\,|\,w\,|^{\,2}}}
we^{\,-iwt-i\bar{w}s}\, dA(w)\\
&=\frac i {(2\pi)^2}\iint_{\R^2}\frac {e^{\,-\,st}-1} s e^{\,ix(s+t)}\hat{\rest}(t)\hat{\rest}(s)\, dsdt.
\end{align*}
Next note that the relation $\rest=G'+1$ (see \eqref{dd1}) means that
$$\hat{\rest}(s)=is\hat{G}(s)+2\pi\,\delta(s),$$
where $\delta$ is the Dirac measure at $0$. Inserting this in the last expression for $L(x)$ we get
\begin{align*}L(x)&=\frac 1 {(2\pi)^2}\iint_{\R^2}\left(1-e^{\,-\,st}\right)e^{\,ix(t+s)}\hat{\rest}(t)\hat{G}(s)\, dsdt\\
&+\frac i {2\pi}\int_\R\lim_{s\to 0} \frac {e^{\,-st}-1} s \cdot e^{\,ixt}\hat{\rest}(t)\, dt\\
&=G(x)\rest(x)-\frac 1 {(2\pi)^2}\iint_{\R^2}e^{\,-\,st}e^{\,ix(s+t)}\hat{\rest}(t)\hat{G}(s)\, dsdt-\rest'(x),
\end{align*}
where we have used that
$$\frac1 {(2\pi)^2}\iint\sb {\R^2}e^{\,ix(s+t)}\hat{\rest}(t)\hat{G}(s)\, dsdt=\rest(x)G(x),$$
and also that
\begin{align*}-\frac i {2\pi}&\int_\R te^{\,ixt}\hat{\rest}(t)\, dt=-\frac d {dx}\frac 1 {2\pi}\int_\R e^{\,ixt}\hat{\rest}(t)\, dt=-\rest'(x).
\end{align*}

In view of Lemma \ref{wardnew}, Ward's equation is equivalent to that $L=G\rest-\rest'$.
Comparing with the last expression for $L(x)$ we have arrived at the following result.

\begin{lem}\label{warn2} Under the conditions above, Ward's equation is satisfied if and only
if we have $\rest=G'+1$ with a function $G$ such that
\begin{equation}\label{statt}\iint_{\R^2}e^{\,-\,st}e^{\,ix(s+t)}\hat{\rest}(t)\hat{G}(s)\, dsdt=0.\end{equation}
\end{lem}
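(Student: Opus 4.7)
The plan is to apply Lemma \ref{wardnew} to reduce Ward's equation to the pair of conditions $\rest=G'+1$ and $L=G\rest-\rest'$, and then translate the second identity into frequency space using the Fourier representation of $\Phi$. Since the hypothesis $\rest=G'+1$ is already built into the lemma's statement, all the content lies in rewriting $L=G\rest-\rest'$ as the integral condition \eqref{statt}.

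First I would expand $|\Phi(x-w)|^{2}$ by writing $\Phi$ as the inverse Fourier transform of $\hat{\rest}$, obtaining a double Fourier representation of the form \eqref{bbab}. Plugging this into the defining integral \eqref{dd3} and swapping the $w$-integration with the $(s,t)$-integration reduces the inner $w$-integral to exactly the shape treated in Lemma \ref{alem2}, producing the factor $i(1-e^{-st})/s$. This exhibits $L(x)$ as an explicit double integral in the Fourier variables against $\hat{\rest}(t)\hat{\rest}(s)$.

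Next I would insert the distributional identity $\hat{\rest}(s)=is\hat{G}(s)+2\pi\delta(s)$, which is simply the Fourier transform of $\rest-1=G'$. The factor $is\hat{G}(s)$ cancels the $1/s$ singularity coming from Lemma \ref{alem2}, and the resulting bulk integral splits cleanly as $G(x)\rest(x)$ (via inverse Fourier inversion in each variable) minus the double integral appearing in \eqref{statt}. The $\delta$-contribution, after computing $\lim_{s\to 0}i(1-e^{-st})/s=-t$, integrates against $te^{ixt}\hat{\rest}(t)$ and produces $-\rest'(x)$. Comparing with $G\rest-\rest'$ then yields \eqref{statt}, and reversing the steps gives the converse.

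The main obstacle will be making these Fourier manipulations rigorous, because $f$ is only a bounded Borel function, so $\hat{f}$ and $\hat{G}$ are a priori tempered distributions rather than integrable functions. The saving grace is that $\hat{\rest}=\hat{\gamma}\hat{f}$ acquires Schwartz decay from the Gaussian $\hat{\gamma}$; this provides enough regularity to justify the Fubini interchange with the Gaussian $w$-integral, to make sense of the pairing with $\delta(s)$, and to give a well-defined meaning to the iterated integral in \eqref{statt} as a pairing of tempered distributions against Schwartz-class test objects.
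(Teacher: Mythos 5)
Your proposal follows the paper's proof exactly: apply Lemma \ref{psiprop} (in its translation-invariant form, Lemma \ref{wardnew}) to reduce to $L=G\rest-\rest'$, expand $|\Phi(x-w)|^2$ via the Fourier representation \eqref{bbab}, evaluate the inner $w$-integral with Lemma \ref{alem2}, substitute $\hat{\rest}(s)=is\hat{G}(s)+2\pi\delta(s)$, and identify the three resulting terms as $G\rest$, the integral in \eqref{statt}, and $-\rest'$. The only blemish is a sign slip: the inner $w$-integral produces $i(e^{-st}-1)/s$, whose limit as $s\to 0$ is $-it$ (not $-t$), which after pairing with $\hat{\rest}(t)e^{ixt}$ and dividing by $2\pi$ gives $-\rest'(x)$ as required.
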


%\subsection{Proof of Theorem \ref{TT3}} \label{transinvproof}

We now prove Theorem \ref{TT3}.

Recall that $\rest=\Phi\bigm|_\R$ and $\Phi=\barg * f$.
Let $g$ be a continuous function on $\R$ such that
$g'=f-1$; this determines $g$ up to a constant. Let us define $G=g*\barg$. Then
$$G'=g'*\barg=\left(f-1\right)*\barg=\rest-1.$$
By Lemma \ref{warn2}, Ward's equation is equivalent to the identity \eqref{statt}
for a suitable choice of integration constant for $g$. We can rewrite \eqref{statt} in the form
\begin{align*}0&=\iint_{\R^2}e^{\,-\,\left(s+t\right)^{\,2}/2}e^{\,ix(s+t)}\hat{f}(t)\hat{g}(s)\, dsdt\\
&=\int_\R d\xi
\, e^{\,-\,\xi^{\,2}/2}e^{\,ix\xi}\int_\R
\hat{g}\left(\xi-t\right)\hat{f}(t)\, dt\\
&=\int_\R e^{\,ix\xi}e^{\,-\,\xi^{\,2}/2}\left(\,\hat{g}*\hat{f}\,\right)\left(\xi\right)\, d\xi\\
&= \calF^{\,-1}\left[\,\hat{\barg}\cdot\left(\,\hat{g}*\hat{f}\,\right)\,\right](x)\\
&=2\pi\,\left(gf\right)*\barg(x).
\end{align*}
This means that $gf=0$ in the sense of distributions and hence as measurable functions.
Let
$$E=\left\{\,x\in\R;\, g(x)=0\,\right\}.$$
Then $E$ is a closed set, and the complement
%$E^*=\R\setminus E$
$E^c=\R\setminus E$ can be written as a countable
union of disjoint open intervals $I_j$. On each $I_j$ we have $f=0$ and $g'=-1$ almost everywhere.
Since $g=0$ at the endpoints, none of the intervals can be finite. Hence $E$ is connected.
Differentiating the relation $fg=0$ and using $g'=f-1$ we obtain that $f=f^{\,2}$ when $f\ne 0$.
Hence $f=\1_E$ almost everywhere. We have shown that $\Phi$ is representable in the form
$$\Phi(z)=\barg*\1_E(z)=\frac 1 {\sqrt{2\pi}}\int_E e^{\,-\,(z-t)^{\,2}/2}\, dt.$$
The proof of Theorem \ref{TT3} is finished. q.e.d.

\subsection{T.i. limiting kernels at regular boundary points} \label{tir} In this section we prove that the only \ti solution consistent with the apriori conditions listed in Theorem \ref{TT2}
is the BG-kernel.

\begin{thm} \label{morn} Suppose that $K(z,w)=G(z,w)\Phi(z+\bar{w})$ is a t.i. limiting kernel at a regular boundary point. Suppose also that $R(z)=\Phi(z,z)$ satisfies $\int_\R t\cdot (R(t)-\1(t))\, dt=\frac 1 8$.
Then $\Phi=F$ is the free boundary plasma kernel.
\end{thm}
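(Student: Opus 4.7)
The plan follows the sketch given just after Theorem \ref{TT3}. First I combine Theorem \ref{TT3} with the a priori bounds of Theorem \ref{TT2} to force $\Phi = \barg * \1_{(-\infty, a)}$ for some $a \in \R$; then I use the $1/8$-formula to pin down $a = 0$.

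Since the hypothesis $\int t(R(t) - \1_{(-\infty,0)}(t))\,dt = 1/8$ prevents $R$ from vanishing identically, the zero-one law in Theorem \ref{TT1.5} gives $R > 0$ everywhere, and Ward's equation holds by the same theorem. By Lemma \ref{TT2.5}, the \ti kernel admits a Gaussian representation $\Phi = \barg * f$ with $0 \le f \le 1$, so Theorem \ref{TT3} applies and yields $\Phi = \barg * \1_I$ for some interval $I \subset \R$ of positive measure. Since $R(z) = \Phi(2\re z)$, the interior estimate of Theorem \ref{TT2} forces $\Phi(2x) \to 1$ as $x \to -\infty$, i.e., $I$ is unbounded below, while the exterior estimate forces $\Phi(2x) \to 0$ as $x \to +\infty$, i.e., $\sup I < \infty$. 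Hence $I = (-\infty, a)$ for some $a \in \R$, and the substitution $u = 2x - t$ identifies
\[
R(x) = \int_{-\infty}^a \barg(2x - t)\,dt = \int_{2x-a}^\infty \barg(u)\,du = \erker(2x - a).
\]

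With this in hand, set $h(a) := \int_{-\infty}^{+\infty} x\bigl(\erker(2x-a) - \1_{(-\infty,0)}(x)\bigr)\,dx$; the hypothesis is $h(a) = 1/8$. Differentiation under the integral (justified by the Gaussian decay of $\erker$ and $1 - \erker$) together with the substitution $u = 2x - a$ give
\[
h'(a) = \int_{-\infty}^{+\infty} x\,\barg(2x-a)\,dx = \frac{1}{4}\int_{-\infty}^{+\infty}(u+a)\,\barg(u)\,du = \frac{a}{4},
\]
so $h(a) = h(0) + a^2/8$. A short integration by parts using $\erker' = -\barg$ and $\int_0^\infty u^2\,\barg(u)\,du = 1/2$ shows that both $\int_0^\infty y\,\erker(y)\,dy$ and $\int_{-\infty}^0 y(\erker(y)-1)\,dy$ equal $1/4$, whence $h(0) = 1/8$; equivalently, the boundary Ginibre kernel itself satisfies the $1/8$-formula, which is a sanity check. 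Therefore $h(a) = 1/8$ has the unique solution $a = 0$, and $\Phi = \barg * \1_{(-\infty, 0)} = \erker$.

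The main obstacle is really just bookkeeping: every step is an immediate consequence of an earlier theorem or a one-line Gaussian computation, but one has to chain together Theorem \ref{TT1.5}, Lemma \ref{TT2.5}, Theorem \ref{TT3}, and Theorem \ref{TT2} correctly before carrying out the final one-variable computation in $a$.
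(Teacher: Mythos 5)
Your proof is correct and follows the same structure as the paper's: invoke Theorem \ref{TT2} for the asymptotic constraints, use Theorem \ref{TT3} (preceded by the Gaussian representation lemma) to get $\Phi = \barg*\1_I$ for a connected set, argue $I = (-\infty,a)$, and then use the $1/8$-formula to pin down $a$. Where you improve on the paper is the final step: the paper reduces to $\int_\R t\bigl(F(2t-a)-F(2t)\bigr)\,dt = 0$ and asserts ``it is easy to see'' this forces $a=0$, whereas you actually parametrize $h(a) := \int_\R x\bigl(F(2x-a)-\1(x)\bigr)\,dx$, compute $h'(a) = a/4$ by a change of variables, and deduce $h(a) = \tfrac18 + a^2/8$, which makes the uniqueness of $a=0$ explicit rather than left to the reader. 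Your derivation of $h(0)=1/8$ by splitting and integrating by parts matches the paper's value $\int_\R tF(2t)\,dt = 1/8$, which the paper instead obtains via $\Expe[X^2]=1$ for a standard normal $X$; both are one-line Gaussian facts. A very minor stylistic remark: explicitly invoking the zero-one law to get $R>0$ is not logically needed before applying Theorem \ref{TT3}, since the hypothesis $\int t(R-\1)\,dt = 1/8$ already rules out $R\equiv 0$, but it does no harm and makes the appeal to Ward's equation (hence Theorem \ref{TT3}) transparent.
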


\begin{proof} By Theorem \ref{TT2} we know that that $R(x)\to 1$ as $x\to-\infty$, $R(x)\to 0$ as $x\to +\infty$, and $\int_\R t\cdot (R(t)-\1(t))\, dt$.
 Moreover, by Theorem \ref{TT3} we can write $\Phi=\gamma *\1_{(-\infty,a)}$ for some $a\in\R$.
We must prove that $a=0$.

Let us first prove that
\begin{equation}\label{expoo}\int_{-\infty}^{\infty}tF(2t)\, dt=\frac 1 8\end{equation}
where $F=\gamma*\1$ is the usual plasma function.
To this end, recall that $F(t)=\Prob(X\ge t)$ where $X$ is a standard normal random variable. For suitable differentiable
functions $\fii$, we have
$$\Expe\left[\fii(X)\right]=-\int_\R \fii(t)\, dF(t)=\int_\R \fii'(t)F(t)\, dt.$$
In particular, taking $\fii(t)=t^2$ we find $2\int_\R tF(t)\, dt=\Expe(X^2)=1$, which implies \eqref{expoo}.

We now return to the \ti limiting kernel $R$. We know that $\int_\R t(R(t)-\1(t))\, dt=\frac 1 2$.
As we observed above, we can also write $R(x)=\gamma*\1_{(-\infty,a)}(2x)=F(2x-a)$ for some $a\in\R$ and we must prove that
$a=0$. However, by \eqref{expoo},
$$0=\int_\R t\cdot(R(t)-\1(t))\, dt-\frac 1 8=\int_\R t\cdot (F(2t-a)-F(2t))\, dt.$$
It is easy to see that the right hand side only vanishes when $a=0$, so we must have $\Phi=F$.
\end{proof}

\subsection{Radially symmetric potentials} \label{maincorproof}
We now prove Theorem \ref{TT4}. We start with a simple lemma.

\begin{lem} \label{radss} Assume that $Q$ is radially symmetric.
Fix a point $p\in\d S$
%! Let $p$ be a boundary point
%! of the droplet
and rescale in the outwards normal direction (see \eqref{E1.3}).
Then every limiting kernel in Theorem \ref{cpthm} takes the form $K=G\Psi$ where
%! rescaled function
$\Psi(z,w)=\Phi(z+\bar{w})$
%! given by
is translation invariant.
\end{lem}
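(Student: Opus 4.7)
The idea is to exploit the rotational symmetry of the radial ensemble so that a rotation of $\C$ acts, in rescaled coordinates, like an imaginary translation, and then transfer this symmetry to the limit $\Psi$.

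By rotational symmetry of $Q$, I may assume $p>0$ is real and $\theta=0$. Since $Q$ is radial, the orthonormal polynomials in $L^2_{nQ}$ are monomials $c_{j,n}\zeta^{j}$, so
$$\bfk_n(\zeta,\eta)=\sum_{j=0}^{n-1}|c_{j,n}|^{2}(\zeta\bar\eta)^{j}$$
is a function of the product $\zeta\bar\eta$ alone. Combined with $Q(e^{i\alpha}\zeta)=Q(\zeta)$, this gives $\bfK_n(e^{i\alpha}\zeta,e^{i\alpha}\eta)=\bfK_n(\zeta,\eta)$ for every $\alpha\in\R$. Writing $Q(\zeta)=q(|\zeta|^{2})$, the Hermitian-analytic function $(\zeta,\eta)\mapsto q(\zeta\bar\eta)$ agrees with $Q$ on the diagonal, and by uniqueness of the Hermitian-analytic extension (Section \ref{bula}) we have $A(\zeta,\eta)=q(\zeta\bar\eta)$. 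Consequently $\bfK_n^{\#}$, built from $A$ via \eqref{buckla}, is likewise invariant under simultaneous rotation, and so is the ratio $\Psi_n:=K_n/K_n^{\#}=\bfK_n/\bfK_n^{\#}$.

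Next I fix $t\in\R$ and set $\alpha_n:=t/(p\sqrt{n\Lap Q(p)})$. A direct computation of the image of $\zeta=p+z/\sqrt{n\Lap Q(p)}$ under rotation by $\alpha_n$ gives, in rescaled coordinates,
$$z'_n=p\sqrt{n\Lap Q(p)}\,(e^{i\alpha_n}-1)+e^{i\alpha_n}z=z+it+O(n^{-1/2}),$$
uniformly for $z$ in compact sets, and similarly $w'_n=w+it+O(n^{-1/2})$. The rotational identity $\Psi_n(z,w)=\Psi_n(z'_n,w'_n)$ thus says that, at the rescaled level, rotation by $\alpha_n$ asymptotically becomes translation by $it$.

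Passing to a subsequence $\Psi_{n_k}\to\Psi$ converging locally uniformly (as in the proof of Theorem \ref{cpthm}), continuity of $\Psi$ together with $(z'_{n_k},w'_{n_k})\to(z+it,w+it)$ yields
$$\Psi(z,w)=\lim_{k\to\infty}\Psi_{n_k}(z,w)=\lim_{k\to\infty}\Psi_{n_k}(z'_{n_k},w'_{n_k})=\Psi(z+it,w+it),$$
so $\Psi$ is translation invariant. The first lemma of Section \ref{transmetr} then gives $\Psi(z,w)=\Phi(z+\bar w)$ for some entire $\Phi$. The only mildly delicate point is the uniformity of the asymptotic $z'_n\to z+it$ on compact sets, but it reduces to a first-order expansion of $e^{i\alpha_n}$; otherwise no real obstacle arises.
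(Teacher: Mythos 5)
Your proof is correct and follows essentially the same route as the paper: use rotational invariance of $\bfK_n$ (and of $\bfK_n^\#$, hence of $\Psi_n$) under $\zeta\mapsto e^{i\alpha}\zeta$, observe that a rotation by a small angle $\alpha_n\sim t/\sqrt{n}$ becomes, in rescaled coordinates, a vertical translation by $it$ up to $O(n^{-1/2})$, and pass to the locally uniform limit along a subsequence. The minor differences are cosmetic: you normalize by taking $\alpha_n=t/(p\sqrt{n\Lap Q(p)})$ so the translation is exactly $it$ whereas the paper obtains $it\sqrt{\delta}$ and absorbs the constant, and you make explicit two steps the paper leaves implicit, namely that $A(\zeta,\eta)=q(\zeta\bar\eta)$ yields rotational invariance of $\bfK_n^\#$ and hence of $\Psi_n$, and that continuity of the limit together with $(z'_{n_k},w'_{n_k})\to(z+it,w+it)$ justifies taking the limit inside.
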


\begin{proof} By assumption we have
$$\bfK_n\left(\zeta,\eta\right)=\bfK_n\left(\kappa\zeta,\kappa\eta\right)\qquad \text{where}\quad \kappa=e^{\,it/\sqrt{n}}.$$
We can suppose that $p=1\in \d S$ and we rescale about $p=1$. Set $\delta=\Lap Q(1)$ and
$z=\sqrt{n\delta}(\zeta-1)$, $w=\sqrt{n\delta}(\eta-1)$. Then
$$\kappa\zeta=\left(1+i\frac t{\sqrt{n}}+O\left(\frac 1 n\right)\right)\left(1+\frac z{\sqrt{n\delta}}\right)=1+\frac 1 {\sqrt{n\delta}}\left(z+it{\sqrt{\delta}}\right)+O\left(\frac 1 n\right),\quad (n\to\infty).$$
This means that $\Psi_n(z,w)=\Psi_n\left(z+it\sqrt{\delta}+o(1),w+it\sqrt{\delta}+o(1)\right)$
where $o(1)\to 0$ as $n\to\infty$.
\end{proof}

Now assume that $Q$ is radially symmetric and that the droplet $S$ is connected;
thus it is either a disc or an annulus. If $p\in \d S$ is a boundary point, then
the outer normal $N_p$ is simply a multiple of $p$, $N_p=\pm p/|p|$. We can assume
that $|p|=1$ and $N_p=p$.
Let us write
$$R_{n,p}(t)=\frac 1 {n\Lap Q(p)}\bfR_n\left(p\left(1+\frac t {\sqrt{n\Lap Q(p)}}\right)\right)$$
for the $1$-point function rescaled about $p$. The radial symmetry of $Q$ implies that
$R_{n,p}=R_{n,pe^{i\theta}}$ for all real $\theta$. From this we conclude that the almost-everywhere convergence in Theorem \ref{TT2.25} must hold pointwise, i.e.
if $R=\lim R_{n_k,p}$ is any limiting $1$-point function, then
$$\int_\R t\cdot (R(t)-\1(t))\, dt=\frac 1 8.$$
In view of Lemma \ref{radss} we know that $R$ corresponds to a \ti limiting
kernel $K(z,w)=G(z,w)\Phi(z+\bar{w})$. An application of Theorem \ref{morn}
now shows that $\Phi=F$ is the free boundary plasma function. The proof of Theorem
\ref{TT4} is finished. q.e.d.

\subsection{T.i. solutions to the mass-one equation.} \label{transcorproof}
We now prove Theorem \ref{TT5}.

Let $\Phi$ be an entire function of the form $\Phi=\gamma *f$ where $f$ is some bounded function.

Using Lemma \ref{alem1} and
the assumption that $\Psi(z,w)=\Phi(z+\bar{w})$, we can rewrite
the mass-one equation
(equality in \eqref{mocio}) in terms of the function $\rest=\Phi|_\R$, as follows
\begin{align*}\rest(x)&=\int e^{\,-\,|\,w\,|^{\,2}}\left|\,\Psi\left(x/2,x/2+w\right)\,\right|^{\,2}\, dA(w)
=\int e^{\,-\,|\,w\,|^{\,2}}\babs{\,\Phi(x+w)\,}^{\,2}\, dA(w)\\
&=\frac 1 {(2\pi)^2}\int_\C e^{\,-\,|\,w\,|^{\,2}}\, dA(w)\iint_{\R^2}e^{\,i(x+w)t}
e^{\,i(x+\bar{w})s}\hat{\rest}(t)\hat{\rest}(s)\, dsdt\\
&=\frac 1 {(2\pi)^2}\iint_{\R^2}
e^{\,ix(s+t)}e^{\,-\,st}\hat{\rest}(t)\hat{\rest}(s)\, dsdt.
\end{align*}

(I) If $\rest=\barg *\1_E$ where $E\subset\R$ is a Borel set of positive measure, then $\hat{\rest}(\xi)= e^{\,-\,\xi^{\,2}/2}\cdot \hat{\1}_E(\xi)$ in the sense of distributions. Passing to Fourier transforms we find that the mass-one equation is equivalent to that (with $\delta$ the Dirac delta function)
\begin{align*}e^{\,-\,\xi^{\,2}/2}\,\hat{\1}_E(\xi)&=\frac 1 {2\pi}
\iint_{\R^2}e^{\,-\,st}\delta\left(s+t-\xi\right)e^{\,-\,s^{\,2}/2}\,\hat{\1}_E(s)\,e^{\,-\,t^{\,2}/2}\,\hat{\1}_E(t)
\, dsdt\\
&=\frac {1}{2\pi}
\int e^{\,-\,\xi^{\,2}/2}\,\hat{\1}_E(s)\,\hat{\1}_E\left(\xi-s\right)\, ds\\
&=e^{\,-\,\xi^{\,2}/2}\frac 1 {2\pi}\left[\,\hat{\1}_E*\hat{\1}_E\,\right](\xi).
\end{align*}
By Fourier inversion, this is equivalent to $\1_E=\1_E^{\,2}$, which is true. We have shown that the function $\rest=\barg*\1_E$ satisfies the
mass-one equation.

(II) If $\Psi(z,w)=\Phi(z+\bar{w})$ satisfies the mass-one equation and $\Phi=\gamma *f$, then
the same calculations
as above with "$\1_E$'' replaced by "$f$'' lead to the equation
$$e^{\,-\,\xi^{\,2}/2}\hat{f}(\xi)=e^{\,-\,\xi^{\,2}/2}\frac 1 {2\pi}\left[\,\hat{f}*\hat{f}\,\right](\xi).$$
Taking inverse Fourier transforms we see that this is
equivalent to that $f(x)=f(x)^{\,2}$ almost everywhere.
Hence $f=\1_E$ almost everywhere where $E$ is some measurable set
of positive measure, and $\Phi=\1_E*\barg$.
The proof of Theorem \ref{TT5} is finished. q.e.d.

%\begin{rem} It is also possible to base a proof on Lemma \ref{char1}.
%\end{rem}

\section{Concluding remarks} \label{concrem}

In this section, we comment on the nature of Ward's equation and the mass-one equation in the general
(non translation invariant) case, relating those equations to harmonic analysis on the Heisenberg group.
We also explain how the technique of rescaling in Ward identities can be applied in several settings
different from the one we have studied hitherto. Namely, we will derive Ward's equation for the random normal matrix model with a hard edge spectrum, for certain types of bulk singularities, and for so called
$\beta$-ensembles. Finally, we will mention some connections to the theory of Hilbert spaces of entire functions, and to the theories of certain special functions.

\subsection{Twisted convolutions} \label{twistcon} For two functions $f,g$ defined on $\C$, the \textit{twisted convolution}
$f\star g$ is defined by
\begin{equation*}\label{twist}\left(f\star g\right)(z):=\int_\C f(z-w)g(w)e^{\,i\im\left(\bar{z}w\right)}\, dA(w).\end{equation*}
See the book \cite{F}.
We will show that Ward's equation and the mass-one equation have precisely the form of twisted convolution
equations. In the translation invariant case, the equations reduce to ordinary convolution equations, which is how
we were able to solve them. However, the general twisted case is certainly more interesting.

Consider the following transform
\begin{equation*}\label{tran}\hat{f}(t):=\int_\C e^{\,-i\left(z,t\right)}f(z)\, dA(z),\quad t\in\C,\quad
\left(z,t\right):=2\re\left(z\bar{t}\right).
\end{equation*}
Letting $\calF$ be two-dimensional Fourier transform with normalization
$$\calF[f](u+iv)=\frac 1 {2\pi}\int_\C e^{\,-i(xu+yv)}f(x+iy)\, dxdy,$$
we then have
$\hat{f}(t)=2\calF[f](2t)$, and
the inverse Fourier transform takes the form
$$f(z)=\int_\C e^{\,i(z,t)}\hat{f}(t)\, dA(t).$$
Let $K=G\Psi$ denote a limiting kernel in Theorem \ref{TT1} and write $R(z)=\Psi(z,z)$.
Using the transform in a proper generalized sense, we expect
that
there is a function $f$, such that
\begin{equation}\label{organ}\hat{R}=\hat{f}\cdot \Gamma,\quad \text{where}\quad \Gamma(z):=e^{\,-\,|\,z\,|^{\,2}/2}.\end{equation}
Here $\hat{f}$ is understood in the sense of tempered distributions.

Under these conditions, we can assert the following analogue of the identity \eqref{stes}.

\begin{lem} \label{fjuckby} For all $z,w\in\C$ we have
$$\Psi(z,w)=\int_\C e^{\,i\left(z\bar{t}+\bar{w}t\right)}\hat{R}(t)\, dA(t).$$
\end{lem}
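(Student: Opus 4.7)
The plan is to show that the right-hand side defines a Hermitian-entire function of $(z,w)$ whose restriction to the diagonal equals $R(z)$, and then to invoke the uniqueness of Hermitian-entire continuation from the diagonal stated in the Notational Conventions, which forces equality with $\Psi(z,w)$.

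First I would give a precise meaning to the integral. Since $\hat{R}=\hat{f}\cdot\Gamma$ with $\Gamma(t)=e^{-|t|^{2}/2}$ a Schwartz function, for each fixed $(z,w)\in\C^{2}$ the function
\[
\phi_{z,w}(t):=\Gamma(t)\,e^{\,i(z\bar{t}+\bar{w}t)}
\]
is Schwartz in $t$, so $F(z,w):=\langle\hat{f},\phi_{z,w}\rangle$ is well defined. The map $(z,w)\mapsto\phi_{z,w}$ is holomorphic in $z$ and in $\bar{w}$ as a Schwartz-function-valued map (its $\bar{z}$- and $w$-derivatives in the Schwartz topology vanish identically, since $\partial_{\bar z}e^{i(z\bar t+\bar w t)}=0$ and $\partial_{w}e^{i(z\bar t+\bar w t)}=0$), so $F$ is Hermitian-analytic in $(z,w)$, i.e.\ entire in $z$ and in $\bar{w}$. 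Differentiation under the pairing is justified by the continuity of $\hat{f}$ on Schwartz space.

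Next I would verify two key identities. For the \emph{Hermitian} property, the change of variable $t\mapsto -t$ in the formal integral defining $\overline{F(w,z)}$ gives
\[
\overline{F(w,z)}=\int_{\C}e^{\,i(z\bar{t}+\bar{w}t)}\,\overline{\hat{R}(-t)}\,dA(t),
\]
and since $R$ is real-valued one has $\overline{\hat{R}(-t)}=\hat{R}(t)$ in the distributional sense, so $\overline{F(w,z)}=F(z,w)$. For the \emph{diagonal} identity, setting $w=z$ turns the exponent into $i(z\bar t+\bar z t)=i(z,t)$, and the Fourier inversion formula applied to the tempered distribution $\hat R=\hat f\cdot\Gamma$ gives $F(z,z)=R(z)$; inversion is legitimate because the Gaussian factor $\Gamma$ supplies the decay needed to pair with the oscillatory exponential, exactly as in the one-dimensional interpretation of \eqref{stes}.

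Since both $F$ and $\Psi$ are Hermitian-entire functions with $F(z,z)=R(z)=\Psi(z,z)$, the uniqueness of the Hermitian-entire extension from the diagonal forces $F\equiv\Psi$ on $\C^{2}$, which is the desired formula. The main technical point is the careful justification of the distributional manipulations---in particular, interpreting $\hat R$ via pairing with $\Gamma$-weighted test functions and checking the Hermitian symmetry at the distributional level; everything else is a formal consequence of Fourier inversion and the uniqueness principle for Hermitian-entire functions. This lemma is a genuine two-variable analogue of \eqref{stes}, with $\Gamma$ playing the role of $\hat\gamma$ in the translation-invariant setting.
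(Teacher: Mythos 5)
The paper gives no proof of Lemma~\ref{fjuckby}: it is asserted in the discussion-heavy Section~\ref{twistcon} as an "analogue of the identity \eqref{stes}'' and left to the reader, with the representation $\hat R=\hat f\cdot\Gamma$ introduced only heuristically ("we expect that there is a function $f$\dots''). So your proposal cannot be compared to an argument in the paper; it can only be judged on its own terms.

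On its own terms your proof is correct and fills the gap cleanly. The two essential points are the ones you isolate. First, the integrand $e^{i(z\bar t+\bar w t)}$ grows exponentially in $t$ once $z\ne w$, so the pairing cannot be taken directly against a tempered distribution; factoring $\hat R=\hat f\cdot\Gamma$ and pairing $\hat f$ with the Schwartz family $\phi_{z,w}(t)=\Gamma(t)e^{i(z\bar t+\bar w t)}$ is the right resolution, and it is exactly the device the paper already spells out for \eqref{stes} in one variable. Second, the diagonal identity $F(z,z)=R(z)$ is immediate because setting $w=z$ makes the exponent $i(z,t)=2i\,\re(z\bar t)$, which is precisely the inverse transform kernel the paper defines, so Fourier inversion applies. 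Together with the Hermitian symmetry $\overline{\hat R(-t)}=\hat R(t)$ (valid since $R$ is real) and the holomorphy of $(z,\bar w)\mapsto \phi_{z,w}$ in the Schwartz topology, you obtain that the right-hand side is a Hermitian-entire function agreeing with $\Psi$ on the diagonal, and the uniqueness principle stated in the paper's notational conventions forces equality. This is the argument the paper evidently has in mind when it calls the lemma an analogue of \eqref{stes}, and you have supplied it correctly.
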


Now define $R_0=1-R$, and assume that we can represent $R_0$ in a similar way to \eqref{organ}
\begin{equation}\label{organ2}\hat{R}_0=\hat{g}\cdot \Gamma,\end{equation}
where $g$ is a suitable function.

Lemma \ref{fjuckby} then allows us to rewrite the mass-one and Ward equations as follows.

\subsubsection*{Mass-one equation}
$$\iint_{\C^2}e^{\,-\bar{s}t}e^{\,i(z,s+t)}\hat{R}(s)\hat{R}_0(t)\, dA(s)dA(t)=0,\qquad (z\in\C).$$
Compare with Section \ref{transcorproof} for the translation invariant analogue.
\subsubsection*{Ward's equation} There exists a smooth function $P_0$ such that $\dbar P_0=R_0$ and
$$\iint_{\C^2}e^{\,-\bar{s}t}e^{\,i(z,s+t)}\hat{R}(s)\hat{P}_0(t)\, dA(s)dA(t)=0,\quad (z\in\C).$$
Compare with Lemma \ref{warn2}.

\medskip

Note that both equations take the form
\begin{equation}\label{both}\iint_{\C^2} e^{\,-\bar{s}t}e^{\,i(z,s+t)}\hat{F}(s)\hat{G}(t)\, dA(s)dA(t)=0.
\end{equation}
If we here represent
$$\hat{F}=\hat{f}\cdot\Gamma,\quad \hat{G}=\hat{g}\cdot\Gamma,\quad \Gamma(z)=e^{-|\,z\,|^{\,2/2}},$$
%Using the assumptions that $\hat{R}$ and $\hat{R}_0$ can be represented as in \eqref{organ} resp.
%\eqref{organ2}, we can write them in the form of twisted convolution equations,
then \eqref{both} takes the form
$$\hat{f}\star \hat{g}=0.$$

\subsection{Ward's equation at the hard edge of the spectrum} \label{whe} For simplicity, we shall restrict our discussion
to the hard edge Ginibre ensemble; we refer to \cite{AKM2} for a discussion of more general hard edge ensembles.

Let $\{\zeta_j\}_1^n$ be the hard-edge Ginibre process and rescale about the boundary
point $p=1$ to obtain the boundary process
$\config_n=\{z_j\}_1^n$, where
$z_j=\sqrt{n}\left(\zeta_j-1\right).$

As before, we let $R_n(z)=K_n(z,z)$ denote the $1$-point function of the rescaled process. The hard edge
Berezin kernel and Cauchy transform are defined, respectively, by
$$B_n(z,w)=\frac {\babs{\,K_n(z,w)\,}^{\,2}} {K_{n}(z,z)},\quad
C_n(z)=\int_\C\frac {B_n(z,w)}{z-w}\, dA(w),$$
with the understanding that $B_n(z,w)=0$ when the point $\zeta=1+z/\sqrt{n}$ satisfies $\babs{\zeta}>1$.

We recall that the hard edge kernel is defined by
\begin{equation}\label{popo}K(z,w)=G(z,w)H(z+\bar{w})\1_\L\otimes\1_\L(z,w),\end{equation}
where $H$ is the hard edge plasma function, $H=\barg *\left(\frac {\1_{(-\infty,0)}} F\right)$.
In terms of this kernel, we put
$$R(z):=K(z,z),\quad B(z,w):=\frac {\babs{\,K(z,w)\,}^{\,2}}
{K(z,z)},\quad C(z):=\int_\L\frac {B(z,w)}{z-w}\, dA(w).$$
Observe that $R(z)=B(z,w)=0$ when $\re z>0$.

\begin{thm} \label{THLp} (Hard edge Ward equation.) The kernel \eqref{popo} gives rise to a solution to the equation
$$\dbar C(z)=R(z)-1-\Lap\log R(z),\qquad z\in\L.$$
\end{thm}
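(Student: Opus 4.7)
The plan is to parallel the proof of Ward's equation developed in Sections \ref{theidentity}--\ref{vixen}, adapted to the hard edge setting. The crucial observation is that Theorem \ref{war1} only requires the potential to be $C^2$-smooth in a neighbourhood of the support of the test function. For the hard edge potential $Q^S$, which equals $|\zeta|^2$ on the closed unit disc and $+\infty$ otherwise, this means Ward's identity $\mathbb{E}_n W_n^+[\psi]=0$ remains valid for every $\psi \in C_0^\infty$ whose support is contained in the open unit disc.

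Rescale about $p=1$ via $z_j = \sqrt{n}(\zeta_j-1)$, and note $\Lap Q^S(p)=1$. Given any $\varphi \in C_0^\infty(\L)$, set $\psi_n(\zeta) = \varphi(\sqrt{n}(\zeta-1))$. Since $\varphi$ has compact support in $\L$, for large $n$ the support of $\psi_n$ lies inside the open unit disc, so Ward's identity applies. Repeating the change-of-variables computation from the proof of Theorem \ref{THL} verbatim produces the rescaled identity
\begin{equation*}
\dbar C_n(z) = R_n(z) - 1 - \Lap \log R_n(z) + o(1),
\end{equation*}
valid in the sense of distributions on any compact subset of $\L$, with $o(1)\to 0$ uniformly on compacts (the ``$-1$'' arises exactly as in the free boundary case, since inside the droplet $\Lap Q^S \equiv 1$).

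The final step is to pass to the limit. By Theorem \ref{hardg}, $K_n \to K$ locally uniformly on $\L \times \L$, hence $R_n \to R$ locally uniformly on $\L$. Since $R(z)=H(2\re z)$ and the integrand defining $H$ is strictly positive, $R$ is bounded away from zero on compact subsets of $\L$, so $\log R_n \to \log R$ locally uniformly there and $\Lap \log R_n \to \Lap \log R$ distributionally on $\L$. For the Cauchy transform, an adaptation of Lemma \ref{belem} applies: the locally uniform convergence $B_n \to B$ on compacts of $\L \times \L$, together with the uniform bound $B_n(z,w)\le R_n(w)$ (which is bounded since $R\le 2$), yields locally bounded convergence $C_n \to C$ on $\L$, hence distributional convergence. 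Combining these observations with the rescaled identity gives Ward's equation on $\L$ in the sense of distributions; Weyl's lemma then upgrades this to a pointwise identity, as the right-hand side is smooth on $\L$.

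The main technical obstacle is the discontinuity of $K$ across $\d\L$: both $K$ and $R$ jump to zero on the complement of $\L$, so the equation can only be expected to hold on the open half plane. This forces us to restrict all arguments to $\L$, where the convergence $K_n \to K$ is locally uniform rather than merely almost-everywhere. The delicate part is the justification that $C_n \to C$ distributionally even as $z$ approaches $\d\L$; this is controlled by the Gaussian decay $|K_n(z,w)|^2 \le R_n(z)R_n(w)e^{-|z-w|^2}$ inherited from the factorization $K_n = \Psi_n G$, which keeps the tail of the defining integral uniformly small.
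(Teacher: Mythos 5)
Your proof follows essentially the same route as the paper's: invoke the localized Ward identity (Theorem \ref{war1}) for test functions supported strictly inside the droplet, where $Q^S$ coincides with the smooth Ginibre potential, rescale exactly as in Theorem \ref{THL} to obtain $\dbar C_n = R_n - 1 - \Lap\log R_n + o(1)$ on $\L$, and then pass to the limit using the locally uniform and bounded convergence from Theorem \ref{hardg}. The only cosmetic difference is the choice of center (the paper translates so that $p=0$, you keep $p=1$); the substance is identical, and your added care about keeping $\supp\psi_n$ inside the open disc and your explicit invocation of Lemma \ref{belem} plus Weyl's lemma are correct elaborations of the paper's terse "we can pass to the limit."
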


\begin{proof} We claim first that we have the asymptotic relation
\begin{equation}\label{fhae}\dbar C_n(z)=R_n(z)-1-\Lap \log R_n(z)+o(1),\qquad z\in\L,\end{equation}
where the error term $o(1)$ converges to zero uniformly on compact subsets of the left half plane $\L$.

In order to prove this, it is convenient to consider the Ginibre potential $Q(\zeta)=\babs{\,\zeta+1\,}^{\,2}$ which has the droplet $S=\{\,\babs{\,\zeta+1\,}\le 1\,\}$.
We rescale about the boundary
point $p=0$ according to
$$z=\sqrt{n}\,\zeta,$$
fix a number
$\eps>0$. Write
$U:=\drop\cap D(0;\eps)$
and consider
test functions $\psi$ supported in the dilated set $\sqrt{n}\cdot U$. As in
the free case we
define $\psi_n\left(\zeta\right):=\psi\left(z\right)$.
Since $Q^S=Q$ in the set $U$ where $\psi_n$ is supported,
the same arguments used in the free boundary case remain valid (cf. Section \ref{funo}). The only difference is that the dilated domains $\sqrt{n}\cdot U$ will,
in our present case,
increase to the open left half plane $\L$. Hence we deduce the Ward's equation \eqref{fhae} for $z\in \L$ precisely as before.

By Theorem \ref{hardg}, we have convergence $R_n\to R$ and $C_n\to C$ locally uniformly in $\L$ and boundedly
almost everywhere in $\C$. It follows that we can pass to the limit in \eqref{fhae}. The proof is complete.
\end{proof}

\begin{cor} \label{ogood2} $H$ satisfies the following "hard edge mass-one equation'',
\begin{equation}\label{hmass}H\left(2\re z\right)=\int_\L\babs{\,H\left(z+\bar{w}\right)\,}^{\,2}e^{\,-\,\babs{\,z-w\,}^{\,2}}\, dA(w),\qquad (z\in\L).\end{equation}
\end{cor}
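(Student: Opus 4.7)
The proof is by passage to the limit in the universal identity
\[\int_\C B_n(z,w)\, dA(w) = 1\]
(see \eqref{mocc1}), applied to the Berezin kernel $B_n(z,w) = |K_n(z,w)|^2/K_n(z,z)$ of the hard edge Ginibre ensemble rescaled about $p=1$.

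First I would fix $z \in \L$ and observe, from Theorem \ref{hardg}, that $K_n(z,w) \to K(z,w) = G(z,w)H(z+\bar{w})\1_\L(z)\1_\L(w)$ locally uniformly on $\L^2$ and locally boundedly almost everywhere on $\C^2$. Since $K_n(z,w) = 0$ whenever the preimage $1 + w/\sqrt{n}$ lies outside the closed unit disc, the function $B_n(z,\cdot)$ is essentially supported in $\L$. Hence $B_n(z,\cdot) \to B(z,\cdot)$ almost everywhere, where
\[B(z,w) = \frac{e^{-|z-w|^2}|H(z+\bar{w})|^2}{H(2\re z)}\,\1_\L(w),\]
using $G(z,z) = 1$ and $|G(z,w)|^2 = e^{-|z-w|^2}$.

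The main task is to justify interchanging the limit and the integral. Here I would produce a uniform-in-$n$ integrable dominant for $B_n(z,\cdot)$. The locally uniform convergence to the positive function $H(2\re z)$ yields $K_n(z,z) \ge c > 0$ for all large $n$, so it suffices to dominate $|K_n(z,w)|^2$. For this, run the bulk-approximation scheme of Section \ref{parti} in the hard edge setting: factorise $K_n = \Psi_n K_n^\#$, where Lemma \ref{AL2} gives $|\Psi_n(z,w)|^2 \le C e^{|z-w|^2}$ and Lemma \ref{cocy} gives $|K_n^\#(z,w)| \le C e^{-|z-w|^2/2}$ in a neighbourhood of the diagonal. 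Combined, these yield $B_n(z,w) \le C e^{-|z-w|^2/2}$ uniformly in $n$, an integrable dominant. Dominated convergence then gives $\int_\L B(z,w)\, dA(w) = 1$, which becomes \eqref{hmass} after multiplying through by $H(2\re z)$.

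The principal obstacle is the uniform integrable dominant. The subtlety is that the bulk-approximation bounds of Section \ref{parti} were derived for smooth potentials, whereas the hard wall cuts off $K_n$ by an indicator. However, truncation can only decrease $|K_n|$ pointwise, so the off-diagonal bounds from the free boundary analysis transfer verbatim; the only place care is required is near the wall $\re w = 0$, but here the pointwise convergence $K_n \to K$ is still locally uniform by Theorem \ref{hardg}, so no additional argument is needed on that region.
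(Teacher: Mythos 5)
Your overall strategy -- pass to the limit in $\int_\C B_n(z,w)\,dA(w)=1$ using the convergence of Theorem \ref{hardg} -- is exactly the paper's approach, so the structure matches. However, the crucial step, the construction of a uniform integrable dominant, contains an arithmetic error that leaves a genuine gap.

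You claim that Lemma \ref{AL2} (which gives $|\Psi_n(z,w)|^2\le C e^{|z-w|^2}$) together with Lemma \ref{cocy} (which gives $|K_n^\#(z,w)|\le C e^{-|z-w|^2/2}$) combine to yield $B_n(z,w)\le Ce^{-|z-w|^2/2}$. But in the factorisation $K_n=\Psi_n K_n^\#$ the two exponents \emph{cancel}:
\[
|K_n(z,w)|=|\Psi_n(z,w)|\,|K_n^\#(z,w)|\le C\,e^{\,|z-w|^2/2}\cdot e^{-|z-w|^2/2}=C,
\]
so one only recovers the trivial bound $|K_n|^2\le C$ (equivalently $|K_n(z,w)|^2\le K_n(z,z)K_n(w,w)$ plus the uniform diagonal bound), which gives $B_n(z,w)\le C$ -- a constant, \emph{not} an integrable dominant on $\L$. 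Lemma \ref{AL2} is only a normal-family (locally-bounded) estimate; it contains no off-diagonal decay and cannot produce one. Your further assertion that ``truncation can only decrease $|K_n|$ pointwise'' is also not correct: the hard-edge kernel is the reproducing kernel of the same polynomial space equipped with a \emph{smaller} $L^2$-norm (integration over $S$ instead of $\C$), so by the extremal characterisation of reproducing kernels the diagonal values $K_n^{\text{hard}}(z,z)$ are \emph{larger}, not smaller, than the free ones; there is no pointwise domination transferable verbatim from the free-boundary bounds.

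The substantive issue -- that one needs genuine off-diagonal decay of $B_n(z,w)$ in $|z-w|$, uniform in $n$, or some other form of uniform tail control to pass the mass-one identity through the limit on the non-compact domain $\L$ -- is not resolved by the cited lemmas and is in fact where the real hard-edge analysis lies; the paper's own proof of this corollary is deliberately terse, citing the ``boundedly almost everywhere'' convergence of Theorem \ref{hardg} and the proof of Theorem \ref{THLp}, and defers the detailed hard-edge estimates to the companion paper \cite{AKM2}. You would need either to establish a sharper bound on $\Psi_n$ (say $|\Psi_n|\le C$ uniformly, which is stronger than Lemma \ref{AL2} provides) or to argue uniform integrability directly, rather than deducing it from the bulk-approximation normal-family estimate.
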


\begin{proof}
The approximate Berezin kernels $B_n$ satisfy $\int B_n(z,w)\, dA(w)=1$ for $z\in\L$. The identity \eqref{hmass}
now follows from the convergence $B_n\to B$
in Theorem \ref{hardg} and the argument used in the foregoing proof.
\end{proof}

\subsection{Ward's equation at bulk singularities and Mittag-Leffler fields} \label{frej} Let us weaken our standing assumptions on the potential $Q$. We still require real-analyticity in a neighbourhood of $S$, but
now allow that $\Lap Q=0$ at isolated points
in the bulk of $S$.
A point $p\in \bulk S$ such that $\Lap Q(p)=0$ will be called a \textit{bulk singularity}.

Assume that
$p=0$ is a bulk singularity and let $\{\zeta_j\}_1^n$ be the point process corresponding to $Q$.
The effect of the bulk singularity is to repel the particles away from it.

There are various
types of bulk singularities depending on the local behaviour of $\Lap Q$ near $p$. For instance, if
$$\Lap Q(\zeta)=ax^{\,2}+by^{\,2}+\Ordo\left(\babs{\,\zeta\,}^{\,3}\right),\qquad (\zeta=x+iy\to 0),$$
where $a$ and $b$ are positive constants, then the local behaviour of the system $\{\zeta_j\}$ near $0$
will depend on $a$ as well as $b$. We will here mainly consider the simplest case when $a=b$, and more
generally, that there is a number $\lambda\ge 1$ such that
$$\Lap Q(\zeta)=\babs{\,\zeta\,}^{\,2\left(\lambda-1\right)}+\ldots,\qquad (\zeta\to 0),$$
where the dots represent negligible terms.
If we wish $Q$ to be real-analytic, we should of course assume that $\lambda$ be an integer. However, the condition of real-analyticity is important only in a neighbourhood of the boundary, e.g. in connection
with Sakai's theory. In the bulk it suffices to assume $C^2$-smoothness.
Thus we can in fact we can choose $\lambda$ as an arbitrary real
constant $\ge 1$. Note that $\lambda=1$ is the well-known case of an ordinary "regular'' bulk point,
in which case we know that the usual Ginibre point field arises. We may thus assume that $\lambda>1$.

It turns out that the proper scaling in the case at hand is
\begin{equation}\label{bulcsc}z_j=n^{\,1/(2\lambda)}\,\zeta_j.\end{equation}
We write $\config_n=\{z_j\}_1^n$ for the rescaled system, equipped with the law which is the
image of the Boltzmann-Gibbs measure $\Prob_n$ under the map \eqref{bulcsc}.

\ex Consider the "power potential''
$Q_\lambda(\zeta)=\babs{\,\zeta\,}^{\,2\lambda}$ where $\lambda>1$.
If $\mathbf{K}_n$ denotes a correlation kernel of the process $\{\zeta_j\}_1^n$, then $\config_n$ has the correlation kernel
\begin{equation}\label{lamsc}K_n(z,w)=n^{\,-1/\lambda}
\mathbf{K}_n\lpar \zeta,\eta\rpar,\quad \text{where}\quad z=n^{\,1/(2\lambda)}\,\zeta\quad ,\quad w=n^{\,1/(2\lambda)}\,\eta.\end{equation}

A straightforward calculation shows that the polynomial $\zeta^j$ has norm
$$\left\|\,\zeta^j\,\right\|_{nQ}^{\,2}=\int_\C\babs{\,\zeta\,}^{\, 2j}e^{\,-\,n\,\babs{\,\zeta\,}^{\,2\lambda}}\, dA(\zeta)=
\frac 1 \lambda n^{\,-\frac {j+1}\lambda}\Gamma\left(\frac {j+1} \lambda\right).$$
Inserting the result in formula \eqref{E1.8} for a correlation kernel, we get
$$\mathbf{K}_n(\zeta,\eta)=\lambda n^{\,1/\lambda}\sum_{j=0}^{n-1}\frac {\lpar n^{1/\lambda}\zeta\bar{\eta}\rpar^j} {\Gamma\lpar \frac {j+1} \lambda\rpar}
e^{\,-n\left(\,|\,\zeta\,|^{\,2\lambda}+|\,\eta\,|^{\,2\lambda}\,\right)/2}.$$
Rescaling according to $z=n^{1/2\lambda}\zeta$, $w=n^{1/2\lambda}\eta$, we obtain
$$K_n(z,w)=n^{\,-1/\lambda}\mathbf{K}_n(\zeta,\eta)=\lambda\sum_{j=0}^{n-1}
\frac {(z\bar{w})^j} {\Gamma\lpar \frac {j+1} \lambda\rpar}e^{\,-\left(\,
\babs{\,z\,}^{\,2\lambda}+\babs{\,w\,}^{\,2\lambda}\,\right)/2}.$$
It is now evident that $$K_n(z,w)\to M_\lambda\left(z\bar{w}\right)
e^{-\left(\,\babs{\,z\,}^{\,2\lambda}+\babs{\,w\,}^{\,2\lambda}\,\right)/2},\quad (n\to\infty),$$ locally uniformly in $\C^2$,
where $M_\lambda$ is the function
 \begin{equation*}\label{mlf}M_\lambda(z)=\lambda\sum_0^\infty \frac {z^{\,j}}{\Gamma\left(\frac {j+1}\lambda\right)}.\end{equation*}
We recognize $M_\lambda$ as the
generalized Mittag-Leffler function $\lambda E_{1/\lambda,1/\lambda}$. See \cite{Er0}, Vol. 3.

\begin{thm} \label{PP3} The point process $\config_n$ converges as $n\to\infty$ to the unique point-field $ML_\lambda$ in $\C$ with kernel
\begin{equation*}\label{pfi3}K(z,w)=M_\lambda\left(z\bar{w}\right)e^{\,-
\frac 1 2 \lpar\, \babs{\,z\,}^{\,2\lambda}
+\babs{\,w\,}^{\,2\lambda}\,\rpar}.\end{equation*}
The convergence holds in the sense of locally uniform convergence of intensity functions.
\end{thm}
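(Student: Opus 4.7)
\textbf{Proof plan for Theorem \ref{PP3}.} The strategy is to reuse the explicit orthogonal-polynomial computation already carried out in the Example immediately preceding the statement. That computation shows that, on the specific potential $Q_\lambda$, the rescaled correlation kernel satisfies
$$K_n(z,w) \;\longrightarrow\; K(z,w) \;:=\; M_\lambda(z\bar w)\, e^{-(|z|^{2\lambda}+|w|^{2\lambda})/2}$$
locally uniformly on $\C^2$. Since the rescaled intensity functions are $R_{n,k}(z_1,\ldots,z_k) = \det\bigl(K_n(z_i,z_j)\bigr)_{i,j=1}^k$, locally uniform kernel convergence passes immediately to locally uniform convergence of all intensities to $R_k(z_1,\ldots,z_k):=\det\bigl(K(z_i,z_j)\bigr)_{i,j=1}^k$. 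The analytic heart of the theorem is thus already in hand.

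What remains is to identify the collection $\{R_k\}$ as the correlation functions of an actual determinantal point field $ML_\lambda$. I would proceed as follows. First, $K$ is Hermitian and a pointwise limit of positive matrices, hence itself a positive matrix. Second, each $K_n$, restricted to any bounded Borel set $D\subset\C$, is the integral kernel of a contractive self-adjoint operator on $L^2(D)$ with spectrum in $[0,1]$ (it is obtained by compressing an $n$-dimensional orthogonal projection). Since $K_n \to K$ uniformly on $D\times D$ and all kernels are continuous, the associated integral operators converge in trace norm on $L^2(D)$, and therefore $0 \le K|_D \le 1$ in the operator sense. The Macchi--Soshnikov construction recalled in the appendix then produces a unique determinantal point field $ML_\lambda$ on $\C$ whose correlation kernel is $K$, and whose intensity functions are the $R_k$ above.

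Uniqueness of the limit point field (not just among determinantal processes) is then a consequence of Lenard's criterion, which is satisfied because the Hadamard bound $|\det K(z_i,z_j)| \le \prod_i K(z_i,z_i)$ together with the continuity of $K(z,z) = M_\lambda(|z|^2)e^{-|z|^{2\lambda}}$ controls the growth of the $R_k$ on compact sets. The main obstacle I anticipate is not any calculation involving the Mittag-Leffler function, but rather the operator-theoretic step establishing $0\le K|_D\le 1$ on bounded sets $D$, i.e.\ verifying that the limiting kernel is in the admissible class for the Macchi--Soshnikov theorem; the explicit form of $M_\lambda$ enters only through its continuity and positivity on the diagonal.
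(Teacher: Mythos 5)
Your proposal is correct but takes a somewhat different route than the paper's, which is much shorter. The paper's entire argument (after the explicit computation in the Example) consists of two observations: (i) $M_\lambda$ is an entire function of order $\lambda$ and normal type, so $M_\lambda(|z|^2)\,e^{-|z|^{2\lambda}}$ is \emph{uniformly bounded on all of $\C$}; (ii) one then invokes Corollary~\ref{easytouse} in the appendix, which packages locally-uniform kernel convergence plus global boundedness of the diagonal $R(z)=K(z,z)$ into the simultaneous conclusions of existence, uniqueness, and convergence of the point fields, entirely via Lenard's theory. You instead split the job: existence via the Macchi--Soshnikov criterion (through the operator compression $P_D K_n P_D$ with spectrum in $[0,1]$ and passage to the limit), and uniqueness via Lenard's growth condition through the Hadamard determinant bound with only local boundedness of $K(z,z)$. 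Both routes are valid; what the paper's route buys is brevity and the pre-packaged corollary, at the cost of needing the nontrivial global boundedness estimate for the Mittag-Leffler kernel, while your route dispenses with that global estimate (continuity on compacts suffices) but in exchange has to verify the operator-theoretic admissibility $0\le K|_D\le 1$ and local trace class directly.

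Two small technical remarks. First, uniform convergence of continuous kernels on $D\times D$ yields Hilbert--Schmidt (hence operator-norm) convergence of the associated integral operators on $L^2(D)$, not trace-norm convergence in general; fortunately operator-norm convergence is all you need to pass $0\le P_D K_n P_D\le 1$ to the limit, so this overclaim does not damage the argument. Second, you should explicitly note that the limiting restricted operator $K|_D$ is locally trace class; this follows from continuity of $K$ on the diagonal and, e.g., a Mercer-type argument, and is needed as a hypothesis for the Macchi--Soshnikov theorem. With those adjustments the proof is complete.
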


\begin{proof} It is easy to see that $M_\lambda$ is of exponential type $\lambda$. This implies that
the kernel $K$ is uniformly bounded.
Existence and uniqueness of a point field $ML_\lambda$ with the given properties now follows,
via Lenard's theory, from the convergence of intensities in the preceding example (cf. the appendix).
\end{proof}

We next consider Ward's equation at $p=0$ for the
the potential $Q_\lambda(\zeta)=\babs{\,\zeta\,}^{\,2\lambda}$. To this end, we introduce the
Berezin kernel rescaled about $0$ on the scale \eqref{bulcsc},
i.e.
$$B_{n}(z,w):=\frac{\babs{\,K_n(z,w)\,}^{\,2}}
{K_n(z,z)}.$$
Ward's equation takes the following form.

\begin{thm} In the above situation, $B_n\to B$ uniformly on compact subsets of $\C^2$ where
$B$ is a solution to the "Ward equation of type $\lambda$''
 \begin{equation*}\label{modw}\dbar_z\int\frac {B(z,w)}
 {z-w}dA(w)=
 B(z,z)-\lambda^2\babs{z}^{\,2\left(\lambda-1\right)}-\Delta_z\log B(z,z).\end{equation*}
\end{thm}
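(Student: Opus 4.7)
The plan is to adapt the derivation of the rescaled Ward identity from Section \ref{funo} (Theorem \ref{THL}) to the new scaling \eqref{bulcsc} at the bulk singularity $p=0$ of the power potential $Q_\lambda(\zeta) = |\zeta|^{2\lambda}$. I would first apply the exact Ward identity (Theorem \ref{war1}) to a dilated test function $\psi_n(\zeta) := \psi(z)$, $z = n^{1/(2\lambda)}\zeta$, with $\psi \in C^\infty_0(\C)$ arbitrary, and then track how each of the three terms $I_n,II_n,III_n$ rescales.

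The change of variables $dA(\zeta)=n^{-1/\lambda}dA(z)$ together with $R_{n,k}(z_1,\dots,z_k)=n^{-k/\lambda}\bfR_{n,k}(\zeta_1,\dots,\zeta_k)$ makes $I_n$ and $III_n$ acquire an overall factor $n^{1/(2\lambda)}$, exactly as in the boundary case. For $II_n$ the crucial new ingredient is that
\[
\partial Q(n^{-1/(2\lambda)}z) = \lambda\, n^{-(2\lambda-1)/(2\lambda)}\,|z|^{2(\lambda-1)}\bar z,
\]
so the factor $n$ in front of $II_n$ combines to produce again $\lambda n^{1/(2\lambda)}|z|^{2(\lambda-1)}\bar z$. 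Dividing through by $n^{1/(2\lambda)}$ and using that $\psi$ is arbitrary, I obtain in the sense of distributions
\[
\int_\C\frac{R_{n,2}(z,w)}{z-w}\,dA(w) = \lambda\bar z\,|z|^{2(\lambda-1)} R_{n,1}(z) + \partial R_{n,1}(z).
\]
Substituting $R_{n,2}(z,w)/R_{n,1}(z) = R_{n,1}(w) - B_n(z,w)$, applying $\dbar$, and using the elementary identity
\[
\dbar\bigl(\bar z\,|z|^{2(\lambda-1)}\bigr) = \dbar\bigl(z^{\lambda-1}\bar z^{\lambda}\bigr) = \lambda\,|z|^{2(\lambda-1)}
\]
(valid also for non-integer $\lambda>1$ away from $z=0$ in the distributional sense on any bounded set), I get the pre-limit identity
\[
\dbar C_n(z) = R_{n,1}(z) - \lambda^2|z|^{2(\lambda-1)} - \Lap\log R_{n,1}(z),
\]
with an error $o(1)\to 0$ uniformly on compacts; here the error comes only from smoothness of $\psi$, as in the proof of Theorem \ref{THL}.

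To pass to the limit, I rely on the explicit convergence established in the example preceding the statement: $K_n(z,w)\to M_\lambda(z\bar w)\exp(-\tfrac12(|z|^{2\lambda}+|w|^{2\lambda}))$ locally uniformly on $\C^2$, with $K_n(z,z)$ bounded above and below on compacts of $\C\setminus\{0\}$ and with the standard decay $|K(z,w)|^2\le K(z,z)K(w,w)\lesssim e^{-|z|^{2\lambda}-|w|^{2\lambda}}$ at infinity. These imply uniform convergence $B_n\to B$ on compact subsets and the domination required to pass the limit through the Cauchy transform $C_n(z)$, exactly as in Lemma \ref{belem}: one splits the integral into $|z-w|<1/\eps$ and $|z-w|>1/\eps$ and uses the mass-one normalization $\int B_n(z,w)\,dA(w)=1$. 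The convergence in distributions of $\dbar C_n$ and $\Lap\log R_{n,1}$ then follows, yielding the claimed equation pointwise on $\C\setminus\{0\}$.

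The main obstacle is the behaviour near $z=0$, where the right-hand side term $\lambda^2|z|^{2(\lambda-1)}$ vanishes but where the Bergman-type estimate for $R_n$ is not automatic (the bulk singularity repels the particles and $R_n(0)\to 0$). However, since the explicit kernel formula gives $R_n\to R$ together with all its derivatives locally uniformly on $\C$, including at $0$, and since $R(0)=M_\lambda(0)>0$ forces $\log R$ to be smooth near $0$ as well, the identity extends to all of $\C$ by continuity. This completes the plan.
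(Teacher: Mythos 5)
Your proposal is essentially the paper's own proof: dilate a test function on the $n^{1/(2\lambda)}$-scale, apply the localized Ward identity, use homogeneity of $Q_\lambda$ to factor out $n^{1/(2\lambda)}$ exactly in all three terms, divide and apply $\dbar$ to obtain the pre-limit identity, and then pass to the limit via the explicit convergence $K_n\to M_\lambda(z\bar w)\,e^{-\frac12(|z|^{2\lambda}+|w|^{2\lambda})}$ together with the Cauchy-transform argument of Lemma \ref{belem}.

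Three slips are worth correcting, none of which affects the substance. First, since $Q_\lambda$ is exactly homogeneous, the pre-limit Ward identity has \emph{no} error term at all (there is nothing to Taylor-expand, and in particular no ``error from smoothness of $\psi$''); the $o(1)$ appears in the paper only as a placeholder to match the notation of the general case. Second, the claimed decay $K(z,z)K(w,w)\lesssim e^{-|z|^{2\lambda}-|w|^{2\lambda}}$ is false: $K(z,z)=M_\lambda(|z|^2)e^{-|z|^{2\lambda}}\sim\lambda^2|z|^{2(\lambda-1)}$ as $|z|\to\infty$, so $R$ is not even bounded. What actually drives the Cauchy-transform estimate is, as you also say, the mass-one normalization $\int B_n(z,w)\,dA(w)=1$ for the far range and locally uniform convergence (giving a local bound on $B_n$ near the diagonal) for the near range. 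Third, $R_n(0)$ does not tend to $0$: from the explicit kernel, $R_n(0)=\lambda/\Gamma(1/\lambda)>0$ for every $n$, so there is no delicacy at the origin after rescaling — your final remark reaches the right conclusion despite the mistaken premise.
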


\begin{proof} We shall first establish the asymptotic relation
\begin{equation}\label{RS}\dbar_z\int\frac {B_{n}(z,w)}
 {z-w}dA(w)=
 B_{n}(z,z)-\lambda^2\babs{\,z\,}^{\,2(\lambda-1)}-\Delta_z\log B_{n}(z,z)+o(1),\end{equation}
 where $o(1)\to 0$ as $n\to\infty$, uniformly on compact subsets of $\C$.

To this end, fix a
test-function $\psi$ and let
$\psi_n\left(\zeta\right)=\psi\left(z\right)$ where $z=n^{\,1/2\lambda}\,\zeta$.

We shall use Ward's identity; we therefore
recalculate the expectations of the terms
$I_n[\psi_n]$, $II_n[\psi_n]$ and $III_n[\psi_n]$ used in the free boundary
case, in Section \ref{funo}. As customary, we use the symbol
$\bfR_{n,k}$
to denote the $k$-point function of the system $\{\zeta_j\}_1^n$. The rescaling $z_j=n^{1/2\lambda}\zeta_j$
then implies that the $k$-point function of the rescaled system $\{z_j\}_1^n$ is
$$R_{n,k}\left(z_1,\ldots,z_k\right)=n^{\,-k/\lambda}\,\bfR_{n,k}\left(\zeta_1,\ldots,\zeta_k\right).$$

For $I_n[\psi_n]$, the change of variables in \eqref{lamsc} gives that
\begin{equation*}\label{ya}\begin{split}
\Expe_nI_n\left[\psi_n\right]&=n^{\,1/2\lambda}\frac 1 2
\iint \frac {\psi(z)-\psi(w)}
{z-w}R_{n,2}(z,w)\\
&=n^{\,1/2\lambda}\int\psi(z)\,dA(z)\int
\frac {R_{n,2}(z,w)}
{z-w}\,dA(w).\\
\end{split}
\end{equation*}

Turning to $II_n[\psi_n]$, we first observe that
$$\d Q_\lambda\left(\zeta\right)=\lambda\bar{\zeta}\cdot \babs{\,\zeta\,}^{\,2(\lambda-1)}.$$
Using this, we see that
$$n\d Q_\lambda\lpar \frac z {n^{\,1/2\lambda}}\rpar=n^{\,1/2\lambda}
\d Q_\lambda(z),$$
which gives
$$\Expe_nII_n\left[\psi_n\right]=n^{\,1/2\lambda}\int
\lambda\bar{z}\babs{\,z\,}^{\,2(\lambda-1)}\cdot\psi\cdot
R_{n,1}.$$
We also compute
$$\Expe_{n}III_n\left[\psi_n\right]
=n^{\,1/2\lambda}\int\d\psi\cdot R_{n,1}=
-n^{\,1/2\lambda}\int\psi\cdot \d R_{n,1}.$$

In view of Ward's identity (Section \ref{theidentity}) we now infer that, in the
sense of distributions,
$$\int\frac {R_{n,2}(z,w)}
{z-w}dA(w)=\lambda\bar{z}\babs{\,z\,}^{\,2(\lambda-1)}
\cdot R_{n,1}(z)+
\d R_{n,1}(z).$$
Dividing by $R_{n,1}$ and applying $\dbar$,
we conclude the proof of the formula \eqref{RS}.
To pass to the limit as $n\to \infty$, we now use the convergence in the example preceding Theorem \ref{PP3}
and the argument in Section \ref{vixen}.
\end{proof}

\begin{rem} Consider a potential $Q(x+iy)=p(x,y)$ where $p$ is a polynomial in $x$ and $y$, positive definite
and homogeneous of degree $2k$ where $k$ is a positive integer.
 Write $\zeta=x+iy$ and rescale by $z=n^{\,1/2k}\,\zeta$. As in the above proof, one deduces
without difficulty the asymptotic relation
\begin{equation}\label{akk}\dbar_z\int\frac {B_{n}(z,w)}
 {z-w}dA(w)=
 B_{n}(z,z)-\Lap Q(z)-\Delta_z\log B_{n}(z,z)+o(1).\end{equation}
Another equation of the type \eqref{akk}
was studied in the paper \cite{AK}.
\end{rem}

We now consider "kernels''  of the form
\begin{equation}\label{beform}B_E(z,w)=\frac {\babs{\,E\left(z\bar{w}\right)\,}^{\,2}} {E\left(\,\babs{\,z\,}^{\,2}\,\right)}
e^{\,-\,\babs{\,w\,}^{\,2\lambda}},\end{equation}
where $E$ is an entire function, real and positive on $(0,\infty)$.
We refer to $B_E$ as
a \textit{(generalized) Berezin kernel of type $\lambda$} (or of the "second kind'' as in \cite{AK}). We say that the entire function $E$ satisfies
the \textit{mass-one equation of type $\lambda$} if
$$\int B_E(z,w)\, dA(w)=1,\qquad z\in\C.$$

The following theorem appears in the paper \cite{AK}.

\begin{thm}\label{muniq} Let $E=M_\lambda$.
Then $B_E$ satisfies the mass-one equation of type $\lambda$.
Furthermore a kernel $B_E$ of the form \eqref{beform}, where $E$ is an entire function which is positive on $\R_+$
satisfies type-$\lambda$ mass-one and Ward equations if and only if $E=M_\lambda$.
\end{thm}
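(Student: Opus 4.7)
The proof splits into verifying two directions: the ``if'' part (that $E = M_\lambda$ satisfies both equations) and the ``only if'' part (that these two conditions together force $E = M_\lambda$). For the mass-one side of the ``if'' direction, the cleanest route is to recognize $M_\lambda(z\bar w)$ as the reproducing kernel of the weighted Bergman space $A^2(e^{-|w|^{2\lambda}}\, dA)$. Setting $h_n := \Gamma((n+1)/\lambda)/\lambda$, a polar-coordinate computation gives the orthogonality $\int \bar w^j w^k e^{-|w|^{2\lambda}}\, dA(w) = h_j \delta_{jk}$, so that $\|w^n\|^2 = h_n$. Since $M_\lambda(z\bar w) = \sum_{n\ge 0} z^n\bar w^n/h_n$, expanding the square and using this orthogonality yields term-by-term
$$\int_\C |M_\lambda(z\bar w)|^2 e^{-|w|^{2\lambda}}\, dA(w) = \sum_{n\ge 0} \frac{|z|^{2n}}{h_n} = M_\lambda(|z|^2),$$
and dividing through by $E(|z|^2) = M_\lambda(|z|^2)$ gives $\int B_{M_\lambda}(z,w)\, dA(w) = 1$.

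For Ward's equation with $E = M_\lambda$, I would pass to the limit in the rescaled Ward identity \eqref{RS} applied to the explicit potential $Q_\lambda(\zeta) = |\zeta|^{2\lambda}$. By the example preceding Theorem~\ref{PP3}, the rescaled correlation kernels $K_n$ converge locally uniformly to the kernel whose Berezin kernel is $B_{M_\lambda}$; combining this with a Cauchy-transform argument analogous to Lemma~\ref{belem}, one concludes that $\dbar C_n \to \dbar C$ in the sense of distributions, yielding the Ward equation for $B_{M_\lambda}$ in the limit.

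For the converse, suppose $E$ is entire, positive on $\R_+$, and $B_E$ satisfies both equations. Writing $E(z) = \sum_n a_n z^n$, the same orthogonality argument gives
$$\int_\C |E(z\bar w)|^2 e^{-|w|^{2\lambda}}\, dA(w) = \sum_n |a_n|^2 h_n |z|^{2n},$$
and equating this with $E(|z|^2) = \sum_n a_n |z|^{2n}$ forces $|a_n|^2 h_n = a_n$ for every $n$; hence $a_n \in \{0, 1/h_n\}$. Thus the mass-one equation alone determines $E$ up to the choice of a subset $S \subset \Z_{\ge 0}$ via $E(z) = \sum_{n\in S} z^n/h_n$, and it remains to invoke Ward's equation to exclude proper subsets.

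The main obstacle is this last step. A useful simplification comes first: since $\Lap |z|^{2\lambda} = \lambda^2 |z|^{2(\lambda-1)}$, one has $\Lap \log B(z,z) = \Lap \log E(|z|^2) - \lambda^2 |z|^{2(\lambda-1)}$, so the $\lambda^2 |z|^{2(\lambda-1)}$ terms cancel and Ward's equation reduces to $\dbar C(z) = B(z,z) - \Lap \log E(|z|^2)$; combining this with the distributional identity $\dbar_z[1/(z-w)] = \pi\delta(z-w)$ gives the equivalent form
$$\int_\C \frac{\dbar_z B(z,w)}{z-w}\, dA(w) = -\Lap \log E(|z|^2).$$
Substituting $E(z\bar w) = \sum_{j\in S} z^j \bar w^j/h_j$ and computing the weighted Cauchy integrals $\int \bar w^j w^k (z-w)^{-1} e^{-|w|^{2\lambda}}\, dA(w)$ via angular-radial decomposition, one expresses each side as a formal series indexed by $S$ in $z,\bar z$. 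The resulting coefficient relations couple indices in $S$ with their neighbors $j \pm 1$, so any gap $n_0 \notin S$ with $n_0 \pm 1 \in S$ produces an unmatched monomial; isolating the lowest-order such mismatch (e.g.\ by taking $n_0$ minimal and expanding near $z = 0$) yields the contradiction forcing $S = \Z_{\ge 0}$, and hence $E = M_\lambda$.
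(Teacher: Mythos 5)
First, note that the paper does not actually prove Theorem~\ref{muniq}: it is introduced with ``The following theorem appears in the paper \cite{AK}'', and no argument is given. So there is nothing in this paper to compare against; your proposal has to stand on its own.

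Your treatment of the mass-one half is correct and clean. The orthogonality $\int \bar w^j w^k e^{-|w|^{2\lambda}}\, dA(w) = h_j\delta_{jk}$ with $h_j = \Gamma((j+1)/\lambda)/\lambda$ is right, the termwise integration gives $\int |M_\lambda(z\bar w)|^2 e^{-|w|^{2\lambda}}\, dA(w) = M_\lambda(|z|^2)$, and in the converse this same identity forces $|a_n|^2h_n = a_n$, hence $a_n\in\{0,1/h_n\}$ and $E=\sum_{n\in S} z^n/h_n$ for some $S\subset\Z_{\ge 0}$. Your algebraic simplification of the $\lambda$-Ward equation (the $\lambda^2|z|^{2(\lambda-1)}$ terms cancel against $\Lap|z|^{2\lambda}$, leaving $\dbar C = B(z,z)-\Lap\log E(|z|^2)$, equivalently $\int(z-w)^{-1}\,\dbar_z B(z,w)\,dA(w) = -\Lap\log E(|z|^2)$) is also correct. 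Using the convergence $K_n\to K$ from the $Q_\lambda$ example together with the rescaled identity \eqref{RS} to get the Ward equation for $E=M_\lambda$ is a legitimate route.

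The genuine gap is in the final step of the ``only if'' direction: you assert, without carrying out the computation, that after substituting $E=\sum_{j\in S}z^j\bar w^j/h_j$ the coefficient relations ``couple indices in $S$ with their neighbors $j\pm1$'', that any gap in $S$ produces ``an unmatched monomial'', and that isolating the lowest such mismatch yields a contradiction. None of this is established. The coupling structure of $\int (z-w)^{-1}\dbar_z B(z,w)\,dA(w)$ after angular--radial reduction is not obviously nearest-neighbor (the $1/(z-w)$ factor mixes all Fourier modes, not just adjacent ones), the claimed unmatched monomial is not exhibited, and no actual contradiction is derived. This is precisely the nontrivial content the theorem asserts, and the paper outsources it to \cite{AK} for a reason. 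As written, the converse is a plausible strategy, not a proof: you need to actually compute the radial reduction of the reformulated Ward equation, identify the recurrence in $n$ that the coefficients of $E$ must satisfy, and show that recurrence is incompatible with $S\ne\Z_{\ge 0}$.
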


Finally, let $Q$ be a potential of the form
$Q(\zeta)=\babs{\,\zeta-p\,}^{\,2\lambda}+\ldots$ where $p\in\bulk S$ and the dots represent negligible terms near
$\zeta=p$.
Let $\config_n$ be the corresponding process rescaled about $p$ by a factor
$n^{\,1/(2\lambda)}$ about $p$. We conjecture that $\lim_{n\to\infty}\config_n= ML_\lambda$ in the sense of point fields.

\subsection{The mass-one equation and Hilbert spaces of entire functions} \label{good}
In this section, we shall interpret the mass-one equation as the reproducing property
in a suitable space of entire functions. As a consequence, we shall find non-trivial relations for the functions
$F$, $H$, and $M_\lambda$.

\smallskip

It has been observed (e.g. \cite{Bl,L0,L} and the references there) that universality
laws in the theory of random \textit{Hermitian} matrices are related to certain specific \textit{de Branges}
spaces $\calB(E)$ of entire functions. See \cite{DB} for the definition of these spaces. In particular, the
sine-kernel describing the spacing of eigenvalues in the bulk is the restriction to $\R^2\subset\C^2$
of the reproducing kernel of the Paley-Wiener space, i.e., the space $\calB(E)$ where $E(z)=e^{-i\pi z}$.
Moreover, the Airy kernel (see \eqref{airker}) which describes the spacing at the edge of the spectrum is the restriction
to $\R^2$ of the reproducing kernel of $\calB(E)$ where $E=\Ai'-i\Ai$, and the Bessel kernel (hard edge)
is the restriction to $(-\infty,0)\times(-\infty,0)$ of the reproducing kernel of the de Branges space corresponding to the function
$E(z)=\sqrt{z}J_0'(\sqrt{z})-iJ_0(\sqrt{z})$.

The appearance of de Branges spaces in the context of Hermitian random matrices is quite natural given the
fact that orthogonal polynomials on the real line can be related to a second order one-dimensional
self-adjoint spectral problem.

The Hilbert spaces $\calH$ of entire functions arising in the random normal matrix theory are not
of de Branges type, and we are not sure about their spectral interpretation. Nevertheless, we will
use the term "spectral measure'': $\mu$ is a spectral measure for $\calH$ if $\calH$ sits isometrically
in $L^2(\mu)$.

\begin{lem} \label{satur} Let $\Psi$ be a Hermitian entire function and $G=G(z,w)$ the Ginibre kernel. The following conditions are equivalent.
\begin{enumerate}[label=(\roman*)]
\item \label{popp}The kernel $K=G\Psi$ satisfies the mass-one equation, i.e.,
\begin{equation}\label{mqq}\int e^{\,-\,|\,w\,|^{\,2}}\babs{\,\Psi(z,z+w)\,}^{\,2}\, dA(w)=\Psi(z,z),\quad z\in\C.\end{equation}
\item \label{top}The holomorphic kernel
$L(z,w)=e^{\,z\bar{w}}\Psi(z,w)$
is the reproducing kernel of some Hilbert space $\calH$ with spectral measure $d\mu(z):=e^{\,-\,|\,z\,|^{\,2}}\, dA(z)$.
\end{enumerate}
If this is the case, then there is a unique point field with correlation kernel $K$.
\end{lem}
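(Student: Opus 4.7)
The plan is to prove the two implications of the equivalence in turn, then derive uniqueness of the point field from (ii).

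The direction (ii) $\Rightarrow$ (i) is a direct computation. Given a reproducing kernel Hilbert space $\calH$ of entire functions that sits isometrically inside $L^2(\mu)$, the function $L_z(\cdot):=L(\cdot,z)$ lies in $\calH$ and satisfies $\|L_z\|_\calH^2=L(z,z)$ by the reproducing property. The isometric embedding gives $\|L_z\|_{L^2(\mu)}^2=L(z,z)$; unfolding $|L(w,z)|^2=e^{w\bar z+\bar w z}|\Psi(w,z)|^2$, multiplying by $e^{-|w|^2}$, substituting $w\mapsto z+w$, and using the Hermitian symmetry of $\Psi$, this reduces to \eqref{mqq}.

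The direction (i) $\Rightarrow$ (ii) rests on the Hermitian-entire uniqueness principle. The mass-one equation \eqref{mqq} is exactly the statement that $L_z\in L^2(\mu)$ with $\|L_z\|_{L^2(\mu)}^2=L(z,z)$. Consider the function
\[
H(z,u):=\int_\C L(z,w)\,L(w,u)\,d\mu(w),
\]
which is entire in $z$ and anti-entire in $u$, hence Hermitian-entire. By \eqref{mqq} we have $H(z,z)=L(z,z)$, and therefore by the paper's uniqueness principle for Hermitian-entire functions (they are determined by their diagonal values) we obtain the identity
\[
\int_\C L(z,w)\,L(w,u)\,d\mu(w)=L(z,u),\qquad z,u\in\C.
\]
For any finite collection $(z_j)_1^N$ and scalars $(\alpha_j)_1^N$, this identity yields $\sum_{j,k}\alpha_j\bar\alpha_k L(z_j,z_k)=\bigl\|\sum_j\alpha_j L_{z_j}\bigr\|_{L^2(\mu)}^2\ge 0$, so $L$ is a positive matrix and in particular $K=G\Psi$ is positive. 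Letting $\calH$ be the closure in $L^2(\mu)$ of $\spann\{L_z:z\in\C\}$, the same identity gives $\langle L_u,L_z\rangle_{L^2(\mu)}=L(z,u)=L_u(z)$, i.e.\ the $L_z$'s reproduce point evaluation of themselves. Extending by linearity and $L^2(\mu)$-continuity one checks that $L$ is the reproducing kernel of $\calH$, and $\calH$ is by construction isometrically embedded in $L^2(\mu)$ with the spectral measure $\mu$.

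For uniqueness of the point field, observe that the unitary map $U:L^2(\mu)\to L^2(\C,dA)$, $(Uf)(z)=e^{-|z|^2/2}f(z)$, conjugates the orthogonal projection $P_\calH:L^2(\mu)\to\calH$ into an operator on $L^2(\C,dA)$ whose integral kernel is precisely $K(z,w)=e^{-|z|^2/2}L(z,w)e^{-|w|^2/2}=G(z,w)\Psi(z,w)$. Hence $K$ is the kernel of a self-adjoint orthogonal projection on $L^2(\C,dA)$, so $0\le K\le I$ as an operator. Since $\Psi(z,z)$ is continuous, $K$ is locally trace-class, and the Macchi--Soshnikov theorem (in the form recalled in the appendix) gives existence and uniqueness of a determinantal point field with correlation kernel $K$.

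The main obstacle is the invocation of the Hermitian-entire uniqueness principle for $H(z,u)$: one must justify that $H$ is jointly Hermitian-entire (not just separately real-analytic). This follows from dominated convergence and Morera's theorem once one verifies the integrability estimates coming from $L_z,L_u\in L^2(\mu)$ combined with Cauchy--Schwarz; the rest is bookkeeping.
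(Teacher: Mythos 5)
Your proof is correct and follows essentially the same route as the paper: both equivalences hinge on the observation that the mass-one equation is the diagonal case $z=w$ of the reproducing identity $\langle L_w,L_z\rangle_{L^2(\mu)}=L(z,w)$, and the off-diagonal case is then recovered by the rigidity of Hermitian-entire functions (the paper says simply ``by analytic continuation,'' while you make the same step explicit via the auxiliary kernel $H(z,u)=\int L(z,w)L(w,u)\,d\mu(w)$ together with a Morera/Fubini argument using the Cauchy--Schwarz bound $|H(z,u)|\le\sqrt{L(z,z)L(u,u)}$, which is exactly what is needed to justify it). The uniqueness part is likewise the paper's argument: the kernel $K$ is, via the unitary $f\mapsto e^{-|z|^2/2}f$, the kernel of an orthogonal projection on $L^2$, hence $0\le T\le I$ and locally trace class, and Soshnikov's theorem applies.
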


\begin{proof} Write $L_w(z)=L(z,w)=e^{\,z\bar{w}}\Psi(z,w)$ (cf. Section \ref{vixen}.)
The function $L$ is the reproducing kernel for a Hilbert space with spectral measure
$d\mu(z)=e^{\,-\,|\,z\,|^{\,2}}\, dA(z)$
if and only if
\begin{equation}\label{starre}\left\langle\, L_w\,,\,L_z\,\right\rangle_{L^2(\mu)}=L(z,w),\qquad z,w\in \C.
\end{equation}

For $z=w$, the identity \eqref{starre} means that
$$\int_\C\babs{\,L_z(\zeta)\,}^{\,2}e^{\,-\,|\,\zeta\,|^{\,2}}\, dA(\zeta)=e^{\,|\,z\,|^{\,2}}\Psi(z,z),$$
or
$$\int_\C\babs{\,\Psi(\zeta,z)\,}^{\,2}e^{\,\zeta\bar{z}\,+\,\bar{\zeta}z\,-\,|\,\zeta\,|^{\,2}\,-\,
|\,z\,|^{\,2}}\, dA(\zeta)=\Psi(z,z),$$
which is precisely the mass-one equation \eqref{mqq}. On the other hand, if the last equation holds, then
\eqref{starre} follows for $z\ne w$ by analytic continuation. This proves the equivalence of
\ref{popp} and \ref{top}.

Next note that the kernel $K=G\Psi$ can be written
$$K(z,w)=e^{\,-\,|\,z\,|^{\,2}/2\,-\,|\,w\,|^{\,2}/2}L(z,w).$$
From this we conclude that if $L$ gives rise to a reproducing kernel as in \ref{top}, then
$K$ is the reproducing kernel of the subspace $\calW=\left\{\,f;\, f(z)=g(z)e^{\,-\,|\,z\,|^{\,2}/2}\,\right\}$
of $L^2$.

Consider the integral operator $T$ on $L^2$ with kernel $K$. It is easy to check that this
operator satisfies the following conditions: $T$
is a Hermitian operator which satisfies $0\le T\le 1$, and is locally trace class.
(That an operator $T$ on $L^2$ is "locally trace class'' means that the operator
$T_B$ on $L^2$ defined by $T_B(f)=T(\1_B f)$ is trace class for every compact set $B\subset\C$.)

By a theorem of Soshnikov (Theorem 3 in \cite{S}), the conditions above guarantee that $K$ is the
correlation kernel of a unique random point field in $\C$.
\end{proof}

In the following, we write
$A^2(\mu)$ for the space of all \textit{entire} functions of class $L^2(\mu)$.
It follows from general facts for reproducing kernels that the Hilbert space $\calH$ in \ref{top}
is the closed linear span
$$\calH=\spann_{L^2(\mu)}\left\{\,L_w;\, w\in\C\,\right\},\quad (d\mu(z)=e^{\,-\,|\,z\,|^{\,2}}\, dA(z)).$$
Let us look at some examples.

For the (bulk) Ginibre process we have $L(z,w)=e^{\,z\bar{w}}$, and hence $\calH=L^2_a(\mu)$ is the Fock space.
The free boundary Ginibre process corresponds to the kernel
$L(z,w)=e^{\,z\bar{w}}F(z+\bar{w})$, and hence
$$\calH=\spann_{L^2(\mu)}\left\{\,e^{\,\bar{w}z}F(z+\bar{w})\,;\, w\in\C\,\right\},$$
where $F$ is the free boundary plasma function.

One can similarly interpret the hard edge mass-one equation \eqref{hmass} as a reproducing
property in a suitable space of entire functions. In fact, this space is
$$\calH=\spann_{L^2(\mu_h)}\left\{\,e^{\,\bar{w}z}H(z+\bar{w})\,;\, w\in\L\,\right\},\quad (d\mu_h(z)=e^{\,-\,|\,z\,|^{\,2}}\cdot\1_\L(z)\, dA(z)),$$
where $H$ is the hard edge plasma function. The fact that the last span consists of entire functions
requires a compactness property in the hard edge situation, which will be established in the
paper \cite{AKM2}.

It would be interesting to describe the above spaces in more constructive terms (e.g. similar to
de Branges theory).
It would also be interesting to know the meaning of Ward's equation for the spaces $\calH$.
(By Lemma \ref{satur}, the mass-one equation is a statement about spectral measures.)

\smallskip

We finally describe the Hilbert spaces corresponding to the Mittag-Leffler processes.
To this end, recall (Theorem \ref{muniq}) that the mass-one equation for the function $M_\lambda$
says that
$$\int_\C\babs{\,M_\lambda(z\bar{w})\,}^{\,2}e^{\,-\,|\,w\,|^{\,2\lambda}}\, dA(w)=M_\lambda
\left(\,|\,z\,|^{\,2}\,\right).$$

This gives, by polarization
\begin{equation*}\label{mp}\int_\C M_\lambda\left(z_1\bar{w}\right)M_\lambda\left(\bar{z}_2w\right)e^{\,-\,|\,w\,|^{\,2\lambda}}\, dA(w)=M_\lambda\left(z_1\bar{z}_2\right).\end{equation*}
(This formula has an alternative, elementary proof: insert
$M_\lambda(\zeta)=\lambda\sum {\zeta^{\,j}}/ {\Gamma\left(\frac{j+1}\lambda\right)}$ in the left hand side
and integrate termwise.)

Let $d\mu_\lambda(z)=e^{\,-\,|\,z\,|^{\,2\lambda}}\, dA(z)$.
The Hilbert space pertaining to the process $ML_\lambda$ is thus
$$\calH:=\spann_{L^2(\mu_\lambda)}\left\{\,z\mapsto M_\lambda(z\bar{w})\,;\, w\in\C\,\right\}.$$
It is not hard to show that polynomials are dense in $\calH$, and consequently
$\calH=A^2(\mu_\lambda)$.

\begin{rem} For a Borel set $e\subset\R$ of positive measure, let
$$F_e(z):=\barg*\1_e(z)=\frac 1 {\sqrt{2\pi}}\int_e e^{\,-\,(z-t)^{\,2}/2}\, dt.$$
Thus $F_{(-\infty,0)}$ is the free boundary plasma function $F$.
By Theorem \ref{TT5} we know that the kernel
$$K_e:=G\Psi_e ,\quad \Psi_e(z,w)=F_e\left(z+\bar{w}\right)$$
satisfies the mass-one equation \eqref{mqq}. The corresponding Hilbert space
is
$$\calH=\spann_{L^2(\mu)}\left\{\,z\mapsto e^{\,z\bar{w}}F_e(z+\bar{w});\, w\in\C\,\right\},\quad (d\mu(z)=
e^{\,-\,|\,z\,|^{\,2}}\, dA(z)),$$
and the weighted version
$\calW$, is the closed linear span in $L^2$ of the kernels
$K_w(z)=G(z,w)F_e\left(z+\bar{w}\right)$.
Then $K_w$ is the reproducing kernel in $\calW$, i.e.,
\begin{equation}\label{polar}\int_\C G(t,z)F_e(t+\bar{z})G(w,t)F_e(w+\bar{t})\, dA(t)=G(w,z)F_e(w+\bar{z}),\qquad z,\,w\in\C.\end{equation}
This can be regarded as a polarized version of the mass-one equation. At the same time, \eqref{polar}
gives a quite non-trivial relation for the function $F_e$.

The positivity property of the kernel $K(z,w)=G(z,w)F_e(z+\bar{w})$ also implies
non-trivial inequalities.
We give a few examples in the case $e=(-\infty,0)$.

The inequality $\babs{\,K(z,w)\,}^{\,2}\le R(z)R(w)$,
holds for the kernel of any determinantal process, and in particular for $K$. It implies that
\begin{equation}\label{ecu}\babs{\,F(z+\bar{w})\,}^{\,2}\le e^{\,|\,z-w\,|^{\,2}}F(z+\bar{z})
F(w+\bar{w}).\end{equation} When $w=0$, this gives
$$\babs{\,F(z)\,}^{\,2}\le \frac 1 2 e^{\,|\,z\,|^{\,2}}F(z+\bar{z}).$$
Setting $w=-z=\frac 1 2(x+iy)$ in \eqref{ecu} and using that $F(-x)=1-F(x)$ we get
$$\babs{\,F(iy)\,}^{\,2}\le e^{\,x^{\,2}+y^{\,2}}\,F(x)(1-F(x)).$$
Letting $y=0$ this gives the inequality
$F(x)-F(x)^{\,2}\ge e^{\,-\,x^{\,2}}/4$, ($x\in\R$).

We also mention the following polarized form of the hard edge mass-one equation
\begin{equation*}\label{polar2}\int_\L G(t,z)H\left(t+\bar{z}\right)G(w,t)H\left(w+\bar{t}\right)\, dA(t)=G(w,z)H(w+\bar{z}),\qquad z,\,w\in\L.\end{equation*}
In a similar way as for $F$, one can explore some consequences of the inequality $\babs{\,H(z+\bar{w})\,}^{\,2}\le e^{\,|z-w\,|^{\,2}}H(z+\bar{z})H(w+\bar{w})$ for $z,w\in\L$, which follows from the positivity of
the hard edge correlation kernel. We note first that
\begin{equation*}\label{log2}\hfun(0)=-\int_{-\infty}^{\,0}\frac d {dt}\log F(t)\, dt=-\log F(0)=\log 2,\end{equation*}
Setting $w=0$ in the preceding inequality now gives that
$$\babs{\,H(z)\,}^{\,2}\le e^{\,\babs{\,z\,}^{\,2}}H\left(z+\bar{z}\right)\log 2,\quad z\in\L.$$
Letting $z=0$ shows that the estimate is sharp.

As a final example, we note that the positivity of the Mittag-Leffler kernel
implies that
$$\babs{\,M_\lambda\left(z\bar{w}\right)\,}^{\,2}\le M_\lambda\left(\,|\,z\,|^{\,2}\,\right)M_\lambda\left(\,|\,w\,|^{\,2}\,\right).$$
\end{rem}

\subsection{Sections of power series} \label{sect} It seems that the type of asymptotics one encounters for the free boundary was
first observed in connection with sections of power series of the exponential function.
By a \textit{section} of an entire function
$$f(\zeta)=\sum_{j=0}^\infty a_j\zeta^{\,j}$$
we here simply mean a partial sum
$$s_n(\zeta)=\sum_{j=0}^{n-1} a_j\zeta^{\,j}.$$
Szeg\H{o}'s original study in \cite{Sz} concerns the distribution of zeros of the blow-up sections $s_n^\sharp(w):= s_n(nw)$
pertaining to the exponential function $f(\zeta)=e^{\,\zeta}$. In the course of the investigation,
 Szeg\H{o} proves asymptotic results for the
function $s_n^\sharp(w)$ valid for all $w$ \textit{except} for $w$ in a fixed neighbourhood of $1$.
This gap was later closed, and the following result ensued.
Consider the rescaled section
\begin{equation*}\label{szego1}\tilde{s}_n(z):=s_n\left(n\,\zeta\right),\qquad z=\sqrt{n}\left(\zeta-1\right).\end{equation*}
One then  has the following (locally uniform) convergence
\begin{equation}\label{wopp}\tilde{s}_n(z)e^{\,-n-\sqrt{n}z}\to e^{\,z^{\,2}/4}F(z)\qquad (n\to\infty),\end{equation}
where $F$ is the free boundary plasma function.
We are unsure concerning whom should be credited for the convergence in \eqref{wopp} when $f(\zeta)=e^\zeta$.
However, the book \cite{ESV} (Theorem 1) contains a statement valid for more general $f$, and the appendix in \cite{BM} contains
a very detailed convergence result for the case at hand.

To see the connection with the scaling limits of the present paper, we remind of the expression for the
correlation kernel for the Ginibre ensemble from Section \ref{pome},
$$\bfK_n(\zeta,\eta)=n\sum_{j=0}^{n-1}\frac {(n\zeta\bar{\eta})^j}{j!}e^{\,-n\frac {\babs{\,\zeta\,}^{\,2}+\babs{\,\eta\,}^{\,2}} 2}.$$
Noting that
$$\bfK_n(\zeta,\eta)=ns_n(n\zeta\bar{\eta})e^{\,-n\frac {\babs{\,\zeta\,}^{\,2}+\babs{\,\eta\,}^{\,2}} 2},$$
and rescaling via
$$z=\sqrt{n}(\zeta-1),\quad w=\sqrt{n}(\eta-1),\quad K_n(z,w)=n^{-1}\bfK_n(\zeta,\eta),$$
we now recognize that
$$K_n(z,w)=\tilde{s}_n(z+\bar{w}+o(1))\,e^{\,-n}\,e^{\,-\sqrt{n}\,\re(z+w)}\cdot e^{\,-\,|\,z\,|^{\,2}/2\,-\,|\,w\,|^{\,2}/2}.$$
Letting $c_n$ be the cocycle $c_n(z)=e^{-i\sqrt{n}\im z}$ we now form the kernels
$$\tilde{K}_n(z,w)=c_n(z)\bar{c}_n(w)K_n(z,w)=\tilde{s}_n(z+\bar{w}+o(1))e^{\,-n-\sqrt{n}(z+\bar{w})}\cdot e^{\,-\,|\,z\,|^{\,2}/2\,-\,|\,w\,|^{\,2}/2},$$
which now closely resembles the left hand side in \eqref{wopp}.
By Theorem \ref{TT4} (or Theorem \ref{ginibref}) we know that
$$\tilde{K}_n(z,z)\to G(z,z)F(z+\bar{z})=F(2\re z),\quad (n\to\infty).$$
As a consequence,
$$\tilde{s}_n(x)e^{\,-n-\sqrt{n}\,x}\to F(x)e^{\,x^{\,2}/4},\quad (n\to\infty),\qquad x\in\R.$$
By analytic continuation, one can now recover the limit in \eqref{wopp}.

The convergence in \eqref{wopp} has been proved for the sections corresponding to more general entire functions.
In the monograph \cite{ESV}, the authors consider the Mittag-Leffler function $E_{1/\lambda}$ as well as a class denoted
"$\mathcal{L}$-functions'', while Edrei has supplied even further examples, see e.g. \cite{E}. In each case the authors
prove a suitably rescaled version of the limit in \eqref{wopp}.

To interpret the above results in terms of our Theorem \ref{TT4}, one chooses a suitable radially symmetric potential $Q$.
For example, one chooses $Q(\zeta)=E_{1/\lambda}(|\,\zeta\,|^{\,2})$ in case of the Mittag-Leffler function alluded to above.
Expressing the
kernel $\bfK_n$ in terms of the orthogonal polynomials (as in \eqref{E1.8}) and
rescaling about a boundary point of the droplet, one can apply Theorem \ref{TT4} and recover the asymptotic behaviour of the sections.

\subsection{Moving points in the bulk} \label{thor}
We have hitherto been occupied with scaling limits at the boundary, and at bulk singularities.
Our methods do however also apply equally well to the more familiar case of a "regular'' point $p$ in the bulk, i.e., a point
where $\Lap Q(p)>0$.

For certain applications (e.g. in \cite{AOC,AKMW}) it is advantageous to allow the point $p$ to
vary with $n$. We shall thus in general work with sequences $p=(p_n)_1^\infty$ rather than points. Let us write
\begin{equation*}\label{distb}\delta_n=\delta_n(p)=\dist\left(p_n,\d S\right).\end{equation*}
We say that
$p$ belongs to the \textit{bulk regime} if all $p_n$ are in $\drop$ and
 $\liminf_{n\to\infty}\sqrt{n}\delta_n=\infty$.

As customary, let $\{\zeta_j\}_1^n$ denote a random sample from the ensemble associated with $Q$
and let $\config_{n}=
\{z_j\}_1^n$, where (for any fixed $\theta\in\R$)
\begin{equation*}\label{OOO}z_j=e^{-i\theta}\sqrt{n\Delta Q(p_n)}\lpar \zeta_j-p_n\rpar,\quad j=1,\ldots,n.
\end{equation*}
We write $K_n$ for the correlation kernel of the rescaled process.

\begin{thm} \label{RBB} Let $p=(p_n)$ be a sequence in the bulk regime and
assume that $Q$ is real analytic and strictly subharmonic
in a neighbourhood of $S$.
If one rescales about $p$ according to \eqref{E1.3}, one has for all $k$
$$R_{n,k}(z_1,\ldots,z_k)\to \det\lpar G(z_i,z_j)\rpar_{i,j=1}^k\qquad \text{as}\quad n\to\infty,$$
with locally uniform convergence on $\C^k$. (This holds both in the free boundary and hard edge cases.)
\end{thm}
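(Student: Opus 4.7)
The strategy is to compare the rescaled correlation kernel $K_n$ with the rescaled bulk approximation $K_n^\#$ from Section 3.3, and then invoke the compactness machinery together with the fact that a Hermitian-entire function is uniquely determined by its diagonal values. Recall that $\bfK_n^\#(\zeta,\zeta) = n\Delta Q(\zeta)$, so after the standard rescaling
$$z = e^{-i\theta}\sqrt{n\Delta Q(p_n)}(\zeta-p_n)$$
one has $K_n^\#(z,z) = \Delta Q(\zeta)/\Delta Q(p_n) \to 1$ locally uniformly in $z$, using that $p_n$ stays in the compact set $S$ and $\Delta Q$ is continuous and strictly positive in a neighborhood of $S$. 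By Lemma \ref{cocy}, there are cocycles $c_n$ with $c_n K_n^\# \to G$ locally uniformly on $\C^2$, so it remains to establish that $K_n - K_n^\# \to 0$ locally uniformly.

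For the diagonal this is immediate from Theorem \ref{T2}: for $\zeta$ in a rescaled compact neighborhood of $p_n$, we have $\dist(\zeta,\partial S) = \delta_n + O(1/\sqrt{n})$, and hence
$$\left|\bfK_n(\zeta,\zeta) - \bfK_n^\#(\zeta,\zeta)\right| \le C\bigl(1 + n e^{-c n \delta_n^2}\bigr).$$
Since $p$ lies in the bulk regime, $n\delta_n^2 = (\sqrt{n}\delta_n)^2 \to \infty$, so the right-hand side is $o(n)$. Dividing by $n\Delta Q(p_n)$ yields $R_n(z) - K_n^\#(z,z) \to 0$, and consequently $R_n(z) \to 1$ locally uniformly on $\C$.

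To promote diagonal convergence to off-diagonal convergence, I would apply Theorem \ref{cpthm}: along any subsequence one can extract a further subsequence such that $c_{n_k} K_{n_k} \to K = G\Psi$ locally uniformly, with $\Psi$ Hermitian entire. Passing to the diagonal gives $\Psi(z,z) = \lim R_{n_k}(z) = 1$ for every $z$, and since a Hermitian-entire function is determined by its diagonal values (see the paragraph on Hermitian analyticity in the \textbf{Notational conventions}), $\Psi \equiv 1$. Thus every subsequential limit of $c_n K_n$ equals $G$, so $c_n K_n \to G$ along the full sequence. Finally, cocycles $c_n(z,w) = g_n(z)\overline{g_n(w)}$ with $|g_n|=1$ factor out of $k\times k$ determinants as $|\det\mathrm{diag}(g_n(z_j))|^2 = 1$, so $R_{n,k}(z_1,\dots,z_k) = \det(K_n(z_i,z_j)) = \det(c_n K_n(z_i,z_j)) \to \det(G(z_i,z_j))$.

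The main obstacle is verifying that the estimates of Section 5 are uniform in the moving base point $p_n$, and that they transfer to the hard edge ensemble. For uniformity, the constants in Theorem \ref{T2} depend only on local bounds on $Q$ and a positive lower bound on $\Delta Q$ near $S$, both of which are guaranteed by the standing hypotheses. For the hard edge case, the bulk regime assumption $\sqrt{n}\delta_n\to\infty$ ensures that the cutoff $\chi_\zeta$ supported in $D(\zeta;\delta_n)$ lies well inside $S$ for large $n$; consequently the orthogonality relations defining $\bfK_n$ coincide locally with the free boundary ones, and the H\"ormander/Bergman estimates of Section 5.2 apply unchanged (the hard wall is exponentially far away in rescaled coordinates and contributes only to the $e^{-cn\delta_n^2}$ error already absorbed above).
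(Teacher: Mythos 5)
Your argument is essentially the paper's own proof: fix a limiting kernel $K=G\Psi$ from Theorem~\ref{cpthm}, use Theorem~\ref{T2} together with the bulk-regime assumption $\sqrt{n}\,\delta_n\to\infty$ to show $R_n\to 1$ locally uniformly, deduce $\Psi(z,z)\equiv 1$ and hence $\Psi\equiv 1$ by Hermitian analyticity, so that every subsequential limit is $G$. Your added remarks on the cocycles factoring out of the determinant, the uniformity of the constants in the moving point $p_n$, and the hard-edge case correctly fill in details that the paper's (very terse) proof in Section~\ref{thor} leaves implicit.
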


\begin{proof}\label{regbulk}
 Let $K=\Psi G$ denote a limiting kernel in Theorem \ref{cpthm}.
If $p=(p_n)$ is in the bulk regime, then the estimate in Theorem \ref{T2} shows that there is a positive
constant $c$ such that
\begin{equation}\label{decayy}R_n(z):=K_n(z,z)=1+\Ordo\left(e^{-cn\delta_n^2}\right)=1+o(1),\quad (n\to\infty).\end{equation}
Hence $R_n\to 1$ uniformly on compact sets, which means that
$\Psi\equiv 1$. We have shown that every limiting kernel equals to $G$, as desired.
\end{proof}

\begin{rem} The theorem holds also when $Q$ is replaced by a smooth perturbation
$\tilde{Q}_n=Q+h/n$ where $h$ is a fixed smooth function. Indeed, it suffices
to prove that the estimate \eqref{decayy} holds in this more general case.
To see this, one can use
the method in the appendix of \cite{AHM3}.
\end{rem}

\subsection{Ward's equation and the mass-one equation for $\beta$-ensembles} \label{OCPs}
Consider a potential $Q$ satisfying the standing assumptions in Section \ref{subsec12} and fix a number
$\beta>0$. Let us consider the probability measure on $\C^n$ defined by
\begin{equation*}d\Prob_n^{\,\beta}(\zeta):=\frac 1 {Z_n^{\,\beta}}\, e^{\,-\,\beta\,\Ham_n(\zeta)},\quad
Z_n^{\,\beta}:=\int_{\C^n}  e^{\,-\,\beta\,\Ham_n(\zeta)}\, dV_n(\zeta),\end{equation*}
where $\Ham_n(\zeta)=-\sum_{j\ne k}\log\babs{\,\zeta_j-\zeta_k\,}+n\sum Q(\zeta_j)$ is the Hamiltonian in the external electric field $Q$. (The parameter $\beta$ is sometimes interpreted as an inverse temperature; the case $\beta=1$ studied
hitherto can be interpreted as the statement that the external field is magnetic.)

Denote by $(\zeta_j)_1^n$ a point picked randomly with respect to $\C^n$ and write
$\{\zeta_j\}_1^n$ for the corresponding unordered configuration. We denote the $k$-point function
of this process by the superscript "$\beta$''. Thus
$$\bfR_{n,k}^{\,\beta}\left(\zeta_1,\ldots,\zeta_k\right)=\lim_{\eps\downarrow 0}
\frac {\Prob_n^{\,\beta}\left(\,\bigcap_{j=1}^k\left\{\,N_{D(\zeta_j;\eps)}\ge 1\,\right\}\,\right)}
{\eps^{2k}},\qquad N_D:=\#\left\{\,j\,;\,\zeta_j\in D\,\right\}.$$
The function $\bfR_{n,k}^{\,\beta}$ can not be written as a determinant when $\beta\ne 1$.
However, we have the following version of Ward's identity. The proof in Section \ref{theidentity} works equally well for the present, more general situation.

\begin{thm}\label{beta1} Put, for a test-function $\psi\in \coity(\C)$,
$$W_n[\psi]=\beta\left(I_n[\psi]-II_n[\psi]\right)+III_n[\psi],$$
where $I_n$, $II_n$, and $III_n$ are as in Section \ref{theidentity}. If $Q$ is $C^2$-smooth near
$\supp\psi$, then $\Expe_n^{\,\beta}W_n^+[\psi]=0$.
\end{thm}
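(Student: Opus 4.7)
The plan is to mimic the proof of Theorem \ref{war1} verbatim, tracking where the extra factor $\beta$ enters. I would start by fixing $\psi \in \coity(\C)$ and, for small $\eps>0$, introduce the perturbed diffeomorphism $\phi_\eps(\zeta) = \zeta + (\eps/2)\psi(\zeta)$, then set $\eta_j = \phi_\eps(\zeta_j)$ for $j=1,\dots,n$. As in the proof of Theorem \ref{war1}, the same choice of $\eps$ (in terms of $\psi$) ensures that for every $j$, either $Q$ is $C^2$-smooth on a neighbourhood of $D_{\eps\psi}(\zeta_j)$ or $\psi(\zeta_j)=0$, so the expressions below are well-defined under the convention $[\d Q\cdot \psi](\zeta)=0$ off $\supp \psi$.

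Next I would expand, to first order in $\eps$, three quantities as functions of $\zeta=(\zeta_j)$. The Jacobian computation gives $dV_n(\eta) = \bigl[1+\eps\,\re III_n(\zeta)+\Ordo(\eps^2)\bigr]\,dV_n(\zeta)$. The logarithmic interaction expands as
\begin{equation*}
\sum_{j\ne k}\log\babs{\,\eta_j-\eta_k\,}^{\,-1}
=\sum_{j\ne k}\log\babs{\,\zeta_j-\zeta_k\,}^{\,-1}-\eps\,\re I_n(\zeta)+\Ordo(\eps^2),
\end{equation*}
and Taylor expansion of $Q$ (using the above alternative for each $j$) yields
\begin{equation*}
n\sum_{j=1}^n Q(\eta_j)=n\sum_{j=1}^n Q(\zeta_j)+\eps\,\re II_n(\zeta)+\Ordo(\eps^2).
\end{equation*}
Adding the last two identities,
\begin{equation*}
\Ham_n(\eta)=\Ham_n(\zeta)+\eps\,\re\bigl(-I_n(\zeta)+II_n(\zeta)\bigr)+\Ordo(\eps^2),
\end{equation*}
so multiplying by $\beta$ (this is the only place the inverse temperature enters!) and combining with the Jacobian expansion gives
\begin{equation*}
Z_n^{\,\beta}=\int_{\C^n}e^{-\beta\Ham_n(\zeta)}\bigl[1+\eps\,\re\bigl(III_n(\zeta)+\beta I_n(\zeta)-\beta II_n(\zeta)\bigr)+\Ordo(\eps^2)\bigr]\,dV_n(\zeta).
\end{equation*}

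Since the left-hand side is independent of $\eps$, the coefficient of $\eps$ on the right must vanish, giving
\begin{equation*}
\re\int_{\C^n}\bigl(III_n+\beta I_n-\beta II_n\bigr)e^{-\beta\Ham_n}\,dV_n=0,
\end{equation*}
i.e.\ $\re\Expe_n^{\,\beta}W_n[\psi]=0$. Repeating the argument with $\psi$ replaced by $i\psi$ yields $\im\Expe_n^{\,\beta}W_n[\psi]=0$, finishing the proof. I do not foresee a real obstacle: the only substantive change from the $\beta=1$ derivation is the harmless factor of $\beta$ on the Hamiltonian, and the $C^2$-smoothness hypothesis near $\supp\psi$ is exactly what was used to justify the Taylor expansion for the $Q$-term in the original proof.
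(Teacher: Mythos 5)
Your proof is correct and is exactly the argument the paper has in mind: the paper's own justification for Theorem~\ref{beta1} is the one-line remark that the proof of Theorem~\ref{war1} applies verbatim, and you have simply carried that out, correctly identifying that the only change is the factor $\beta$ multiplying the Hamiltonian (hence multiplying $I_n$ and $II_n$ in the $\eps$-coefficient) while the Jacobian contribution $III_n$ is unaffected.
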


Now fix a point $p\in S$ and a real parameter $\theta$ and rescale about $p$ according to
$$z_j=e^{\,-i\theta}\sqrt{n\Lap Q(p)}\,\left(\zeta_j-p\right),\qquad j=1,\ldots,n.$$
We denote by $\config_n^{\,\beta}:=\{z_j\}_1^n$ the rescaled process and write
$R_{n,k}^{\,\beta}(z_1,\ldots,z_k):=\bfR_{n,k}^{\,\beta}(\zeta_1,\ldots,\zeta_k)$ for the
joint intensities.
We also define the \textit{Berezin kernel}
of the process $\config_n^{\,\beta}$ by
$$B_n^{\,\beta}(z,w):=\frac {R_{n,1}^{\,\beta}(z)R_{n,1}^{\,\beta}(w)-R_{n,2}^{\,\beta}(z,w)}
{R_{n,1}^{\,\beta}(z)}.$$
Notice that $B_n^{\,\beta}(z,z)=R_{n,1}^{\,\beta}(z)$ and $\int_\C B_n^{\,\beta}(z,w)\, dA(w)=1$.
The kernel $B_n^{\,\beta}$ can be interpreted in terms of insertion of a point charge, as explained
in Section \ref{bkamo} in the case $\beta=1$.

We can also rescale in the Ward identity in Theorem \ref{beta1}.
The proof of the following theorem is merely a repetition of the argument in Section \ref{funo}.

\begin{thm}\label{beta2} If $p$ belongs to some neighbourhood of $S$ in which $Q$ is strictly
subharmonic and $C^2$-smooth, then
\begin{equation}\label{ptl}\dbar C_n^{\,\beta}(z)=R_n^{\,\beta}(z)-1-\frac 1 \beta\, \Lap_z
\log R_n^{\,\beta}(z)+o(1),\end{equation}
where
$$C_n^{\,\beta}(z):=\int_\C\frac {B_n^{\,\beta}(z,w)}{z-w}\, dA(w),\quad R_n^{\,\beta}:=R_{n,1}^{\,\beta}.$$
\end{thm}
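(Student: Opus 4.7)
The plan is to mimic closely the derivation of the rescaled Ward equation in Section \ref{funo}, tracking the parameter $\beta$ through the calculation. Without loss of generality assume $p=0$ and $\theta=0$, and set $\delta=\Lap Q(p)$. Choose a neighbourhood $U$ of $p$ on which $Q$ is real analytic (or $C^2$) and strictly subharmonic. For a test function $\psi\in\coity(\C)$, let $\psi_n(\zeta):=\psi(\sqrt{n\delta}\,\zeta)$, so $\supp\psi_n\subset U$ for all sufficiently large $n$. Theorem \ref{beta1} then yields
$$\beta\,\Expe_n^{\,\beta} I_n[\psi_n]\;-\;\beta\,\Expe_n^{\,\beta} II_n[\psi_n]\;+\;\Expe_n^{\,\beta} III_n[\psi_n]\;=\;0.$$

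The next step is to evaluate each of the three expectations after the change of variables $\zeta\mapsto z=\sqrt{n\delta}\,\zeta$. The computations are literally those of \eqref{rep0}--\eqref{rep2}, since they invoke only the defining relation between $\bfR_{n,k}^{\,\beta}$ and the rescaled intensities $R_{n,k}^{\,\beta}$; nothing about determinantal structure is used. Inserting the resulting formulas, dividing by $\sqrt{n\delta}$, and using the arbitrariness of $\psi$ yields, in the sense of distributions on $\sqrt{n\delta}\cdot U$,
$$\beta\int_\C \frac{R_{n,2}^{\,\beta}(z,w)}{z-w}\,dA(w)\;=\;\beta\,\frac{\sqrt{n}}{\sqrt{\delta}}\,\d Q\!\left(\frac{z}{\sqrt{n\delta}}\right)R_{n,1}^{\,\beta}(z)\;+\;\d R_{n,1}^{\,\beta}(z).$$
Using the identity $R_{n,2}^{\,\beta}(z,w)/R_{n,1}^{\,\beta}(z)=R_{n,1}^{\,\beta}(w)-B_n^{\,\beta}(z,w)$ to split the left-hand side, and then dividing by $\beta R_{n,1}^{\,\beta}(z)$, one obtains
$$\int_\C\frac{R_{n,1}^{\,\beta}(w)}{z-w}\,dA(w)\;-\;C_n^{\,\beta}(z)\;=\;\frac{\sqrt{n}}{\sqrt{\delta}}\,\d Q\!\left(\frac{z}{\sqrt{n\delta}}\right)\;+\;\frac{1}{\beta}\,\d\log R_{n,1}^{\,\beta}(z).$$

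Applying $\dbar$ and using $\dbar_z(z-w)^{-1}=\delta_w$ (with respect to $dA$) together with $\Lap=\d\dbar$ gives
$$R_{n,1}^{\,\beta}(z)\;-\;\dbar C_n^{\,\beta}(z)\;=\;\frac{1}{\delta}\,\Lap Q\!\left(\frac{z}{\sqrt{n\delta}}\right)\;+\;\frac{1}{\beta}\,\Lap\log R_{n,1}^{\,\beta}(z).$$
Since $Q$ is $C^2$-smooth near $p$, we have $\frac{1}{\delta}\Lap Q(z/\sqrt{n\delta})=1+o(1)$ uniformly on compact subsets of $\C$ as $n\to\infty$, and rearranging produces \eqref{ptl}.

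There is essentially no obstacle: the whole calculation is a direct $\beta$-weighted repetition of the argument in Section \ref{funo}. The one arithmetic point to watch is that the factor $\beta$ accompanies $I_n-II_n$ but not $III_n$ in the Ward identity of Theorem \ref{beta1}, so after dividing through by $\beta R_{n,1}^{\,\beta}$ the reciprocal $1/\beta$ surfaces in front of $\Lap\log R_{n,1}^{\,\beta}$, which is exactly the deformation of the $\beta=1$ equation appearing in \eqref{ptl}. No new analytic input is needed beyond the standing hypotheses on $Q$.
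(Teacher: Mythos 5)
Your derivation is correct and is precisely the route the paper takes: the authors simply state that "the proof is merely a repetition of the argument in Section \ref{funo}," and your write-up carries that out explicitly, correctly tracking the factor $\beta$ attached to $I_n-II_n$ but not $III_n$, which after dividing by $\beta R_{n,1}^{\,\beta}$ and applying $\dbar$ produces the $1/\beta$ in front of $\Lap\log R_{n,1}^{\,\beta}$.
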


We do not know whether it is possible to pass to the limit as $n\to\infty$ in \eqref{ptl}, but for the
sake of argument, let us temporarily assume that we can define a limiting Berezin kernel $B^{\,\beta}$.
Letting $n\to\infty$ in \eqref{ptl}, one then formally obtains the following generalization of Ward's equation
\begin{equation}\label{betawa}\dbar_z\int_\C\frac {B^{\,\beta}(z,w)}{z-w}\, dA(w)=
B^{\,\beta}(z,z)-1-\frac 1 \beta\Lap_z\log B^{\,\beta}(z,z),\quad z\in\C.\end{equation}
This more general equation can easily be transformed to the case $\beta=1$ by the linear scaling
\begin{equation}\label{we}B(u,v)=B^{\,\beta}(z,w),\quad u=\sqrt{\beta}\,z,\, v=\sqrt{\beta}\,w.\end{equation}
The result is the following.
\begin{prop} Suppose that $B^{\,\beta}$ solves \eqref{betawa}. Then the kernel $B$ in \eqref{we}
solves Ward's equation (with $\beta=1$).
\end{prop}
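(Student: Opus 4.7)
The proof is essentially a direct change of variables, and the only thing to keep straight is the scaling of all the operators and measures by the appropriate powers of $\sqrt{\beta}$. The plan is to substitute $u=\sqrt{\beta}\,z$, $v=\sqrt{\beta}\,w$ throughout the $\beta$-Ward equation and check that the factors of $\beta$ cancel in exactly the right way.

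First I would compute how the Cauchy transform transforms. Using $dA(w)=\beta^{-1}\,dA(v)$ (since $v=\sqrt{\beta}\,w$ dilates two-dimensional area by $\beta$) and $z-w=\beta^{-1/2}(u-v)$, one finds
\begin{equation*}
C(u):=\int_\C\frac{B(u,v)}{u-v}\,dA(v)=\int_\C\frac{B^{\beta}(z,w)}{\sqrt{\beta}\,(z-w)}\cdot\beta\,dA(w)=\sqrt{\beta}\,C^{\beta}(z),
\end{equation*}
where $C^\beta(z):=\int_\C B^\beta(z,w)/(z-w)\,dA(w)$ is the Cauchy transform appearing in \eqref{betawa}.

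Next I would compute $\dbar_u$ of this. Since $z=u/\sqrt{\beta}$ and $\bar z=\bar u/\sqrt{\beta}$, we have $\dbar_u=\beta^{-1/2}\dbar_z$ and similarly $\d_u=\beta^{-1/2}\d_z$, so $\Lap_u=\beta^{-1}\Lap_z$, equivalently $\Lap_z=\beta\Lap_u$. Therefore
\begin{equation*}
\dbar_u C(u)=\beta^{-1/2}\dbar_z\bigl[\sqrt{\beta}\,C^{\beta}(z)\bigr]=\dbar_z C^{\beta}(z).
\end{equation*}

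Finally, plugging into the assumed $\beta$-Ward equation \eqref{betawa}, using $B^{\beta}(z,z)=B(u,u)$ and $\beta^{-1}\Lap_z\log B^{\beta}(z,z)=\Lap_u\log B(u,u)$, yields
\begin{equation*}
\dbar_u C(u)=B(u,u)-1-\Lap_u\log B(u,u),
\end{equation*}
which is precisely Ward's equation in the case $\beta=1$. There is no genuine obstacle here beyond bookkeeping; the proof is a one-line verification once one has written down the three scaling identities (for $dA$, for the difference $z-w$, and for $\Lap$) carefully.
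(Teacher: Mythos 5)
Your proof is correct, and it is exactly the routine change-of-variables verification the paper has in mind when it says the $\beta$-equation "can easily be transformed to the case $\beta=1$ by the linear scaling''; the paper itself gives no further details. Your three scaling identities (for $dA$, for $z-w$, and for $\Lap$) are right, and the cancellation of the $\sqrt{\beta}$ factors goes through exactly as you write.
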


We do not know whether the presumptive kernels $B^{\,\beta}$ would be non-negative, so speaking about
"mass-one'' could possibly be misleading. However, if we assume that
$\int_\C B^{\,\beta}(z,w)\, dA(w)=1$,
then the corresponding kernel $B$ in \eqref{we} satisfies
the "mass-$\beta$ equation'': $\int_\C B(u,v)\, dA(v)=\beta$.
Note also that our solution of Ward's equation in the case $\beta=1$ depends
on analyticity of solutions.

The case of large $\beta$ is associated to several conjectures of physical relevance, notably the "Hall effect'' and "Abrikosov's lattice'', see e.g \cite{CFTW,Se} and the references there.
%It seems to be of interest to investigate to what extent these phenomena are reflected by the $\beta$-Ward's equation.

%We do not know if this is a valid assumption in general.

\begin{rem} Consider the potential $Q_\lambda(\zeta)=|\zeta|^{2\lambda}$ where $\lambda>1$ and rescale about the bulk
singularity $p=0$. The argument in Section \ref{frej} shows that Ward's equation for the system $\config_n^{\,\beta}$ takes the form
$$\dbar_z C_n^{\,\beta}(z)=R_n^{\,\beta}(z)-\lambda^2\babs{z}^{2(\lambda-1)}-\frac 1 \beta\, \Lap_z
\log R_n^{\,\beta}(z)+o(1).$$
\end{rem}

\appendix
\section{Convergence of point processes} \label{ARPF}

A \textit{configuration} $\{z_i\}$ is a finite or countably infinite "set'' of points
$\zeta_i\in\C$ with repetitions allowed. The configuration is said to be \textit{locally finite} if
for any compact set $K\subset \C$ there are at most finitely many $i$ such that $z_i\in K$.

Let $\mathcal{X}$ denote the set of all locally finite configurations $\{z_j\}$ in
$\C$. Let $\calB$ be the $\sigma$-algebra generated by all "cylinder sets'' $C_n^{\,B}=\left\{\,z\in \mathcal{X};\, N_B(z)=n\,\right\}$ where $B\subset\C$ is a Borel set
and $$N_B(z)=\#\left\{\,j;\, z_j\in B\,\right\}.$$
By a \textit{random point field}, we mean a probability measure $\Prob$ on the measure space $\left(\mathcal{X},\mathcal{B}\right)$.

Now suppose that $\config=\left(\mathcal{X},\mathcal{B},\Prob\right)$ is a random point field. We write
$\config=\{z_j\}$ for a random sample from this distribution.

We say that a locally
integrable function
$\rho_k:\C^k\to[0,+\infty)$ is a \textit{$k$-point intensity function} for $\config$ if
$$\Expe\left(\prod_{j=1}^m\frac {N_{B_j}!}{(N_{B_j}-k_j)!}\right)=\int_{B_1^{k_1}\times\cdots\times B_m^{k_m}}\rho_k\, dV_k$$
for all bounded Borel sets $B_1,\ldots,B_m\subset\C$ and all positive integers $k_1,\ldots,k_m$ with $k_1+\cdots+k_m=k$.

Note that a point field does not necessarily possess intensity functions.

Now let $\config_n=\{z_j\}_1^n$ be a sequence of point processes in $\C$ with intensity functions
$R_{n,k}$ (see \eqref{E1.2}). We shall say that $\config_n$ \textit{converges} to a point field $\config$ as
$n\to \infty$ if $\config$ has intensity functions $R_k$ of all orders and $R_{n,k}\to R_k$ as
$n\to\infty$ locally uniformly as $n\to\infty$. (Exception: In the case of a sequence of hard edge processes
$\config_n$, we say that they converge to $\config$ if the corresponding intensities converge with bounded almost everywhere convergence.)

\subsection{Existence of limiting point fields} \label{exi} We will use the following result due to Lenard \cite{L2}. See also \cite{S}, Theorem 1, p. 926.

\begin{thm} \label{lenard} A sequence of locally integrable functions $\rho_k:\C^k\to [0,+\infty)$ can be represented as the intensity functions
of some random point field $\config$ in $\C$ if and only if
\begin{enumerate}[label=(\roman*)]
\item \label{lena1} $\rho_k\lpar z_{\pi(1)}\,,\,\ldots\,,\,z_{\pi(k)}\rpar=\rho_k\lpar z_1\,,\,\ldots\,,\,z_k\rpar$ for all permutations $\pi$ of $\left\{\,1\,,\,\ldots\,,\,k\,\right\}$,
\item \label{lena2} For any sequence $\fii_k:\C^k\to \R$ of compactly supported bounded Borel functions and any $N\ge 0$ the following implication holds:
    \begin{equation}\label{b2}\fii_0+\sum_{k=1}^N
    \sum_{i_1\ne\ldots\ne i_k}\fii_k \lpar z_{i_1}\,,\,\ldots\,,\,z_{i_k}\rpar\,\ge\, 0\quad \text{for all }\left\{\,z_j\,\right\}\in \mathcal{X}\end{equation}
    \begin{center}implies\end{center}
    \begin{equation}\label{positivity}
    \fii_0+\sum_{k=1}^N\int_{\C^k}
    \fii_k\,\cdot\,\rho_k\,d V_k\ge 0.
    \end{equation}
\end{enumerate}
\end{thm}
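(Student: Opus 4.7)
The necessity direction is routine. Symmetry (i) is immediate since configurations are unordered. For (ii), assume that $\rho_k$ are the intensity functions of some field $\config$. The standard consequence of the definition of intensities is the identity
\begin{equation*}
\Expe_\config\Bigl[\sum_{i_1\ne\cdots\ne i_k} \fii_k(z_{i_1},\ldots,z_{i_k})\Bigr] = \int_{\C^k} \fii_k\,\rho_k\, dV_k,
\end{equation*}
which I would first establish for $\fii_k$ a product of indicators of pairwise disjoint bounded Borel sets (this is the definition) and then extend by linearity and a monotone class argument to bounded compactly supported $\fii_k$. The hypothesis of \eqref{b2} then says the random variable in the bracket is pointwise $\ge 0$, so its expectation -- which equals \eqref{positivity} -- is $\ge 0$.

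The sufficiency direction is the substantive one. Let $\mathcal{A}$ be the vector space of functions on $\mathcal{X}$ of the form $F[\{z_i\}] = \fii_0 + \sum_{k=1}^N \sum_{i_1\ne\cdots\ne i_k}\fii_k(z_{i_1},\ldots,z_{i_k})$, and define $\Lambda\colon\mathcal{A}\to \R$ by $\Lambda(F) = \fii_0 + \sum_{k=1}^N \int \fii_k\rho_k\, dV_k$. Applying (ii) to both $F$ and $-F$ when $F\equiv 0$ shows $\Lambda$ is well-defined (independent of representation), monotone on $\mathcal{A}$, and satisfies $\Lambda(1)=1$. The next step is to convert $\Lambda$ into an honest probability measure on $(\mathcal{X},\mathcal{B})$. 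For each bounded Borel set $B$, I would define the \emph{Janossy densities} on $B^n$ by the inclusion-exclusion formula
\begin{equation*}
j_n^B(z_1,\ldots,z_n) := \sum_{k=0}^\infty \frac{(-1)^k}{k!}\int_{B^k} \rho_{n+k}(z_1,\ldots,z_n,w_1,\ldots,w_k)\, dV_k(w),
\end{equation*}
with the series reduced to a finite sum at the level of cylinder sets. Non-negativity $j_n^B\ge 0$ is obtained by testing (ii) against the explicit sequence $\fii_0=0$, $\fii_k = (-1)^{k-n}\binom{k}{n} \1_{B^{k-n}}\otimes(\text{probe on }z_1,\ldots,z_n)/k!$; $\Lambda(1)=1$ gives $\sum_{n\ge 0}\frac{1}{n!}\int_{B^n} j_n^B\, dV_n = 1$. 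Hence $j_n^B$ determines a probability distribution $\Prob_B$ on finite point configurations in $B$.

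The family $\{\Prob_B\}_B$ is consistent as $B$ runs through an exhausting sequence of bounded Borel sets: if $B\subset B'$, the marginal of $\Prob_{B'}$ on points lying in $B$ coincides with $\Prob_B$, which is another inclusion-exclusion identity built from the relation between $j_n^B$ and $j_n^{B'}$. Kolmogorov's extension theorem (applied on the sequential space of configurations indexed by a countable exhaustion) then produces a unique probability measure $\Prob$ on $(\mathcal{X},\mathcal{B})$ whose Janossy densities on each $B$ are $j_n^B$. A direct computation, essentially inverting the defining formula for $j_n^B$, verifies that the $k$-point intensity functions of $\Prob$ are the given $\rho_k$.

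\textbf{Main obstacle.} The crux is the positivity $j_n^B\ge 0$, together with the convergence/interpretation of the alternating series defining $j_n^B$. Without an a priori growth bound on $\rho_k$ the series need not converge absolutely, so the argument must be phrased so that only finitely many terms are tested at once -- i.e., one applies (ii) to finite-order approximations and then passes to a limit using the positivity of partial sums combined with monotone convergence. Ensuring the compatibility of these truncations across different sets $B$ (so that Kolmogorov extension actually applies) is the technical heart; everything else is formal bookkeeping.
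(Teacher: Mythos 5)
The paper does not actually prove this statement: it is imported verbatim from Lenard \cite{L2} (see also \cite{S}, Theorem 1, p.\ 926), so there is no in-paper argument to compare against. Judged on its own terms, your necessity half is fine, and introducing the positive, monotone functional $\Lambda$ on the cone $\mathcal{A}$ is the right opening move for sufficiency. The gap is in what comes after.

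You propose to build Janossy densities $j_n^B$ via the alternating inclusion--exclusion series and then assemble the law by Kolmogorov extension. But Theorem~\ref{lenard} imposes no growth hypothesis on the $\rho_k$, so there is no reason that $\sum_{k\ge 0}\frac{(-1)^k}{k!}\int_{B^k}\rho_{n+k}\,dV_k$ converges, absolutely or even conditionally. The remedy you sketch (``positivity of partial sums combined with monotone convergence'') does not close this: the partial sums of an alternating series are not monotone, and applying (ii) to each finite truncation only delivers a lower bound $\ge 0$ for that truncated sum, not a Cauchy criterion. Without convergence there is no $j_n^B$, hence no $\Prob_B$, and the Kolmogorov-extension step has nothing to consume. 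Note also that the representing point field is in general \emph{not unique} under (i)--(ii) alone---uniqueness needs a moment bound, cf.\ \eqref{len2} in this appendix---so there is no canonical object the divergent series could even be ``pointing at''. Lenard's actual argument bypasses Janossy densities entirely: the monotone functional $\Lambda$ is extended to a probability measure on $(\mathcal{X},\calB)$ by a Daniell/Riesz-type representation argument on the lattice generated by cylinder indicators, which involves no series inversion and therefore requires no growth control on $\rho_k$. If you want to pursue the Janossy route you must either add a moment hypothesis (in which case you are proving a weaker theorem) or restructure the argument so that positivity of $\Lambda$ is used to produce the measure before the $j_n^B$ are ever written down.
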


A couple of remarks are in order. The second sum in \eqref{b2} are over all sequences of distinct indices appearing in the configuration
$\{z_j\}$.
Since the supports of the $\fii_k$ are compact, only finitely many terms in the sum are nonzero.

Consider the modified condition
\begin{equation}\label{b2p}\fii_0+\sum_{k=1}^N\sum_{\pi\in S_{k,N}}\fii_k \lpar z_{\pi(1)}\,,\,\ldots\,,\,z_{\pi(k)}\rpar\ge 0\quad \text{for all }z\in\C^N,
\end{equation}
where $S_{k,N}$ is the set of injective functions $\{1,\ldots,k\}\to\{1,\ldots,N\}$.

Let us say that the sequence $\rho_k$ has \textit{property $P\lpar N\rpar$} if \eqref{b2p} implies \eqref{positivity}.
It is easy to see that condition (ii) of Lenard's theorem holds if $\left\{ \rho_k \right\}$ has property $P\lpar N\rpar$ for all $N\ge 0$ (and all $\{\fii_k\}_0^N$). Indeed, if the assumption \eqref{b2} holds for all locally finite configurations $\left\{z_j\right\}$, it will in particular hold for configurations of length $N$. I.e. \eqref{b2p} holds. Thus by $P\lpar N\rpar$ we can conclude \eqref{positivity}, which shows that (ii) holds.

The $n$-point process $\{\zeta_j\}_1^n$ associated with a potential $Q$
is not a
random point field, since the sample space
is not $\mathcal{X}$. However, for fixed $n$
the functions $\varrho_{n,k}:=\bfR_{n,k}$ (or even $\bfR_{n,k}^{\,\beta}$) satisfy condition \ref{lena1} of Lenard's existence theorem (which is immediate) and also that $\{\zeta_j\}$ has property $P\lpar\,N\,\rpar$ whenever $n\ge N$ (take $\Expe_N$ of the random variable in the left hand side of
 \eqref{b2p} and use the non-negativity of the $\varrho_{n,k}$).

Rescaling about some point $z_0$, we find that the functions $\rho_{n,k}:=R_{n,k}$ satisfy the condition \ref{lena1} in Theorem \ref{lenard}, and also has property $P\lpar N\rpar$ whenever $n\ge N$. From this we conclude that if a locally integrable limit
$\rho_k=\lim_{n\to\infty}\rho_{n,k}$ exists, with bounded almost everywhere convergence on compact subsets of $\C$,
then the limiting
functions $\rho_k$ must satisfy conditions \ref{lena1} and \ref{lena2} of Theorem \ref{lenard}. Consequently they will be intensity functions
of some random point field.

\subsection{Uniqueness of limiting point fields} \label{uni} The following theorem, also due to Lenard, gives a convenient condition for checking the uniqueness of
a point field in $\C$.

\begin{thm} Let $\rho_k:\C^k\to[0,+\infty)$ be a sequence of intensity functions with respect to some random point field. Suppose that there is a number $c$ such that
\begin{equation}\label{len2}\sup_{E^k}\rho_k\le \lpar ck^{2}\rpar^{k}.\end{equation}
We then have uniqueness of the random point field in $\C$ having $\rho_k$ for correlation functions.
\end{thm}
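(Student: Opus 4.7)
The strategy is to reduce the uniqueness of the point field to a classical determinacy question for the moment problem. The law of a point field on $\calX$ is determined by the joint distributions of the vectors $(N_{B_1},\ldots,N_{B_m})$, where $B_1,\ldots,B_m$ are disjoint bounded Borel subsets of $\C$ (this is built into the generation of $\calB$ by the cylinder sets). Hence it suffices to show that, for each such finite collection, the joint distribution of the $N_{B_j}$ is determined by the intensity functions $\rho_k$.

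First, I would translate the intensity bound into a bound on joint factorial moments. Fix disjoint bounded Borel sets $B_1,\ldots,B_m$, let $B=\bigcup B_j$ and let $c_B$ be the constant such that $\rho_k\le (c_B k^2)^k$ on $B^k$. Using the defining property of the $\rho_k$,
\[
\Expe\!\lpar \prod_{j=1}^m \frac{N_{B_j}!}{(N_{B_j}-k_j)!}\rpar
= \int_{B_1^{k_1}\times\cdots\times B_m^{k_m}} \rho_k\, dV_k
\le (c_B k^2)^k \prod_j \babs{B_j}^{k_j},
\]
where $k=k_1+\cdots+k_m$. Converting ordinary moments to factorial moments via Stirling numbers, the ordinary joint moments $\Expe[\prod_j N_{B_j}^{k_j}]$ satisfy a bound of the same shape $(C_B k^2)^k$, with a possibly larger constant.

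Next I would invoke a Carleman-type criterion. For the single-variable case (take $m=1$) the moments $m_k:=\Expe[N_B^k]$ satisfy $m_k^{1/(2k)}\le k\sqrt{C_B}$, so $\sum_k m_k^{-1/(2k)}\ge \sum_k 1/(k\sqrt{C_B})=\infty$, and Carleman's criterion yields that the distribution of $N_B$ is uniquely determined by its moments — and hence by $(\rho_k)$. The same argument works for joint distributions: the multivariate moment bound $(C_B k^2)^k$ is of the Nussbaum/Petersen type that forces multivariate determinacy (one standard reduction is to test linear combinations $t_1 N_{B_1}+\cdots+t_m N_{B_m}$ and apply the one-dimensional criterion to each, using that the growth rate is preserved under this projection).

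Finally, having shown that each finite-dimensional joint distribution $(N_{B_1},\ldots,N_{B_m})$ is determined by $(\rho_k)$, the point field itself is determined, by the Kolmogorov-type construction that underlies Lenard's theorem (Theorem~A.1). The main obstacle is the multivariate determinacy step: the passage from determinacy of each marginal $N_B$ to determinacy of the joint law of a vector of counts is the place where one must be careful, and one invokes a multivariate Carleman or Nussbaum-type theorem. The intensity bound $(ck^2)^k$ is precisely tuned to make this work, since $k^2$ is the borderline growth for which Carleman's sum still diverges.
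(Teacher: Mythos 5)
The paper itself offers no proof of this theorem; it simply cites Lenard's original argument (reference \cite{L1}, p.~42). Your strategy --- reduce uniqueness of the point field to determinacy of the joint moment problem for the count variables $N_{B_j}$, then invoke Carleman --- is indeed the standard route, and the numerical steps (factorial moment bound $\le \lpar c_B k^2\rpar^k \prod|B_j|^{k_j}$, conversion to ordinary moments of the same shape $\lpar C_B k^2\rpar^k$ via Stirling numbers) are correct.

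One place you need to be more careful: the bound $m_k \le \lpar C_B k^2\rpar^k$ is precisely the borderline case at which the \emph{Hamburger} Carleman criterion fails. Indeed $m_{2k}^{1/(2k)} \le 4C_B k^2$, so $\sum_k m_{2k}^{-1/(2k)} < \infty$, and the two Hamburger-type theorems you name (Nussbaum's criterion via $\sum_k \lpar\int |x|^{2k} d\mu\rpar^{-1/(2k)} = \infty$, and Petersen's marginal-determinacy theorem) do not apply as stated. What makes the argument work is that the $N_B$ are non-negative, so one is in the \emph{Stieltjes} setting, where the relevant criterion is $\sum_k m_k^{-1/(2k)} = \infty$, which you do verify. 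You quietly use the right formula in the one-dimensional step (writing $m_k^{-1/(2k)}$, not $m_{2k}^{-1/(2k)}$), but the appeal to Nussbaum/Petersen in the multivariate step needs adapting. Two clean ways to close the gap: (i) for each $t\in[0,\infty)^m$, the non-negative random variable $Y_t=\sum t_j N_{B_j}\le \|t\|_\infty N_{\cup B_j}$ has moments again bounded by $\lpar C' k^2\rpar^k$, hence its law on $[0,\infty)$ is Stieltjes-determinate; since the moments of every $Y_t$ are linear combinations of the joint moments, both candidate point fields yield the same law for each $Y_t$, hence the same multivariate Laplace transform $\Expe\left[e^{-\sum t_j N_{B_j}}\right]$ for $t\ge 0$, and Laplace uniqueness on $[0,\infty)^m$ finishes the job; or (ii) pass to $\lpar\sqrt{N_{B_1}},\ldots,\sqrt{N_{B_m}}\rpar$, symmetrize, and apply Nussbaum's multivariate Hamburger Carleman criterion, noting $\int\lpar\sum y_j^2\rpar^k = \Expe\left[N_{\cup B_j}^k\right]\le\lpar C' k^2\rpar^k$, giving a divergent series. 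Either route makes the invocation precise; without one of them, the multivariate step has a genuine gap.
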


For a proof, see \cite{L1}, p. 42.

The condition \eqref{len2} is certainly satisfied for a determinantal point field if we can
prove that its correlation kernel $K$ is uniformly bounded. For if $\babs{K}\le C$ on $\C^2$, then
the corresponding intensity functions satisfy
$R_{k}\le C^{\,k}k^{\,k/2}$ by the Hadamard inequality for determinants. Since
$\babs{K(z,w)}^2\le K(z,z)K(w,w)$ this gives
the following simple uniqueness criterion.

\begin{cor}\label{easytouse} Let $K_n$ be correlation kernels of the processes $\config_n$. If $K_n\to K$
locally uniformly on $\C^2$ and if $R(z):=K(z,z)$ is bounded on $\C$, then $K$ is the correlation kernel
of a unique point field $\config$ in $\C$, and $\config_n$ converges to $\config$ as point fields.
The same conclusion holds if $K_n\to K$ almost everywhere with bounded convergence on the diagonal in $\C^2$.
\end{cor}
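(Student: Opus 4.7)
The plan is to identify $K$ as the correlation kernel of a unique point field by verifying the two hypotheses of Lenard's theorems applied to the candidate intensities $R_k(z_1,\ldots,z_k) := \det(K(z_i,z_j))_{i,j=1}^{k}$. Since each $K_n$ is a positive matrix and $K_n \to K$ pointwise, $K$ is itself a positive matrix, and in particular the Cauchy--Schwarz-type inequality $|K(z,w)|^2 \le K(z,z)\,K(w,w)$ holds. Combined with the hypothesis $R(z)=K(z,z)\le C$ and Hadamard's inequality for positive matrices, this gives $|R_k| \le C^k k^{k/2} \le (c k^2)^k$ for a suitable constant $c$. This is exactly the growth bound \eqref{len2}, so \emph{once} $R_k$ is shown to be the intensity sequence of \emph{some} point field, Lenard's uniqueness theorem identifies this point field uniquely.

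For existence I would invoke Lenard's theorem (Theorem A.1) via the property $P(N)$ discussion after it. Symmetry (i) of $R_k$ is immediate from the determinantal form. For condition (ii), fix $N$ and compactly supported bounded Borel functions $\fii_0,\fii_1,\ldots,\fii_N$ satisfying the pointwise inequality \eqref{b2}. For each $n \ge N$, the determinantal intensities $R_{n,k}(z_1,\ldots,z_k)=\det(K_n(z_i,z_j))$ come from the honest $n$-point process $\config_n$, which has property $P(N)$ as explained in the text following Theorem A.1. Therefore
\begin{equation*}
\fii_0 + \sum_{k=1}^{N}\int_{\C^k}\fii_k\,R_{n,k}\,dV_k \;\ge\; 0.
\end{equation*}
It remains to pass to the limit $n \to \infty$ in each of the finitely many integrals. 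Here the key observation is the uniform Hadamard bound: on any compact $E \subset \C$ we have $\sup_{n} \sup_{z \in E} R_{n,1}(z) \le C_E < \infty$ (directly from locally uniform convergence of $K_n(z,z)$ to the bounded function $R$, or from the locally bounded a.e.~convergence assumption), so $|R_{n,k}| \le C_E^{k} k^{k/2}$ on $E^k$ uniformly in $n$. Dominated convergence then gives $\int \fii_k R_{n,k}\,dV_k \to \int \fii_k R_k\,dV_k$ for each $k$, establishing \eqref{positivity}. Lenard's existence theorem produces a point field $\config$ with intensities $R_k$, and the uniqueness step above pins it down.

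Once $\config$ is constructed, the statement that $\config_n$ converges to $\config$ as point fields is exactly the definition recalled at the start of Appendix A (locally uniform convergence, or bounded a.e.~convergence in the hard-edge case, of all intensity functions), which is the hypothesis of the corollary combined with the determinantal formulas. The main technical point is the dominated-convergence step in the a.e.\ case: bounded convergence on the diagonal does \emph{not} automatically give bounded convergence off the diagonal, but the positive-matrix inequality $|K_n(z,w)|^2 \le K_n(z,z)K_n(w,w)$ together with Hadamard propagates the diagonal bound to the full $k$-variable intensities, so dominated convergence still applies. This is the only real obstacle; the rest of the argument is the invocation of Lenard's two theorems.
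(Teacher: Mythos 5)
Your proof is correct and follows essentially the same path as the paper: existence of the limiting point field via Lenard's existence theorem using the property $P(N)$ discussion from Section~A.1, and uniqueness via Lenard's uniqueness criterion with the Hadamard bound $R_k \le C^k k^{k/2}$ coming from $\babs{K(z,w)}^{2}\le K(z,z)K(w,w)$. The one place you go slightly beyond what the paper writes out is the observation that the positive-matrix inequality is precisely what propagates the bounded diagonal convergence to all off-diagonal entries (and hence to the $k$-point intensities), which is what makes the dominated-convergence step work in the almost-everywhere case; the paper leaves that implicit, so your flagging it as the only genuine technical point is a reasonable elaboration rather than a deviation.
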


\subsection*{Acknowledgements}
We would like to thank Seong-Mi Seo and Hee-Joon Tak for careful reading and much appreciated help with improving this manuscript. We thank Aron Wennman for many useful discussions. In particular, he communicated the proof of Lemma \ref{ln}.
%in particular in Section \ref{logs}

%and for helping to clarify some subtle points.
%We are also grateful to Mikhail Sodin and Alexei
%Borodin for helpful communication.

\end{document}